\numberwithin{equation}{section}
\tikzstyle{result} = [rectangle, 
\tikzstyle{input} = [rectangle, 
\tikzstyle{arrow} = [thick, ->,>=stealth]
\newtheorem{thm}{Theorem}[section]
\newtheorem{remark}[thm]{Remark}
\newtheorem{lemma}[thm]{Lemma}
\newtheorem{discussion}[thm]{Discussion}
\newtheorem{prop}[thm]{Proposition}
\newtheorem{definition}[thm]{Definition}
\newtheorem{corollary}[thm]{Corollary}
\definecolor{darkgreen}{rgb}{0,0.5,0}
\definecolor{darkblue}{rgb}{0,0,0.7}
\definecolor{darkred}{rgb}{0.9,0.1,0.1}
\newcommand{\gucomment}[1]{\marginpar{\raggedright\scriptsize{\textcolor{darkred}{#1}}}}
\newcommand{\zhaocomment}[1]{\marginpar{\raggedright\scriptsize{\textcolor{darkblue}{#1}}}}
\newcommand{\cCf}{\cC_{\infty}} 
\newcommand{\cCm}{\cC_{*}} 
\newcommand{\cCb}{\cC_{\operatorname{bc}}} 
\newcommand{\cCp}{\cC_{**}} 
\newcommand{\cCa}{\cC_{\infty}(\a)}
\newcommand{\cCam}{\cC^*_{\a}(\cu_m)}
\newcommand{\cCaGm}{\cC^*_{\a^{G}}(\cu_m)}
\newcommand{\OO}{\mathscr{O}} 
\newcommand{\EE}{\mathcal{E}} 
\renewcommand{\leq}{\leqslant}
\renewcommand{\geq}{\geqslant}
\renewcommand{\subset}{\subseteq}
\renewcommand{\supset}{\supseteq}
\newcommand{\mcl}{\mathcal}
\newcommand{\aA}{{\mathcal{A}}}
\newcommand{\G}{\mathcal{G}}
\newcommand{\E}{\mathbb{E}}
\newcommand{\eE}{\mathcal{E}}
\newcommand{\fE}{\mathfrak{E}}
\renewcommand{\a}{\mathbf{a}}
\newcommand{\atilde}{\tilde{\mathbf{a}}}
\newcommand{\ab}{{\overbracket[1pt][-1pt]{\a}}}
\newcommand{\adm}{\a^{\delta}_{m}}
\newcommand{\ai}{\tilde{\a}^\delta}
\newcommand{\aim}{\tilde{\a}^\delta_{m}}
\newcommand{\aii}[1]{\tilde{\a}^{\delta,(#1)}_{m}}
\newcommand{\aio}[1]{{\a}^{#1}}
\newcommand{\na}{\nabla}
\newcommand{\Ll}{\left}
\newcommand{\Rr}{\right}
\newcommand{\lhs}{left-hand side}
\newcommand{\rhs}{right-hand side}
\newcommand{\lL}{\mathcal{L}}
\newcommand{\R}{\mathbb{R}}
\newcommand{\cC}{\mathscr{C}}
\newcommand{\N}{\mathbb{N}}
\newcommand{\ZZ}{\mathbb{Z}}
\newcommand{\Zd}{{\mathbb{Z}^d}}
\newcommand{\Ed}{E_d}
\newcommand{\Edo}{\overrightarrow{E_d}}
\newcommand{\Eda}{E_d^{\a}}
\newcommand{\itr}{\mathrm{int}}
\renewcommand{\P}{\mathbb{P}}
\newcommand{\Pp}{\mathbb{P}_\p}
\newcommand{\pP}{\mathcal{P}}
\newcommand{\pPN}{\mathcal{P}^{(N,k)}}
\newcommand{\kK}{\mathcal{K}}
\newcommand{\rR}{\mathcal{R}}
\newcommand{\mM}{\mathcal{M}}
\newcommand{\tT}{\mathcal{T}}
\newcommand{\sSl}{\mathcal{S}^{loc}}
\newcommand{\gG}{\mathcal{G}}
\newcommand{\gGN}{\mathcal{G}^{(N)}}
\newcommand{\wW}{\mathcal{W}}
\newcommand{\fF}{\mathcal{F}}
\renewcommand{\bar}{\overline}
\renewcommand{\tilde}{\widetilde}
\newcommand{\eps}{\varepsilon}
\renewcommand{\epsilon}{\varepsilon}
\newcommand{\cu}{{\scaleobj{1}{\square}}}
\newcommand{\p}{\mathfrak{p}}
\newcommand{\oO}{\mathcal{O}}
\newcommand{\name}{{\textcolor{blue}{weighted-invasion-percolation}}}
\newcommand{\Ind}[1]{\mathbf{1}_{\left\{#1\right\}}}
\newcommand{\id}{\mathsf{Id}}
\newcommand{\norm}[1]{\left\Vert{#1}\right\Vert}
\newcommand{\bracket}[1]{\left\langle{#1}\right\rangle}
\newcommand{\de}{\hat{\delta}} 
\newcommand{\rr}{\mathsf{r}} 
\renewcommand{\aa}{\mathsf{c}} 
\newcommand{\stable}{\mathsf{stable}} 
\newcommand{\W}{\mathbf{W}}
\newcommand{\aL}{\underline{L}}
\newcommand{\g}{\mathbf{g}}
\newcommand{\w}{\mathbf{w}}
\DeclareMathOperator{\dist}{dist}
\DeclareMathOperator{\size}{size}
\DeclareMathOperator{\supp}{supp}
\DeclareMathOperator{\diam}{diam}
\DeclareMathOperator*{\osc}{osc}
\title[Differentiability of diffusivity on percolation]{The diffusivity of supercritical Bernoulli percolation is infinitely differentiable}
\author{Chenlin Gu, Wenhao Zhao}
\address[Chenlin Gu]{Yau Mathematical Sciences Center, Tsinghua University, Beijing, China}
\email{gclmath@tsinghua.edu.cn}
\address[Wenhao Zhao]{
	EPFL, Lausanne, Switzerland \& School of Mathematical Sciences, Peking University, Beijing, China}
\email{wenhao.zhao@epfl.ch}
\begin{document}
	\begin{abstract}
		We prove that, the diffusivity and conductivity on $\Zd$-Bernoulli percolation ($d \geq 2$) are infinitely differentiable in supercritical regime. This extends a result by Kozlov [\emph{Uspekhi Mat. Nauk} 44 (1989), no. 2(266), pp 79–120]. The key to the proof is a uniform estimate for the finite-volume approximation of derivatives, which relies on the perturbed corrector equations  in homogenization theory. The renormalization of geometry is then implemented in a sequence of scales to gain sufficient degrees of regularity. To handle the higher-order perturbation on percolation, new techniques, including \emph{cluster-growth decomposition} and \emph{hole separation}, are developed.
		
		\bigskip
		
		\noindent \textsc{MSC 2010:} 35B27, 60K37, 60K35.
		
		\medskip
		
		\noindent \textsc{Keywords:} diffusion, random walk, invariance principle, regularity, stochastic homogenization, supercritical percolation, renormalization, coarse-graining.

	\end{abstract}
	\maketitle
	
	\setcounter{tocdepth}{1}
	\tableofcontents

	\begin{figure}[h!]
		\centering
		\includegraphics[width=0.3\textwidth]{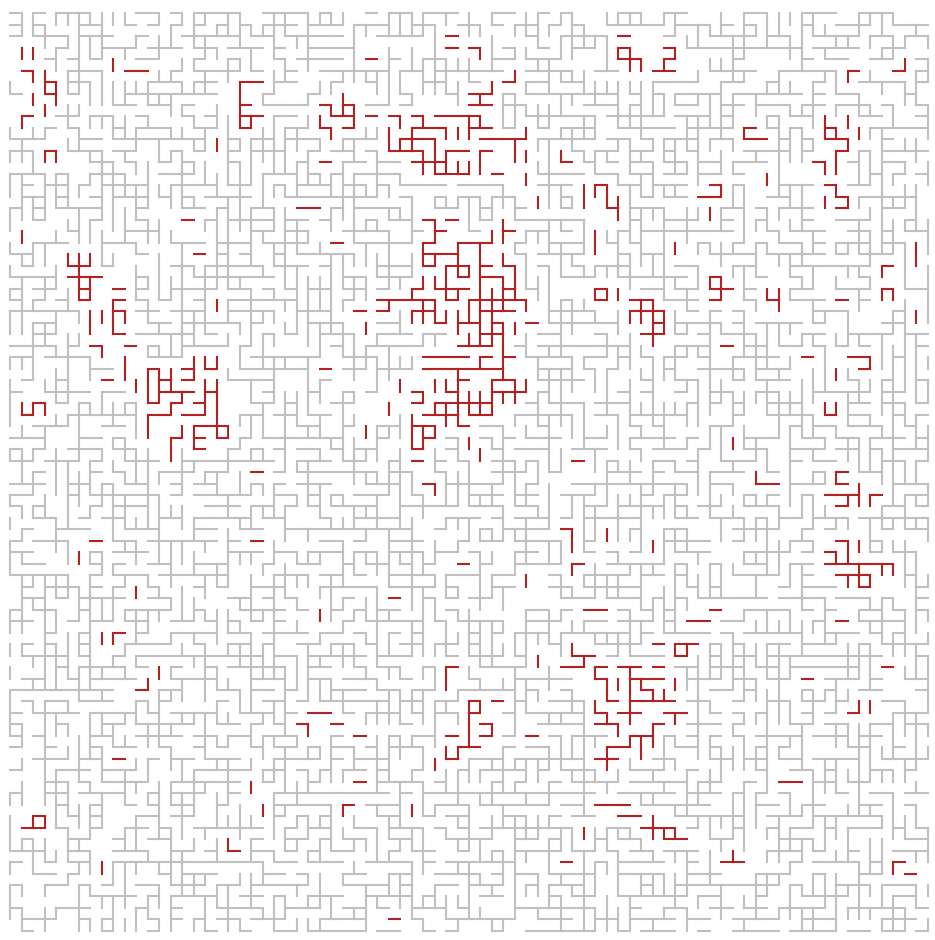}
		\includegraphics[width=0.3\textwidth]{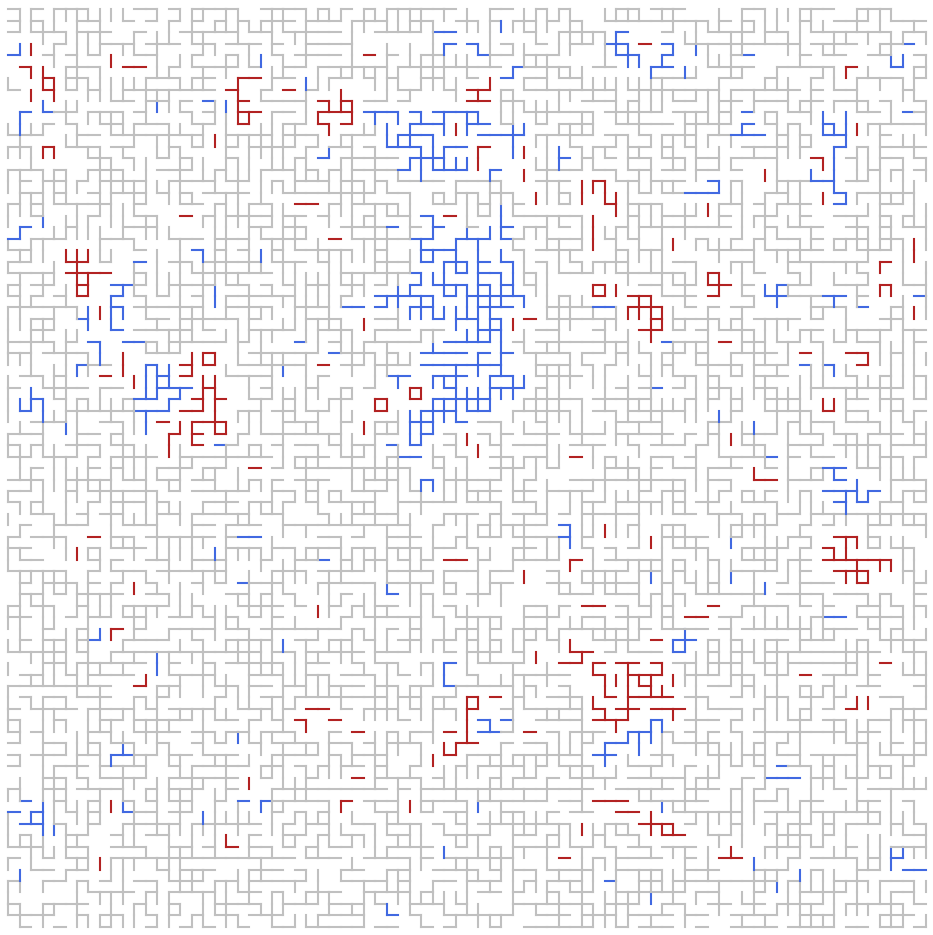}
		\includegraphics[width=0.3\textwidth]{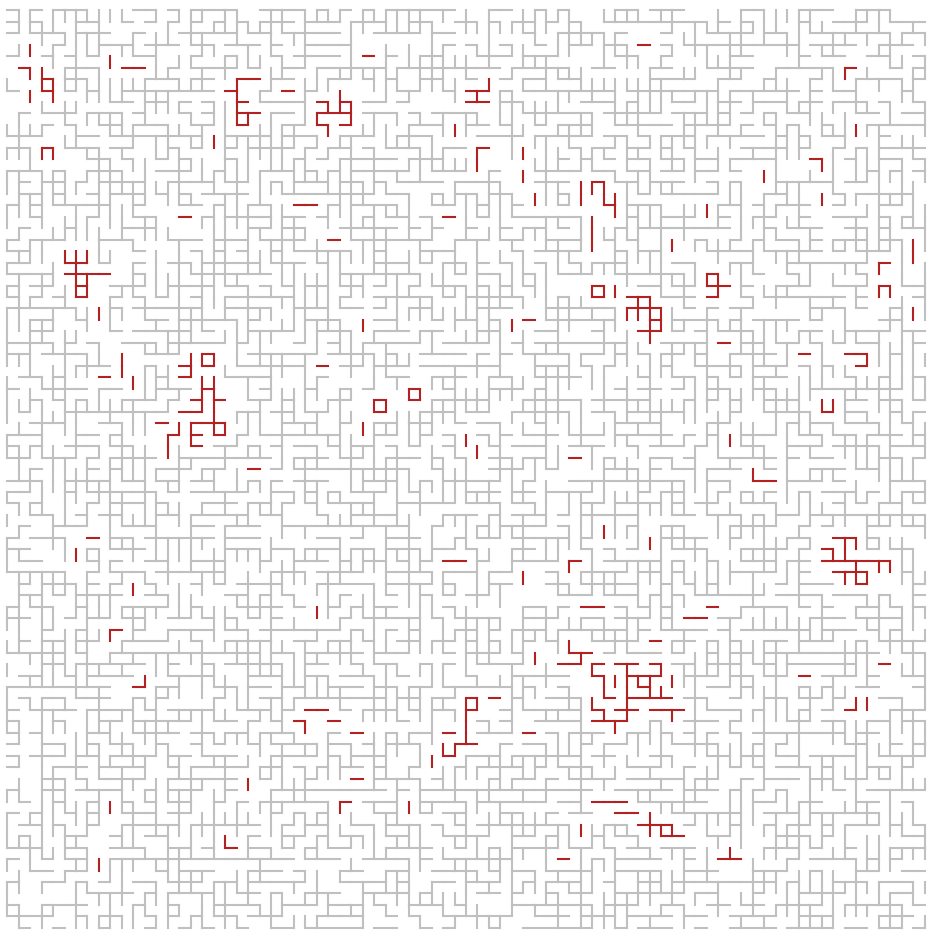}
		\caption{\scriptsize{Figures on the left and right are coupled supercritical percolation of parameter $\p$ and $\p+\delta$, with isolated clusters in red and boundary-connecting clusters in gray. In the figure in the middle, the clusters in blue are the growth of the boundary-connecting cluster in this passage.}}
		\label{fig.cover}
	\end{figure}

	\newpage

	\section{Introduction}

	\subsection{Main result} 
	The Bernoulli percolation was introduced by Broadbent and Hammersley \cite{broadbent1957percolation} in order to study the flow in porous material. Let $\Zd$ be the $d$-dimensional lattice and $\Ed$ be the associated edges/bonds which connect the nearest vertices.
	We sample every bond independently with probability $\p \in [0,1]$ to be \emph{open}, and with probability $(1-\p)$ to be \emph{closed}. That is a family of i.i.d. Bernoulli random variables $\{\a(e)\}_{e \in \Ed}$ satisfying 
	\begin{align}\label{eq.def_Percolation}
		\Pp[\a(e) = 1] = \p, \qquad \Pp[\a(e) = 0] = 1 - \p.
	\end{align}
	The open edges form connected components called \emph{clusters}. The $\Zd$-Bernoulli percolation is the random graph defined by ${(\Zd, \{e\in\Ed:\a(e)=1\})}$. Despite its simple description, this model captures the phase transition in statistical physics:  for $d \geq 2$, there exists a critical point $\p_c(d) \in (0,1)$, such that there exists almost surely a unique infinite cluster, denoted by $\cCf$, in \emph{the supercritical phase} $\p > \p_c$ (see \cite{aizenman1987uniqueness, burton1989density}), and no infinite cluster in \emph{the subcritical phase} $\p < \p_c$. The connected probability is defined as
	\begin{align}\label{def.theta}
		{\theta(\p):=\Pp[0\in \cCf]}.
	\end{align}
	This quantity is zero in subcritical phase and positive in supercritical phase. It was proved by $\theta(\p_c) = 0$ for $d = 2$ by Kesten in \cite{kesten1980critical}, and for $d \geq 19$ by Hara and Slade in \cite{hara1990mean}, and for $d \geq 11$ by Fitzner and van der Hofstad in \cite{fitzner2017mean}, but it still remains as a major open problem for $3 \leq d \leq 10$. We refer to the monograph by Grimmett \cite{grimmett1999} and the proceeding by Duminil-Copin \cite{duminil2017sixty} for a detailed introduction of this topic.

	$\Zd$-Bernoulli percolation in supercritical regime is usually believed better understood. As indicated in the work \cite{pisztora1996surface} by Pisztora and in \cite{kesten1990} by Kesten and Zhang, the infinite cluster has a geometry close to $\Zd$ when zooming out, and the other finite clusters are generally of small size. For these reasons, the random walk on supercritical percolation should be close to Brownian motion in large scale. This observation, known as \emph{``the ant in the labyrinth"}, was mentioned by de Gennes in \cite{de1976percolation}. For the rigorous results, the Gaussian bound was proved by Barlow in \cite{barlow2004random}; the quenched invariance principle was obtained at first by Sidoravicius and Sznitman in \cite{sidoravicius2004quenched} for $d \geq 4$, then generalized by Berger and Biskup in \cite{berger2007quenched}, and by Mathieu and Piatnitski in \cite{mathieu2007quenched} for any dimension $d \geq 2$. These results have also been extended to other settings (including the stationary ergodic environment, the unbounded conductance, the correlated percolation, stable process, etc) in a series of work \cite{ABDH,andres2015invariance,  BD, hambly2009parabolic, procaccia2016quenched, sapozhnikov2017random, BCKW2021, CCKW2025, CKW2021}; see also the survey by Biskup \cite{biskup2011recent} and a very recent survey by Andres  \cite{andres2025homogenization}. 
	
	We state rigorously the quenched invariance principle for \emph{the variable-speed random walk} $\left( X_t \right)_{t \geq 0}$. It is a  continuous time Markov-jump process associated to the generator 
	$\nabla \cdot \a \nabla$
	\begin{equation} \label{eq.VSRW}
		\forall x \in \cCf, \qquad \nabla \cdot \a \nabla u(x) := \sum_{z: z \sim x} \a(\{x,z\}) \left( u(z) - u(x) \right).
	\end{equation}
	Here we use the notation $z \sim x$ if $\{x,z\} \in \Ed$. 	In another word, every open edge is associated to an independent exponential clock of parameter $1$, and the random walk will cross the neighbor open edge whose clock rings first. The quenched invariance principle states that, there exists a deterministic diffusive constant $\sigma(\p) > 0$ such that, for $\P_\p$-almost every realization $\{\a(e)\}_{e \in \Ed}$, the following convergence in law holds under the Skorokhod topology
	\begin{equation} \label{eq.IP}
		\Ll(\frac{1}{\sqrt{N}} X_{N t}\Rr)_{t \geq 0} \stackrel{N \to \infty
		}{\Longrightarrow} ( \sigma B_{t})_{t \geq 0},
	\end{equation}
	where $(B_{t})_{t \geq 0}$ is the $d$-dimensional Brownian motion. 
	
	An important method to study the invariance principle on random conductance is the homogenization. This theory, appearing at first in PDE, studies the large-scale behavior of operators when assuming periodic or ergodic structure. Percolation is also an important example, and its study dates back to Zhikov in \cite{zhikov1989}. In the last decades, there is a lot of progress in the field of quantitative homogenization (see \cite{armstrong2016quantitative, armstrong2016lipschitz, armstrong2016mesoscopic, armstrong2017additive, NS, gloria2011optimal, gloria2012optimal, gloria2015quantification, GO3, armstrong2024renormalization} and the monographs \cite{AKMbook,armstrong2022elliptic}), and they have significantly contributed to the percolation theory. Armstrong and Dario established in \cite{armstrong2018elliptic} the complete Liouville theorem on supercritical percolation cluster, which extends the result in previous work \cite{benjamini2015disorder} by Benjamini, Duminil-Copin, Kozma, and 	Yadin. Later, the estimates of the first-order correctors and Green's function were obtained respectively in \cite{dario2021corrector, dario2021quantitative}. Using these results, the first author, together with Su and Xu, recently construct a good coupling for the invariance principle \eqref{eq.IP} in \cite{gu2024coupling}.

	The present work focuses on the regularity of $\sigma(\p)$ in function of $\p$. This question is motivated by the phase transition in statistical physics, since the fundamental quantities are usually believed to be smooth away from the threshold, and exhibit a power law decay at the critical point. Concerning the Bernoulli percolation, the smoothness of $\theta(\p)$ was studied by Russo in  \cite{russo1978note}, and by Chayes,  Chayes, Newman in \cite{chayes1987bernoulli}; see also  \cite[Chapter 8.7]{grimmett1999}. Its analyticity was confirmed by Georgakopoulos and Panagiotis  in \cite{georgakopoulos2018analyticity}. The critical exponent  of $\theta(\p)$, together with other quantities including the susceptibility, were discussed by Smirnov and Werner in \cite{smirnov2001} for planar percolation, and by Hara and Slade in \cite{hara1990mean} for high-dimensional percolation.  The regularity was also established for the free energy in Ising model, see \cite{lee1952statistical, messager1974analyticity, messager1976analyticity, ott2020weak}. Therefore, $\sigma(\p)$ is expected to illustrate similar properties, but very few rigorous results can be found in the literature. In \cite[Open Problem 14.1]{high2017},  Heydenreich and van der Hofstad mentioned the open problem about the monotonicity of $\sigma(\p)$ and  the critical exponent $\sigma^2(\p) \approx (\p - \p_c)^t$. The discussion about critical exponent can also be  found in \cite[Problem 4.21]{biskup2011recent} by Biskup, and in \cite[equation (45)]{Hughes2021} by Hughes.

	Our main theorem states the regularity of this diffusive constant in the supercritical phase. We believe it is the first step to understand the questions above. 
	
	\begin{thm}\label{thm.main}
		For $d \geq 2$, the mapping $\p \mapsto \sigma(\p)$ is infinitely differentiable in  $(\p_c, 1]$. 
	\end{thm}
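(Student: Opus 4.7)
The plan is to exploit the homogenization representation of the diffusivity. For each $\p \in (\p_c, 1]$, the squared diffusivity $\sigma^2(\p)$ coincides, up to the smooth factor $\theta(\p)$, with the homogenized matrix $\ov{\a}(\p)$ characterized by the corrector problem $\nabla \cdot \a(\nabla \phi_e + e) = 0$ on $\cCf$. Smoothness of $\sigma$ therefore reduces to smoothness of $\ov{\a}$. The natural idea is to differentiate the corrector equation formally in $\p$: the $n$-th derivative $\partial_\p^n \ov{\a}$ should be an expected energy involving a hierarchy of \emph{perturbation correctors} $\phi_e^{(k)}$, $k=1,\dots,n$, each of which satisfies an inhomogeneous divergence-form equation whose right-hand side is polynomial in the $\phi_e^{(j)}$ with $j<k$ and in differences between the environment at $\p$ and at $\p+\delta$.

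First I would use the standard monotone coupling of $\{\a^{\p}(e)\}$ and $\{\a^{\p+\delta}(e)\}$ via i.i.d.~$[0,1]$ marks, so that raising $\p$ to $\p+\delta$ amounts to opening an independent $\delta/(1-\p)$-sparse set of new edges. In a finite cube $\cu_m$ with suitable boundary conditions the finite-volume homogenized matrix $\ov{\a}_m(\p+\delta)$ is a polynomial in $\delta$ and thus trivially analytic; the key quantitative step is a uniform (in $m$) $L^2(\Pp)$ bound on each coefficient $\partial_\delta^n \ov{\a}_m(\p)$. One views that coefficient as an $n$-linear form in the sensitivities of the corrector to added edges, and controls these sensitivities through Caccioppoli, Meyers, and weighted Poincar\'e inequalities on $\cCf\cap \cu_m$. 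The geometric inputs are the renormalized exponential tails of bad boxes \cite{pisztora1996surface, kesten1990} and the quantitative homogenization estimates of Armstrong--Dario \cite{armstrong2018elliptic}. The passage $\partial_\delta^n \ov{\a}_m(\p) \to \partial_\p^n \ov{\a}(\p)$ then follows from a compactness/uniqueness argument for the infinite-volume perturbed corrector equations.

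The main obstacle is to control the $n$-th-order term uniformly as $n$ grows. Each new derivative introduces another perturbed edge, which may reshape $\cCf$ by absorbing a previously finite cluster, modify the corrector far from its location, or interact nontrivially with other perturbed edges. This is where the techniques announced in the abstract should enter. The \emph{cluster-growth decomposition} would partition, according to size $R$, the event that the $k$-th perturbed edge attaches to $\cCf$ a previously finite cluster; the known exponential tails in $R$ of finite-cluster sizes in supercritical percolation then allow summing over $R$. The \emph{hole-separation} technique would disentangle the contributions of several perturbed edges producing nearby modifications, either by grouping them into composite holes of controlled diameter or by proving approximate factorization up to exponentially small errors. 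These estimates are iterated across a dyadic sequence of coarse-graining scales, gaining one extra order of regularity at each scale. Differentiating the variational formula for $\ov{\a}_m(\p)$ termwise and letting $m\to\infty$ via the uniform bounds yields $\partial_\p^n \sigma^2(\p)$ for every $n\geq 1$, hence $C^\infty$ regularity on $(\p_c, 1]$.
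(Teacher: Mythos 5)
Your outline follows the paper's macroscopic route faithfully: reduce $\sigma$ to $\ov\a$ via Einstein's relation and the known smoothness of $\theta(\p)$, pass to the polynomial finite-volume approximation $\ov\a_m(\p)$, prove a uniform-in-$m$ bound on the coefficients $\ov\a_m^{(k)}(\p)$, and exchange limit and derivative. You also correctly identify the geometric inputs (renormalized tails of bad boxes, Armstrong--Dario estimates, Meyers/Caccioppoli on clusters) and the fact that the coefficients are multilinear forms in corrector sensitivities to opened edges. So far this is the paper's strategy.

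However, the mechanisms you attribute to the two named techniques are not what the paper does, and as described they would not close the argument. You describe the cluster-growth decomposition as a probabilistic stratification by the size $R$ of the finite cluster absorbed by the $k$-th opened edge, summed against exponential cluster-size tails. In the paper it is instead a \emph{deterministic, exact} algebraic identity: $F$ splits as $F = F_* \sqcup F_\circ$ according to whether an edge lies in $\EE(\cCb^F(\cu_m))$, and $D_F v_m = \nabla_{F_\circ} D_{F_*} v_m$ (Lemma~\ref{lem.decom_DF}), which converts the Glauber derivative along $F_\circ$ into a spatial finite difference between canonical grains. No cluster-size tail enters here; that would be far too lossy once all $\binom{|\Ed(\cu_m)|}{k}$ choices of $F$ are summed. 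Similarly, you describe hole separation as grouping nearby perturbations into composite holes or as an approximate factorization up to exponentially small errors; in the paper it is again exact: $V_m(F,j\,|\,\cu)$ simply excludes $\Ed(\rr\cu)$ from the index set of the inner $G$-sum, forcing all perturbations inside $\cu$ to come from $F$, so the cluster-growth identity becomes usable locally (Lemma~\ref{lem.decom_DF_j}). The deeper issue your sketch does not confront is the combinatorial explosion of the outer sum over $|F|=k$: the naive bound \eqref{eq.CS_naive} produces a factor $\binom{|\Ed(\cu_m)|}{j}$ that diverges with $m$, and the whole point of the improved $\ell^1$-$L^2$ energy \eqref{eq.improvedEnergy}, the perturbed corrector equation \eqref{eq.recurrence_eq}, and the induction on the partial order $(i,j)\prec(i_0,j_0)$ across increasing pyramid-partition scales is to tame exactly this. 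Without an explicit mechanism that exploits the compensations in the Glauber derivative (rather than cluster-size tails), your plan would stall at that point.
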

	Here the derivative at $1$ is understood as the left derivative.  To the best of our knowledge, the only previous result in the same direction is \cite{kozlov1989} by Kozlov. On site-percolation setting, he justified the left derivative at $1$  in \cite[Theorem~9]{kozlov1989}. At the end of the introduction there, he also asked the question about the differentiability on the whole interval  (\emph{``$\cdots$the differential properties of $a(p, 0)$ for $p > 0$ remain unclear$\cdots$"}). See also \cite[(9.25)]{jikov1994}. Theorem~\ref{thm.main} answers his question in supercritical regime.

	We have two remarks about our main theorem. Firstly, we focus on VSRW model, while \emph{the constant speed random walk } (CSRW) is also a natural random walk on the percolation. It is well-known that the diffusivity in two random walks only differ by a factor $2d \E_\p[\a(\{0,1\}) \vert 0 \in \cCf]$; see \cite[Theorem~1.1, Section~6.2]{ABDH}. The smoothness of this factor in function of $\p$ can be justified following the same argument in \cite[Chapter 8.7]{grimmett1999}. Therefore, Theorem~\ref{thm.main} is also valid for CSRW.
	
	Secondly, as indicated in \cite[Theorem~1.1]{mathieu2007quenched} and \cite[Theorem~1.1]{berger2007quenched}, the limit Brownian motion in \eqref{eq.IP} is isotropic. This property is not utilized in the paper. Our proof is robust, and can also be adapted to the inhomogeneous Bernoulli percolation model \cite[Chapter~11.9]{grimmett1999}.
	
	
	\subsection{A sketch of proof}
	In this part, we highlight the ingredients of the proof, together with some auxiliary results and useful techniques developed in the paper.
	
	
	\subsubsection{Outline}
	
	We start by recalling Einstein's conductivity-diffusivity formula (see \cite[eq.(181)]{dario2021quantitative})
	\begin{align}\label{eq.Einstein}
		\frac{\sigma^2(\p) \id}{2} = \frac{\ab(\p)}{\theta(\p)}.
	\end{align}
	Here $\ab(\p)$ is the effective conductivity, which is a symmetric $\R^{d \times d}$ matrix. Thanks to the previous work, in all dimensions $d \geq 2$, the connected probability $\theta(\p)$   was proved infinitely differentiable in supercritical regime \cite{russo1978note, georgakopoulos2018analyticity}. Thus, it suffices to study the regularity of $\ab(\p)$, and its proof can be explained in three steps.
	\begin{enumerate}
		\item Find a finite-volume approximation of the quantity. 
		\item Calculate the derivative of the finite-volume approximation.
		\item Give a uniform control of the derivative with respect to the volume.
	\end{enumerate}
	Afterwards, a classical theorem of uniform convergence (see \cite[Theorem~7.17]{rudin1976}) ensures the exchange between the limit and derivative.

	Step~1 for $\ab(\p)$ relies on the homogenization theory. Especially, Armstrong and Dario proved a finite-volume approximation in \cite[Section~5]{armstrong2018elliptic}
	\begin{align*}
		\ab(\p)  =  \lim_{m \to \infty} \ab_m(\p), \\
	\end{align*}  
	where $\ab_m(\p)$ is a symmetric $\R^{d \times d}$  matrix defined by
	\begin{align}\label{eq.abm_intro}
		\xi \cdot \ab_m(\p) \xi :=  \E_{\p} \Ll[\inf_{v \in \xi \cdot x + C_0(\cu_m)}\frac{1}{\vert \cu_m \vert} \sum_{e \in \Ed(\cu_m)}  \a(e) \vert \nabla v(e)\vert^2\Rr].
	\end{align}
	Here $\cu_m = \Zd \cap \Ll(-\frac{3^m}{2}, \frac{3^m}{2}\Rr)^d$ is the finite lattice cube, the operator $\nabla$  denotes the discrete difference along the edge $e$, and $C_0(\cu_m)$ is the set of functions with value zero on the boundary.
	Since $\ab_m(\p)$ only depends on the percolation in $\cu_m$, the mapping $\p \mapsto \ab_m(\p)$ is a polynomial and Step~2 also works. The $k$-th derivative of $\ab_m(\p)$ provides a natural candidate to approximate that of $\ab(\p)$. We denote them respectively by $\ab_m^{(k)}(\p)$ and $\ab^{(k)}(\p)$. Actually,  we can further prove the following quantitative version of convergence. 
	
	\begin{thm}\label{thm.ab}
		Given $d \geq 2$ and $\p \in (\p_c, 1]$, the following statement is valid for the effective conductivity $\ab(\p)$: there exists an exponent $\alpha_*(d, \p) > 0$, such that for every $k \in \N$  and $\alpha \in (0, \alpha_*)$, there exists a finite positive constant $C(d,k, \p, \alpha)$ satisfying
		\begin{align}\label{eq.ck}
			\Ll\vert \ab_m^{(k)}(\p) - \ab^{(k)}(\p)\Rr\vert \leq C(d,k, \p, \alpha) 3^{-\alpha m}.
		\end{align}
	\end{thm}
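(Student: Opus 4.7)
The plan is to show that $(\ab_m^{(k)}(\p))_{m \geq 1}$ is Cauchy in $m$ at rate $3^{-\alpha m}$; since $\ab_m \to \ab$ by the finite-volume homogenization of \cite{armstrong2018elliptic}, the limit is then $\ab^{(k)}(\p)$ and \eqref{eq.ck} follows. Because $\ab_m(\p)$ depends only on the Bernoulli variables in $\cu_m$, it is a polynomial in $\p$, and an iterated discrete Russo-type identity gives
$$\ab_m^{(k)}(\p) \;=\; k!\sum_{\{e_1,\ldots,e_k\}\subset \Ed(\cu_m)} \E_\p\bigl[D_{e_1}\cdots D_{e_k}\, Q_m(\a,\xi)\bigr],$$
where $Q_m(\a,\xi)$ denotes the minimal energy in \eqref{eq.abm_intro} and $D_e F(\a):=F(\a^{e,1})-F(\a^{e,0})$ is the Bernoulli difference operator. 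The whole task is to force this $|\cu_m|^k$-fold sum to decay in $m$.

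The core analytic ingredient is the \emph{perturbed corrector equation}: flipping a single edge $e_j$ produces a source term in the finite-volume corrector equation that is supported in a neighborhood of $e_j$, so $D_{e_j}\varphi_\xi$ solves an elliptic equation on the percolation cluster with localized right-hand side. Chaining $k$ such identities rewrites $D_{e_1}\cdots D_{e_k} Q_m$ as a product of (perturbed) corrector gradients, each morally localised near the corresponding edge. The sublinear growth and Meyers-type integrability of correctors from \cite{armstrong2018elliptic, dario2021corrector, dario2021quantitative} then produce pointwise decay of each summand in the mutual distances between the $e_j$ and in the distance to $\partial \cu_m$. For $k=0$ this directly yields the known quantitative homogenization rate $|\ab_{m+1}-\ab_m|\leq C 3^{-\alpha m}$, and the hope is that the same mechanism extends to $k \geq 1$.

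The main obstacle is absorbing the $3^{dkm}$ combinatorial growth of the $k$-fold sum against the pointwise estimates, because opening a single edge may attach a macroscopic finite cluster to $\cCf$ and thereby alter the minimiser globally, so perturbations across many far-apart edges do not decouple naively. This is where the \emph{cluster-growth decomposition} intervenes: it sequentially tracks the enlargement of the unique infinite cluster as $e_1,\ldots,e_k$ are resampled and uses the exponential decay of finite cluster sizes in the supercritical phase to show that the typical increment is small, so that only a vanishing fraction of $k$-tuples contribute non-trivially. Complementarily, the \emph{hole separation} step isolates and controls the influence of newly created or destroyed isolated clusters, again yielding an exponentially small error. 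Implementing these two mechanisms inside a renormalisation over a sequence of triadic scales converts the combinatorial explosion into a geometric series summable at rate $3^{-\alpha m}$, with $\alpha\in(0,\alpha_*)$ independent of $k$ and the constant $C$ depending on $k$. The classical uniform-convergence exchange of limit and derivative \cite[Theorem~7.17]{rudin1976} then finishes the argument.
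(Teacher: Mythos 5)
Your proposal takes a genuinely different route from the paper, but it contains a gap that the paper explicitly works around. You aim to show directly that $(\ab_m^{(k)}(\p))_m$ is Cauchy at rate $3^{-\alpha m}$ by analysing the $k$-fold sum over tuples of edges with pointwise estimates that decay in the mutual separations of the $e_j$'s; the paper never does this. Instead, the heart of the paper (Proposition~\ref{pr:key_estimate_avg}) establishes a \emph{uniform bound} $\sup_m |\ab_m^{(k)}(\p)| < \infty$, and only the $k=0$ convergence is quantitative (inherited from \cite[Proposition~5.2]{armstrong2018elliptic}). The quantitative rate for $k \geq 1$ in \eqref{eq.ck} is then produced by an interpolation argument: Lemma~\ref{lem.conceptual} invokes the Landau--Kolmogorov inequality
$\norm{(\nu_L - \nu)^{(k)}}_{L^\infty(J)} \leq C \norm{\nu_L - \nu}_{L^\infty(J)}^{1-k/n} \norm{(\nu_L-\nu)^{(n)}}_{L^\infty(J)}^{k/n}$,
combining the quantitative $k=0$ rate with the uniform bound on a higher derivative $\nu^{(n)}_L$, with $n$ large. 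This interpolation step is absent from your proposal.

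The concrete gap is this: the machinery you describe (perturbed corrector equations, cluster-growth decomposition, hole separation, renormalization over triadic scales) is precisely what the paper deploys to bound the improved $\ell^1$-$L^2$ energy $\bar{I}_{m}(i,j)$ in \eqref{eq.improvedEnergy} \emph{uniformly in $m$}. Nothing in that argument compares two consecutive scales $m$ and $m+1$, so it cannot by itself yield a Cauchy estimate with a decaying rate in $m$. You assert ``converts the combinatorial explosion into a geometric series summable at rate $3^{-\alpha m}$,'' but summing a geometric series over scales gives boundedness, not decay in $m$. Your closing appeal to \cite[Theorem~7.17]{rudin1976} also does not repair this, since that theorem identifies the limit of derivatives qualitatively and carries no rate. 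Separately, your description of the cluster-growth decomposition and hole separation as ``only a vanishing fraction of $k$-tuples contribute non-trivially'' misreads what these tools do: they are deterministic algebraic identities (\eqref{eq.decom_DF} and \eqref{eq.hole_separation_id}) that enable a coarse-graining induction on the indices $(i,j)$, not a typicality or density argument over $k$-tuples. If you want a direct route, you would need genuine off-diagonal decay of $\E_\p[D_{e_1}\cdots D_{e_k}\mathscr{E}_{m,\xi}]$ in the spread of the $e_j$'s together with a boundary-layer comparison between $\cu_m$ and $\cu_{m+1}$, neither of which the paper establishes; the interpolation route is substantially cheaper.
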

	
	Theorem~\ref{thm.ab} and Step~3 mentioned in the beginning are related to the conductivity under perturbation. This direction has a long history, as Clausius (1879) and Mossotti (1850) have independently proposed an expansion of effective conductivity of two-phase media. A lot of rigorous work emerges recently. Clausius--Mossotti formula was proved by Almog in \cite{almog1,almog2,almog3}  under certain condition. Mourrat studied a first-order expansion under Bernoulli perturbation in \cite{dl-diff}, where the reference media is also random. Anantharaman and Le Bris proposed a second order expansion in \cite{AL1,AL2}. In the work \cite{dg1}, Duerinckx and Gloria  not only relaxed the condition for Clausius--Mossotti formula, but also generalized the expansion to arbitrarily high order. They further simplified the proof of Clausius--Mossotti formula in \cite{dg23}. The argument of higher-order expansion is robust, and was applied to infinite interacting particle systems in \cite{GGMN} and to derive Einstein’s effective viscosity formula in \cite{dg_Einstein}.

	\subsubsection{Improved $\ell^1$-$L^2$ energy estimate}
	Our proof of Theorem~\ref{thm.ab} is also inspired by the work \cite{dg1, GGMN, dg_Einstein}, and the heart is \emph{the improved $\ell^1$-$L^2$ energy estimate}: the derivatives of $\xi \cdot \ab_m(\p) \xi$ is bounded by a family of improved energy $\{I_{m, \xi}(i,j)\}_{i,j \in \N}$, so it suffices to prove their uniform bound  with respect to the domain $\cu_m$. The improved energy $I_{m, \xi}(i,j)$ is defined as  
	\begin{align}\label{eq.defV_intro_1}
		I_{m, \xi}(i,j) :=  \frac{1}{\vert \cu_m \vert} \sum_{\substack{\vert F \vert = i \\ F \subset \Ed(\cu_m )}} \sum_{e \in \Ed(\cu_m )} \vert \nabla V_{m,\xi}(F,j)(e)\vert^2, 
	\end{align}
	and the function $V_{m, \xi}(F, j) $ in the integral is defined as 
	\begin{align}\label{eq.defV_intro_2}
		V_{m, \xi}(F, j) := \sum_{\substack{|G|=j\\G\subset \Ed(\cu_m ) \setminus F}} D_{F\cup G} \Ll(v_{m,\xi}(\a)\Rr). 
	\end{align}
	Here $v_{m,\xi}$ is the minimiser of Dirichlet energy in \eqref{eq.abm_intro}, and $D_G$ is \emph{the Glauber derivative}. 	Roughly, $\xi \cdot \ab_m(\p) \xi$ can be expressed in function of $v_{m,\xi}$, and the perturbation of $\p$ is then transformed to the Glauber derivative over the environment. Precisely, $\a^G$ means setting all the bonds $e$ of $G$ open, and $D_G$ is defined as 
	\begin{align*}
		(D_G f)(\a) := \sum_{G' \subset G} (-1)^{\vert G \setminus  G'\vert} f(\a^{G'}).
	\end{align*}

	At the first glance, the integral $I_{m, \xi}(i,j)$ counts a huge sum of ${ \vert \cu_m\vert \choose i+j}$ terms, because $F$ and $G$ go over all possible subsets in \eqref{eq.defV_intro_1} and \eqref{eq.defV_intro_2}. The heuristic of uniform bound roots from the compensation from the $(i+j)$-th Glauber derivative contained in $V_{m, \xi}(F, j)$.
	
	A rigorous approach to study $I_{m, \xi}(i,j)$ is \emph{the perturbed corrector equation} below
	\begin{equation}\label{eq.recurrence_intro}
		-\na \cdot (\aio{F} \na V_{m, \xi}(F,j)) = \na \cdot \W_{m, \xi}(F,j) \qquad \text{ in } \cu_m,
	\end{equation}
	whose counterpart can be found in \cite[(4.1)]{dg1}, \cite[(3.15)]{dg_Einstein}, \cite[(5.27)]{GGMN}. Since $v_{m,\xi}$ is the minimiser of Dirichlet energy in \eqref{eq.abm_intro}, under the percolation environment $\a^F$, we have 
	\begin{align*}
		-\na \cdot (\aio{F} \na v_{m, \xi}(\a^F)) = 0 \qquad \text{ in } \cu_m.
	\end{align*} 
	Then an inclusion-exclusion type calculation will yield \eqref{eq.recurrence_intro}, where the {\lhs} is the term of leading order and $\W_{m, \xi}$ only contains terms of the lower order. Therefore, if we can dominate the Dirichlet energy of $V_{m, \xi}$ by that of $\W_{m, \xi}$, an induction will give a uniform bound for $I_{m, \xi}(i,j)$.

	Nevertheless, unlike the previous work \cite{dg1, GGMN}, equation \eqref{eq.recurrence_intro} is defined on percolation setting, so the uniform ellipticity is missing. All the information accessible on its {\lhs} is on $\a^F$, and the domination we obtain is actually
	\begin{align}\label{eq.VWbound_intro}
		I^{*}_{m}(i_0,j_0) \lesssim	\sum_{(i,j)\prec(i_0,j_0)}I_{m}(i,j).
	\end{align}
	Here $(i,j)\prec(i_0,j_0)$ represents a partial order relation. The notation ``$\lesssim$" stands for a domination up to a constant, which is independent of $m$. As $\xi \mapsto V_{m, \xi}(F,j)$ is linear, we set $\xi = 1$ and omit its dependence in notations. The quantity $I^{*}_{m}(i,j)$ is analogue to $I_{m}(i,j)$, but supported on the percolation cluster  
	\begin{align*}
		I^{*}_{m}(i,j) 
		:= \frac{1}{\vert \cu_m \vert}\sum_{\substack{|F|=i\\F\subset \Ed(\cu_m)}}\Ll( \sum_{e\in \EE(\cCf(\a^F) \cap \cu_m)} |\na V_{m}(F, j)(e)|^2 \Rr).
	\end{align*}
	Clearly, we cannot close the induction only using \eqref{eq.VWbound_intro}. The previous work \cite{dg_Einstein} covers some cases in continuum percolation model, but it requires a uniform separation assumption \cite[Page~21]{dg_Einstein}, which does no hold in the entire supercritical regime of the $\Zd$-Bernoulli percolation. Therefore, we discuss the new inputs in the following paragraphs.

	\subsubsection{New inputs}
	In order to iterate \eqref{eq.VWbound_intro}, a natural attempt is to justify $I_{m}(i,j) \approx	I^{*}_{m}(i,j)$.  This is the idea of coarse-graining, and the work \cite{armstrong2018elliptic} illustrates a basic principle.
	\begin{enumerate}
		\item \emph{Extension from the grain}: set the infinite cluster as the grains, and extend the function value from the grains to their neighborhood.
		\item \emph{Renormalization of geometry}: ensure the cluster is locally well-connected, so the error in coarse-graining is small.
	\end{enumerate}
	The idea applies to the case $(i,j) = (0,0)$, where it suffices to study $v_{m}$. Then we will get $I_{m}(0,0) \approx	I^{*}_{m}(0,0)$ 
	and it confirms the basis of our induction. However, the cases $(i,j) \neq (0,0)$ contain more challenges, because the Glauber derivative even perturbs the geometry of the percolation. As $V_m(F,j)$ involves at least $2^{\vert F \vert} \times { \vert \cu_m \vert \choose j}$ different percolation configurations, its behaviors outside $\a^F$ is quite complicated, and ``the extension from the grain" above is broken; see its definition in \eqref{eq.defV_intro_2}. Therefore, we develop several techniques to enhance the coarse-graining argument.
	
	\medskip
	Concerning the case $j=0$, we only need to treat $V_m(F,0) = D_F v_{m}$. Our first observation is \emph{the cluster-growth decomposition}, which says $D_F$ can be divided into two independent parts.  Notice that, when the percolation grows from $\a$ to $\a^F$, the contribution of $F$ can be decomposed into two disjoint subset $F = F_* \sqcup F_\circ$:
	\begin{itemize}[label=---]
		\item $F_*$ enlarges the clusters connecting to the boundary $\partial \cu_m$.
		\item $F_\circ$ merges the isolate clusters disconnecting to the boundary $\partial \cu_m$.
	\end{itemize} 
	The function $v_m$ is the solution of a Dirichlet problem, so its value is determined by the clusters connecting to the boundary, and is only perturbed by $F_*$. Meanwhile, for the isolated clusters, if we choose the nearest grain from $\cC_{\infty}(\a)$, then the growth from $F_*$ does not provide more choice, and only $F_\circ$ can change the grain.  Therefore, we then discover the following decomposition for $D_F v_{m}$
	\begin{equation}\label{eq.decom_DF_intro}
		\forall x \in \cu_m, \qquad (D_F v_{m})(\a, x) = (\nabla_{F_\circ} D_{F_*} v_{m})(\a, x).
	\end{equation}
	Here $\nabla_{F_\circ}$ is a higher-order spatial finite difference operator (rigorously defined in \eqref{eq.def_diff_shift}). This decomposition further yields a coarse-graining involving the terms of lower order
	\begin{align}\label{eq.Im_Is_0_intro}
		I_{m}(i_0,0) \lesssim \sum_{i \leq i_0}	I^{*}_{m}(i_0,0).
	\end{align}
	Together with \eqref{eq.VWbound_intro}, we can close the induction for $\{I_m(i,0)\}_{i \geq 0}$.
	
	\medskip
	
	The general case $j \geq 1$ is more delicate, and we need a technique called \emph{the hole separation} as the second input. The index $j$ has a different structure from the index $i$, as \eqref{eq.defV_intro_1} and \eqref{eq.defV_intro_2} indicate. If we apply Cauchy--Schwarz inequality naively to $V_m(F,j)$, we can transfer $I_m(i,j)$ to ${\vert \cu_m \vert \choose j}I_m(i+j,0)$, where the additional factor explodes when $m \to +\infty$. We thus propose a more careful manipulation using a variant of $V_m(F, j)$
	\begin{equation}\label{eq.V_F_j_Generalized_intro}
		V_m(F, j \vert \cu) := \sum_{\substack{|G|=j\\G\subset (\Ed(\cu_m) \setminus F) \setminus \Ed(\cu) }} D_{F\cup G} v_m  .
	\end{equation}
	Roughly speaking, we separate the perturbation of $G$ from $\cu \subset \cu_m$, so its information there is more accessible. It also has its associated version of improved energy 
	\begin{align}\label{eq.defIalpha_hole_intro}
		\tilde{I}_{m}(i,j) := \frac{1}{\vert \cu_m \vert}\sum_{\substack{|F|=i\\F\subset \Ed(\cu_m)}}\Ll(\sum_{\substack{\cu \in \pP(\cu_m)}} \sum_{e\in \Ed( \cu )} \Ll|\na V_{m}(F, j \vert \cu)(e) \Rr|^2 \Rr).
	\end{align}
	Here $\pP(\cu_m)$ stands for a partition of cube, and the size of $\cu$ there is small. 
	
	The quantity $\tilde{I}_{m}$  serves as a link between $I_m$ and  $I^*_m$, because it captures the tiny fluctuation in the growth of clusters. By counting the number of perturbation on $\cu$, we have the following expansion
	\begin{align*}
		V_m(F, j) = \sum_{\ell=1}^j \sum_{\substack{G \subset \Ed(\cu) \\ \vert G \vert = \ell}} V_m(F \cup G,j-\ell \vert \cu).
	\end{align*}
	It then yields a domination of $I_{m}$ by $\tilde{I}_{m}$
	\begin{align}\label{eq.Im_It_intro}
		I_{m}(i_0,j_0) \lesssim	\tilde{I}_{m}(i_0,j_0) + \sum_{(i,j)\prec(i_0,j_0)}\tilde{I}_{m}(i,j).
	\end{align}
	Moreover, by the definition of $V_m(F, j \vert \cu)$ in \eqref{eq.V_F_j_Generalized_intro}, we have excluded the influence from the second index $j$, so all the perturbation  on $\cu$ comes from $F$. Thus, the cluster-growth decomposition \eqref{eq.decom_DF_intro} locally holds on $\cu$, which leads to a coarse-graining with lower-order terms similar as \eqref{eq.Im_Is_0_intro}
	\begin{align}\label{eq.It_Is_intro}
		\tilde{I}_{m}(i_0,j_0) \lesssim	I^{*}_{m}(i_0,j_0) + \sum_{(i,j)\prec(i_0,j_0)}\tilde{I}_{m}(i,j).
	\end{align}
	The equations  \eqref{eq.Im_It_intro}, \eqref{eq.It_Is_intro}, \eqref{eq.VWbound_intro} allow us to close an induction for $\{I_m(i,j) \}_{i,j \in \N}$. 
	
	\medskip
	
	The third ingredient is a \emph{pyramid partitions of good cubes}. We also need technical estimates in the induction and coarse-graining above.  Especially, in \eqref{eq.VWbound_intro},  $L^2$-estimate is not sufficient and we need $L^p$-Meyers' estimate. To treat a specific case with two sides of different supports in \eqref{eq.recurrence_intro}, we further develop a \emph{coarse-grained Meyers' estimate}. All these results require  the renormalization of geometry. Therefore, we develop the idea of partition in \cite[Section~2]{armstrong2018elliptic}, and build a sequence of good cubes  $\{\gG_{k}\}_{k \in \N}$ inductively
	\begin{align*}
		\gG_0 \supset \gG_1 \supset \gG_2 \supset \cdots \supset \gG_{k-1} \supset \gG_k \supset \cdots.
	\end{align*}
	The cubes in $\gG_0$ already have nice properties including Meyer's estimate, and the cubes in  $\gG_k$ are even better since it can be further partitioned into finer good cubes in $\gG_{k-1}$.  Therefore, $\gG_k $ allows us to implement \eqref{eq.VWbound_intro} for $k$ rounds, and control the $k$-th derivative.
	
	\smallskip
	
	We remark that, in order to present the main idea, the notations including $I_{m}(i,j),\gG_k $ and $V_m(F, j \vert \cu)$ are simplified in the discussion above. The complete version of the improved $\ell^1$-$L^2$ energy is $I^{\; (N,k)}_{\; m,\alpha, h}(i,j)$ defined in \eqref{eq.defIalpha}, which is explained quickly Fig~\ref{fig.energy}. In the procedure of induction, the indices $(i,j)$ decrease according to the partial order ``$\prec$", while the moment $\alpha$ and the scale of partition $h$ will increase. We need the percolation configuration in $\cu_m$ to be connected and robust enough, as the parameters $k, N$ respectively indicate, to finish the induction of uniform bound. This is the case in large scale when sending $m \to +\infty$ and fixing other parameters.   
	\begin{figure}[b]
		\includegraphics[width=0.5\textwidth]{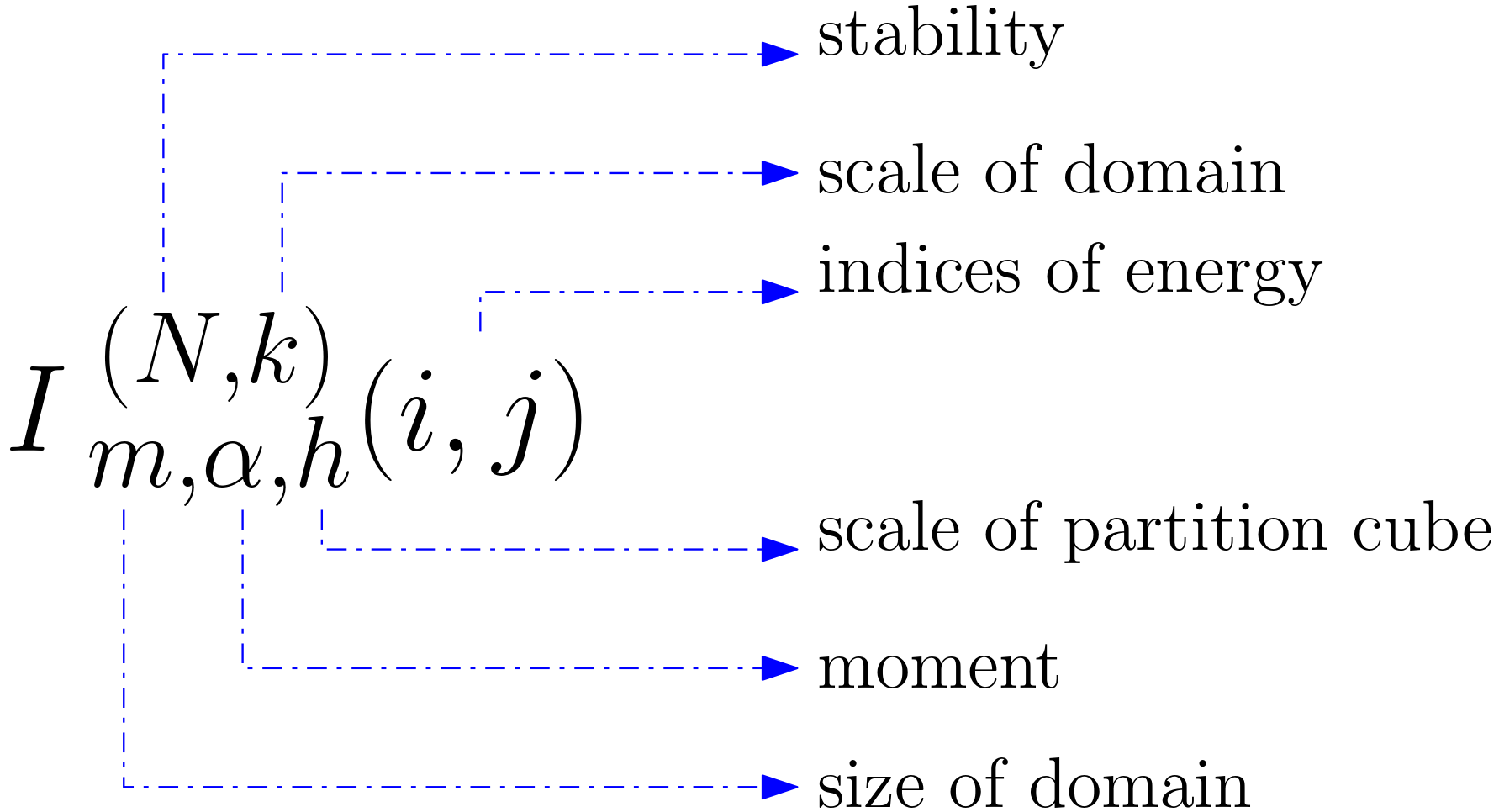}
		\caption{Quick explanation of parameters in the $\ell^1$-$L^2$ improved energy}\label{fig.energy}
	\end{figure}

	\medskip
	
	Finally, a natural question is the analyticity of $\p \mapsto \ab(\p)$. We prove the uniform bound of $\ab_m^{(k)}(\p)$ with respect to $m$, but does not track its dependence on $k$. Some difficulty comes from  Meyers' estimate in our proof, where the exponent depends on $k$ implicitly. As the work \cite{dg_Einstein} obtained the analyticity under the uniform separation condition,  we hope to explore a combination between the methods there and the present paper in the future. 

	\subsection{Organization of paper} The remaining of paper is organized as follows. In Section~\ref{sec.Pre}, we recall the results from previous work. The elementary objects, including the expression of $\ab_m^{(k)}(\p)$, the  $\ell^1$-$L^2$ energy, and the perturbed corrector equation are deduced in Section~\ref{sec.correctorEquation}. We then study the uniform bound of the  $\ell^1$-$L^2$ energy from the aspects of geometry, analysis, and combinatorics. In Section~\ref{sec.cube}, the pyramid partition of good cubes are discussed intensively. Section~\ref{sec.Poisson} studies the weighted estimate for the Poisson equation on percolation, where a coarse-grained Meyers' estimate is developed. Afterwards, we explore the cluster-growth decomposition in Section~\ref{sec.HarmonicExtension}, which allows us to conclude the induction $\{I_m(i,0)\}_{i \in \N}$ in Section~\ref{sec.KeyEstimate1}. The technique of hole separation is introduced in Section~\ref{subsec.Hole}, and we complete the induction  $\{I_m(i,j)\}_{i,j \in \N}$ in the rest of Section~\ref{sec.KeyEstimate2}. Finally, we conclude our proof of main theorem in Section~\ref{sec.Pf}. See Figure~\ref{fig.outline} for the outline. 
	
	A list of notations is attached in Appendix~\ref{sec.notation_list} for convenience.

	\bigskip
	
	\begin{figure}
		\centering
		\begin{tikzpicture}[node distance=2cm]\label{fig.diagram}
			
			\node (AD18) [input] {\cite{armstrong2018elliptic}, Section~\ref{subsec.Homo} \\ $\ab_m \xrightarrow{m \to \infty} \ab$};
			\node (Corrector) [input, below of = AD18] {Section~\ref{sec.correctorEquation} \\ \textit{Perturbed corrector equations}};
			\node (I0) [input, below of = Corrector] {Section~\ref{sec.KeyEstimate1} \\ \textit{Induction of $I_m(i,0)$}};
			\node (I1) [input, below of = I0] {Section~\ref{subsec.Inductionij1}-\ref{subsec.Inductionij2}\\ \textit{Induction of $I_m(i,j), j \geq 1$}};
			\node (Main) [input, below of = I1] {Section~\ref{sec.Pf} \\ $\ab^{(k)}_m \xrightarrow{m \to \infty} \ab^{(k)}$};
			
			\node (Partition) [input, left of = Corrector, xshift = -3cm] {Section~\ref{sec.cube} \\ \textit{Pyramid partition of good cubes}};
			\node (Poisson) [input, below of = Partition] {Section~\ref{subsec.fieldEda} \\ \textit{Weighted estimate of Poisson equation on clusters}};
			\node (Meyer) [input, below of = Poisson] {Section~\ref{subsec.fieldEd} \\ \textit{Coarse-grained Meyers' estimate}};
			
			\node (Growth) [input, right of = I0, xshift = 3cm] {Section~\ref{sec.HarmonicExtension} \\ \textit{Cluster-growth decomposition}};
			\node (Hole) [input, below of = Growth] {Section~\ref{subsec.Hole} \\ \textit{Hole separation}};
			
			\draw [arrow] (AD18) -- (Corrector);
			\draw [arrow] (Corrector) -- (I0);	
			\draw [arrow] (I0) -- (I1);	
			\draw [arrow] (I1) -- (Main) ;
			\draw [arrow] (AD18.west) |- ($(AD18.west)  + (-5.6cm,0cm)$) |- ($(Main.west)  + (-5.6cm,0cm)$) |- (Main.west);	
			\draw [arrow] (AD18)   -| (Growth);	
			
			\draw [arrow] (Partition) -- (Poisson);	
			\draw [arrow] (Poisson) -- (Meyer);	
			\draw [arrow] (Poisson) -- (I0);	
			\draw [arrow] (Meyer) -- (I1);	
			\draw [arrow] (Partition.west) |- ($(Partition.west)  + (-0.3cm,0cm)$) |- ($(Meyer.west)  + (-0.3cm,0cm)$) |- (Meyer.west);	
			\draw [arrow] (Partition.north) |- ($(Partition.north)  + (0cm, 0.3cm)$) |- ($(Partition.north)  + (12cm,0.3cm)$) |- (Hole.east);

			\draw [arrow] (Growth) -- (Hole);	
			\draw [arrow] (Growth) -- (I0);	
			\draw [arrow] (Hole) -- (I1);	
		\end{tikzpicture}
		\caption{The outline of proof.}\label{fig.outline}
	\end{figure}
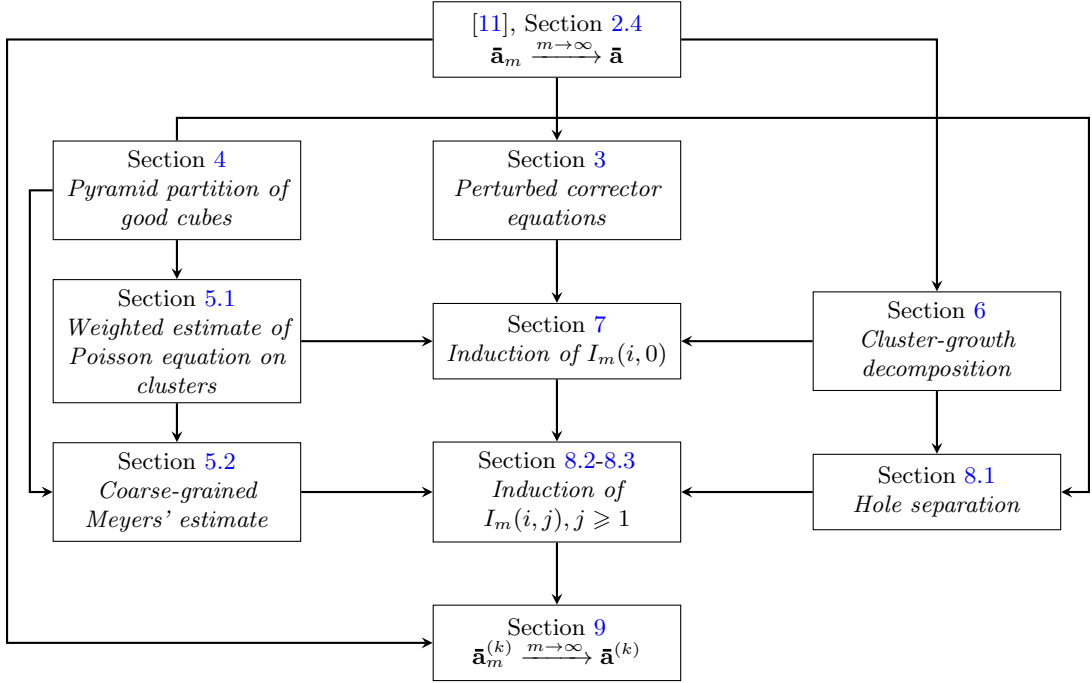
	
	\section{Preliminaries}\label{sec.Pre}
	In this part, we introduce the notations used throughout the paper and recall some preliminary results on homogenization.
	\subsection{Notations}
	We introduce at first the basic notations.
	\subsubsection{Space}\label{subsec.space}	
	We recall $(\Zd, \Ed)$ the $d$-dimensional integer lattice graph with the edge set induced by the nearest vertices. The distance function on $\Zd$ is defined with respect to the $\ell^{\infty}$ norm, i.e. $\dist(x,y) = \sup_{1\leq i \leq d}|x_i-y_i|$. For every $ V \subset \Zd$, we can construct two types of subgraphs: $(V, \Ed(V))$ with the geometry inherited from $(\Zd,\Ed)$, which is called \emph{the induced graph}; $(V, \Eda(V))$ with the geometry inherited from the percolation, and we call it \emph{the percolation graph}. They are defined respectively as 
	\begin{equation}\label{eq.defEdgePerco}
		\begin{split}
			\Ed(V) &:= \left\{\{x,y\} | x,y \in V, x \sim y \right\},  \\
			\Eda(V) &: =  \left\{\{x,y\} | x,y \in V, \a(x,y) \neq 0 \right\}.
		\end{split}
	\end{equation}
	The \emph{interior} of $V$ with respect to $(V, \Ed(V))$ and $(V, \Eda(V))$ are defined as
	\begin{align*}
		\itr(V) &:= \{x\in V | y \sim x \Longrightarrow y \in V\}, \\
		\itr_{\a}(V) &:= \{x\in V | y \sim x, \a(x,y) \neq 0 \Longrightarrow y \in V\},
	\end{align*}
	and the \emph{boundaries} are defined as $\partial(V) := V \backslash \itr(V)$ and $\partial_{\a}(V) := V \backslash \itr_{\a}(V)$. For any subset $U, V \subset \Zd$, we say $U \xleftrightarrow{\a} V$ if there exists an open path connecting $U$ and $V$. 
	

	We also define \emph{the oriented bonds} $\overrightarrow{\Ed}(V)$ for any set $V \subset \Zd$, 
	\begin{align*}
		\overrightarrow{\Ed}(V) := \{(x,y) \in V \times\ V : |x-y| = 1\},
	\end{align*}
	and we use $\overrightarrow{\Ed}$ for shorthand notation of $\overrightarrow{\Ed}(\Zd)$. An \emph{anti-symmetric vector field} $\overrightarrow{F}$ on $\overrightarrow{\Ed}$ is a function $\overrightarrow{F} : \overrightarrow{\Ed} \rightarrow \mathbb{R}$ such that ${\overrightarrow{F}(x,y) = - \overrightarrow{F}(y,x)}$. We abuse the notation by writing $\overrightarrow{F}(e)$, with $e = \{x,y\} \in \Ed$, to give its value with an arbitrary orientation of $e$, in the case it is well-defined (for example $\vert \overrightarrow{F} \vert(e)$). The \textit{divergence} of $\overrightarrow{F}$ is defined as $\nabla \cdot \overrightarrow{F} : \Zd \rightarrow \mathbb{R}$
	\begin{align*}
		\forall x \in \Zd, \quad \nabla \cdot \overrightarrow{F}(x) := \sum_{y \in \Zd : y \sim x} \overrightarrow{F}(x,y).
	\end{align*}
	For any $u : \Zd \rightarrow \mathbb{R}$, we define the discrete derivative $\nabla u : \overrightarrow{\Ed} \rightarrow \mathbb{R}$ as a vector field
	\begin{align*}
		\forall (x,y) \in \overrightarrow{\Ed}, \quad \nabla u(x,y):= u(y) - u(x),
	\end{align*} 
	and $\a \nabla u, \nabla u \Ind{\a \neq 0}$ are vector fields defined by
	\begin{align*}
		\a \nabla u (x,y) := \a(x,y)\nabla u(x,y), \qquad \nabla u \Ind{\a \neq 0}(x,y) := \nabla u(x,y)\Ind{\a(x,y) \neq 0}.
	\end{align*}
	Using these notations, the divergence form operator $-\nabla \cdot \a \nabla$ is well-defined and coincides with \eqref{eq.VSRW}. 
	
	We define the inner product for the vector field $\overrightarrow{F},\overrightarrow{G}: \overrightarrow{\Ed}(V) \rightarrow \R$ 
	\begin{equation*}
		\bracket{\overrightarrow{F},\overrightarrow{G}}_V := \sum_{e \in \Ed(V)} \overrightarrow{F}(e)\overrightarrow{G}(e) = \frac{1}{2}\sum_{x,y\in V, x\sim y} \overrightarrow{F}(x,y)\overrightarrow{G}(x,y).
	\end{equation*}

	\subsubsection{Function space}
	
	For every finite subset $U \subset \Zd$, we denote by $C_0(U)$ the functions vanishing on the boundary
	\begin{align}\label{eq.defC0}
		C_0(U) := \Ll\{f:U \to \R \vert \, f = 0 \text{ on } \partial U \Rr\}.
	\end{align}
	We denote by $L^p(U)$ the $L^p$-norm on lattice. It applies to a function $f$ or a vector field $\overrightarrow{F}$
	\begin{align}\label{eq.defLp}
		\norm{f}_{L^p(U)} := \Ll(\sum_{x \in U} \vert f(x)\vert^p\Rr)^{\frac{1}{p}},  \qquad \norm{\overrightarrow{F}}_{L^p(U)} := \Ll(\sum_{e \in \Ed(U)} \vert \overrightarrow{F}(e)\vert^p\Rr)^{\frac{1}{p}},
	\end{align}
	and its meaning is clear in context. We also define the associated normalized norm $\aL^p(U)$ as follows
	\begin{align}\label{eq.defaLp}
		\norm{f}_{\aL^p(U)} := \Ll(\frac{1}{\vert U\vert}\sum_{x \in U} \vert f(x)\vert^p\Rr)^{\frac{1}{p}},  \qquad  \norm{\overrightarrow{F}}_{\aL^p(U)} := \Ll(\frac{1}{\vert U\vert}\sum_{e \in \Ed(U)} \vert \overrightarrow{F}(e)\vert^p\Rr)^{\frac{1}{p}}.
	\end{align}

	\subsubsection{Cubes}
	A \emph{cube} is a subset of $\Zd$ of the form
	\begin{align*}
		\ZZ^d \cap \Ll(z + \Ll(-\frac{L}{2}, \frac{L}{2}\Rr)^d \Rr), \qquad z \in \ZZ^d, \qquad L \in 2 \N + 1,
	\end{align*}
	where \emph{center} and \emph{size} of the cube above is respectively $z$ and $L$. For a cube $\cu$, and $c \in \R_+$, we use $c\cu$ to denote the cube after scaling of $c$, i.e. the cube with the same center as $\cu$, and with size $2\lfloor \frac{c\size(\cu)}{2} \rfloor + 1$.
	
	We denote by $\cu_m(z)$ the triadic cube
	\begin{equation}\label{eq.defCube}
		\cu_m(z) := \ZZ^d \cap \Ll(z + \Ll(-\frac{3^m}{2}, \frac{3^m}{2}\Rr)^d \Rr), \qquad z \in 3^m \ZZ^d, \qquad m\in \N.
	\end{equation}
	We also write simply $\cu_m := \cu_m(0)$. 
	
	Let $\tT_n := \{z+ \cu_n: z\in 3^n \ZZ^d\}$ be the collection of triadic cubes of size $3^n$, and $\tT = \cup_{n=1}^{\infty} \tT_n$ be the collection of all cubes. For each $\cu \in \tT$, the predecessor of $\cu$ is the unique triadic cube $\Tilde{\cu} \in \tT$ satisfying 
	\begin{equation}\label{eq.predecessor}
		\cu \subset \Tilde{\cu} \quad \text{ and } \quad \frac{\text{size}(\Tilde{\cu})}{\text{size}(\cu)} = 3.
	\end{equation}
	If $\Tilde{\cu}$ is the predecessor of $\cu$, then we also say that $\cu$ is a successor of $\Tilde{\cu}$. Thus a cube of size larger than $1$ has $3^d$ successors.

	\subsection{Percolation}\label{subsec.perco}
	Recall that cluster refers to the connected component formed by open edges, and we usually use the vertices set $\cC$ to represent it. One should keep in mind its edge set on percolation $\Eda(\cC)$ can be different from the induced graph $\Ed(\cC)$; see Figure~\ref{fig.Induced} for an example.
	
	We denote by $\cCf$  the infinite cluster. We call a cluster \emph{hole} if it is a finite cluster, and denote it by $\OO$. Especially, for $x \notin \cCf$, let $\OO(x)$ be the hole containing $x$. Given a finite subset $U \subset \Zd$, we also define $\cCb(U)$ the boundary-connecting clusters
	\begin{align}\label{eq.def_Cbc}
		\cCb(U) := \{x \in \cu: x \stackrel{\a}{\longleftrightarrow} \partial U \text{ within } U \}.
	\end{align}
	We write $\cCf^{\a}, \OO^\a(x), \cCb^\a(U)$ when we highlight their dependence on the environment $\a$.

	\begin{figure}[b]
		\centering
		\includegraphics[height=100pt]{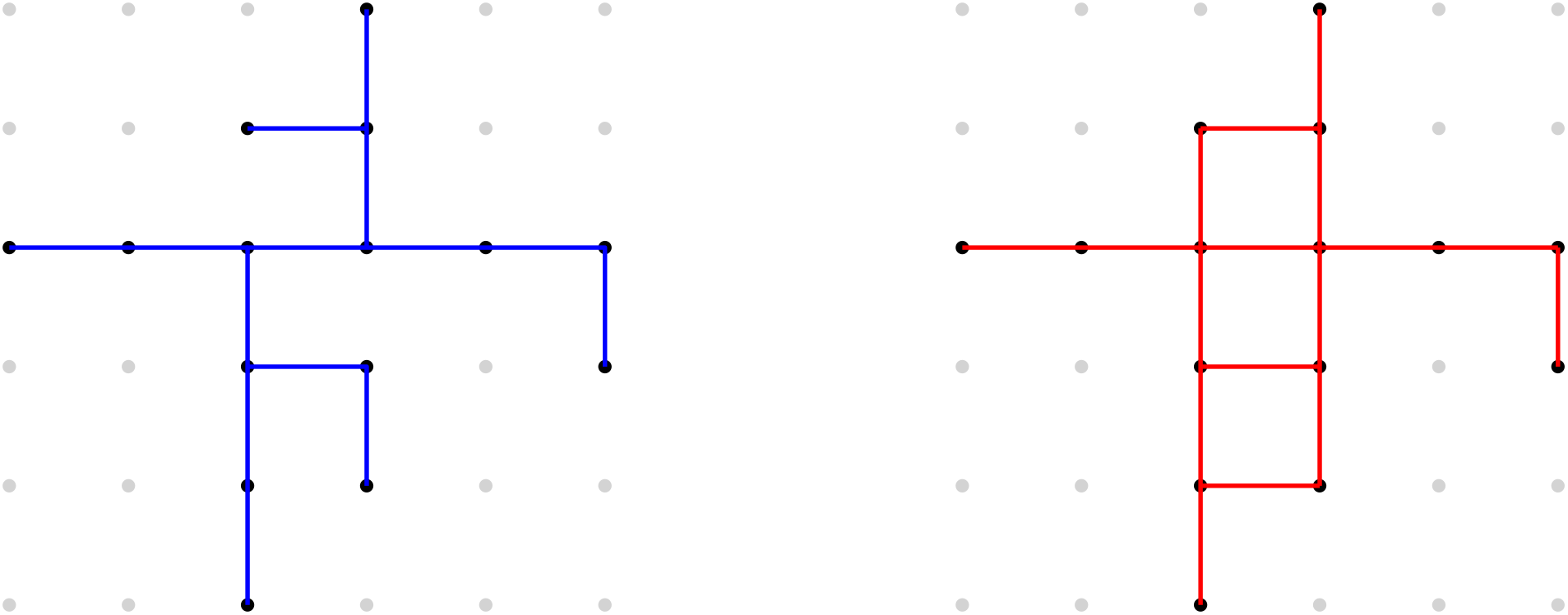}
		\caption{The figure on the left is a realization of cluster of percolation $(\cC, \Eda(\cC))$, and the figure on the right is its induced graph $(\cC, \Ed(\cC))$.}\label{fig.Induced}
	\end{figure}

	\subsection{Perturbation on environment}\label{subsec.Diff}

	Given $\a \in \{0,1\}^{\Ed}$ and $G \subset \Ed$, let $\a^G$ stand for the configuration $\a$ with all the bonds in $G$ open
	\begin{align}\label{eq.defaG}
		\a^G(e) := \Ind{e\in G} + \a(e)\Ind{e\not \in G}.    
	\end{align}
	For $e_1, e_2, \cdots, e_n \in \Ed$, we sometimes abuse the notation by identifying that 
	\begin{align}\label{eq.abuse}
		\a^{e_1, e_2, \cdots, e_n} := \a^{\{e_1, e_2, \cdots, e_n\}}.
	\end{align}
	Given a function $f$ defined on $\{0,1\}^{\Ed}$ and $G \subset \Ed$, we also keep the convention
	\begin{align}\label{eq.f_convention}
		f^G(\a) := f(\a^G),
	\end{align}
	For example, $\cCf^G$ stands for the infinite cluster under $\a^G$, and is shorthand for $\cCf^{\a^G} $. Recall the notation \eqref{eq.defEdgePerco} for the bond in percolation, and we define the following shorthand notation to indicate \emph{the intrinsic open bonds} 
	\begin{align}\label{eq.def_intrinsic}
		\EE(\cCf^G):=\Ed^{\a^G}(\cCf^{\a^G}).
	\end{align}
	This notation also applies similarly to other subset of percolation including $\cCf^{\a}, \OO^\a(x), \cCb^\a(U)$.
	
	\medskip
	
	For any $e \in \Ed$, \emph{the Glauber derivative} $D_e$ (also called \emph{the vertical derivative}) measures the variation when opening the edge $e$. Given $f: \{0,1\}^{\Ed} \to \R $, we define 
	\begin{align}\label{eq.defDe}
		(D_e f)(\a) := f(\a^{e}) - f(\a).
	\end{align} 
	Notice that for every $e, e' \in \Ed$, we have 
	\begin{equation*}  
		(D_{e'} D_e f)(\a) = (D_e f)(\a^{e'}) -  (D_e f)(\a) = f(\a^{e, e'}) - f(\a^{e'}) - f(\a^{e}) + f(\a).
	\end{equation*}
	In particular, the operators $D_e$ and $D_{e'}$ commute.  We can therefore define, for every $G = \{e_1,\ldots, e_n\} \subset \Ed$, the higher-order derivative
	\begin{equation}\label{e.def.DE}
		(D_G f)(\a) := (D_{e_1} \circ D_{e_2} \cdots \circ D_{e_n} f)(\a).
	\end{equation}
	
	We collect basic properties of the Glauber derivative.
	\begin{prop}
		For every function ${f,g: \{0,1\}^{\Ed} \to \R}$ and every finite set $G \subset \Ed$, the following identities hold.
		\begin{itemize}
			\item \textit{Inclusion-exclusion formula}
			\begin{align}\label{eq.Difference}
				(D_G f) = \sum_{G' \subset G} (-1)^{\vert G \setminus  G'\vert} f^{G'}.
			\end{align}
			\item \textit{Telescoping formula}
			\begin{align}\label{eq.Telescope}
				(f^G) = \sum_{G' \subset G} (D_{G'} f).
			\end{align}
			\item \textit{Leibniz formulas}
			\begin{align}\label{eq.Leibniz1}
				(D_G (fg)) = \sum_{G' \subset G} (D_{G'} f)(D_{G \setminus G'} g^{G'}),
			\end{align}
			and
			\begin{align}\label{eq.Leibniz2}
				(D_G (fg)) = \sum_{G_1, G_2 \subset G, G_1 \cup G_2 = G} (D_{G_1} f)(D_{G_2} g).
			\end{align}
			\item \text{Local expansion}
			\begin{align}\label{eq.aLocal}
				(\a^G - \a) = \sum_{e \in G} (\a^e - \a).
			\end{align}
		\end{itemize}
	\end{prop}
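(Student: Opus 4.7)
These four identities are purely formal consequences of the definition $(D_e f)(\a) = f(\a^e) - f(\a)$, the fact that $(\a^e)^{e'} = \a^{\{e,e'\}}$ (which immediately gives commutativity of $D_e$ and $D_{e'}$, so that \eqref{e.def.DE} is well-defined), and the Boolean structure of the poset of subsets of $G$. My plan is to prove them in the order listed, since each subsequent identity can be bootstrapped from the previous ones.

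For the inclusion-exclusion formula \eqref{eq.Difference} I would induct on $|G|$. The case $|G|=1$ is the definition of $D_e$. For the step, pick any $e \in G$ and write $D_G = D_e \circ D_{G\setminus\{e\}}$; apply the induction hypothesis to expand $D_{G \setminus \{e\}} f$ as a signed sum $\sum_{H \subset G \setminus \{e\}} (-1)^{|(G\setminus\{e\}) \setminus H|} f^H$, then act by $D_e$ on each term using $D_e(f^H) = f^{H \cup \{e\}} - f^H$ (valid since $(\a^H)^e = \a^{H \cup \{e\}}$). Re-indexing the resulting sum by $G' = H$ or $G' = H \cup \{e\}$ recovers exactly the signed sum over all $G' \subset G$ with the correct signs. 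The telescoping formula \eqref{eq.Telescope} is then a one-line consequence by Möbius inversion: substituting \eqref{eq.Difference} into the right-hand side of \eqref{eq.Telescope}, swapping the order of summation, and using the elementary identity $\sum_{H \subset K} (-1)^{|H|} = \Ind{K = \emptyset}$ collapses all terms except $G'' = G$.

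For the Leibniz identities I would again induct on $|G|$. The base case $|G| = \{e\}$ of \eqref{eq.Leibniz1} is
\begin{equation*}
f(\a^e)g(\a^e) - f(\a)g(\a) = (D_e f)(\a)\, g(\a^e) + f(\a)\,(D_e g)(\a),
\end{equation*}
obtained by adding and subtracting $f(\a)g(\a^e)$. The induction step applies $D_e$ to both sides of \eqref{eq.Leibniz1} written for $G \setminus \{e\}$, uses the base case on each summand, and then rearranges by grouping according to whether $e$ ends up inside the $D_\bullet f$-factor or inside the $D_\bullet g^\bullet$-factor. Formula \eqref{eq.Leibniz2} can be derived from \eqref{eq.Leibniz1} by expanding the shifted factor via \eqref{eq.Telescope} as $g^{G'} = \sum_{G'' \subset G'} D_{G''} g$ and re-indexing the resulting double sum by pairs $(G_1, G_2)$ with $G_1 \cup G_2 = G$; the overlap $G_1 \cap G_2$ is absorbed in the freedom of choosing $G' \supset G_2$. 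Finally, \eqref{eq.aLocal} is a pointwise edge-by-edge check: evaluating both sides at $e_0 \in \Ed$, they vanish when $e_0 \notin G$; when $e_0 \in G$, both sides equal $1 - \a(e_0)$, since only the summand $e = e_0$ contributes on the right.

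The whole proposition is formal and I do not anticipate any genuine obstacle. The only point requiring care is the index bookkeeping in the Leibniz inductions, where one must not confuse the two copies of $G$ produced when $D_e$ acts on a product of functions that each already depend on configurations shifted by different subsets; organising the induction by partitioning subsets of $G$ according to whether they contain the distinguished edge $e$ keeps this transparent.
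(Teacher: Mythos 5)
Your proposal is correct and, for the one identity the paper actually proves, it coincides with the paper's argument: \eqref{eq.aLocal} is checked edge by edge using the observation $\a^e(e') = \a(e')$ for $e' \neq e$, so that at most one summand on the right contributes, and the single contributing term gives $1 - \a(e')$ just as the left side does. For the remaining four identities the paper does not give a proof at all — it cites \cite[Proposition~5.1]{GGMN} — whereas you carry out the proofs in full. Your inclusion-exclusion induction is the standard one (base case is the definition, step uses $D_e(f^H) = f^{H\cup\{e\}} - f^H$ and re-indexing according to whether $e \in G'$); your M\"obius-inversion derivation of \eqref{eq.Telescope} is correct, with the collapse $\sum_{H \subset G\setminus G''}(-1)^{|H|} = \Ind{G'' = G}$; your Leibniz induction is sound (one should observe in the step that $(D_{K\setminus G'} g^{G'})^e = D_{K\setminus G'} g^{G'\cup\{e\}}$ since $e \notin K\setminus G'$, which is what makes the regrouping clean); and the bijection $(G',G'') \leftrightarrow (G_1,G_2) = (G', (G\setminus G')\cup G'')$ between $\{G'' \subset G' \subset G\}$ and $\{G_1 \cup G_2 = G\}$ turns \eqref{eq.Leibniz1} plus \eqref{eq.Telescope} into \eqref{eq.Leibniz2} exactly as you say. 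In short, nothing is missing; your write-up is simply more self-contained than the paper's, which buys the reader a proof rather than a citation. (Minor typo: in the Leibniz base case you wrote ``$|G| = \{e\}$'' for ``$G = \{e\}$''.)
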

	\begin{proof}
		Equations \eqref{eq.Difference}, \eqref{eq.Telescope}, \eqref{eq.Leibniz1} and \eqref{eq.Leibniz2} follow the same proof as that in \cite[Proposition 5.1]{GGMN}. Concerning the identity \eqref{eq.aLocal}, it suffices some verification. The main observation is that 
		\begin{align*}
			\forall e,e' \in \Ed, e \neq e', \qquad \a^e(e') = \a(e').
		\end{align*}
		Thus at most one term on the {\rhs} of \eqref{eq.aLocal} contributes. For the case $e' \notin G$, two sides of  \eqref{eq.aLocal} vanish. For the case $e' \in G$, then the edge $e'$ is open in $\a^G$ and $(\a^G - \a)(e') = 1 - \a(e')$, which is equal to the contribution of $(\a^{e'} - \a)(e')$ on {\rhs} of \eqref{eq.aLocal}. This concludes the identity.
	\end{proof}

	Finally, when several functions exist in a  formula, we use the superscript $\#$ to indicate the function to which these operators are applied, keeping the others ``frozen''. That is, for $f, g: \{0,1\}^{\Ed} \to \R$, we write
	\begin{equation}\label{eq.sharp}
		\begin{split}
			D_G (f^{\#} g) &= (D_G f) g,\\
			D_G(f^{\#}  + g) &= \Ll\{
			\begin{array}{ll}
				f+g, & \text{ if } G = \emptyset, \\
				D_G f, & \text{ if } G \neq \emptyset. 
			\end{array}
			\Rr. \\
		\end{split}
	\end{equation}
	This notation can also represent a function of undetermined environment $\a^\#$.

	\subsection{Homogenization}\label{subsec.Homo}
	The effective conductivity is defined using the homogenization theory. Given $\a \in \{0,1\}^{\Ed}$, we define the normalized Dirichlet energy in a finite cube 
	\begin{equation}
		\label{eq.defmu}
		\mathscr{E}_{m,\xi}(\a) := \inf_{\ell_{\xi}+C_0(\cu_m)} \frac{1}{|\cu_m|}\Ll \langle\nabla v,\a \nabla v\Rr \rangle_{\cu_m},
	\end{equation}
	where $\ell_{\xi}(x) := \xi \cdot x$. Then this problem is associated to an elliptic equation on boundary-connecting cluster $\cCb(\cu_m)$
	\begin{equation}\label{eq.harmonicBC}
		\begin{cases}
			-\nabla \cdot (\a \nabla v_{m,\xi})=0 & \text { in } int(\cu_m) \cap \cCb(\cu_m), \\ 
			v_{m,\xi} =\ell_{\xi} & \text { on } \partial \cu_m \cap \cCb(\cu_m).
		\end{cases}
	\end{equation}
	By the Riesz representation theorem, the existence and uniqueness of $v_{m,\xi}$ is well-defined on $\cCb(\cu_m)$. Moreover, once $v_{m,\xi}$ follows \emph{the constant extension rule}, i.e. constant on every hole in $\cu_m$ disconnecting to the boundary $\partial \cu_m$, then we have 
	\begin{equation}\label{eq.harmonicCube}
		\begin{cases}
			-\nabla \cdot (\a \nabla v_{m,\xi})=0 & \text { in } int(\cu_m), \\ 
			v_{m,\xi} =\ell_{\xi} & \text { on } \partial \cu_m,
		\end{cases}
	\end{equation}
	and it is a minimiser of \eqref{eq.defmu}
	\begin{equation}\label{eq.energy_cluster}
		\begin{split}
			\mathscr{E}_{m,\xi}(\a) &= \frac{1}{|\cu_m|}\Ll \langle\nabla v_{m,\xi}(\a), \a \nabla v_{m,\xi}(\a) \Rr \rangle_{\cu_m} \\
			&= \frac{1}{|\cu_m|} \sum_{e \in \EE(\cCb(\cu_m))} \a(e) \vert \nabla  v_{m,\xi}(e) \vert^2.
		\end{split}
	\end{equation}
	We always assume the constant extension rule for $v_{m,\xi}$ throughout the paper. One should notice such extension is not unique. A specific choice of constant extension will be fixed later in Section~\ref{sec.HarmonicExtension}.

	We will use the following definition as the conductivity on percolation clusters
	\begin{equation}\label{eq.ab}
		\begin{split}
			\xi \cdot \ab_m(\p) \xi &:= \lim_{m \to \infty}\E_{\p}[\mathscr{E}_{m,\xi}(\a)], \\
			\ab(\p) &:= \lim_{m \to \infty}\ab_m(\p).
		\end{split}
	\end{equation}
	This definition is slightly different from the original $\ab(\p)$ in \cite[Lemma~4.3, Definition~5.1]{armstrong2018elliptic}, where the Dirichlet energy is only counted on the maximal cluster. However, the contribution from the boundary layer is negligible in the limit, and \cite[Theorem~C.1]{gu2022efficient} justifies that 
	\begin{align*}
		\lim_{m \to \infty} \mathscr{E}_{m,\xi}(\a),
	\end{align*}
	almost surely exists and equals to the original $\ab(\p)$ in \cite{armstrong2018elliptic}. Notice the trivial bound ${\mathscr{E}_{m,\xi}(\a) \leq d \vert \xi\vert^2}$ by testing \eqref{eq.defmu} with $\ell_\xi$, then the dominated convergence theorem applies. Therefore, the definitions of $\ab(\p)$ in \eqref{eq.ab} and \cite{armstrong2018elliptic} coincide.
	

	\section{Perturbed corrector equations}\label{sec.correctorEquation}
	This section is devoted to the basic observations about $\ab_m$ defined in \eqref{eq.ab}.

	\subsection{Chaos expansion}
	Our first observation is an explicit expression for derivatives of $\ab_m(\p)$, relying on the Dirichlet energy $\mathscr{E}_{m,\xi}$ \eqref{eq.defmu} and the Glauber derivative \eqref{eq.defDe}.
	
	\begin{prop}\label{pr:expansion_init}
		For every $m \in \N_+$, the mapping $\p \mapsto \ab_m(\p)$ is polynomial, and its $k$-th derivative $\ab^{(k)}_m$ has an explicit expression
		\begin{equation}
			\label{e:derivative_formula}
			\forall \p \in (0,1), \qquad \xi \cdot \ab^{(k)}_m(\p) \xi =  \frac{k!}{(1-\p)^k} \sum_{\substack{|F|=k\\ F\subset \Ed(\cu_m)}} \E_{\p}\Ll[D_F \mathscr{E}_{m,\xi}(\a) \Rr].
		\end{equation}
	\end{prop}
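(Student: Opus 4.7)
The plan is to prove \eqref{e:derivative_formula} by induction on $k$, based on a Margulis--Russo-type identity recast in the Glauber-derivative language. Polynomiality is immediate: since $\mathscr{E}_{m,\xi}(\a)$ depends only on the finitely many Bernoulli variables $\{\a(e) : e \in \Ed(\cu_m)\}$, the direct expansion
\begin{equation*}
\E_\p[\mathscr{E}_{m,\xi}] = \sum_{H \subset \Ed(\cu_m)} \p^{|H|}(1-\p)^{|\Ed(\cu_m)| - |H|} \mathscr{E}_{m,\xi}(\1_H)
\end{equation*}
exhibits $\p \mapsto \ab_m(\p)$ as a polynomial in $\p$, so its derivatives at all orders exist and the identity makes sense.

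For the base case $k=1$, I would prove $\tfrac{d}{d\p} \E_\p[f] = (1-\p)^{-1} \sum_{e \in \Ed(\cu_m)} \E_\p[D_e f]$ for any $f$ depending on finitely many edges. A direct covariance calculation from the polynomial expansion above yields the Margulis--Russo formula $\tfrac{d}{d\p}\E_\p[f] = \sum_{e} \E_\p[f(\a') - f(\a'')]$, where $\a', \a''$ denote $\a$ with edge $e$ forced open and closed respectively. The key observation is the factorization $D_e f(\a) = (f(\a') - f(\a'')) \Ind{\a(e) = 0}$, which follows from $\a^e = \a$ whenever $\a(e) = 1$; since $f(\a') - f(\a'')$ is independent of $\a(e)$ under the product measure, taking expectation gives $\E_\p[D_e f] = (1-\p) \E_\p[f(\a') - f(\a'')]$, and the claim follows.

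For the inductive step, I would assume \eqref{e:derivative_formula} at order $k-1$ and differentiate both sides in $\p$. Applying the $k=1$ identity to each $\E_\p[D_F f]$ and using the algebraic identity $D_e^2 = -D_e$ (immediate from $\a^{e,e} = \a^e$, see \eqref{eq.defaG}), I obtain
\begin{equation*}
\sum_{e \in \Ed(\cu_m)} \E_\p[D_e D_F f] = \sum_{e \notin F} \E_\p[D_{F \cup \{e\}} f] - (k-1) \E_\p[D_F f].
\end{equation*}
The spurious $-(k-1)\E_\p[D_F f]$ contribution cancels exactly against the derivative of the prefactor $(1-\p)^{-(k-1)}$, which is the crucial algebraic coincidence. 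Reindexing the remaining double sum over $(F, e)$ as a sum over $F' = F \cup \{e\}$ of size $k$ (each appearing with multiplicity $k$) produces the right-hand side of \eqref{e:derivative_formula} at order $k$ with prefactor $k!$, closing the induction. The main delicacy---rather than a genuine obstacle---is the combinatorial bookkeeping around $D_e^2 = -D_e$ and verifying the cancellation of the $(k-1)$-multiplicity terms; once this is handled, the argument applies directly to $f = \mathscr{E}_{m,\xi}$ and concludes.
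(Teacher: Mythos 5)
Your proposal is correct, and it takes a genuinely different route from the paper. The paper couples a $\p$-percolation $\a$ with a $(\p+\delta)$-percolation $\a^\delta$ via an i.i.d.\ family of uniforms $\{U(e)\}$, Taylor-expands $\E[\mathscr{E}_{m,\xi}(\a^\delta)-\mathscr{E}_{m,\xi}(\a)]$ in $\hat\delta := \delta/(1-\p)$ all at once, and then extracts $c_{k,m}$ by matching coefficients and applying a binomial identity together with the inclusion--exclusion formula for $D_F$. You instead prove the base case $k=1$ as a Margulis--Russo-type identity, $\tfrac{d}{d\p}\E_\p[f]=(1-\p)^{-1}\sum_e\E_\p[D_e f]$, via the clean factorization $D_e f(\a)=\bigl(f(\a')-f(\a'')\bigr)\Ind{\a(e)=0}$, and then induct. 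The crucial step in your induction is correct: for $|F|=k-1$,
\begin{equation*}
\sum_{e}\E_\p[D_eD_F f]=\sum_{e\notin F}\E_\p[D_{F\cup\{e\}}f]-(k-1)\E_\p[D_F f],
\end{equation*}
using $D_e^2=-D_e$ (since $(\a^e)^e=\a^e$ forces $(D_ef)(\a^e)=0$), and the $-(k-1)$ multiplicity cancels the $\tfrac{d}{d\p}(1-\p)^{-(k-1)}$ term exactly; reindexing the remaining sum over $(F,e)$ as a $k$-fold cover of $\{|F'|=k\}$ gives the $k!$ prefactor. This is a self-contained proof of the general identity $\bigl(\tfrac{d}{d\p}\bigr)^k\E_\p[f]=\tfrac{k!}{(1-\p)^k}\sum_{|F|=k}\E_\p[D_F f]$ for any function of finitely many edges, applied to $f=\mathscr{E}_{m,\xi}$. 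Your approach is more modular and arguably more transparent about where the factor $(1-\p)^{-k}$ comes from; the paper's coupling-based derivation extracts all $c_{k,m}$ at once without iteration and introduces the coupling $\a^\delta$ which has conceptual value for the rest of the paper's renormalization arguments, though the two derivations are of comparable length.
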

	\begin{proof}
		We divide the proof into 3 steps.
		
		\textit{Step~1: regularity of $\ab_m(p)$.} Recall that $\Eda(\cu_m)$ stands for the set of open edges in $\cu_m$, we have
		\begin{align*}
			\xi \cdot \ab_m(\p) \xi = \sum_{\a \in \{0,1\}^{\Ed(\cu_m)}} \p^{\vert \Eda(\cu_m)\vert} (1-\p)^{\vert \Ed(\cu_m) \setminus \Eda(\cu_m)\vert} \mathscr{E}_{m,\xi}(\a).
		\end{align*}
		Here $\mathscr{E}_{m,\xi}(\a)$ depends on the configuration $\a$, but does not depend on the parameter $\p$. This implies that $\p \mapsto \ab_m(\p)$ is polynomial.

		\smallskip
		
		\textit{Step~2: identification of derivatives.} Because $\p \mapsto \ab_m(\p)$ is polynomial, it is analytic and we have 
		\begin{align}\label{eq.abmTaylor}
			\ab_m(\p+\delta) - \ab_m(\p) = \sum_{k=1}^{\vert \Ed(\cu_m) \vert} \frac{\delta^k}{k!} \ab^{(k)}_m(\p).
		\end{align}
		It suffices to identify $\ab^{(k)}_m(\p)$, and we can then study it in a  coupling space: we sample $\{U(e)\}_{e \in \Zd}$ as i.i.d. random variables uniformly distributed in $[0,1]$, which are also independent of $\P_\p$. For $\delta \in [0, 1-\p]$, we define that
		\begin{equation}\label{eq.defaDelta}
			\a^{\delta}(e) := \a(e) + (1-\a(e))\boldsymbol{1}_{\{U(e) \leq \frac{\delta}{1-\p}\}},
		\end{equation}
		then clearly $\{\a^{\delta}(e)\}_{e \in \Zd}$ are i.i.d. Bernoulli random variables of parameter $(\p+\delta)$. 	
		
		In this coupling space, we denote by $\P$ the probability and $\E$ the associated expectation, then obtain that 
		\begin{align*}
			\xi \cdot (\ab_m(\p+\delta) - \ab_m(\p)) \xi = \E\Ll[\mathscr{E}_{m,\xi}(\a^\delta) - \mathscr{E}_{m,\xi}(\a) \Rr].
		\end{align*}
		We calculate at first the expectation with respect to $\{U_e\}_{e \in \Zd}$, which gives us an expansion in function of $\delta$. In order to simplify the notation, we also denote by ${\de := \frac{\delta}{1-\p}}, {N := \vert \Ed(\cu_m)\vert}$.
		\begin{equation}\label{eq.firstVariation}
			\begin{split}
				&\xi \cdot (\ab_m(\p+\delta) - \ab_m(\p)) \xi \\
				&= \E\Ll[ \E\Ll[ \mathscr{E}_{m,\xi}(\a^\delta) - \mathscr{E}_{m,\xi}(\a) \,  \big\vert  \a \Rr]\Rr]\\
				&=\sum_{G\subset \Ed(\cu_m)} \de^{|G|}  (1-\de)^{N-|G|} \E_\p \Big[\mathscr{E}_{m,\xi}(\a^G) - \mathscr{E}_{m,\xi}(\a)\Big],\\
				&=\sum_{k=0}^{N} c_{k,m}\de^k.
			\end{split}
		\end{equation}
		In the last line, we write the equation in function of $\de$. Using the Binomial expansion for $(1-\de)^{N-|G|}$ and comparing the coefficient, we get 
		\begin{align}
			\label{e:clm_1}
			c_{k,m} = \sum_{j=0}^{k} (-1)^{k-j} \binom{N-j}{k-j}\sum_{\substack{|G|=j\\ G\subset \Ed(\cu_m)}} \E_\p\Big[\mathscr{E}_{m,\xi}(\a^G) - \mathscr{E}_{m,\xi}(\a)\Big].
		\end{align}
		Compare \eqref{eq.firstVariation} and \eqref{eq.abmTaylor}, the case $k=0$ is trivial and consistent with \eqref{e:derivative_formula}. For the other cases, we deduce that
		\begin{align}\label{eq.abm_ckm}
			k \in \N_+, \qquad \xi \cdot \ab^{(k)}_m(\p) \xi = \frac{k!}{(1-\p)^k} c_{k,m},
		\end{align}		
		so it remains to simplify $c_{k,m}$.
		
		\smallskip
		
		\textit{Step~3: further simplification.}  For $k \geq 1$, we apply  Binomial expansion and obtain 
		\begin{align*}
			\sum_{j=0}^{k} (-1)^{k-j} \binom{N-j}{k-j} \E_\p\Big[ \mathscr{E}_{m,\xi}(\a)\Big] = (1-1)^k \E_\p\Big[ \mathscr{E}_{m,\xi}(\a)\Big] = 0.
		\end{align*}
		We only need to simplify 
		\begin{align*}
			\forall k \in \N_+, \qquad c_{k,m} = \sum_{j=0}^{k} (-1)^{k-j} \binom{N-j}{k-j}\sum_{\substack{|G|=j\\ G\subset \Ed(\cu_m)}} \E_\p\Big[\mathscr{E}_{m,\xi}(\a^G)\Big].
		\end{align*}
		
		This is a combinatorial identity. We observe that given any $|G|=j$, there exists exactly $\binom{N-j}{k-j}$ subsets $F$ such that $|F|=k$ and $G\subset F\subset  \Ed(\cu_m)$, thus we have the following identity
		\begin{equation*}
			\binom{N-j}{k-j} \sum_{\substack{|G|=j\\ G\subset \Ed(\cu_m)}} \mathscr{E}_{m,\xi}(\a^G) = \sum_{\substack{|F|=k\\ F\subset \Ed(\cu_m)}} \sum_{\substack{|G|=j\\ G\subset F}} \mathscr{E}_{m,\xi}(\a^G).
		\end{equation*}
		Thus $c_{k,m}$ can be further simplified as 
		\begin{align*}
			\forall k \in \N_+, \qquad c_{k,m} &= \E_\p\Ll[\sum_{j=0}^{k} (-1)^{k-j} \sum_{\substack{|F|=k\\ F\subset \Ed(\cu_m)}} \sum_{\substack{|G|=j\\ G\subset F}} \mathscr{E}_{m,\xi}(\a^G)\Rr]\\
			&=  \sum_{\substack{|F|=k\\ F\subset \Ed(\cu_m)}} \E_\p\Ll[\sum_{G\subset F} (-1)^{k-|G|}  \mathscr{E}_{m,\xi}(\a^G)\Rr] \\
			&= \sum_{\substack{|F|=k\\ F\subset \Ed(\cu_m)}} \E_\p\Ll[ D_F \mathscr{E}_{m,\xi}(\a)\Rr].
		\end{align*}
		The passage from the second line to the third line relies on \eqref{eq.Difference}. We put this identity back to \eqref{eq.abm_ckm}, and conclude the identity \eqref{eq.abmTaylor}. 
	\end{proof}
	
	We end this subsection with an explicit expression of $\E_{\p}\Ll[D_F \mathscr{E}_{m,\xi}(\a) \Rr]$.
	\begin{lemma}
		For every $F \subset \Ed(\cu_m)$ with $\vert F \vert = k \geq 1$, we have the following identity
		\begin{multline}\label{eq.DFmu_formula}
			\sum_{\substack{|F|=k\\ F\subset \Ed(\cu_m)}} D_F \mathscr{E}_{m,\xi}(\a) =  	\frac{1}{|\cu_m|}
			\Ll \langle \nabla \ell_\xi, \a \sum_{\substack{|F|=k \\ F \subset \Ed(\cu_m)}} (D_{F}\nabla v_{m,\xi})(\a)\Rr \rangle_{\cu_m}\\
			+	\frac{1}{|\cu_m|}\sum_{e\in \Ed(\cu_m)}
			\Ll \langle \nabla\ell_\xi, (\a^e-\a)\sum_{\substack{|G|=k-1 \\  G \subset \Ed(\cu_m)}} (D_{G}\nabla v_{m,\xi})(\a^e)\Rr \rangle_{\cu_m}.
		\end{multline}
	\end{lemma}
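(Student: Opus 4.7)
The plan is to differentiate the energy representation of $\mathscr{E}_{m,\xi}(\a)$ using the Leibniz rule for $D_F$ and then exploit the extreme locality of $\a \mapsto \a(e)$ encoded in \eqref{eq.aLocal}. The starting point is the observation that, because $v_{m,\xi}(\a) - \ell_\xi \in C_0(\cu_m)$ and $v_{m,\xi}(\a)$ is $\a$-harmonic on $\itr(\cu_m)$ under the constant-extension rule \eqref{eq.harmonicCube}, a one-line summation by parts gives
\begin{equation*}
\mathscr{E}_{m,\xi}(\a) \;=\; \frac{1}{|\cu_m|}\bigl\langle \nabla \ell_\xi,\, \a \nabla v_{m,\xi}(\a)\bigr\rangle_{\cu_m},
\end{equation*}
so that the Dirichlet energy is a linear functional of the test field $\a \nabla v_{m,\xi}(\a)$. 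Since $\nabla \ell_\xi$ is a deterministic vector field independent of $\a$, differentiating $\mathscr{E}_{m,\xi}$ reduces to differentiating this product edge by edge.

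Next, I would apply the Leibniz formula \eqref{eq.Leibniz1} to $\a(e')\,\nabla v_{m,\xi}(\a)(e')$ for each fixed edge $e'$, writing
\begin{equation*}
D_F\bigl[\a(e')\nabla v_{m,\xi}(\a)(e')\bigr] \;=\; \sum_{G\subset F}\bigl(D_G\a\bigr)(e')\,\bigl(D_{F\setminus G}\nabla v_{m,\xi}^{G}\bigr)(e').
\end{equation*}
The crucial simplification comes from \eqref{eq.aLocal}: the map $\a\mapsto \a(e')$ is affine in each coordinate, hence $D_G\a(e')=0$ whenever $|G|\ge 2$, and for $G=\{e\}$ one has $(D_e\a)(e')=(\a^e-\a)(e')$, which vanishes unless $e=e'$. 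Only two values of $G$ survive: $G=\emptyset$, which contributes $\a(e')\,(D_F\nabla v_{m,\xi})(e')$, and $G=\{e'\}$ when $e'\in F$, which contributes $(\a^{e'}-\a)(e')\,(D_{F\setminus\{e'\}}\nabla v_{m,\xi}^{e'})(e')$.

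Finally, I would sum over all $F\subset \Ed(\cu_m)$ with $|F|=k$ and swap the sums in the second term by writing $F=G\cup\{e\}$ with $|G|=k-1$. The first piece directly yields the first term on the right-hand side of \eqref{eq.DFmu_formula}. For the second piece, after the swap we get $\sum_{e\in\Ed(\cu_m)}(\a^e-\a)(e)\sum_{|G|=k-1,\,G\subset\Ed(\cu_m)\setminus\{e\}}(D_G\nabla v_{m,\xi}^{e})(e)$; the restriction $G\subset \Ed(\cu_m)\setminus\{e\}$ can be dropped because $D_G v_{m,\xi}^{e}=0$ whenever $e\in G$ (the derivative $D_e$ applied to a function already frozen at $\a^e$ produces zero). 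Localizing the factor $(\a^e-\a)$ back into the vector-field inner product then recovers exactly the second term in \eqref{eq.DFmu_formula}. There is no serious obstacle: the only subtle point is bookkeeping the two places where a single $e\in F$ can be ``absorbed'' into the $\a$-factor, and using $D_e\circ(\cdot)^e=0$ to freely extend the range of $G$.
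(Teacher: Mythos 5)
Your proposal is correct and follows essentially the same route as the paper's own proof: both start from the representation $\mathscr{E}_{m,\xi}(\a) = \frac{1}{|\cu_m|}\langle \nabla \ell_\xi, \a \nabla v_{m,\xi}(\a)\rangle_{\cu_m}$, apply the Leibniz formula \eqref{eq.Leibniz1} to $D_F(\a^\#\nabla v_{m,\xi}(\a^\#))$, truncate to $|G|\leq 1$ since higher Glauber derivatives of $\a$ vanish, sum over $F$ with the change of variable $G=F\setminus\{e\}$, and extend the range of $G$ via $(D_G\nabla v_{m,\xi})(\a^e)=0$ for $e\in G$. One small remark: the vanishing $D_G\a(e')=0$ for $|G|\ge 2$ is an elementary consequence of $\a(e')$ depending only on the coordinate $e'$ rather than a consequence of \eqref{eq.aLocal}, but this does not affect the validity of the argument.
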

	\begin{proof} 
		Viewing that $v_{m,\xi} \in \ell_\xi + C_0(\cu_m)$ and is $\a$-harmonic, \eqref{eq.defmu} can be reformulated as
		\begin{align*}
			\mathscr{E}_{m,\xi}(\a) = \frac{1}{|\cu_m|}\Ll \langle\nabla \ell_{\xi}, \a \nabla v_{m,\xi}(\a) \Rr \rangle_{\cu_m}.
		\end{align*}
		Therefore, using \eqref{eq.sharp}, we get the equation 
		\begin{align*}
			D_F \mathscr{E}_{m,\xi}(\a) =\frac{1}{|\cu_m|}\Ll \langle\nabla \ell_{\xi}, D_F\Big(\a^\# \nabla v_{m,\xi}(\a^\#)\Big) \Rr \rangle_{\cu_m}.
		\end{align*}
		We then apply Leibniz's formula \eqref{eq.Leibniz1} and obtain
		\begin{align*}
			D_F \mathscr{E}_{m,\xi}(\a) &=  \frac{1}{| \cu_m | } \Ll\langle\nabla \ell_{\xi}, \sum_{G \subset F} (D_G \a) \Ll(D_{F \setminus G}\nabla v_{m,\xi} \Rr)(\a^{G})\Rr\rangle_{\cu_m}\\
			&=  \frac{1}{| \cu_m | } \Ll\langle\nabla \ell_{\xi}, \a  \Ll(D_{F}\nabla v_{m,\xi} \Rr)(\a)\Rr\rangle_{\cu_m} \\
			& \qquad + \frac{1}{| \cu_m | }\Ll\langle\nabla \ell_{\xi}, \sum_{e\in F} \Ll(\a^e-\a\Rr) \Ll(D_{F \setminus \{e\}}\nabla v_{m,\xi}\Rr)(\a^{e})\Rr\rangle_{\cu_m}. 
		\end{align*}
		Because $(D_G \a)(e')$ will vanish when $|G| \geq 2$, we only keep two terms in the expansion above.
		
		Finally, we make a sum over $F \subset \Ed(\cu_m)$ with $\vert F \vert = k$, which will yield the first line of \eqref{eq.DFmu_formula} immediately. The second line of \eqref{eq.DFmu_formula} comes from the fact
		\begin{align*}
			&\sum_{\substack{|F|=k\\ F\subset \Ed(\cu_m)}} \frac{1}{| \cu_m | }\Ll\langle\nabla \ell_{\xi}, \sum_{e\in F} \Ll(\a^e-\a\Rr) \Ll(D_{F \setminus \{e\}}\nabla v_{m,\xi}\Rr)(\a^{e})\Rr\rangle_{\cu_m} \\
			&= \frac{1}{|\cu_m|}\sum_{e\in \Ed(\cu_m)}
			\Ll \langle \nabla \ell_\xi , (\a^e-\a)\sum_{\substack{|G|=k-1 \\  G \subset \Ed(\cu_m) \setminus \{e\}}}\Ll(D_{G}\nabla v_{m,\xi}\Rr)(\a^{e})\Rr \rangle_{\cu_m}\\
			&= \frac{1}{|\cu_m|}\sum_{e\in \Ed(\cu_m)}
			\Ll \langle \nabla \ell_\xi , (\a^e-\a)\sum_{\substack{|G|=k-1 \\  G \subset \Ed(\cu_m)}}\Ll(D_{G}\nabla v_{m,\xi}\Rr)(\a^{e})\Rr \rangle_{\cu_m}
		\end{align*}
		The passage from the first line to the second line is a change of variable $G = F\backslash\{e\}$.  Then we  observe that $(D_G \na v_{m,\xi})(\a^e)=0$ whenever $e\in G$, so we can extend the domain of sum from the second line to the third line. This concludes \eqref{eq.DFmu_formula}.
	\end{proof}

	\subsection{Improved $\ell^1$-$L^2$ energy}
	In this part,  we aim to dominate $\ab^{(k)}_m$ via \emph{the improved  $\ell^1$-$L^2$ energy}. We set $\a$ as the default environment for the functions. Then for every $F\subset \Ed(\cu_m)$, we define that
	\begin{equation}\label{eq.V_F_j}
		V_{m,\xi}(F, j) := \sum_{\substack{|G|=j\\G\subset \Ed(\cu_m) \setminus F}} D_{F\cup G} v_{m,\xi},
	\end{equation}
	with the convention that 
	\begin{equation}\label{eq.V_F_convention}
		\begin{split}
			V_{m,\xi}(F,j)=0, \qquad  &\forall j<0. \\
		\end{split}
	\end{equation}
	In particular, $V_{m,\xi}(\emptyset, 0) = v_{m,\xi}$. The improved  $\ell^1$-$L^2$ energy is then defined as 
	\begin{align}\label{eq.improvedEnergy}
		\bar{I}_{m,\xi}(i,j) :=  \E_\p\Ll[\frac{1}{\vert \cu_m \vert} \sum_{\substack{ \vert F \vert = i \\  F \subset \Ed(\cu_m)}} \sum_{e \in \Ed(\cu_m)} \vert \nabla V_{m,\xi}(F,j)(e)\vert^2  \Rr]. 
	\end{align}
	At the first glance, this quantity is quite large due to the summations over $F$ in \eqref{eq.improvedEnergy} and $G$ in \eqref{eq.V_F_j}, but they can be bounded by the compensation from the Glauber derivatives.
	
	\medskip
	
	We express $\big(D_{G}\nabla v_{m,\xi}\big)(\aio{e})$ using the expression \eqref{eq.V_F_j} at first. 
	\begin{lemma}\label{lem.VmSimple}
		For every $e\in \Ed(\cu_m)$, the following identity holds 
		\begin{equation}\label{eq.VmSimple}
			\sum_{\substack{|G|=k-1\\G\subset \Ed(\cu_m)}}\big(D_{G}\nabla v_{m,\xi}\big)(\aio{e}) = V_{m,\xi}(\emptyset,k-1) + V_{m,\xi}(\{e\}, k-1) - V_{m,\xi}(\{e\}, k-2).
		\end{equation}
	\end{lemma}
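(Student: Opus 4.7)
The plan is a straightforward combinatorial manipulation: split the sum on the left-hand side according to whether $e \in G$ or not, handle each piece, and then reindex to recover the right-hand side. Since the operators $D_{G}$ and the spatial gradient $\nabla$ commute, I may work at the level of $D_G v_{m,\xi}$ and apply $\nabla$ at the end; the identity in the statement should be read as an identity of vector fields on $\Ed(\cu_m)$.

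First I would establish two elementary facts about evaluating Glauber derivatives at the shifted environment $\aio{e}$. If $e \in G$, writing $D_G = D_e \circ D_{G \setminus \{e\}}$ and using $(\aio{e})^{e} = \aio{e}$, we have $(D_e f)(\aio{e}) = f(\aio{e}) - f(\aio{e}) = 0$ for every $f$, so $(D_G v_{m,\xi})(\aio{e}) = 0$. If instead $e \notin G$, then the defining relation $(D_e f)(\a) = f(\aio{e}) - f(\a)$ applied to $f = D_G v_{m,\xi}$ gives
\begin{equation*}
(D_G v_{m,\xi})(\aio{e}) = (D_G v_{m,\xi})(\a) + (D_{G \cup \{e\}} v_{m,\xi})(\a).
\end{equation*}

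With these in hand, the left-hand side of \eqref{eq.VmSimple} collapses to a sum over $G \subset \Ed(\cu_m) \setminus \{e\}$ with $\vert G \vert = k-1$, which by the identity above splits as
\begin{equation*}
\sum_{\substack{|G|=k-1 \\ G \subset \Ed(\cu_m) \setminus \{e\}}} (D_G v_{m,\xi})(\a) + \sum_{\substack{|G|=k-1 \\ G \subset \Ed(\cu_m) \setminus \{e\}}} (D_{G \cup \{e\}} v_{m,\xi})(\a).
\end{equation*}
The second piece is exactly $V_{m,\xi}(\{e\}, k-1)$ by the definition \eqref{eq.V_F_j}. For the first piece, I compare with $V_{m,\xi}(\emptyset, k-1) = \sum_{|G|=k-1,\, G \subset \Ed(\cu_m)} D_G v_{m,\xi}$: the missing terms (those with $e \in G$) correspond bijectively, via $G \leftrightarrow G' = G \setminus \{e\}$, to the summands of $V_{m,\xi}(\{e\}, k-2)$. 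Hence the first piece equals $V_{m,\xi}(\emptyset,k-1) - V_{m,\xi}(\{e\},k-2)$, and summing yields \eqref{eq.VmSimple} after applying $\nabla$.

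I do not expect any real obstacle: this is a pure bookkeeping identity whose content is entirely captured by the two observations above together with the reindexing $G \leftrightarrow G \setminus \{e\}$. The only point to double-check is the boundary case $k = 1$, where $V_{m,\xi}(\{e\}, -1) = 0$ by the convention \eqref{eq.V_F_convention} and the identity reduces to $\nabla v_{m,\xi}(\aio{e}) = \nabla v_{m,\xi}(\a) + (D_e \nabla v_{m,\xi})(\a)$, which is simply the definition of $D_e$.
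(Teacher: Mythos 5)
Your proof is correct and, up to a minor re-ordering of steps, matches the paper's argument. The only cosmetic difference is in how the $-V_{m,\xi}(\{e\},k-2)$ term appears: you first annihilate all summands with $e\in G$ using $(D_G f)(\a^e)=0$ and then recover the defect by reindexing the remaining $e\notin G$ sum against $V_{m,\xi}(\emptyset,k-1)$, whereas the paper keeps every $G$ and instead invokes $D_eD_e=-D_e$; the two are the same cancellation written in different order.
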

	\begin{proof}
		Notice $v_{m,\xi}(\aio{e}) = v_{m,\xi}(\a) + (D_e v_{m,\xi})(\a)$, we split the sum into three terms
		\begin{align*}
			&\sum_{\substack{|G|=k-1\\G\subset \Ed(\cu_m)}} D_G\Ll(v_{m,\xi}(\a)+ (D_e v_{m,\xi})(\a)\Rr)\\
			=& V_{m,\xi}(\emptyset,k-1) + \sum_{\substack{|G|=k-1\\e\not \in G\subset \Ed(\cu_m)}} D_{G\cup\{e\}}v_{m,\xi}(\a) + \sum_{\substack{|G|=k-1\\e \in G\subset \Ed(\cu_m)}} D_{G \setminus \{e\}} D_eD_ev_{m,\xi}(\a).
		\end{align*}
		The second term is just the definition of $V_{m,\xi}(\{e\},k-1)$ because
		\begin{align*}
			\sum_{\substack{|G|=k-1\\e\not \in G\subset \Ed(\cu_m)}} D_{G\cup\{e\}}v_{m,\xi}(\a) &= \sum_{\substack{|G|=k-1\\  G \subset \Ed(\cu_m) \setminus \{e\}}} D_{G\cup\{e\}}v_{m,\xi}(\a)\\
			&= V_{m,\xi}(\{e\},k-1).
		\end{align*}
		For the third term, observing that $D_e D_e = -D_e$, we make a change of variable ${H = G \setminus \{e\}}$
		\begin{align*}
			\sum_{\substack{|G|=k-1\\e\in G \subset \Ed(\cu_m)}}  D_{G \setminus \{e\}} D_eD_e v_{m,\xi}(\a) 
			&= \sum_{\substack{|H|=k-2\\H\subset\Ed(\cu_m)\setminus\{e\}}}\Ll(-D_{H \cup \{e\}} v_{m,\xi}(\a)\Rr)\\ 
			&=-V_{m,\xi}(\{e\},k-2).
		\end{align*}
	\end{proof}

	We are now ready to give a bound of $\Ll\vert \ab^{(k)}_m(\p) \Rr\vert$ using the improved Dirichlet energy.
	\begin{lemma}\label{lem.abk_VFj}
		For every $k, m \in \N$ and $\p \in (0,1)$, the following estimate holds 
		\begin{multline}
			\label{e:main_terms_to_bound}
			\Ll\vert \ab^{(k)}_m(\p) \Rr\vert \leq  \frac{4 \cdot k!}{(1-\p)^k}\sum_{i=1}^d\Big(1 + \bar{I}_{m, e_i}(0,k) + \bar{I}_{m, e_i}(0,k-1) \\
			+ \bar{I}_{m, e_i}(1,k-1) + \bar{I}_{m, e_i}(1,k-2)\Big).
		\end{multline}
	\end{lemma}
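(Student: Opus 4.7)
The starting point is Proposition~\ref{pr:expansion_init}, which gives
\begin{equation*}
	\xi \cdot \ab^{(k)}_m(\p)\xi = \frac{k!}{(1-\p)^k}\sum_{\substack{|F|=k \\ F \subset \Ed(\cu_m)}} \E_\p\Ll[D_F \mathscr{E}_{m,\xi}(\a)\Rr].
\end{equation*}
For $k \geq 1$, the plan is to substitute the identity \eqref{eq.DFmu_formula} for the inner sum. By the definition \eqref{eq.V_F_j}, the first term of \eqref{eq.DFmu_formula} equals $\frac{1}{|\cu_m|}\bracket{\nabla \ell_\xi, \a\, \nabla V_{m,\xi}(\emptyset, k)}_{\cu_m}$. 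For the second term, I apply Lemma~\ref{lem.VmSimple} to rewrite $\sum_{|G|=k-1}(D_G \nabla v_{m,\xi})(\a^e)$ as the linear combination $\nabla V_{m,\xi}(\emptyset, k-1) + \nabla V_{m,\xi}(\{e\}, k-1) - \nabla V_{m,\xi}(\{e\}, k-2)$. This leaves four gradient terms of the form $\nabla V_{m,\xi}(F,j)$ to estimate. The case $k = 0$ is immediate: $e_i \cdot \ab_m e_i = \E_\p[\mathscr{E}_{m,e_i}(\a)] \leq \bar{I}_{m, e_i}(0,0)$.

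I then bound each of the four terms by discrete Cauchy--Schwarz followed by AM--GM. For the first term, $|\a(e)| \leq 1$ and Cauchy--Schwarz yield $\norm{\nabla \ell_\xi}_{\aL^2(\cu_m)} \norm{\nabla V_{m,\xi}(\emptyset, k)}_{\aL^2(\cu_m)}$. For the three outer-sum terms over $e$, the crucial observation is that the vector field $\a^e - \a$ is supported on the single edge $e$ with magnitude at most $1$, so
\begin{equation*}
	\bracket{\nabla \ell_\xi, (\a^e-\a)W}_{\cu_m} = \nabla \ell_\xi(e)\,(1-\a(e))\,W(e)
\end{equation*}
for any vector field $W$. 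Summing over $e$ and applying Cauchy--Schwarz once more produces $\norm{\nabla \ell_\xi}_{\aL^2(\cu_m)}\cdot \Ll(\frac{1}{|\cu_m|}\sum_e |W_e(e)|^2\Rr)^{1/2}$. For $W_e = \nabla V_{m,\xi}(\emptyset, k-1)$, independent of $e$, this matches $\bar{I}_{m,\xi}(0, k-1)^{1/2}$ after expectation. For $W_e = \nabla V_{m,\xi}(\{e\}, j)$, I inflate the diagonal sum to the full double sum
\begin{equation*}
	\sum_{e\in \Ed(\cu_m)} |\nabla V_{m,\xi}(\{e\}, j)(e)|^2 \leq \sum_{\substack{|F|=1 \\ F \subset \Ed(\cu_m)}}\sum_{e' \in \Ed(\cu_m)} |\nabla V_{m,\xi}(F, j)(e')|^2,
\end{equation*}
which, normalized by $|\cu_m|$ and averaged, is exactly $\bar{I}_{m,\xi}(1, j)$. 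Taking expectation, applying Jensen's inequality to pull $\sqrt{\cdot}$ out, and using $\sqrt{ab} \leq \frac{1}{2}(a+b)$ converts each Cauchy--Schwarz product into $\frac{1}{2}(\norm{\nabla \ell_\xi}_{\aL^2(\cu_m)}^2 + \bar{I}_{m,\xi}(i,j))$.

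Specializing to $\xi = e_i$ gives $\norm{\nabla \ell_{e_i}}_{\aL^2(\cu_m)}^2 \leq 1$, producing a bound on each diagonal entry $|e_i \cdot \ab^{(k)}_m e_i|$. To recover the full matrix norm, I use the symmetry of $\ab^{(k)}_m$ and the polarization identity $e_i \cdot \ab^{(k)}_m e_j = \frac{1}{2}\big((e_i+e_j)\cdot \ab^{(k)}_m(e_i+e_j) - e_i \cdot \ab^{(k)}_m e_i - e_j \cdot \ab^{(k)}_m e_j\big)$, combined with the linearity of $V_{m,\xi}$ in $\xi$ (so that $\bar{I}_{m, e_i+e_j}(\cdot,\cdot) \leq 2(\bar{I}_{m, e_i}(\cdot,\cdot) + \bar{I}_{m, e_j}(\cdot,\cdot))$). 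Collecting constants delivers the overall factor $4$ in the statement. The only genuinely subtle step in the proof is the diagonal-to-double-sum inflation for the $V_{m,\xi}(\{e\}, \cdot)$ terms, where the precise shape of the $\ell^1$-$L^2$ energy \eqref{eq.improvedEnergy} --- with its sum over both $F$ with $|F|=1$ and the edge variable $e'$ --- is exactly what is needed to absorb these contributions.
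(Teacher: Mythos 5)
Your proof is correct and follows essentially the same route as the paper: expand $\ab^{(k)}_m$ via Proposition~\ref{pr:expansion_init}, substitute \eqref{eq.DFmu_formula} and Lemma~\ref{lem.VmSimple} to express everything through $\nabla V_{m,\xi}(F,j)$, and bound the resulting inner products by Cauchy--Schwarz / Young's inequality. Your ``single-edge support'' observation for $\a^e-\a$ is exactly the paper's use of $\sum_e(\a^e-\a)(e')\le 1$, and your diagonal-to-double-sum inflation for the $V_{m,\xi}(\{e\},\cdot)$ terms is the same relaxation the paper performs to land on $\bar{I}_{m,e_i}(1,\cdot)$; the only cosmetic difference is at the end, where you control the off-diagonal entries via polarization and linearity of $\xi\mapsto V_{m,\xi}$ while the paper simply sums the diagonal quadratic forms $e_i\cdot\ab_m^{(k)}e_i$.
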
 
	\begin{proof}
		Combing \eqref{e:derivative_formula}, \eqref{eq.DFmu_formula} and \eqref{eq.VmSimple}, we have
		\begin{align*}
			\Ll\vert \ab^{(k)}_m(\p) \Rr\vert &\leq \sum_{i=1}^d e_i \cdot \ab^{(k)}_m(\p) e_i \\
			& \leq  \frac{k!}{(1-\p)^k} \frac{1}{\vert \cu_m \vert} \sum_{i=1}^d \E_{\p}\Ll[ \Ll\vert \Big \langle \nabla \ell_{e_i}, \a \nabla V_{m,e_i}(\emptyset,k) \Big \rangle_{\cu_m} \Rr \vert \Rr.\\
			& \quad + \Ll\vert \sum_{e\in \Ed(\cu_m)}
			\Big \langle \nabla\ell_{e_i}, (\a^e-\a) \nabla \Ll(V_{m,e_i}(\emptyset,k-1)\Rr)\Big \rangle_{\cu_m}\Rr \vert\\
			& \quad +  \Ll.\Ll\vert \sum_{e\in \Ed(\cu_m)}
			\Big \langle \nabla\ell_{e_i}, (\a^e-\a) \nabla \Ll( V_{m,e_i}(\{e\}, k-1) - V_{m,e_i}(\{e\}, k-2)\Rr)\Big \rangle_{\cu_m}\Rr \vert\Rr].
		\end{align*}
		We  apply Young's inequality and obtain
		\begin{align*}
			& \Ll\vert \ab^{(k)}_m(\p) \Rr\vert \\
			& \leq \frac{4 \cdot k!}{(1-\p)^k}  \sum_{i=1}^d \Ll( \underbrace{\E_\p \Ll[ \frac{1}{|\cu_m|}  \sum_{e' \in \Ed(\cu_m)}  \vert \na V_{m, e_i}(\emptyset,k) (e') \vert^2 + \vert \nabla \ell_{e_i}(e') \vert^2 \Rr]}_{ \leq \bar{I}_{m, e_i}(0,k) +1} \Rr.\\
			&\quad +  \E_\p \Ll[\frac{1}{|\cu_m|}\sum_{e\in \Ed(\cu_m)} \sum_{e' \in \Ed(\cu_m)} (\aio{e}-\a)(e')  \vert \na V_{m, e_i}(\emptyset,k-1) (e')\vert^2 \Rr] \\
			&\quad + \Ll. \underbrace{\E_\p \Ll[\frac{1}{|\cu_m|}\sum_{e\in \Ed(\cu_m)} \sum_{e' \in \Ed(\cu_m)} \Ll(\vert \na V_{m, e_i}(\{e\},k-1) (e')\vert^2  + \vert \na V_{m, e_i}(\{e\},k-2) (e')\vert^2  \Rr)\Rr]}_{=  \bar{I}_{m, e_i}(1,k-1) + \bar{I}_{m, e_i}(1,k-2)} \Rr).
		\end{align*}
		It remains to clarify the equation in the third line above, which can be simplified as 
		\begin{align*}
			&\E_\p \Ll[\frac{1}{\vert \cu_m\vert}\sum_{e\in \Ed(\cu_m)}\sum_{e'\in \Ed(\cu_m)} (\aio{e}-\a)(e')   \vert \na V_{m, e_i}(\emptyset,k-1)(e') \vert^2 \Rr]\\
			&=  \E_\p \Ll[\frac{1}{\vert \cu_m\vert}\sum_{e'\in\Ed(\cu_m)} \Ll(\sum_{e\in \Ed(\cu_m)} (\aio{e}-\a)(e')\Rr)  \vert \na V_{m, e_i}(\emptyset,k-1)(e') \vert^2 \Rr]\\
			&\leq \E_\p \Ll[\frac{1}{\vert \cu_m\vert}\sum_{e' \in \Ed(\cu_m)} \vert \na V_{m, e_i}(\emptyset,k-1)(e') \vert^2 \Rr]\\
			&=  \bar{I}_{m, e_i}(0,k-1).
		\end{align*}
		From the second line to the third line, we use the fact that $\aio{e}(e')$ differ from $\a(e')$ only when $e'=e$. Thus, we have ${\sum_{e\in \Ed(\cu_m)} (\aio{e}-\a)(e') \leq 1}$. This concludes \eqref{e:main_terms_to_bound}.

	\end{proof}

	\subsection{Perturbed corrector equations}
	Thanks to Lemma~\ref{lem.abk_VFj}, we need to give an estimate for the improved $\ell^1$-$L^2$ energy uniformly with respect to $m$. The idea is to establish the following perturbed corrector equation \eqref{eq.recurrence_eq}. In the remaining part of the paper, we fix a vector $\xi \in \R^d$ with $\xi = 1$, and use the shorthand
	\begin{align}\label{eq.shorthand}
		v_{m,\xi} \equiv v_{m}, \qquad V_{m,\xi}(F,j) \equiv V_m(F,j), \qquad \bar{I}_{m,\xi}(i,j) \equiv \bar{I}_{m}(i,j).
	\end{align}
	They are defined on the percolation $\a$, and  $v^G_{m}$ refers to the one on $\a^G$ following the convention \eqref{eq.f_convention}.

	\begin{lemma}
		\label{lem.recurrence_eq}
		For any $F\subset \Ed(\cu_m)$ and $j \in \N$, we have
		\begin{equation}\label{eq.recurrence_eq}
			-\na \cdot (\aio{F} \na V_m(F,j)) = \na \cdot \W_m(F,j) \quad \text{in } \itr(\cu_m), 
		\end{equation}
		where $\W_m(F,j)$ is an anti-symmetric vector field on $\overrightarrow{\Ed}(\cu_m)$
		\begin{align}
			\label{e:induction_eqW}
			\W_m(F,j) &= \sum_{e\in \Ed(\cu_m) \setminus F}(\aio{e}-\a)\Big(\na V_m(F,j-1) - \na V_m(F\cup \{e\}, j-2) \\
			\nonumber & \qquad \qquad \qquad \qquad + \na V_m(F\cup \{e\}, j-1)\Big) \\
			\nonumber &\qquad +\sum_{e\in F}(\aio{e}-\a)\Big(\na V_m(F\setminus\{e\},j)-\na V_m(F,j-1)\Big).
		\end{align}
	\end{lemma}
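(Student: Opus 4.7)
The plan is to start from the $\a$-harmonicity of $v_m$, differentiate the equation with respect to the environment, and then re-index the resulting sums so that the leading operator becomes $-\nabla\cdot(\a^F\nabla V_m(F,j))$. Since $v_m(\a)$ is harmonic in $\itr(\cu_m)$, we have the pointwise identity $-\nabla\cdot(\a\nabla v_m^{\#})(\a)=0$ on $\itr(\cu_m)$, where the symbol $\#$ tags the environment that the Glauber operators $D_{F\cup G}$ will act on (in the sense of \eqref{eq.sharp}). The first step is to apply $D_{F\cup G}$ to this vector field for each admissible $G$ with $|G|=j$ and $G\subset\Ed(\cu_m)\setminus F$, and to sum. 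Because the divergence and the Glauber operators commute, it suffices to manipulate the vector field $D_{F\cup G}(\a\nabla v_m)$ before taking divergence.

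Next, I apply Leibniz \eqref{eq.Leibniz2} to $D_{F\cup G}(\a\nabla v_m)$, splitting into the factor $D_{H_1}\a$ and $D_{H_2}\nabla v_m$ with $H_1\cup H_2=F\cup G$. Since $\a$ is $1$-local in the environment, $D_{H_1}\a=0$ as soon as $|H_1|\geq 2$, and $D_{\{e\}}\a=\a^e-\a$. Thus only $H_1=\emptyset$ and $H_1=\{e\}$ with $e\in F\cup G$ survive, and we obtain
\begin{align*}
D_{F\cup G}(\a\nabla v_m)=\a\,D_{F\cup G}\nabla v_m+\sum_{e\in F\cup G}(\a^e-\a)\Big(D_{(F\cup G)\setminus\{e\}}\nabla v_m+D_{F\cup G}\nabla v_m\Big).
\end{align*}
Now I sum over $G$ and split the $e\in F\cup G$ sum into the two cases $e\in F$ and $e\in G$, interchanging the order of summation in the latter. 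Using the defining formula \eqref{eq.V_F_j}, one checks the two combinatorial identities
\begin{align*}
\sum_{\substack{|G|=j\\G\subset\Ed(\cu_m)\setminus F}}D_{(F\setminus\{e\})\cup G}v_m&=V_m(F\setminus\{e\},j)-V_m(F,j-1),\qquad e\in F,\\
\sum_{\substack{|G|=j,\,e\in G\\G\subset\Ed(\cu_m)\setminus F}}D_{F\cup G\setminus\{e\}}v_m&=V_m(F,j-1)-V_m(F\cup\{e\},j-2),\qquad e\notin F,
\end{align*}
which come from writing the larger index set in \eqref{eq.V_F_j} as the desired set plus the $G$'s that contain $e$, and then changing variables $G=\{e\}\sqcup G'$. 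The analogous identities with the term $D_{F\cup G}\nabla v_m$ give $\nabla V_m(F,j)$ when $e\in F$ and $\nabla V_m(F\cup\{e\},j-1)$ when $e\in G$.

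The last step is to convert the leading coefficient from $\a$ to $\a^F$. The local expansion \eqref{eq.aLocal} yields $\a^F-\a=\sum_{e\in F}(\a^e-\a)$, so
\begin{align*}
\a\,\nabla V_m(F,j)=\a^F\,\nabla V_m(F,j)-\sum_{e\in F}(\a^e-\a)\nabla V_m(F,j).
\end{align*}
Adding the $e\in F$ correction absorbs the $+\nabla V_m(F,j)$ piece produced by the combinatorial identities, leaving exactly the second sum of \eqref{e:induction_eqW}, while the $e\in G$ case contributes the first sum. Taking divergence of the resulting identity yields \eqref{eq.recurrence_eq} on $\itr(\cu_m)$, and antisymmetry of $\W_m(F,j)$ on $\overrightarrow{\Ed}(\cu_m)$ is automatic because each summand has the form $(\a^e-\a)\nabla(\cdot)$.

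The main obstacle, and the only substantive work, is the combinatorial bookkeeping when switching the order of summation and reassembling the partial sums into the $V_m(F',j')$ family: the index set of $G$ in \eqref{eq.V_F_j} depends on the first argument, so whenever $F$ is replaced by $F\setminus\{e\}$ or $F\cup\{e\}$ one must separate off the subfamily of $G$'s that do or do not contain $e$. Once these identities are established, the rest is a direct rearrangement, and the absence of any integration by parts means the boundary behaviour of $V_m(F,j)$ plays no role in this step.
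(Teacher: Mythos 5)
Your proposal is correct and follows essentially the same route as the paper: both rely on differentiating the harmonicity of $v_m$ with the Leibniz rule \eqref{eq.Leibniz2}, using the $1$-locality $D_{\{e\}}\a=\a^e-\a$, the local expansion $\a^F-\a=\sum_{e\in F}(\a^e-\a)$, and the same re-indexing identities expressing the restricted sums in terms of $V_m(F',j')$. The only organizational difference is that the paper inserts the harmonic equation for each environment $\a^E$ at the outset, producing $D_{F\cup G}\bigl((\a^F-\a^\#)\nabla v_m^\#\bigr)$ so that the $\a^F$-coefficient emerges already inside the Leibniz step, whereas you differentiate $\nabla\cdot(\a\nabla v_m)=0$ directly and apply the $\a^F-\a$ correction at the end; this is a rearrangement of the identical cancellation, not a different argument.
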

	\begin{proof}
		Using \eqref{eq.V_F_j}, we develop the {\lhs} 
		\begin{align*}
			-\na \cdot (\aio{F} \na V_m(F,j)) = -\sum_{\substack{|G|=j\\G\subset \Ed(\cu_m) \setminus F}} \na \cdot \Ll(\aio{F} D_{F\cup G} v_m\Rr).
		\end{align*}
		In the rest of the proof, we will focus on the single term on the {\rhs} at first, and then apply to the sum.
		
		\textit{Step 1: study of the single term.} We focus on the single term with $G\subset \Ed(\cu_m) \setminus F$
		\begin{align*}
			-\na \cdot \Ll(\aio{F} \na D_{F\cup G}v_m\Rr)
			&= -\sum_{E \subset F\cup G}(-1)^{|F\cup G\setminus E|} \na \cdot \Ll(\aio{F} \na v^E_m\Rr)\\
			&= -\sum_{E \subset F\cup G}(-1)^{|F\cup G\setminus E|} \na \cdot \Ll((\aio{F}-\aio{E}) \na v^E_m\Rr)\\
			&= -\na \cdot D_{F\cup G}\Ll((\aio{F}-\aio{\#}) \na v^\#_m\Rr).
		\end{align*}
		Here from the first line to the second line, we use the inclusion-exclusion formula \eqref{eq.Difference}. From the second line to the third line, we add the harmonic equation ${ -\na \cdot \Ll(\aio{E} \na v^E_m\Rr) = 0}$ in $\itr (\cu_m)$ thanks to the harmonic extension in \eqref{eq.harmonicCube}. This insertion creates more cancellation and that is why the {\rhs} has lower order. The notation \eqref{eq.sharp} is applied in the last line.
		
		We continue the manipulation of the single term by Leibniz's formula \eqref{eq.Leibniz2}
		\begin{equation}\label{eq.DecomDFG}
			\begin{split}
				&-D_{F\cup G}\Ll((\aio{F}-\aio{\#}) \na v^\#_m\Rr) \\
				&= -(\aio{F}-\a)\na D_{F\cup G}v_m
				+ \sum_{e\in F\cup G}(\aio{e}-\a)\na D_{F\cup G}v_m\\
				&\qquad  + \sum_{e\in F\cup G}(\aio{e}-\a)\na D_{F\cup G\setminus \{e\}}v_m.
			\end{split}
		\end{equation}
		Here we use the fact that $D_G (\aio{F}-\aio{\#})$ vanishes when $\vert G \vert \geq 2$. Using the locality $\aio{F} -\a = \sum_{e\in F}(\aio{e}-\a)$, in the second line of \eqref{eq.DecomDFG}, the first term can compensate parts of the second term. Thus we obtain that 
		\begin{equation}\label{eq.RecurSingle}
			\begin{split}
				-D_{F\cup G}\Ll((\aio{F}-\aio{\#}) \na v^\#_m\Rr) 
				&=  \sum_{e\in G}(\aio{e}-\a)\na D_{F\cup G}v_m  \\
				& \qquad + \sum_{e\in F\cup G}(\aio{e}-\a)\na D_{F\cup G\setminus \{e\}}v_m.
			\end{split}
		\end{equation}
		
		\medskip
		
		\textit{Step 2: sum-up.} Now, we need to make the sum-up over $|G|=j, G\subset \Ed(\cu_m) \setminus F$, and we treat it term by term.
		
		\textit{Step~2.1: sum-up for the second line in \eqref{eq.RecurSingle}.} With a change of variable ${G = H \cup \{e\}}$, and we exchange the order of sum
		\begin{align*}
			&\sum_{\substack{|G|=j\\G\subset \Ed(\cu_m) \setminus F}}   \sum_{e\in G} \na \cdot\Ll((\aio{e}-\a)\na D_{F\cup G}v_m\Rr) \\
			&=     \na \cdot\Ll(\sum_{e\in  \Ed(\cu_m) \setminus F}(\aio{e}-\a) \sum_{\substack{|H|=j-1\\ H \subset \Ed(\cu_m) \setminus (F \cup \{e\})}}  \na D_{(F\cup \{e\}) \cup H}v_m\Rr)\\
			&=  \na \cdot\Ll(\sum_{e\in  \Ed(\cu_m) \setminus F}(\aio{e}-\a) \nabla V_m(F \cup \{e\}, j-1)\Rr).
		\end{align*}
		From the second line to the third line, we use directly the definition \eqref{eq.V_F_j}. This is the second line in \eqref{e:induction_eqW}. 
		
		\smallskip
		
		\textit{Step~2.2: sum-up for the third line in \eqref{eq.RecurSingle}, part $\sum_{e\in F}$.} For this term, we can exchange the order of sum directly as one does not influence each other 
		\begin{align*}
			&\sum_{\substack{|G|=j\\G\subset \Ed(\cu_m) \setminus F}}   \sum_{e\in F} \na \cdot\Ll((\aio{e}-\a)\na D_{F\cup G \setminus \{e\}}v_m\Rr) \\
			&=    \na \cdot\Ll(\sum_{e\in F}(\aio{e}-\a) \sum_{\substack{|G|=j\\G\subset \Ed(\cu_m) \setminus F}}  \na D_{(F \setminus \{e\}) \cup G }v_m\Rr). 
		\end{align*}
		However, the definition \eqref{eq.V_F_j} does not apply directly to the last line. We can decompose the second sum in the following identity
		\begin{align}\label{eq.RecurDecom}
			\sum_{\substack{|G|=j\\G\subset \Ed(\cu_m) \setminus F}}\Big(\cdots\Big) = \sum_{\substack{|G|=j\\G\subset \Ed(\cu_m) \setminus (F \setminus \{e\})}} \Big(\cdots\Big) - \sum_{\substack{|G|=j\\ e \in G\subset \Ed(\cu_m) \setminus (F \setminus \{e\})}} \Big(\cdots\Big).
		\end{align}
		For the first term on {\rhs} of the identity above, we apply \eqref{eq.V_F_j} that 
		\begin{align*}
			\sum_{\substack{|G|=j\\G\subset \Ed(\cu_m) \setminus (F \setminus \{e\})}}  D_{(F \setminus \{e\}) \cup G }v_m = V_m(F \setminus \{e\}, j).
		\end{align*}
		For the second term on {\rhs} of \eqref{eq.RecurDecom}, we make the change of variable $G = H \cup \{e\}$ 
		\begin{align*}
			\sum_{\substack{|G|=j\\ e \in G\subset \Ed(\cu_m) \setminus (F \setminus \{e\})}}  D_{(F \setminus \{e\}) \cup G }v_m 
			&= \sum_{\substack{|H|=j-1\\  H \subset \Ed(\cu_m) \setminus F}}  D_{(F \setminus \{e\}) \cup (H \cup \{e\}) }v_m \\
			&= \sum_{\substack{|H|=j-1\\  H \subset \Ed(\cu_m) \setminus F}}  D_{F \cup H }v_m\\
			&=  V_m(F, j-1).
		\end{align*}
		These two terms corresponds to the third line of \eqref{e:induction_eqW}.
		
		\smallskip
		
		\textit{Step~2.3: sum-up for the second line in \eqref{eq.RecurSingle}, part $\sum_{e\in G}$.} The manipulation of this term combines the technique from Step~2.1 and Step~2.2. Like Step~2.1, we need at first a change of variable ${G = H \cup \{e\}}$
		\begin{align*}
			&\sum_{\substack{|G|=j\\G\subset \Ed(\cu_m) \setminus F}}   \sum_{e\in G} \na \cdot\Ll((\aio{e}-\a)\na D_{F\cup G \setminus \{e\}}v_m\Rr) \\
			&=     \na \cdot\Ll(\sum_{e\in  \Ed(\cu_m) \setminus F}(\aio{e}-\a) \sum_{\substack{|H|=j-1\\ H \subset \Ed(\cu_m) \setminus (F \cup \{e\})}}  \na D_{F\cup H}v_m\Rr).
		\end{align*}
		The difference is that here the operator is $D_{F\cup H}$ instead of $D_{(F\cup \{e\}) \cup H}$, and we need to apply the technique \eqref{eq.RecurDecom} once again to conclude that 
		\begin{align*}
			\sum_{\substack{|H|=j-1\\ H \subset \Ed(\cu_m) \setminus (F \cup \{e\})}}  \na D_{F\cup H}v_m = V_m (F, j-1) - V_m(F\cup \{e\}, j-2).
		\end{align*}
		These two terms corresponds to the first line in \eqref{e:induction_eqW} and we finish the proof.
	\end{proof}

	\begin{remark}\label{rmk.OrderDecre}
		We highlight the good property of this perturbed corrector equation \eqref{e:induction_eqW}. Roughly, the Dirichlet energy for the term $V_m$ of \eqref{e:induction_eqW} should be bounded from that of the $\W_m$ on the {\rhs}. By defining the degree, 
		\begin{align}\label{eq.defdegV}
			\deg (V_m(F, j)) := \vert F \vert + 2j,
		\end{align}
		we also observe that the degree of the functions on the {\rhs} strictly decreases by at least $1$ compared to the {\lhs}. Therefore, in finite steps of induction, we can finally obtain the upper bound of the Dirichlet energy.
		
		Nevertheless, the observation above contains some technical difficulties. The control of $V_m$ by $\W_m$ is direct under the uniformly ellipticity (see \cite{GGMN, dg1}), which is not the case for the percolation. To see this, it is clear that the {\lhs} of \eqref{eq.recurrence_eq} is only supported on $\a^F$ rather than $\Ed(\cu_m)$. Therefore, the induction of the improved energy $\bar{I}_{m}(i,j)$ is not closed here.  We need a better understanding of \eqref{eq.recurrence_eq} both from analysis, geometry and combinatorial, so more techniques will be involved. Its final proof is presented in Proposition~\ref{pr:key_estimate_avg}.
		
	\end{remark}

	\section{Pyramid partitions of good cubes}\label{sec.cube}
	In this section, we will introduce the partition of good cubes. This technique rooted from the work by Pisztora \cite{pisztora1996surface}, further developed by Armstrong and Dario in \cite{armstrong2018elliptic} and then applied in \cite{dario2021corrector, dario2021quantitative}. The main novelty  here is a partition in several scales called \emph{pyramid partitions of good cubes}, i.e. every partition cube can be further refined (see Figure~\ref{fig.cube} as an illustration), so that they can treat the high-order perturbed corrector equation \eqref{eq.recurrence_eq}. We will state a general local partition of good cubes as its basis in Section~\ref{subsec:partition}, then build the pyramid partitions in Section~\ref{subsec.RenomalPerco}. Finally, we establish its important properties in Section~\ref{subsec:counting}.

	The following lemma will be frequently used in this section.
	\begin{lemma}[Lemma~2.3, \cite{armstrong2018elliptic}]
		\label{le:Borel_Contelli}
		Let $B_n$ be a series of event satisfying 
		\begin{equation*}
			\P[B_n] \leq C\exp(-C^{-1}3^{ns}),
		\end{equation*}
		with some finite positive constants $C,s$. We define 
		\begin{equation*}
			N := \sup\{3^n:B_n \text{ happens}\},
		\end{equation*}
		then there exists a finite positive constant $C' = C'(C,s)$ such that 
		\begin{align*}
			\P[N \geq t] \leq \exp(-{C'}^{-1}t^s).
		\end{align*}
	\end{lemma}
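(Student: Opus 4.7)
The plan is to reduce the statement to a union bound plus the observation that a sum of stretched-exponentials is governed by its leading term. First I would note that, by the very definition $N = \sup\{3^n : B_n \text{ happens}\}$, the event $\{N \geq t\}$ forces some $B_n$ with $3^n \geq t$ to occur, so
\begin{equation*}
\P[N \geq t] \le \sum_{n : 3^n \geq t} \P[B_n] \le C\sum_{n \geq n_0(t)} \exp\!\bigl(-C^{-1}3^{ns}\bigr),
\end{equation*}
where $n_0(t) := \lceil \log_3 t \rceil$, so that $3^{n_0(t)} \geq t$ and in particular $3^{n_0(t)\,s} \geq t^s$.

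Next I would factor out the leading term and bound the remaining tail by a constant depending only on $C$ and $s$. Writing
\begin{equation*}
\sum_{n \geq n_0(t)} \exp\!\bigl(-C^{-1}3^{ns}\bigr) = \exp\!\bigl(-C^{-1}3^{n_0(t)\,s}\bigr)\sum_{k \geq 0} \exp\!\Bigl(-C^{-1}\bigl(3^{(n_0(t)+k)s} - 3^{n_0(t)\,s}\bigr)\Bigr),
\end{equation*}
I would use $3^{(n_0(t)+k)s} - 3^{n_0(t)\,s} = 3^{n_0(t)\,s}\bigl(3^{ks} - 1\bigr) \ge 3^{ks} - 1$ (valid as soon as $t \ge 1$, hence $n_0(t) \ge 0$) together with the trivial lower bound $3^{ks} - 1 \ge (3^s - 1)k$ for $k \ge 1$. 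This shows the bracketed sum is dominated by some explicit $K(C,s) < \infty$. Substituting back,
\begin{equation*}
\P[N \geq t] \le CK(C,s)\,\exp\!\bigl(-C^{-1}t^s\bigr) \qquad (t \ge 1),
\end{equation*}
which absorbs into $\exp(-{C'}^{-1} t^s)$ once $t$ is at least some threshold depending on $C, s$. Finally, for $t$ smaller than that threshold, the trivial bound $\P[N \geq t] \le 1$ already gives the conclusion after enlarging $C'$ so that $\exp(-{C'}^{-1}t^s) \ge 1/e$ (or any fixed constant) on the bounded range.

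The argument is largely bookkeeping; the only genuine ingredient is the rapid decay of $\exp(-C^{-1}3^{ns})$, which makes the geometric tail sum collapse to its leading term. The only mild subtlety I anticipate is handling the endpoint behaviour (small $t$ and the boundary $3^{n_0(t)}$ vs.\ $t$), but this is resolved by choosing $C'$ large enough to subsume all multiplicative constants into the exponent.
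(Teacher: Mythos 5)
Your core estimate is correct, and it follows the standard direct route: a union bound over $\{n : 3^n \geq t\}$, factoring out the leading term, and bounding the remaining stretched-exponential tail by a constant $K(C,s)$ using $3^{ks}-1 \geq (3^s-1)k$. This is a perfectly good proof of the large-$t$ tail bound. Note, however, that the paper does not prove this lemma at all — it cites it directly from \cite{armstrong2018elliptic}, and a draft proof left commented out in the source instead proceeds by bounding the exponential moment $\E[\exp(\delta N^s)]$ for small $\delta$ and then (implicitly) applying Chernoff. Your approach is the more elementary of the two and delivers the tail bound directly rather than through an $\mathcal{O}_s$-style moment intermediary, but both are standard and the moment route is the one matching the $\mathcal{O}_s$ formalism used throughout \cite{armstrong2018elliptic}.

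One genuine soft spot: your last paragraph does not actually close the small-$t$ regime. After enlarging $C'$ you obtain $\exp(-{C'}^{-1}t^s) \geq 1/e$ on a bounded range, but the trivial bound $\P[N\geq t]\leq 1$ is still larger than $1/e$, so the claimed inequality is not established there. In fact, if $\P[\cup_n B_n] = 1$ (which the hypothesis on $C$ does not rule out), then $\P[N\geq t]=1$ for $t\leq 1$ while $\exp(-{C'}^{-1}t^s)<1$, so no choice of $C'$ works. This is an imprecision inherited from the lemma's statement itself — the Armstrong--Dario version carries an explicit multiplicative constant (or is phrased via $\mathcal{O}_s$), which absorbs exactly this issue — but your proposed fix does not repair it. You should either (i) observe that in every application the constant $C$ is small enough that $\sum_n C\exp(-C^{-1}3^{ns})<1$, so $\P[\cup_n B_n]<1$ and the threshold can be pushed down, or (ii) state the conclusion with a multiplicative constant in front, $\P[N\geq t]\leq C'\exp(-{C'}^{-1}t^s)$, which your computation does establish as written.
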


	\subsection{Local partition of good cubes}\label{subsec:partition}

	For each $\a \in \Omega$, we denote by $\gG(\a) \subset \tT$ a (random) set of $\a$-measurable ``good cubes'', which has certain properties. In the following paragraphs, we would drop the dependence on $\a$ and just write $\gG$ for simplicity. The following two conditions are usually assumed for the random set of good cubes $\gG$.
	\begin{itemize}
		\item \emph{Locality:} 
		\begin{equation}
			\label{e:local_dependence}
			\forall \cu \in \tT, \qquad \text{the event }\{\cu \not \in \gG\} \text{ is } \mathcal{F}(3\cu)\text{-measurable}. 
		\end{equation}
		\item \emph{Tail probability estimate:} there exist two finite positive constants $K, s$ such that
		\begin{equation}
			\label{e:rarely_bad}
			\forall m \in \N_+, \qquad \sup_{z\in 3^m\ZZ^d}\P[z+\cu_m \not \in \gG] \leq K\exp(-K^{-1}3^{ms}).
		\end{equation}
	\end{itemize}
	We will highlight them in the concrete statement.
	
	As its name indicates, \emph{the partition of good cubes} is to divide a subset of $\Zd$ into disjoint union of elements from $\gG$. Our first result is \emph{the local partition of good cubes}, which has already appeared in \cite[Proposition~2.1 (iv)]{armstrong2018elliptic}. This local version will be heavily used in our work, because we always work on a finite domain. We reformulate it for the completeness of proof.
	
	\begin{prop}[Local partition of good cubes]\label{prop.partition}
		Given a random set of good cubes $\gG\subset \tT$ satisfying \eqref{e:local_dependence}, then for any cube $\cu \in \gG$, there exists its local partition of good cubes denoted by $\sSl_{\gG}(\cu) \subset \gG$, such that the following properties hold:
		\begin{enumerate}
			\item All predecessors of elements of $\sSl_{\gG}(\cu)$ in $\cu$ are good: for every $\cu', \cu'' \in \tT$, 
			\begin{align*}
				\cu' \in  \sSl_{\gG}(\cu) \text{ and } \cu'  \subset \cu'' \subset \cu  \Longrightarrow \cu''  \in  \gG.
			\end{align*}
			\item Neighboring elements of $\sSl_{\gG}(\cu)$ have comparable sizes: for every $\cu', \cu'' \in \sSl_{\gG}(\cu)$ such that $\operatorname{dist}(\cu', \cu'')\leq 1$, we have
			\begin{equation}
				\label{e:neighbor_comparable_size}
				\frac{1}{3}\leq \frac{\size(\cu'')}{\size(\cu')}\leq 3.        
			\end{equation}
			\item Locality: the partition $\sSl_{\gG}(\cu)$ is $\fF(3 \cu)$-measurable.
			\item Estimate for the coarseness of $\sSl_{\gG}(\cu)$: if we also assume the tail event estimate \eqref{e:rarely_bad} for $\gG$, then there exists a finite positive constant $C(K,s)$, such that for any $\cu$ and any $x \in \cu$, we have
			\begin{equation}
				\label{e:coarseness_partition}
				 \P\Ll[\size(\cu_{\sSl_{\gG}}(x)) \geq t\Rr] \leq \exp(-C^{-1}t^s),
			\end{equation}
			where we denote by $\cu_{\sSl_{\gG}}(x)$ the unique cube in partition $\sSl_{\gG}(\cu)$ containing $x$.
		\end{enumerate}
	\end{prop}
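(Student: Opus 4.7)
The plan is to adapt the partition construction of Armstrong--Dario \cite[Proposition~2.1]{armstrong2018elliptic} to the local setting inside a fixed good cube $\cu \in \gG$. The core idea is to assign to each $x \in \cu$ a triadic subcube $\cu_{m_*(x)}(x) \subset \cu$ in $\gG$ by taking $m_*(x)$ as small as possible subject to the chain condition
\begin{equation*}
  m_*(x) := \min\bigl\{ m \in \N : \cu_{m'}(x) \in \gG \text{ for every integer } m \leq m' \leq \log_3\size(\cu) \bigr\},
\end{equation*}
which is well defined because $\cu \in \gG$. I would first verify that the family $\{\cu_{m_*(x)}(x)\}_{x \in \cu}$ is nested: if two such cubes intersect then by the triadic structure one contains the other, and for $y \in \cu_{m_*(x)}(x)$ the identity $\cu_{m'}(y)=\cu_{m'}(x)$ for $m'\geq m_*(x)$ means the good chain witnessing $m_*(x)$ extends the chain at $y$, so $m_*(y) \leq m_*(x)$ with equality forcing the two cubes to coincide. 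Taking inclusion-maximal elements then yields a disjoint cover of $\cu$, which I denote $\sSl_\gG(\cu)$.

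Properties (1)--(3) follow directly from the construction. Property (1) on predecessors is built into the definition of $m_*$: every triadic cube $\cu''$ with $\cu_{m_*(x)}(x) \subset \cu'' \subset \cu$ sits on the good chain witnessing $m_*(x)$. Property (3) (locality) is a consequence of \eqref{e:local_dependence}: every triadic cube $\cu_m(y)$ queried in the construction lies inside $\cu$, hence $3\cu_m(y) \subset 3\cu$, so each event $\{\cu_m(y) \in \gG\}$ is $\fF(3\cu)$-measurable and so is the final partition. For property (2), if two partition cubes $\cu',\cu''$ satisfied $\dist(\cu',\cu'')\leq 1$ and $\size(\cu') \geq 9\,\size(\cu'')$, then tracing the minimality of $m_*$ at a boundary point of $\cu'$ facing $\cu''$ would produce a good cube of scale $\size(\cu')/3$ inside the neighborhood of $\cu''$, contradicting the inclusion-maximal choice; this is the standard triadic smoothing argument of \cite[Section~2]{armstrong2018elliptic}.

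The coarseness bound (4) is the only probabilistic step. By construction, $\size(\cu_{\sSl_\gG}(x)) \geq 3^n$ forces some triadic cube at scale $3^k$ with $k \geq n-1$ within a bounded $\ell^\infty$-neighborhood of $x$ of radius $O(3^k)$ to be bad. A union bound over these $O(1)$ cubes at each scale, combined with the stretched-exponential bound \eqref{e:rarely_bad}, gives
\begin{equation*}
  \P\bigl[\size(\cu_{\sSl_\gG}(x)) \geq 3^n\bigr] \leq \sum_{k \geq n-1} C\,K \exp\bigl(-K^{-1}3^{ks}\bigr) \leq C' \exp\bigl(-(C')^{-1} 3^{ns}\bigr),
\end{equation*}
and Lemma~\ref{le:Borel_Contelli} (applied to the events $B_n = \{\size(\cu_{\sSl_\gG}(x)) \geq 3^n\}$) then yields \eqref{e:coarseness_partition}.

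The main obstacle is reconciling properties (1) and (2) simultaneously: the bare minimal-chain definition of $m_*$ handles the predecessor structure cleanly but not the neighbor-size comparability, while a maximal-good-cube definition does the opposite. The correct construction must therefore interleave both constraints and be checked with care; this is where the bulk of the detailed work lies, and the routine probabilistic estimate in the last step rides on top of it.
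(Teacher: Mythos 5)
Your construction—defining $m_*(x)$ by requiring only the direct ancestors $\cu_{m'}(x)$ for $m'\geq m_*(x)$ to be good—does not satisfy property (2), and your sketch of the contradiction argument for (2) fails. Here is a one-dimensional counterexample (the indices of $\cu_m(z)$ below follow \eqref{eq.defCube}). Suppose $\cu_1(3)=\{2,3,4\}$ is the only bad cube: all scale-$1$ singletons and all triadic cubes of scale $\geq 9$ are in $\gG$. Then $m_*(4)=2$ (the chain through $\cu_1(3)$ fails at scale $3$), so the partition cube at $4$, after your maximality step, is $\cu_2(0)$ of size $9$. But $m_*(5)=0$ because $\cu_0(5)=\{5\}$, $\cu_1(6)=\{5,6,7\}$, $\cu_2(9)$, \dots\ are all good, and no larger candidate $\cu_{m_*(w)}(w)$ contains $5$ (that would require a bad cube inside $\cu_1(6)$ or $\cu_2(9)$, which by assumption does not exist). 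Hence the partition cube at $5$ is $\{5\}$ of size $1$; it is adjacent to $\cu_2(0)$, violating \eqref{e:neighbor_comparable_size}. The step you invoke, producing ``a good cube of scale $\size(\cu')/3$ inside the neighborhood of $\cu''$,'' cannot work: the cube at scale $\size(\cu')/3$ that is forced by minimality of $m_*$ is $\cu_{m_*(x)-1}(x)$, which is \emph{bad} rather than good and sits strictly inside $\cu'$, giving no information about $\cu''$.

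The obstacle you flag at the end is exactly where the extra content of the proposition lives, and it is not resolved by the minimal-chain definition alone. The paper's construction replaces the direct-ancestor chain by the \emph{cone} $\kK_m(\cu')$ in \eqref{eq.BargG}: a cube $\cu'$ is admitted into $\Bar{\gG}(\cu)$ only if every cube reachable from $\cu'$ by alternating lateral moves (``$\dist\leq 1$'') and scale increases is good. In my example this disqualifies $\{5\}$ from $\Bar{\gG}(\cu)$ because the lateral reach at scale $3$ captures the bad cube $\cu_1(3)$; the resulting partition cube at $5$ becomes $\cu_1(6)$ of size $3$, restoring the $1/3\le\cdot\le3$ comparability. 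This lateral condition is precisely what lets the paper's contradiction argument for (2) go through (Step ``$\kK_m(\cu')$ contains all successors of $\cu''$''), and it also changes the probability estimate: the number of cubes of $\kK_m(\cu')$ at scale $3^n$ grows like $3^{d(n-\log_3\size(\cu'))}$, which is reflected in \eqref{eq.BarcuTail}, whereas your ``$O(1)$ cubes at each scale'' is consistent only with the direct-ancestor construction that we have just seen fails (1)--(2) simultaneously.
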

	
	\begin{remark}
		For the notation $\cu_{\sSl_{\gG}}(x)$ in \eqref{e:coarseness_partition}, $\cu$ indicates the cube to be partitioned. A concrete example is that, for $\cu' \in \gG$, we implement the local partition $\sSl_{\gG}(\cu')$, then for any $x \in \cu'$,  the cube $\cu'_{\sSl_{\gG}}(x)$ is the one containing $x$.
	\end{remark}
	\begin{proof}
		The proof is similar as \cite[Proposition~2.1]{armstrong2018elliptic}. Without loss of generality we assume $\size(\cu) = 3^m$, and let $\tT(\cu)$ be the collection of triadic cubes which are subsets of $\cu$. For each $\cu' \in \tT(\cu)$, define
		\begin{align}
			\kK_m(\cu') := \{&\cu''\in\tT(\cu):\exists n\in \N \text{ and } \cu^0,\cdots, \cu^n \in \tT(\cu), \\
			& \nonumber \text{ such that }\cu^0 = \cu', \cu^n = \cu'', \\
			& \nonumber \text{ and } \forall 1\leq k \leq n,   \size(\cu^k) = 3^k\size(\cu'),  \dist(\Tilde{\cu^k},\cu^{k-1})\leq 1 \},
		\end{align}
		where we use $\Tilde{\cu^k}$ to denote the predecessor of $\cu^k$. Here $\kK_m(\cu)$ can be seen as the set of triadic cubes in $\cu$ obtained from transport and scaling.
		
		We consider a subset of good cubes $\Bar{\gG}(\cu)$, whose elements are candidates for the property (1)
		\begin{align}\label{eq.BargG}
			\Bar{\gG}(\cu) := \{\cu' \in \tT(\cu): \kK_m(\cu') \subset \gG\}.
		\end{align} 
		Then we introduce the partition $\sSl_{\gG}(\cu)$ as follows. For each $x\in \cu$, let $\cu_{\sSl_{\gG}}(x)$ be the largest cube in $\Bar{\gG}(\cu)$ which has a successor not belonging to $\Bar{\gG}(\cu)$, and we also define  
		\begin{equation*}
			\sSl_{\gG}(\cu) := \Ll\{\cu_{\sSl_{\gG}}(x)\Rr\}_{x \in \cu}.
		\end{equation*}
		This does form a partition of $\cu$, because every $x \in \cu$ associates a unique $\cu_{\sSl_{\gG}}(x)$, and elements in $\sSl_{\gG}(\cu)$ are disjoint or identical from the maximality in the definition. 
		
		We now verify the properties one by one. Property (1) follows from the definition of $\Bar{\gG}(\cu)$ and (3) is the result of \eqref{e:local_dependence}. For property (2), we prove by contradiction and assume that two cubes $\cu', \cu'' \in \sSl_{\gG}(\cu)$ are neighbors with $\size(\cu'')\geq 9\size(\cu')$. Since $\kK_m(\cu')$ contains all successors of $\cu''$, it follows that all successors of $\cu''$ are also in $\Bar{\gG}(\cu)$ from the construction of $\cu'$, which contradicts the construction of $\cu''$.
		
		Concerning the property (4), if we also assume \eqref{e:rarely_bad}, it is not hard to show that for any $\cu' \in \tT(\cu)$
		\begin{equation}\label{eq.BarcuTail}
			\P[\cu' \notin \Bar{\gG}(\cu)] \leq \sum_{n = \size(\cu')}^\infty 3^{d(n - \size(\cu'))}K\exp(-K^{-1} n^s) \leq K'\exp(-(K')^{-1} n^s).
		\end{equation}
		Let $\cu_n(x)$ be the unique cube in $\tT_n$ containing $x$, and we define the event 
		\begin{equation*}
			B_n(x) :=  \{\cu_n(x) \text{ or one of its successors are not in } \Bar{\gG}(\cu) \}.
		\end{equation*}
		Applying Lemma~\ref{le:Borel_Contelli} for $B_n(x)$ with the help of \eqref{eq.BarcuTail}, we conclude (4).
	\end{proof}

	\bigskip
	We then define a subset of the good cubes $\Lambda(\gG, t, C) \subset \gG$, in which the partition cubes has  a good control for the size.
	\begin{definition}\label{def.LambdaG}
		A cube $\cu$ is an element in $\Lambda(\gG, t, C)$ if it satisfies the following three conditions:
		\begin{itemize}
			\item $\cu$ is good: $\cu \in \gG$;
			\item Control of the averaged $\ell^t$ moment:
			\begin{align}\label{eq.defgGpc}
				\frac{1}{|\cu|} \sum_{x\in \cu}\vert \cu_{\sSl_{\gG}}(x)\vert^t \leq C
			\end{align}
			\item Control of the maximum partition cube:
			\begin{align}\label{eq.sizeMax}
				\sup_{x \in \cu} \size( \cu_{\sSl_{\gG}} (x)) \leq \size(\cu)^{\frac{d}{d+t}}.
			\end{align}
		\end{itemize}
	\end{definition}
	
	\begin{lemma}\label{lem.local}
		Assume \eqref{e:local_dependence} for $\gG$, then the event $\{\cu \in\Lambda(\gG, t, C)\}$ is $\mcl F(3\cu)$-measurable.
	\end{lemma}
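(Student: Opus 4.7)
The plan is to verify measurability of each of the three conditions defining $\Lambda(\gG, t, C)$ separately, and then note that a finite intersection of $\mcl F(3\cu)$-measurable events remains $\mcl F(3\cu)$-measurable. Condition~(1), namely $\cu \in \gG$, is immediately $\mcl F(3\cu)$-measurable by the locality assumption \eqref{e:local_dependence}. For conditions~(2) and (3), I would first argue that the partition $\sSl_{\gG}(\cu)$ itself is $\mcl F(3\cu)$-measurable, which is already recorded as property~(3) of Proposition~\ref{prop.partition}; once this is in hand, both $\sum_{x\in\cu}|\cu_{\sSl_{\gG}}(x)|^t$ and $\sup_{x\in\cu}\size(\cu_{\sSl_{\gG}}(x))$ are deterministic functionals of the partition, so the events $\{(2)\}$ and $\{(3)\}$ inherit the measurability.

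For completeness it is worth recalling why $\sSl_{\gG}(\cu)$ is $\mcl F(3\cu)$-measurable. Going back to the construction in the proof of Proposition~\ref{prop.partition}, the partition is determined by the subset $\bar{\gG}(\cu) = \{\cu' \in \tT(\cu) : \kK_m(\cu')\subset \gG\}$, which only involves queries of the form ``$\cu''\in\gG$'' for cubes $\cu''\in\tT(\cu)$, i.e.\ $\cu''\subset \cu$. For each such $\cu''$, the event $\{\cu''\in\gG\}$ is $\mcl F(3\cu'')$-measurable by \eqref{e:local_dependence}, and a direct comparison of centers and sizes shows $3\cu''\subset 3\cu$ whenever $\cu''\subset\cu$; hence $\{\cu''\in\gG\}$ is in fact $\mcl F(3\cu)$-measurable. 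Taking intersections over the (deterministic) finite collection $\kK_m(\cu')$ then shows $\bar{\gG}(\cu)$ is $\mcl F(3\cu)$-measurable, and so is the resulting partition.

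The only point that needs a moment of care is the inclusion $3\cu''\subset 3\cu$; everything else is a bookkeeping argument, so I do not anticipate any genuine obstacle.
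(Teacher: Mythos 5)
Your proposal is correct and matches the paper's argument, which simply cites \eqref{e:local_dependence} for condition (1) and property (3) of Proposition~\ref{prop.partition} for conditions (2) and (3). Your second paragraph re-derives Proposition~\ref{prop.partition}(3) from scratch, which is unnecessary since you may invoke it directly, but the check that $\cu''\subset\cu$ implies $3\cu''\subset 3\cu$ is indeed the correct bookkeeping step behind that proposition.
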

	\begin{proof}
		This is a direct result from \eqref{e:local_dependence} and (3) in Proposition~\ref{prop.partition}.
	\end{proof}
	
	It is not hard to find cubes in $\Lambda(\gG, t, C)$. This property is in fact reformulated from \cite[Proposition~2.4]{armstrong2018elliptic}, but we do not use the notation of minimal scale in order to highlight that the tail estimate is for the local event.  
	\begin{prop}\label{prop.cubePartitionSize}
		Assume \eqref{e:local_dependence}, \eqref{e:rarely_bad} for $\gG$, then for every $t \geq 1$, there exists a finite positive constants $C$ depending on $t,s,K,d$ there, such that 
		\begin{equation}\label{eq.momentTail}
			\P[	\cu \notin\Lambda(\gG, t, C)] \leq C \exp\Ll(- C^{-1}  \size(\cu)^{\frac{ds}{d+t+s}}\Rr).
		\end{equation}
	\end{prop}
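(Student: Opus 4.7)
The plan is to split the event $\{\cu \notin \Lambda(\gG, t, C)\}$ via a union bound into the failures of the three defining conditions of Definition~\ref{def.LambdaG}, and to bound each failure probability separately so that the total stays within the target rate $\exp(-C^{-1}\size(\cu)^{ds/(d+t+s)})$. The failure of condition (1), namely $\{\cu \notin \gG\}$, is immediately controlled by the assumption \eqref{e:rarely_bad}, which yields rate $\size(\cu)^s$ and strictly dominates the target since $s > \tfrac{ds}{d+t+s}$.

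For the failure of condition (3), I would take a union bound over $x \in \cu$ and apply the coarseness estimate \eqref{e:coarseness_partition} to obtain
\[
\P\Ll[\sup_{x\in \cu}\size(\cu_{\sSl_\gG}(x)) > \size(\cu)^{\frac{d}{d+t}}\Rr] \le |\cu|\exp\Ll(-C^{-1}\size(\cu)^{sd/(d+t)}\Rr).
\]
Since $|\cu| = \size(\cu)^d$ is only polynomial in $\size(\cu)$ and $\tfrac{sd}{d+t} > \tfrac{ds}{d+t+s}$, this bound is absorbed into the target rate for $\size(\cu)$ sufficiently large.

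The main work concerns condition (2). Using the partition structure I would rewrite
\[
\frac{1}{|\cu|}\sum_{x\in\cu}|\cu_{\sSl_\gG}(x)|^t = \frac{1}{|\cu|}\sum_{\cu'\in \sSl_\gG(\cu)}|\cu'|^{t+1},
\]
and organize the right-hand side by the dyadic scale of $\cu'$. Setting the truncation threshold $r_* := \size(\cu)^{d/(d+t+s)}$, the event that some partition cube exceeds size $r_*$ is controlled via \eqref{e:coarseness_partition} and a union bound over $x\in \cu$ by $|\cu|\exp(-C^{-1}r_*^s)$, which by the choice of $r_*$ is precisely of the target order. The exponent $d/(d+t+s)$ is the unique choice balancing this probabilistic cost against the deterministic scale at which the $\ell^{t+1}$-weighted sum over partition cubes remains manageable.

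The hard part is the deterministic step on the complementary event: one has to show that when all partition cubes have size at most $r_*$, the sum $\sum_{\cu'\in\sSl_\gG(\cu)}|\cu'|^{t+1}$ is still bounded by $C|\cu|$ with a constant $C$ depending only on $t,s,K,d$. A crude supremum bound only yields $r_*^{dt}|\cu|$, which grows in $\size(\cu)$, so the argument must exploit the full scale-by-scale structure of the partition: at each dyadic scale $3^n\le r_*$, the count $N_n(\cu)$ of partition cubes of that size is controlled by the rarity of the forcing bad cubes via \eqref{e:coarseness_partition}, and a Borel--Cantelli-type summation across scales in the spirit of Lemma~\ref{le:Borel_Contelli} should then yield the required geometric summability and close the argument.
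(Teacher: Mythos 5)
Your structural outline is sound: the union bound over the three conditions of Definition~\ref{def.LambdaG}, the treatment of conditions (1) and (3), and the choice of the truncation scale $r_* = \size(\cu)^{d/(d+t+s)}$ (which makes $r_*^s$ exactly the target exponent) are all correct and match the strategy behind the paper's citation of \cite[Proposition~2.4]{armstrong2018elliptic}. However, there is a genuine gap in how you propose to close condition~(2).

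You call the remaining step, namely that on $\{\sup_x \size(\cu_{\sSl_\gG}(x)) \le r_*\}$ one has $\sum_{\cu'\in\sSl_\gG(\cu)}|\cu'|^{t+1}\le C|\cu|$, a ``deterministic step''. It is not deterministic, and no Borel--Cantelli-type argument will make it so. Even with all partition cubes capped at $r_*$, the sum $\sum_{\cu'}|\cu'|^{t+1}=\sum_{x\in\cu}|\cu_{\sSl_\gG}(x)|^t$ can in principle be as large as $r_*^{dt}|\cu|$: nothing in the coarseness estimate \eqref{e:coarseness_partition} rules out the event that a fixed positive fraction of $x\in\cu$ have partition cube of size comparable to $r_*$. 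Lemma~\ref{le:Borel_Contelli} converts a sequence of \emph{probability} bounds $\P[B_n]\le C\exp(-C^{-1}3^{ns})$ into a tail bound for a random \emph{maximal scale}; it controls the largest bad cube or largest partition cube, but it gives no control over the \emph{count} $N_n$ of partition cubes of a given size. Your line ``the count $N_n(\cu)$ ... is controlled by the rarity of the forcing bad cubes via \eqref{e:coarseness_partition}'' only gives that $\E[N_n]$ is small; to conclude that the realized sum $\sum_n 3^{nd(t+1)}N_n$ is $\le C|\cu|$ with \emph{stretched-exponential} failure probability you need a concentration inequality, and Markov alone gives only a constant failure probability.

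The missing mechanism, and the one the paper actually invokes through \cite[Proposition~2.4]{armstrong2018elliptic}, is \emph{independence}. Once all partition cubes have size $\le r_*$, the partition $\sSl_\gG(\cu)$ decomposes as a disjoint union of local partitions of subcubes $z+\cu_n$ at an intermediate scale $3^n\gg r_*$, and by the locality assumption \eqref{e:local_dependence} the random variables
\begin{equation*}
S_z := \frac{1}{|\cu_n|}\sum_{x\in z+\cu_n}|\cu_{\sSl_\gG}(x)|^t
\end{equation*}
are $\fF(z+\cu_{n+1})$-measurable, hence independent for $z$ at mutual distance $\ge 3^{n+2}$, and each $S_z$ has a stretched-exponential moment bound by \eqref{e:coarseness_partition}. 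Splitting $\{z\}$ into a bounded number of such far-apart families and applying a concentration inequality for sums of i.i.d.\ sub-Weibull variables then yields the required tail bound on the empirical average. This independence-plus-concentration step is exactly what cannot be replaced by a Borel--Cantelli argument, and it is the piece your proposal leaves open.
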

	\begin{proof}
		The proof follows exactly \cite[Proposition~2.4]{armstrong2018elliptic}. More precisely, \eqref{eq.defgGpc} is derived from Step~1, Step~2, eq.(2.15) in \cite[Proposition~2.4]{armstrong2018elliptic}, and \eqref{eq.momentTail}, \eqref{eq.sizeMax} are derived from Step~3 in \cite[Proposition~2.4]{armstrong2018elliptic}. In fact, the proposition there is stated for the global partition of good cubes, but the proof also makes use of the local version to approximate and to create independence, which covers the case in our context.
	\end{proof}

	\subsection{Construction of pyramid partitions}\label{subsec.RenomalPerco}
	In this subsection, we carry the local partition of good cubes described in Proposition~\ref{prop.partition} to the supercritical percolation clusters. As we track carefully the local event in these propositions, we can further build a sequence of partitions by recurrence.
	
	In the construction, we will have coarsened good cubes in different scales. Roughly, they satisfy four nice properties: 
	\begin{enumerate}
		\item The very-well-connectedness.
		\item Meyers' estimate.
		\item The control of partition cube size.
		\item The robustness under the perturbation of percolation.
	\end{enumerate}
	Let us introduce their definitions at first. Here the control of the partition cube size is stated in Proposition~\ref{prop.cubePartitionSize}. The very-well-connectedness is defined as follows.
	Recall $\tT$ the set of triadic cubes and the definition of predecessors, successor in \eqref{eq.predecessor}. We also recall the crossing cluster from \cite{pisztora1996surface}. 
	\begin{definition}[Crossability and crossing cluster]
		\label{def:crossing_cluster}
		We say that a cube $\cu$ is crossable if each of the $d$ pairs of opposite $(d-1)$-dimensional faces of the cube $\cu$ is joined by an open path in $\cu$. We say that a cluster $\cC\subset \cu$ is a crossing cluster for $\cu$ if $\cC$ intersects each of the $(d-1)$-dimensional faces of $\cu$.
	\end{definition}
	We adapt the word ``very-well-connected" instead of ``good cube" in \cite[Definition~2.6]{armstrong2018elliptic} in order to avoid the abuse in our context. We also add a lower bound of the size to exclude some exceptions in the small scale.
	\begin{definition}[Well-connected and very-well-connected]
		\label{def:well_connected}
		We say that a triadic cube $\cu\in \tT$ is \emph{well-connected} if there exists a crossing cluster $\cC$ for the cube $\cu$ such that:
		\begin{enumerate}
			\item Each cube $\cu'$ with $\size(\cu') \in [\frac{1}{10}\size(\cu),\frac{1}{2}\size(\cu)]$ and $\cu' \cap \frac{3}{4}\cu \neq \emptyset$ is crossable.
			\item Every open path $\gamma \subset \cu'$ with $\diam(\gamma) \geq \frac{1}{10}\size(\cu)$ is connected to $\cC$ within $\cu'$.
		\end{enumerate}
		We denote this unique crossing cluster $\cC$ by $\cC_*(\cu)$, and call it  \emph{the maximal cluster} in the cube $\cu$.
		
		A triadic cube $\cu$ is said \emph{very-well-connected} if it is well-connected, all its $3^d$ successors,  $9^d$ second order successors, and $27^d$ third order successors are well-connected, and its size is larger than $10000$.   
	\end{definition}
	
	Finally, we recall that, the very-well-connected property is valid with high probability in the supercritical regime.
	\begin{lemma}[eq.(2.24) of \cite{antal1996chemical}]
		For every $\p \in (\p_c, 1]$, there exist two finite positive constants $C(d,\p)$ and $c(d,\p) \in (0, \frac{1}{2}]$ such that 
		\begin{align}\label{eq.tailWellConnected}
			\forall \cu \in \tT, \qquad \P_\p[\cu \text{ is very-well-connected }] \geq 1 - C \exp(-c\size(\cu)).
		\end{align}
	\end{lemma}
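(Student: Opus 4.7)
The plan is to reduce the statement for very-well-connected cubes to the analogous statement for a single well-connected cube, which is precisely the content of the classical renormalization estimate of Antal and Pisztora, and then to conclude by a union bound.

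First I would invoke the well-connectedness estimate: for every $\p \in (\p_c, 1]$, there exist constants $C_0(d,\p)$ and $c_0(d,\p) \in (0,1]$ such that for any triadic cube $\cu$,
\[
\P_\p[\cu \text{ is well-connected}] \geq 1 - C_0 \exp(-c_0 \size(\cu)).
\]
This is exactly \cite[eq.(2.24)]{antal1996chemical}, whose geometric output matches conditions (1) and (2) of Definition~\ref{def:well_connected} nearly word-for-word: the existence of a unique crossing cluster, the crossability of every sub-cube of proportional size intersecting $\tfrac{3}{4}\cu$, and the connection of every macroscopically long open path in such sub-cubes to this crossing cluster. The same conclusion can also be extracted from Pisztora's renormalization scheme in \cite{pisztora1996surface}.

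Next I would pass from well-connectedness to very-well-connectedness by a union bound. By Definition~\ref{def:well_connected}, a cube $\cu$ is very-well-connected iff $\size(\cu) > 10000$ and each of the $1 + 3^d + 9^d + 27^d$ cubes consisting of $\cu$ together with its successors up to order $3$ is well-connected. Each such sub-cube $\cu'$ satisfies $\size(\cu') \geq 3^{-3}\size(\cu)$, so the union bound yields
\[
\P_\p[\cu \text{ is not very-well-connected}] \leq (1 + 3^d + 9^d + 27^d)\, C_0 \exp\Ll(-\tfrac{c_0}{27} \size(\cu)\Rr),
\]
as soon as $\size(\cu) > 10000$. Setting $c := \min(c_0/27, 1/2)$ and taking $C$ large enough to absorb the combinatorial prefactor and to trivially cover the finitely many cubes with $\size(\cu) \leq 10000$, the displayed inequality follows.

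Since the argument is essentially a citation combined with a union bound, there is no substantive obstacle; the only point requiring care is bookkeeping. One must verify that the geometric conclusions supplied by the Antal–Pisztora estimate literally imply each of the two requirements in Definition~\ref{def:well_connected}, and that the probability bound is genuinely exponential in $\size(\cu)$ rather than stretched-exponential $\exp(-c\size(\cu)^s)$ with $s < 1$, so that no loss of quality occurs when summing over the constantly-many sub-cubes.
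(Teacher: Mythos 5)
Your reconstruction is correct and is essentially the only sensible route, which is why the paper presents the lemma as a citation rather than proving it: the exponential tail for well-connectedness is drawn from the Antal--Pisztora/Pisztora renormalization machinery (via \cite[Definition~2.6 and the accompanying probability estimate]{armstrong2018elliptic}, which this paper explicitly adapts), and passing from well-connectedness of a single cube to very-well-connectedness of $\cu$ together with its $1 + 3^d + 9^d + 27^d$ descendants down to order three is a union bound that degrades the rate by at most a factor $27$ in the exponent. Your bookkeeping is also right: the constraint $\size(\cu) > 10000$ in Definition~\ref{def:well_connected} forces $\P_\p[\cu \text{ very-well-connected}] = 0$ for small cubes, and taking $C \geq e^{10000c}$ makes the bound vacuous there, so no uniformity is lost. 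The one substantive point you flag --- verifying that the geometric conclusions in \cite[eq.(2.24)]{antal1996chemical} really deliver clauses (1) and (2) of Definition~\ref{def:well_connected} with an exponential (not stretched-exponential) rate --- is precisely what makes this a citation rather than a one-line observation, but it is settled by the existing literature and does not require new work here.
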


	The second property is Meyers' estimate, which is developed in previous work \cite[Proposition~3.8]{armstrong2018elliptic}. Here we give the version involving boundary in our context. Given a well-connected cube $\cu$ and $\cu' \supset \cu$, then we define \emph{the enlarged maximal cluster} for every $c \geq 1$
	\begin{equation}\label{eq.def_clt_boundary}
		\begin{split}
			\cCp(c \cu \cap \cu') &:= \Ll\{x \in c \cu \cap \cu': x  \stackrel{\a}{\longleftrightarrow} \cCm(\cu),  \Rr.   \\
			&  \qquad \text{ or if } c \cu \cap \partial \cu' \neq \emptyset,  x  \stackrel{\a}{\longleftrightarrow}  c \cu \cap \partial \cu', \\
			&  \qquad \qquad \Ll. \text{ all the connection are realized within }  c \cu \cap \cu' \Rr\}.
		\end{split}
	\end{equation} 
	That is, $\cCp(c \cu \cap \cu')$ is the union of the cluster containing $\cCm(\cu)$ and the clusters connecting the boundary of $\cu'$ in $c \cu \cap \cu'$. We usually pick $\cu'$ to be very large and $\cu$ to be its partition cube, then $\cCp(c \cu \cap \cu')$ can be regarded as the local version of $\cCb(\cu')$ in $c \cu \cap \cu'$.


	\begin{definition}[Meyers' estimate]\label{def.Meyers}
		Given three constants ${\eps \in (0,1)}$, ${\rr > \frac{4}{3}}$, ${C \in (0, \infty)}$, we denote by $\mM(\a, \rr, \epsilon, C) \subset \tT$ the collection of cubes $\cu$ satisfying the following property under $\a$
		\begin{enumerate}
			\item $\cu, \frac{4}{3}\cu, \rr \cu$ are very-well connected;
			\item For every cube $\cu' \supset \cu$ and function $v: \rr \cu \cap \cu' \rightarrow \R$ satisfying 
			\begin{equation}\label{eq.Meyers_Poisson}
				\Ll\{
				\begin{array}{cc}
					-\na \cdot (\a \na v) = -\na\cdot \w  &\text{ in }  {\itr \cCp(\rr \cu \cap \cu')}, \\
					v = 0 & \text{ on } \rr \cap \partial \cu'.
				\end{array}
				\Rr.
			\end{equation}
			with  $\w$ an anti-symmetric vector field supported on $\overrightarrow{\Eda}$, one has the estimate
			\begin{multline}
				\label{e:Meyers}
				\left(\frac{1}{\left|\frac{4}{3}\cu\right|} \sum_{e\in \EE(\cCp(\frac{4}{3} \cu \cap \cu'))}|\nabla v(e)|^{2+\eps}\right)^{\frac{1}{2+\eps}} \\
				\leq C\left(\frac{1}{\left|\rr \cu\right|} \sum_{e\in \EE(\cCp(\rr \cu \cap \cu'))}|\nabla v(e)|^2\right)^{\frac{1}{2}}+C\left(\frac{1}{\left|\rr \cu\right|} \sum_{e\in \EE(\cCp(\rr \cu \cap \cu')}|\w(e)|^{2+\eps}\right)^{\frac{1}{2+\eps}}.
			\end{multline}
		\end{enumerate}
	\end{definition}

	The existence of cubes satisfying Meyers' estimate is stated in the following lemma. 
	\begin{lemma}\label{lem.Meyers}
		For every $\rr \in (\frac{4}{3},2)$, there exist finite positive constants $\eps_{Me}, C_{Me}$ such that $\mM(\a, \rr, \eps_{Me}, C_{Me})$ is a collection of good cubes satisfying \eqref{e:local_dependence} and \eqref{e:rarely_bad}.
	\end{lemma}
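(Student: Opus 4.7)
The strategy is to exhibit a local event $E_\cu \in \mathcal{F}(\rr \cu)$ whose occurrence forces $\cu \in \mM(\a, \rr, \eps_{Me}, C_{Me})$ with deterministic constants $\eps_{Me}, C_{Me}$ depending only on $d, \rr$, and $\p$. Since $\rr < 2$ implies $\rr \cu \subset 2 \cu \subset 3\cu$, the $\mathcal{F}(\rr \cu)$-measurability of $E_\cu$ yields \eqref{e:local_dependence} automatically, and the tail bound \eqref{e:rarely_bad} will reduce to the exponential estimate \eqref{eq.tailWellConnected} for very-well-connected cubes. So the whole lemma is extracted from the \emph{interior} Meyers inequality of \cite[Proposition~3.8]{armstrong2018elliptic} plus a boundary extension argument.

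For the core Meyers estimate, I would invoke \cite[Proposition~3.8]{armstrong2018elliptic}, which provides exponents $\eps_{Me} \in (0,1)$ and a constant $C_{Me}$ such that, whenever $\cu, \frac{4}{3}\cu, \rr \cu$ are very-well-connected, every solution of $-\na \cdot (\a \na v) = -\na \cdot \w$ on the maximal cluster of $\rr \cu$ obeys the reverse-Hölder inequality \eqref{e:Meyers} over the maximal cluster of $\frac{4}{3}\cu$. The new ingredient is the dependence on the outer cube $\cu'$ and the Dirichlet condition on $\rr \cu \cap \partial \cu'$. I would split into two cases. If $\rr \cu \subset \itr \cu'$, the Dirichlet data plays no role and $\cCp(\rr \cu \cap \cu')$ is contained in the cluster of $\cCm(\cu)$ inside $\rr \cu$, so \eqref{e:Meyers} is literally the interior statement. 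If $\rr \cu \cap \partial \cu' \neq \emptyset$, I would extend both $v$ and $\w$ by zero outside $\cu'$; using that $v$ vanishes on $\rr \cu \cap \partial \cu'$ and that $\cCp$ by definition \eqref{eq.def_clt_boundary} already absorbs the boundary-connecting clusters, the extended pair satisfies the same divergence-form equation on the enlarged cluster in $\rr \cu$ with the same $L^{2+\eps}$ norm of the source. Applying the interior estimate to the extension recovers \eqref{e:Meyers} with the same constants.

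For the tail bound, define $E_\cu$ as the event that $\cu, \frac{4}{3}\cu, \rr \cu$ are simultaneously very-well-connected. A union bound combined with \eqref{eq.tailWellConnected} yields
\[
\P_{\p}\bigl[\cu \notin \mM(\a, \rr, \eps_{Me}, C_{Me})\bigr] \leq \P_\p\bigl[E_\cu^c\bigr] \leq 3 C \exp\bigl(-c\, \size(\cu)\bigr),
\]
which verifies \eqref{e:rarely_bad} with $s = 1$. Locality of $E_\cu$ is immediate since the very-well-connectedness of each of $\cu, \frac{4}{3}\cu, \rr \cu$ and its finitely many immediate successors is $\mathcal{F}(\rr \cu)$-measurable.

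The main obstacle is the boundary case. One must check that the zero-extension argument is compatible with the definition of $\cCp(\rr \cu \cap \cu')$ in \eqref{eq.def_clt_boundary}: the clusters attached to $\rr \cu \cap \partial \cu'$ should be absorbed into one effective cluster whose harmonic analysis still falls under the very-well-connectedness in $\rr \cu$. This is precisely what the enlarged definition of $\cCp$ is designed for—the boundary-connecting finite clusters become part of a single cluster on which the Caccioppoli and reverse-Hölder machinery of \cite[Section~3]{armstrong2018elliptic} remains valid after zero extension, whence the unified constants $\eps_{Me}, C_{Me}$ depend only on $d, \rr, \p$ and not on $\cu'$.
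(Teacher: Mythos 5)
Your overall plan—locality via $\mathcal{F}(\rr\cu)$-measurability, the tail bound from exponential decay of bad events, and a reduction of the boundary Meyers estimate to the interior estimate of \cite[Proposition~3.8]{armstrong2018elliptic}—matches the paper in outline; the paper itself is terse, pointing to \cite[Proposition~11]{dario2021corrector} for the interior case and to \cite[Appendix~C]{AKMbook} for the boundary adaptation. Where your argument diverges from what those references actually provide is the zero-extension step, and that step has a genuine gap. If you extend a Dirichlet solution $v$ of $-\na\cdot(\a\na v)=-\na\cdot\w$ in $\itr\,\cCp(\rr\cu\cap\cu')$ by zero outside $\cu'$, the extended function does \emph{not} satisfy the same equation on all of (the maximal cluster of) $\rr\cu$. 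Concretely, at a site $x\in\partial\cu'$ with a neighbor $y\in\itr\cu'$, the extended divergence evaluates to $\na\cdot(\a\na v)(x)=\sum_{y\sim x,\,y\in\itr\cu'}\a(x,y)\,v(y)$, which the original equation never constrains and which is in no way controlled by the $L^{2+\eps}$ norm of $\w$; this is precisely the discrete analogue of the distributional source carried by the conormal flux of a Dirichlet solution. The interior Meyers estimate requires the equation to hold on $\itr$ of the relevant cluster in $\rr\cu$, so its hypothesis fails for the extension, and your claim that ``the extended pair satisfies the same divergence-form equation \ldots with the same $L^{2+\eps}$ norm of the source'' is not justified.

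The correct route, which is what the pointer to \cite[Appendix~C]{AKMbook} is standing in for, is a \emph{boundary Caccioppoli inequality}: since $v$ vanishes on $\rr\cu\cap\partial\cu'$, the test function $\eta^2 v$ lies in $C_0(\cu')$ even when the cutoff $\eta$ does not vanish near $\partial\cu'$, so the Caccioppoli estimate holds on cubes straddling the boundary with the zero-boundary Poincaré inequality replacing the interior mean-subtracted Poincaré; one then runs Gehring's lemma as in the interior case (the zero extension is legitimate only as a notational device \emph{after} this boundary Caccioppoli has been established). A second, smaller issue: your tail bound asserts $\P[\cu\notin\mM]\leq\P[E_\cu^c]$ with $E_\cu$ the event of simultaneous very-well-connectedness, implicitly treating the Meyers estimate as a deterministic consequence of very-well-connectedness alone. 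In \cite{armstrong2018elliptic} the Meyers estimate additionally requires control on the coarseness of the local partition of good cubes (this is where the stretched-exponential exponent $s<1$ in \eqref{e:rarely_bad} comes from), so the event $\{\cu\in\mM\}$ is strictly smaller than $E_\cu$ and the exponent $s=1$ you claim is optimistic; the tail estimate should instead be inherited directly from the statement of \cite[Proposition~3.8]{armstrong2018elliptic}.
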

	\begin{proof}
		The result about interior Meyers' estimate was already discussed in \cite[Proposition~3.8]{armstrong2018elliptic} and \cite[Proposition~11]{dario2021corrector}. Meyers' estimate involving the boundary is similar and we refer to \cite[Appendix~C]{AKMbook}. The locality inherits that from the definition \eqref{eq.def_clt_boundary} and \eqref{e:Meyers}.
	\end{proof}

	The last property is the robustness of cube and partition defined in the following sense. Here $\mathbf{S}(\a)$ stands for a general statement $\mathbf{S}$ under the realization $\a$. 
	\begin{definition}[$N$-$\stable$]\label{def.N_stable}
		Given $N \in \N_+$, then a cube and its partition $(\cu,\pP)$ is $N$-$\stable$ for the statement $\mathbf{S}(\a^\#)$ if the following property holds: fix the realization of $(\cu,\pP)$ under $\a$,  then it satisfies $\mathbf{S}(\a^F)$ for every $F \subset \Ed(\cu)$ with $\vert F\vert \leq N$.
	\end{definition}
	\begin{remark}\label{rmk.N_stable}
		More explicitly, here we only have one partition $(\cu,\pP)$, but it satisfies the statement under  $\sum_{k=0}^N{ \vert \Ed(\cu) \vert \choose k}$ different percolation configurations. We use $\a^\#$ to indicate the admissible environment $\a^F$, in order to avoid confusion and highlight the generality in the discussion/proof about $N$-$\stable$ property.
	\end{remark}

	Now, we state the main result of this section, which defines the good cubes in this paper. They are actually a sequence of good cubes and coarsened good cubes. Here the scaling constant $\rr$ can be chosen.
	\begin{prop}[$N$-$\stable$ good cubes]
		\label{prop.good_cube_property}
		Given $d \geq 2$, $\p \in (\p_c, 1]$, $\rr \in (\frac{4}{3}, 2)$, and $N \in \N$, there exist constants $C_{Me},\eps_{Me}$ and $\{C_{Sc,k}, K_k, s_k\}_{k\geq 0}$ depending on $\p, d, \rr, N$ and a sequence of good cubes $(\gGN_k)_{k \geq 0}$ 
		\begin{align}\label{eq.good_cube_subset}
			\tT \supset \gG \supset \gGN_0 \supset \gGN_1 \supset \gGN_2 \supset \gGN_3 \cdots
		\end{align}
		For every cube $\cu \in \gGN_k$, it is $N$-$\stable$ for the statement that
		\begin{enumerate}[label=(\roman*)]
			\item \emph{Meyers' estimate}: $\cu \in \mM(\a^\#, \rr, \eps_{Me}, C_{Me})$;
			\item \emph{Connectivity}: $\cu, \frac{4}{3}\cu, \rr \cu$ are very-well connected under $\a^\#$;
		\end{enumerate}
		and it also satisfies the following properties
		\begin{enumerate}	
			\item \emph{Partition cube size}: when $k \geq 1$, then $\cu \in \Lambda\Ll(\gGN_{k-1}, d + k, C_{Sc,k}\Rr)$.
			\item \emph{Locality and tail-estimate}: $\cu$ verifies the locality \eqref{e:local_dependence} and the tail probability estimate \eqref{e:rarely_bad} with constants $K_k, s_k$.
		\end{enumerate}
		
	\end{prop}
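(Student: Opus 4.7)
The construction is by induction on $k$, using the local partition machinery of Section~\ref{subsec:partition} as the engine. For the base case I would set $\gG := \mM(\a, \rr, \eps_{Me}, C_{Me})$ from Lemma~\ref{lem.Meyers} (augmented by the very-well-connectedness of $\cu, \frac{4}{3}\cu, \rr\cu$ under $\a$), and explicitly enforce $N$-stability by defining
\begin{equation*}
\gGN_0 := \bigl\{\cu \in \tT \,:\, \forall F \subset \Ed(\cu), |F| \leq N, \ \cu \in \mM(\a^F, \rr, \eps_{Me}, C_{Me}) \ \text{and } \cu, \tfrac{4}{3}\cu, \rr\cu \ \text{are very-well-connected under } \a^F\bigr\}.
\end{equation*}
For $k \geq 1$ I would then set $\gGN_k := \Lambda(\gGN_{k-1}, d+k, C_{Sc,k})$, with $C_{Sc,k}$ supplied by Proposition~\ref{prop.cubePartitionSize} applied to $\gGN_{k-1}$ at moment exponent $t = d+k$.

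For the base step the two key properties are verified as follows. Locality \eqref{e:local_dependence} is automatic: each event in the intersection depends only on $\a$ restricted to $\rr\cu \subset 2\cu \subset 3\cu$. For the tail bound \eqref{e:rarely_bad}, the elementary change-of-measure estimate $\P_\p[B \text{ under } \a^F] \leq \p^{-|F|} \P_\p[B \text{ under } \a]$, obtained by conditioning on the event $\{\a|_F \equiv 1\}$ of probability $\p^{|F|}$ on which $\a = \a^F$, combined with the stretched exponential bounds from Lemma~\ref{lem.Meyers} and \eqref{eq.tailWellConnected}, yields for each fixed $F$ with $|F| \leq N$ a bound of the form $C \p^{-N} \exp(-c\,\size(\cu)^s)$. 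The union bound over the at most $C\,\size(\cu)^{dN}$ admissible $F$ produces only a polynomial prefactor, absorbed by the stretched exponential, giving \eqref{e:rarely_bad} for $\gGN_0$ with a slightly reduced but positive exponent $s_0$.

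The inductive step $\gGN_{k-1} \rightsquigarrow \gGN_k$ is then a direct application of Section~\ref{subsec:partition}: Lemma~\ref{lem.local} gives locality, Proposition~\ref{prop.cubePartitionSize} gives the tail bound with updated exponent $s_k = \tfrac{d\, s_{k-1}}{2d+k+s_{k-1}} > 0$, and the inclusion $\gGN_k \subset \gGN_{k-1}$ is built into Definition~\ref{def.LambdaG} of $\Lambda$ (since $\cu \in \Lambda(\gG, \cdot, \cdot)$ requires $\cu \in \gG$). Consequently the $N$-stable properties (i) and (ii) are automatically inherited along the chain $\gGN_k \subset \gGN_{k-1} \subset \cdots \subset \gGN_0$, while property (1) is precisely the defining condition of $\Lambda$.

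The main obstacle is concentrated in the base case: reconciling the polynomial-in-scale number of perturbation patterns with the stretched exponential decay of each bad event. This is resolved cleanly because $N$ is a fixed integer (independent of $\size(\cu)$), so the prefactor $\size(\cu)^{dN}$ is harmless at large scales. A subtle secondary point is that very-well-connectedness is not obviously monotone in the open-edge configuration — opening edges of $F$ could a priori create a long path disjoint from the enlarged maximal cluster, thereby violating property (2) of Definition~\ref{def:well_connected} — but this is sidestepped by explicitly quantifying over all admissible $F$ in the definition of $\gGN_0$. Finally, although the tail exponents $s_k$ decay geometrically in $k$, they remain strictly positive for every fixed $k$, which is exactly what the finite-order differentiability program carried out in the remainder of the paper requires.
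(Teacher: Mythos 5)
Your proposal is correct and takes essentially the same route as the paper: the same definition $\gGN_0 := \bigcap_{|F|\leq N}\{\cu\in\mM(\a^F,\rr,\eps_{Me},C_{Me})\}$ (your extra very-well-connectedness clause is redundant, since it is already condition (1) in Definition~\ref{def.Meyers}), the same inductive rule $\gGN_k := \Lambda(\gGN_{k-1}, d+k, C_{Sc,k})$, and the same appeal to Lemma~\ref{lem.local} and Proposition~\ref{prop.cubePartitionSize} for locality and the tail. The one place where you are more explicit than the paper is the base-case tail bound: the paper just asserts that the estimate for $\a$ transfers to $\a^F$ ``because $\a^F$ only differs from $\a$ on finite bonds,'' whereas you supply the clean change-of-measure inequality $\P_\p[B(\a^F)] \leq \p^{-|F|}\P_\p[B(\a)]$ via conditioning on $\{\a|_F\equiv 1\}$; this is a valid and slightly more self-contained justification. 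Your explicit recursion $s_k = d\,s_{k-1}/(2d+k+s_{k-1})$ likewise matches what Proposition~\ref{prop.cubePartitionSize} produces, and your observation that non-monotonicity of well-connectedness is neutralized by quantifying over $F$ is exactly the implicit reason the paper's intersection-definition works.
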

	\begin{proof}
		We prove it by induction. 
		\smallskip

		\textit{Step of basis $k=0$:} We define that
		\begin{align}\label{eq.good_cubs_basis}
			\Ll\{\cu \in \gGN_{0}\Rr\} := \bigcap_{\substack{ F \subset \Ed(\cu) \\ \vert F\vert \leq N}} \Ll\{ \cu \in \mM(\a^F, \rr, \eps_{Me}, C_{Me})\Rr\}.
		\end{align}
		
		This definition justifies the $N$-$\stable$ property for Meyers' estimate and the connectivity. Actually, the part about connectivity was included implicitly in (1) of Definition~\ref{def.Meyers}.
		
		Then the locality in  \eqref{e:Meyers} is natural, as it only depends on the connectivity of $\rr\cu$. Viewing that every term on {\rhs} is  $\mcl F^{\a}(3\cu)$-measurable, which justifies the locality \eqref{e:local_dependence} for $\gGN_{0}$. Then the union bound gives that
		\begin{align*}
			\P_{\p}\Ll[\cu \notin  \gGN_{0}\Rr] & \leq \sum_{\substack{ F \subset \Ed(\cu) \\ \vert F\vert \leq N}} 	\P_{\p}\Ll[\cu \notin \mM(\a^F, \rr, \eps_{Me}, C_{Me})\Rr].
		\end{align*}
		Because $\a^F$ only differs from $\a$ on finite bonds, the tail estimate from \cite[Proposition~3.8, (3.17)]{armstrong2018elliptic} thus yields that of $\gGN_{0}$.
		
		
		\textit{Step of induction:}
		Assume that the case for some $k \in \N$ is already established. Then we define 
		\begin{align}\label{eq.good_cubs_Gk}
			\gGN_{k+1} :=  \Lambda\Ll(\gGN_{k}, d + k + 1, C_{Sc,k+1}\Rr).
		\end{align}
		According to \eqref{eq.defgGpc}, we have naturally $\gGN_{k+1} \subset \gGN_{k}$ above, which implies \eqref{eq.good_cube_subset}. $N$-$\stable$ property for Meyers' estimate and connectivity also inherits from $\gGN_{0}$ thanks to the inclusion $\gGN_{k+1} \subset \gGN_{k} \subset \gGN_{0}$. We then verify the other properties one by one:
		\begin{enumerate}
			\item The control of the size of the partition cube is direct from \eqref{eq.good_cubs_Gk}.
			\item By assumption,  $\gGN_k$ satisfies  \eqref{e:local_dependence} and \eqref{e:rarely_bad}, then  Lemma~\ref{lem.local}  and Proposition~\ref{prop.cubePartitionSize}  justifies respectively \eqref{e:local_dependence} and \eqref{e:rarely_bad} for the case $(k+1)$. 
		\end{enumerate}
		
	\end{proof}

	We define the pyramid partitions of good cubes. Together with the properties in Proposition~\ref{prop.good_cube_property}, they are our basic tools to resolve the degenerate geometry in \eqref{eq.recurrence_eq}. 	
	\begin{definition}[Pyramid partitions of good cubes]\label{def.pyramid}
		Given $k \in \N_+$ and $N \in \N$, for every $\cu \in \gGN_k$, we can construct a sequence partition of good cubes $(\pPN_{h})_{0 \leq h \leq k-1}$ as follows. 
		\begin{equation}\label{e.pyrmid}
			\begin{split}
				\pPN_{k-1}(\cu) &:= \sSl_{\gGN_{k-1}}(\cu), \\
				\forall 0 \leq h \leq k-2, \qquad \pPN_{h}(\cu) &:= \bigcup_{\cu' \in \pPN_{h+1}} \sSl_{\gGN_{h}}(\cu').  
			\end{split}
		\end{equation}
		In the equation \eqref{e.pyrmid} above, for $\cu' \in \pPN_{h+1}$, we call the cubes in $\sSl_{\gGN_{h}}(\cu')$ its \emph{children partition cubes}, and respectively $\cu'$ is the unique \emph{parent partition cube} for $\sSl_{\gGN_{h}}(\cu')$. For any $x \in \cu$, we denote by $\cu_{\pPN_{h}}(x)$ the unique element in $\pPN_{h}$ that contains $x$.
	\end{definition}
	\begin{proof}[Proof of the well-definedness of Definition~\ref{def.pyramid}]
		Since $\cu \in \gGN_k$, then (1) of Proposition~\ref{prop.good_cube_property} implies $\cu \in \gGN_{k-1}$, thus Proposition~\ref{prop.partition} ensures the partition $\sSl_{\gGN_{k-1}}(\cu)$. Afterwards, \eqref{eq.good_cube_subset} gives inductively the local partition.
	\end{proof}
	
	\begin{remark}\label{rmk.counting}
		For $\cu \in \gGN_k$ and $0 \leq h \leq k-1$, the partition $\pPN_{h}(\cu)$ can be different from $\sSl_{\gGN_{h}}(\cu)$, although the elements of both partitions are from $\gGN_{h}$. In particular, neighbors in $\pPN_{h}(\cu)$ may not even  have comparable sizes, thus we should be really careful in the proof; see Lemma~\ref{le:overlap_counting}. Meanwhile, we can do $(k-h)$ rounds of partitions from $\cu$ to $\pPN_{h}(\cu)$, and this is one key tool to study \eqref{eq.recurrence_eq}.  
	\end{remark}

	\begin{figure}[b]
		\centering
		\includegraphics[width=0.7\textwidth]{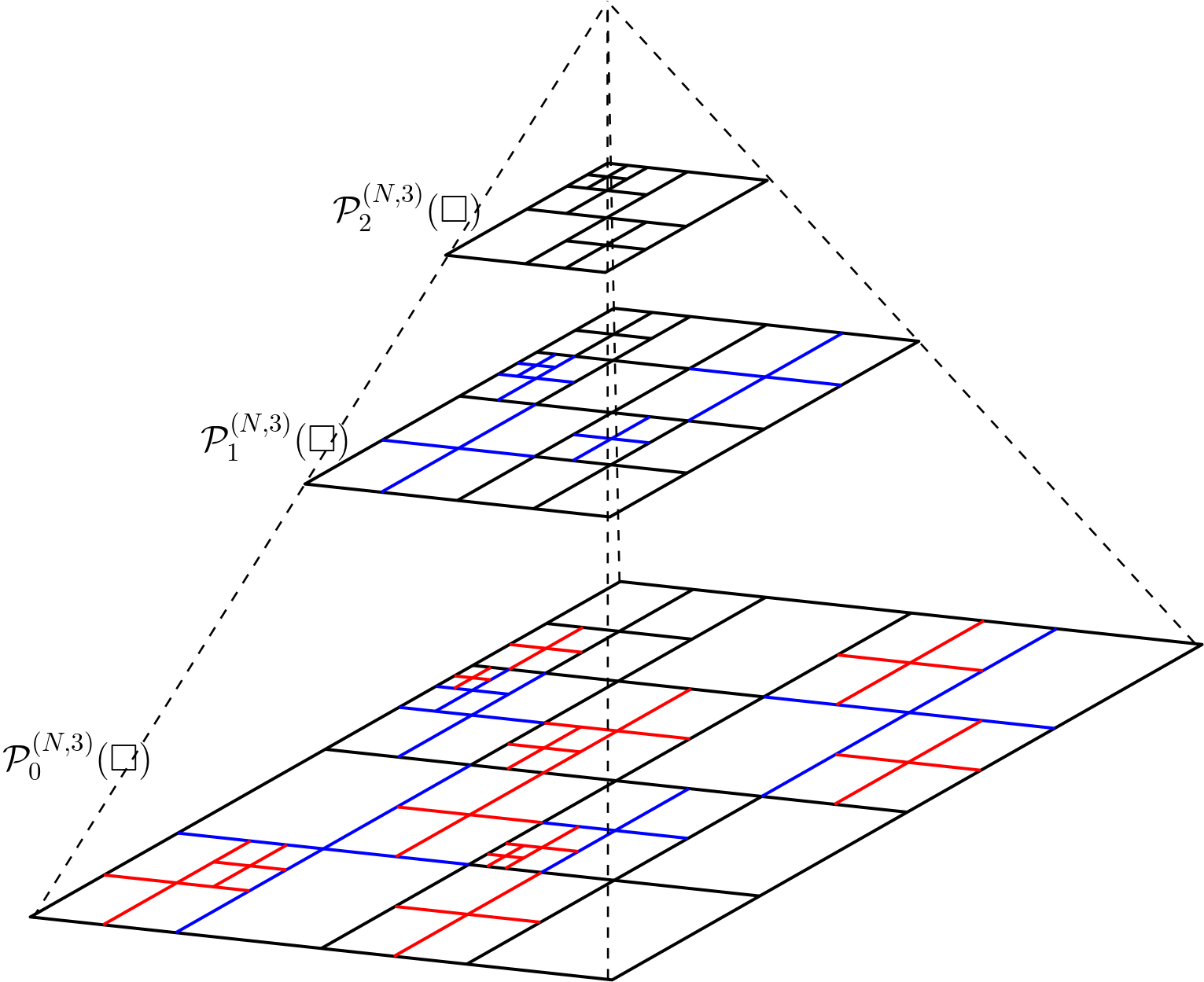}
		\caption{An illustration of the pyramid partitions $\pP^{(N,3)}_{2}(\square), \pP^{(N,3)}_{1}(\square), \pP^{(N,3)}_{0}(\square)$.}\label{fig.cube}
	\end{figure}
	
	\subsection{Basic properties of the pyramid partitions}\label{subsec:counting}
	In the end of this section, we give several basic properties about the pyramid partitions $(\pPN_{h})_{0 \leq h \leq k-1}$ in Definition~\ref{def.pyramid}. Recall Definition~\ref{def.N_stable} for the $N$-$\stable$ properties and the convention in Remark~\ref{rmk.N_stable}. We assume $k \in \N_+, N \in \N$ in all the statement without further explanation.

	Lemma~\ref{lem.maximalCluster} is a generalization of similar properties for the partition of the very-well-connected cubes in \cite{armstrong2018elliptic}.
	
	\begin{lemma}\label{lem.maximalCluster}
		Every $\cu \in \gGN_k$ is $N$-$\stable$ for the following property: for every $\cu', \cu'' \in \sSl_{\gGN_{k-1}}(\cu)$ with ${\dist(\cu',\cu'')\leq 1}$, there exists a cluster $\cC^\#$ such that
		\begin{equation}
			\label{e:neighbor_clusters}
			\cC^\#_*(\cu') \cup \cC^\#_*(\cu'') \subset \cC^\# \subset \cu' \cup \cu''.
		\end{equation} 
		Moreover, for any integer $0 \leq h \leq k-1$ and $\widehat{\cu} \in \pPN_{h}(\cu)$, we have
		\begin{equation}
			\label{e:inner_cluster_connectivity}
			\cC^\#_*(\widehat{\cu}) \subset	\cC^\#_*(\cu).
		\end{equation}
	\end{lemma}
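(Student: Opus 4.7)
\emph{Strategy.} The plan is to reduce both claims to the renormalization argument underlying \cite[Proposition~2.7]{armstrong2018elliptic}, adapted to the $N$-$\stable$ setting. I will fix throughout an arbitrary $F \subset \Ed(\cu)$ with $|F| \leq N$ and abbreviate $\cC_* := \cC^{\a^F}_*$; since the partition $\sSl_{\gGN_{k-1}}(\cu)$ depends only on $\a$ (not $\a^F$), the $N$-$\stable$ conclusion will follow once the statement is verified for this arbitrary $F$.

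\emph{Part~1 (neighboring partition cubes).} I will take $\cu', \cu'' \in \sSl_{\gGN_{k-1}}(\cu)$ with $\dist(\cu', \cu'') \leq 1$. Proposition~\ref{prop.partition}(2) gives $\size(\cu'')/\size(\cu') \in [1/3, 3]$; WLOG $\size(\cu') \geq \size(\cu'')$. By Proposition~\ref{prop.partition}(1), every triadic cube $\cu^*$ with $\cu' \subset \cu^* \subset \cu$ lies in $\gGN_{k-1} \subset \gGN_0$, so I can select such a $\cu^*$ of size roughly $27\size(\cu')$ with $\cu' \cup \cu'' \subset \tfrac{3}{4}\cu^*$. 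The $N$-$\stable$ very-well-connectedness built into $\gGN_0$ then makes $\cu^*$ satisfy Definition~\ref{def:well_connected} under $\a^F$. Both $\cu'$ and $\cu''$ are sub-cubes of $\cu^*$ of size in $[\tfrac{1}{10}\size(\cu^*), \tfrac{1}{2}\size(\cu^*)]$ meeting $\tfrac{3}{4}\cu^*$, and each crossing cluster $\cC_*(\cu'), \cC_*(\cu'')$ is an open path of diameter $\geq \tfrac{1}{10}\size(\cu^*)$. Condition (2) of Definition~\ref{def:well_connected} will then connect $\cC_*(\cu')$ to $\cC_*(\cu^*)$ within $\cu'$, and $\cC_*(\cu'')$ to $\cC_*(\cu^*)$ within $\cu''$. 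The connected component of the open subgraph on $\cu' \cup \cu''$ containing these two maximal clusters will serve as the desired $\cC^\#$.

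\emph{Part~2 (iterated inclusion).} I will proceed by downward induction on $h$. For the base case $h = k-1$, fix $\widehat{\cu} \in \sSl_{\gGN_{k-1}}(\cu)$ and iteratively apply Part~1 along a chain of adjacent partition cubes starting from $\widehat{\cu}$, merging their maximal clusters into a growing open subcluster $\tilde{\cC} \subset \cu$. By the size control \eqref{eq.sizeMax}, each partition cube has size at most $\size(\cu)^{d/(d+k)}$, much smaller than $\tfrac{1}{10}\size(\cu)$ for $\cu$ large; hence the chain can be extended until $\tilde{\cC}$ lies inside a sub-cube of $\cu$ of size in $[\tfrac{1}{10}\size(\cu), \tfrac{1}{2}\size(\cu)]$ meeting $\tfrac{3}{4}\cu$, while still having diameter $\geq \tfrac{1}{10}\size(\cu)$. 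Definition~\ref{def:well_connected}(2), applied to $\cu \in \gGN_k \subset \gGN_0$ under $\a^F$, will then force $\tilde{\cC} \subset \cC_*(\cu)$, yielding $\cC_*(\widehat{\cu}) \subset \tilde{\cC} \subset \cC_*(\cu)$. For the inductive step, given $\widehat{\cu} \in \pPN_h(\cu)$ with $h < k-1$, let $\widehat{\cu}_+ \in \pPN_{h+1}(\cu)$ be its parent, so that $\widehat{\cu} \in \sSl_{\gGN_h}(\widehat{\cu}_+)$ and $\widehat{\cu}_+ \in \gGN_{h+1}$. Applying the base-case argument to the pair $(\widehat{\cu}_+, \widehat{\cu})$ gives $\cC_*(\widehat{\cu}) \subset \cC_*(\widehat{\cu}_+)$, and combining with the induction hypothesis $\cC_*(\widehat{\cu}_+) \subset \cC_*(\cu)$ closes the argument.

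\emph{Main obstacle.} The conceptual content is exactly the renormalization argument from \cite{armstrong2018elliptic}; what is new is the uniform $N$-stability, i.e.\ ensuring the connectivity statements hold simultaneously for every $F \subset \Ed(\cu)$ with $|F| \leq N$ even though the partition itself is fixed under $\a$. This will be handled by the $N$-stability baked into $\gGN_0$ via the defining intersection in \eqref{eq.good_cubs_basis}, propagated upward through the inclusion chain \eqref{eq.good_cube_subset}. A secondary subtlety in Part~2 is the size-matching between a partition cube and its parent: the rapid decay of partition cube sizes guaranteed by \eqref{eq.sizeMax} is precisely what allows chains of partition cubes to grow to the scale required by Definition~\ref{def:well_connected}(2).
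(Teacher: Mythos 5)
Your overall structure --- fix $F$ with $|F|\le N$, chain neighboring partition cubes, then induct from $h=k-1$ downward along the pyramid --- matches the paper's, and your inductive step for Part~2 is essentially identical. But Part~1 contains a genuine quantitative gap, and your base case for Part~2 takes a detour the paper avoids.

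In Part~1 you take $\cu^*$ of size $\approx 27\size(\cu')$ and assert that $\cu'$, $\cu''$ are sub-cubes of size in $[\tfrac{1}{10}\size(\cu^*), \tfrac{1}{2}\size(\cu^*)]$ whose crossing clusters have diameter at least $\tfrac{1}{10}\size(\cu^*)$. With that choice, however, $\size(\cu') = \tfrac{1}{27}\size(\cu^*) < \tfrac{1}{10}\size(\cu^*)$, so $\cu'$ (hence also $\cu''$, which is no larger) falls outside the range admissible in Definition~\ref{def:well_connected}, and $\diam \cC^\#_*(\cu') \leq \size(\cu')$ is below the diameter threshold in condition~(2). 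A second, independent issue: even with a correctly sized ancestor, condition~(2) only gives a path from $\cC^\#_*(\cu')$ to $\cC^\#_*(\cu^*)$ \emph{within} $\cu'$ and a path from $\cC^\#_*(\cu'')$ to $\cC^\#_*(\cu^*)$ \emph{within} $\cu''$; the leg through $\cC^\#_*(\cu^*)$ joining these two may well exit $\cu' \cup \cu''$, so the containment $\cC^\# \subset \cu' \cup \cu''$ required in \eqref{e:neighbor_clusters} is not established. The paper sidesteps all of this by citing the proof of \cite[Lemma~2.8]{armstrong2018elliptic} verbatim, the only new point being that $N$-stability of $\gGN_{k-1}$ makes that argument valid simultaneously for every admissible $\a^\#$.

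For the base case of Part~2, you grow a chain from $\widehat{\cu}$ until it has diameter roughly $\tfrac{1}{10}\size(\cu)$ and then embed it using Definition~\ref{def:well_connected}(2). This could work but requires a geometric stopping-time argument (stop at the right diameter, verify the bounding sub-cube meets $\tfrac{3}{4}\cu$ and stays inside $\cu$) which you only gesture at. The paper's base case is simpler: chain through \emph{all} of $\pPN_{k-1}(\cu)$ using Part~1. Since the partition tiles $\cu$, the resulting open connected set is a crossing cluster of $\cu$ sitting inside $\cu$, and well-connectedness of $\cu$ itself (uniqueness of the crossing cluster) identifies it with a subset of $\cC^\#_*(\cu)$, giving $\cC^\#_*(\widehat{\cu}) \subset \cC^\#_*(\cu)$ with no intermediate geometry. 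Adopting that route eliminates the stopping-time argument and fits cleanly with the inductive step you already wrote.
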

	\begin{proof}
		We only apply Proposition~\ref{prop.good_cube_property} and Definition~\ref{def.pyramid} in the proof, which conveys the $N$-$\stable$ property. 
		
		The definition \eqref{e.pyrmid} yields a local partition of good cube $\sSl_{\gGN_{k-1}}(\cu)$. From (ii) of Proposition~\ref{prop.good_cube_property}, the cubes in $\gGN_{k-1}$ are very-well-connected satisfying \eqref{e:local_dependence} and \eqref{e:rarely_bad}  under admissible $\a^\#$, thus \eqref{e:neighbor_clusters} follows directly the proof of \cite[Lemma 2.8]{armstrong2018elliptic}. 
		
		For \eqref{e:inner_cluster_connectivity}, we prove it by induction from $h=(k-1)$ to $h=0$. The case $(k-1)$ is the basis. Applying \eqref{e:neighbor_clusters} under admissible $\a^\#$, we go over the partition cubes in $\pPN_{k-1}(\cu)$, their maximal clusters are connected to a cluster $\cC^\#$, which is crossing in $\cu$. Since $\cu \in \gGN_k$ is also well-connected, $\cC^\#$ has to be $\cC^\#_*(\cu)$ and this justifies \eqref{e:inner_cluster_connectivity} for $h= k-1$. For the induction step, suppose that \eqref{e:inner_cluster_connectivity} is already established for $(h+1)$ and we treat the case $h$. Given $\widehat{\cu} \in \pPN_{h}(\cu)$, then by the definition of pyramid partitions \eqref{e.pyrmid}, there exists its unique parent cube $\widehat{\cu}'$ such that 
		\begin{align*}
			\widehat{\cu}' \in \pPN_{h+1}(\cu), \qquad \widehat{\cu} \in \sSl_{\gGN_{h}}(\widehat{\cu}').
		\end{align*}
		Then the assumption \eqref{e:inner_cluster_connectivity} for $(h+1)$ implies
		\begin{align*}
			\cC^\#_*(\widehat{\cu}') \subset \cC^\#_*(\cu).
		\end{align*}
		Because $\widehat{\cu}' \in \gGN_{h+1}$, which is also robust when opening $N$ edges, the basic case of \eqref{e:inner_cluster_connectivity} applies to $\sSl_{\gGN_{h}}(\widehat{\cu}')$ that 
		\begin{align*}
			\cC^\#_*(\widehat{\cu}) \subset \cC^\#_*(\widehat{\cu}').
		\end{align*}
		In conclusion, we obtain that $\cC^\#_*(\widehat{\cu}) \subset \cC^\#_*(\widehat{\cu}')  \subset \cC^\#_*(\cu)$.
	\end{proof}
	
	The following lemma strengthens the result above:  we can determine whether or not a vertex is on the maximal cluster just by a slight enlargement of the partition cube. Let us explain the notations in the lemma.  Here the scaling constant $\rr$ is associated to good cubes $(\gGN_k)_{k \geq 0}$ defined in Proposition~\ref{prop.good_cube_property}, and $\OO(x)$ stands for the finite cluster containing $x$ as defined in Section~\ref{subsec.perco}, and $\cCp \Ll(\frac{4}{3} \cu' \cap \cu \Rr)$ is the boundary-connecting cluster defined in \eqref{eq.def_clt_boundary}.
	\begin{lemma}\label{lem.detouring_path}
		Every $\cu \in \gGN_k$ is $N$-$\stable$ for the statement: for every $\cu' \in \pPN_{h}(\cu) $ with $0 \leq h \leq k-1$, and every ${x \in \frac{16}{15}\cu' \cap \cu}$, the following properties hold.
		\begin{itemize}
			\item If $x \in \cC^\#_*(\cu)$, then $x \in \cCm^\# \Ll(\frac{4}{3} \cu'\Rr) \subset \cCm^\# \Ll(\rr \cu'\Rr)$.
			\item If $x \in \cCb^\#(\cu)$, then $x \in \cCp^\# \Ll(\frac{4}{3} \cu' \cap \cu \Rr) \subset \cCp^\# \Ll(\rr \cu' \cap \cu\Rr)$. As a corollary, we also have 
			\begin{align}\label{eq.clt_decomposition}
				\bigcup_{\cu' \in \pPN_{h}(\cu)} \cCp^\# \Ll(\frac{4}{3} \cu' \cap \cu \Rr) = \bigcup_{\cu' \in \pPN_{h}(\cu)} \cCp^\# \Ll(\rr \cu' \cap \cu \Rr) = \cCb^\# \Ll(\cu\Rr).
			\end{align}
			\item If $x \notin \cCb^\#(\cu)$, then $\OO^\#(x) \subset \frac{4}{3} \cu'$.
		\end{itemize}
	\end{lemma}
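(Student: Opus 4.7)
The plan is to derive all three bullets from the very-well-connectedness of $\frac{4}{3}\cu'$ and $\rr\cu'$, which is supplied by Proposition~\ref{prop.good_cube_property} since $\cu'\in\pPN_h(\cu)\subset\gGN_h\subset\gGN_0$ is $N$-stable for exactly these properties. I fix an admissible configuration $\a^\#=\a^F$ with $|F|\le N$ and work under this fixed $\a^\#$ throughout; then $\cu',\frac{4}{3}\cu',\rr\cu'$ are all very-well-connected in the sense of Definition~\ref{def:well_connected}. The arithmetic engine is the identity
\[
\dist\!\Ll(\tfrac{16}{15}\cu',\,\partial\tfrac{4}{3}\cu'\Rr)=\tfrac{2}{15}\size(\cu')=\tfrac{1}{10}\size\!\Ll(\tfrac{4}{3}\cu'\Rr),
\]
which matches the threshold $\tfrac{1}{10}$ in condition~(2) of Definition~\ref{def:well_connected}. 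For the first bullet, I let $C_x$ be the component of $x$ in $\cC^\#_*(\cu)\cap\tfrac{4}{3}\cu'$ (viewed as a subgraph inside $\tfrac{4}{3}\cu'$). Because $\cC^\#_*(\cu)$ crosses the strictly larger cube $\cu$, $C_x$ must reach $\partial\tfrac{4}{3}\cu'$, giving $\diam(C_x)\ge\dist(x,\partial\tfrac{4}{3}\cu')\ge\tfrac{1}{10}\size(\tfrac{4}{3}\cu')$; the well-connectedness of $\tfrac{4}{3}\cu'$ then forces $C_x\subset\cC^\#_*(\tfrac{4}{3}\cu')$, hence $x\in\cC^\#_*(\tfrac{4}{3}\cu')$. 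The second inclusion $\cC^\#_*(\tfrac{4}{3}\cu')\subset\cC^\#_*(\rr\cu')$ is the same argument one scale up, since $\diam\cC^\#_*(\tfrac{4}{3}\cu')\ge\tfrac{4}{3}\size(\cu')>\tfrac{1}{10}\size(\rr\cu')$ (using $\rr<2$).

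For the third bullet I argue by contradiction: if $\OO^\#(x)\not\subset\tfrac{4}{3}\cu'$, then the component of $x$ in $\OO^\#(x)\cap\tfrac{4}{3}\cu'$ reaches $\partial\tfrac{4}{3}\cu'$ and so has diameter $\ge\tfrac{1}{10}\size(\tfrac{4}{3}\cu')$; well-connectedness merges it into $\cC^\#_*(\tfrac{4}{3}\cu')$, which by Lemma~\ref{lem.maximalCluster} contains $\cC^\#_*(\cu')\subset\cC^\#_*(\cu)\subset\cCf^\#$, contradicting finiteness of $\OO^\#(x)$. For the second bullet with $x\in\cCb^\#(\cu)$, I dichotomise on whether $x\in\cC^\#_*(\cu)$: if yes, bullet~1 together with $\cC^\#_*(\cu')\subset\cC^\#_*(\tfrac{4}{3}\cu')$ (obtained by applying condition~(2) to the crossing path of $\cu'$ in $\tfrac{4}{3}\cu'$) places $x$ in the $\cC^\#_*(\cu')$-attached component of $\tfrac{4}{3}\cu'\cap\cu$, i.e.\ in $\cCp^\#(\tfrac{4}{3}\cu'\cap\cu)$; if no, I track a path $\gamma$ in the cluster of $x$ from $x$ to $\partial\cu$ within $\cu$, and the dichotomy is that either $\gamma$ reaches $\partial\cu$ before leaving $\tfrac{4}{3}\cu'$ (so $x$ connects inside $\tfrac{4}{3}\cu'\cap\cu$ to $\tfrac{4}{3}\cu'\cap\partial\cu$, a contribution to $\cCp^\#$ by the second clause of~\eqref{eq.def_clt_boundary}), or $\gamma$ first exits $\tfrac{4}{3}\cu'$ — in which case the same diameter argument routes $x$ into the $\cC^\#_*(\cu')$-attached cluster.

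The corollary~\eqref{eq.clt_decomposition} then follows by double inclusion: the $\subset$ side is immediate from~\eqref{eq.def_clt_boundary}, since any cluster in $\cCp^\#(\rr\cu'\cap\cu)$ reaches either $\cC^\#_*(\cu')\subset\cC^\#_*(\cu)\subset\cCb^\#(\cu)$ or $\rr\cu'\cap\partial\cu\subset\partial\cu$; the $\supset$ side is bullet~2 applied to the partition cube $\cu'\in\pPN_h(\cu)$ whose $\tfrac{16}{15}$-enlargement contains $x$. The main technical obstacle throughout is handling boundary partition cubes, where $\tfrac{4}{3}\cu'$ pokes outside $\cu$ and the cluster-tracking in bullet~2 might leave $\cu$; this is precisely the reason the coarse-grained object to track is $\cCp^\#(\tfrac{4}{3}\cu'\cap\cu)$ rather than $\cC^\#_*(\tfrac{4}{3}\cu')$, because the two-clause definition~\eqref{eq.def_clt_boundary} bundles the maximal-cluster-attached and the $\tfrac{4}{3}\cu'\cap\partial\cu$-attached components in a single object — which is exactly the output of the case split above.
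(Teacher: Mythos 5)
Your arguments for the first two bullets follow essentially the same route as the paper's: a proportion/diameter estimate feeding condition~(2) of Definition~\ref{def:well_connected} for $\frac{4}{3}\cu'$, together with the observation that the two clauses of \eqref{eq.def_clt_boundary} absorb the boundary case where $\frac{4}{3}\cu'$ pokes outside $\cu$. Your computation $\dist(\frac{16}{15}\cu',\partial(\frac{4}{3}\cu'))=\frac{1}{10}\size(\frac{4}{3}\cu')$ matches the paper's \eqref{eq.large_open_path}, and your route for the second inclusion in bullet one (via the crossing path of $\frac{4}{3}\cu'$ having diameter exceeding $\frac{1}{10}\size(\rr\cu')$ for $\rr<2$) is a fine way to unpack what the paper attributes without detail to Definition~\ref{def:well_connected}.

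The third bullet, however, contains a genuine error: the inclusion chain $\cC^\#_*(\cu')\subset\cC^\#_*(\cu)\subset\cCf^\#$ and the resulting contradiction ``$\OO^\#(x)$ is finite''. Lemma~\ref{lem.maximalCluster} gives only the first inclusion. The second inclusion $\cC^\#_*(\cu)\subset\cCf^\#$ is never supplied anywhere in the good-cube machinery: very-well-connectedness (Definition~\ref{def:well_connected}) guarantees a crossing cluster \emph{inside} $\cu$, but says nothing about how that cluster continues outside $\cu$, so $\cC_*^\#(\cu)$ can perfectly well be a finite cluster of $\Zd$. This is precisely why the paper states the trichotomy in this lemma in terms of $\cCb^\#(\cu)$ rather than $\cCf^\#$. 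The contradiction you should reach from $\OO^\#(x)\not\subset\frac{4}{3}\cu'$ is $x\in\cCb^\#(\cu)$, not ``$\OO^\#(x)$ is infinite'': the diameter argument connects $x$ to $\cC^\#_*(\cu')\subset\cC^\#_*(\cu)$ within $\frac{4}{3}\cu'$, and then, case-splitting as you did elsewhere, either that connecting path stays inside $\cu$ --- in which case $x$ reaches $\cC^\#_*(\cu)$ and hence $\partial\cu$ within $\cu$, since the crossing cluster of $\cu$ is a cluster contained in $\cu$ meeting each face --- or the path exits $\cu$, and its first exit vertex already lies in $\partial\cu$ and is reached from $x$ within $\cu$. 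Either way $x\in\cCb^\#(\cu)$, contradicting the hypothesis, with no appeal to $\cCf^\#$.
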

	\begin{proof}		
		All the arguments in the proof come from  Proposition~\ref{prop.good_cube_property}, Definition~\ref{def.pyramid}, and Lemma~\ref{lem.maximalCluster}, so this lemma is $N$-$\stable$.
		
		We study the first claim. 
		Under admissible $\a^\#$, thanks to \eqref{e:inner_cluster_connectivity}, $\cC^\#_*(\cu') \subset \cC^\#_*(\cu)$, then $\cC^\#_*(\cu')$ and $x$ should be connected in $\cu$. We claim this event is also realized in $\frac{4}{3}\cu'$. Otherwise, $x$ is connected to $\cC^\#_*(\cu')$ from outside $\frac{4}{3}\cu'$, so there must exists an open path $\gamma$ connecting $x$ to $\partial(\frac{4}{3}\cu')$. Then the proportion between the path $\gamma$ and $\frac{4}{3}\cu'$ has an estimate
		\begin{align}\label{eq.large_open_path}
			\frac{\vert \gamma\vert}{\size(\frac{4}{3} \cu')} \geq \frac{\frac{\frac{4}{3} - \frac{16}{15}}{2}\size( \cu')}{\frac{4}{3}\size( \cu')}  \geq \frac{1}{10}.
		\end{align}
		Here in the first inequality we consider the shortest possible situation from $\partial(\frac{4}{3}\cu')$ to $x$. Following (ii) in Proposition~\ref{prop.good_cube_property}, $\frac{4}{3} \cu'$ is also well-connected, so $\gamma$ should intersect $\cC^\#_*(\frac{4}{3}\cu')$ by (2) of Definition~\ref{def:well_connected}. Notice that $\cC^\#_*(\cu') \subset \cC^\#_*(\frac{4}{3}\cu')$, we result in that $x$ is connected to $\cC^\#_*(\cu')$ within $\frac{4}{3} \cu'$. 
		
		The inclusion $\cCm^\# \Ll(\frac{4}{3} \cu'\Rr) \subset \cCm^\# \Ll(\rr \cu'\Rr)$ is the result from Definition~\ref{def:well_connected} and $\rr \in (\frac{4}{3}, 2)$.
		
		For the second claim, if $x \in \cCm^\#(\cu)$, then it has been analyzed in the first claim. Otherwise, $x \in \cCb^\#(\cu) \setminus \cCm^\#(\cu)$ and it connects to the boundary of $\cu$. If this path is not contained in $\cCp^\# \Ll(\frac{4}{3} \cu' \cap \cu \Rr)$, then $x$ has to connect $\partial (\frac{4}{3} \cu') \cap \partial \cu$, which results in an open path like \eqref{eq.large_open_path} and yield contradiction. The inclusion $\cCp^\# \Ll(\frac{4}{3} \cu' \cap \cu \Rr) \subset \cCp^\# \Ll(\rr \cu' \cap \cu\Rr)$ is directly verified by the definition \eqref{eq.def_clt_boundary}. The observation above implies 
		\begin{align*}
			\cCb^\# \Ll(\cu\Rr) \subset \bigcup_{\cu' \in \pPN_{h}(\cu)} \cCp^\# \Ll(\frac{4}{3} \cu' \cap \cu \Rr).
		\end{align*}
		For the other side, the clusters connecting $\partial \cu$ in $ \cCp^\# \Ll(\frac{4}{3} \cu' \cap \cu \Rr)$  is naturally contained in $\cCb^\# \Ll(\cu\Rr)$, and the maximal clusters also have inclusion thanks to \eqref{e:inner_cluster_connectivity}.
		
		Concerning the third claim, if $\OO^\#(x) \nsubseteq \frac{4}{3} \cu'$, then it also contains an open path connecting $x$ to $\partial(\frac{4}{3}\cu')$ within $\OO^\#(x)$, which is a contradiction by the argument around \eqref{eq.large_open_path}.
		
	\end{proof}

	\bigskip
	In the pyramid partition $\pPN_{h}$ with $0 \leq h \leq k-2$, the comparability between neighbor partition cubes \eqref{e:neighbor_comparable_size} is not necessarily ensured because they can have different parent partition cubes in $\pPN_{h+1}$; see the discussion in Remark~\ref{rmk.counting}. This will bring some technical difficulty in our analysis. The following lemma called \emph{the counting argument} aims to replace the size comparability property, and will be frequently applied in the rest of paper.

	\begin{lemma}[Counting argument]\label{le:overlap_counting}
		For every real number $\aa \in (1,2)$, there exists a finite positive constant $C(\aa, d)$ such that, 
		the following estimate holds: for every $\cu \in \gGN_k$, we have
		\begin{equation}\label{eq.counting}
			\forall 0 \leq h \leq k-1, \quad \forall x \in \Zd, \qquad \sum_{\cu' \in \pPN_{h}(\cu)} \Ind{x \in \aa\cu'} \leq C^{k-h}.
		\end{equation}
	\end{lemma}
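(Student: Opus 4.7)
The approach combines a single-round counting bound for one local partition with the nested structure of the pyramid partition, then iterates.

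\emph{Step 1: single-round bound.} First I would establish a lemma that, for every $\cu' \in \gG$ admitting a local partition $\sSl_{\gG}(\cu')$ in the sense of Proposition~\ref{prop.partition}, and every $\aa \in (1,2)$,
\begin{equation*}
\sup_{x \in \Zd}\, \#\bigl\{Q \in \sSl_{\gG}(\cu') : x \in \aa Q\bigr\} \leq C_1(\aa, d).
\end{equation*}
This is a purely geometric statement that relies on property (2) of Proposition~\ref{prop.partition}: neighboring partition cubes have sizes in ratio $[1/3,3]$. The constraint $\aa < 2$ is crucial because it forces $\dist(x, Q) \leq (\aa-1)\size(Q)/2 < \size(Q)/2$ for every admissible $Q$; this restricts the candidate cubes to a bounded adjacency neighborhood of the partition cube containing $x$ (or of the boundary cubes of $\cu'$ nearest to $x$), and the size comparability then bounds their sizes from below by a constant multiple of the maximum size in that neighborhood. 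A packing argument (disjoint cubes of comparable size inside a ball of radius $\lesssim \size$(max)) concludes.

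\emph{Step 2: inheritance.} Next I would prove that if $\cu_1 \subset \cu_2$ are two triadic cubes and $x \in \aa \cu_1$, then $x \in \aa \cu_2$. Indeed, if $x \in \cu_2$ the claim is immediate; otherwise $x \notin \cu_1$, so
\begin{equation*}
\dist(x, \cu_2) \leq \dist(x, \cu_1) \leq \tfrac{\aa-1}{2}\size(\cu_1) \leq \tfrac{\aa-1}{2}\size(\cu_2),
\end{equation*}
giving $x \in \aa \cu_2$. Applied along the pyramid, this means that whenever $\cu' \in \pPN_h(\cu)$ has a parent $\cu'' \in \pPN_{h+1}(\cu)$ with $\cu' \in \sSl_{\gGN_h}(\cu'')$ and $x \in \aa \cu'$, the parent automatically satisfies $x \in \aa \cu''$.

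\emph{Step 3: induction on $h$.} The base case $h = k-1$ is the single-round bound applied to $\pPN_{k-1}(\cu) = \sSl_{\gGN_{k-1}}(\cu)$, yielding the bound $C_1$. For the inductive step, each $\cu' \in \pPN_h(\cu)$ lies in $\sSl_{\gGN_h}(\cu'')$ for a unique parent $\cu'' \in \pPN_{h+1}(\cu)$, and by inheritance every $\cu'$ contributing to the sum has a parent that itself contributes at level $h+1$. Therefore
\begin{equation*}
\sum_{\cu' \in \pPN_h(\cu)} \Ind{x \in \aa\cu'} \;\leq\; \sum_{\substack{\cu'' \in \pPN_{h+1}(\cu)\\ x \in \aa\cu''}} \;\sum_{\cu' \in \sSl_{\gGN_h}(\cu'')} \Ind{x \in \aa\cu'} \;\leq\; C_1 \sum_{\cu'' \in \pPN_{h+1}(\cu)} \Ind{x \in \aa\cu''},
\end{equation*}
where the inner sum is controlled by Step 1 applied to the partition $\sSl_{\gGN_h}(\cu'')$. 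The induction hypothesis then delivers $C_1^{k-h}$, so one may take $C = C_1$.

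\textbf{Main obstacle.} The only substantive difficulty is Step 1. As emphasized in Remark~\ref{rmk.counting}, even a single local partition can contain cubes of wildly different sizes, and size comparability only applies to immediate neighbors. The key is to argue that the constraint $\aa < 2$ prevents any cube $Q$ with $x \in \aa Q$ from being ``far'' from the partition cube containing $x$ in the adjacency graph, so that iterated size comparability pins down all contributing cubes to a single size scale, after which disjointness and a volume bound in $B_\infty(x, (\aa+1)\max_Q \size(Q)/2)$ finish the count.
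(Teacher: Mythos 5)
Your proposal follows essentially the same three-part structure as the paper's proof: a single-round counting bound on a local partition, the monotonicity of scaled-cube membership along the parent relation (your Step 2, which the paper uses implicitly at \eqref{eq.countInduction}), and a downward induction in $h$. The only point where the paper does more work is your Step 1: the paper establishes the ``bounded adjacency neighborhood'' claim precisely, via the graph-distance recursion $\aa_1 = \tfrac53$, $\aa_{n+1} = \tfrac43 + \tfrac{\aa_n}{3}$ (which climbs to $2$) proving $\aa\cu'\cap\aa\cu''\neq\emptyset \Rightarrow \dist_{G_{k,k-1}(\cu)}(\cu',\cu'')\le L_\aa$, and then bounds the count by a graph-ball volume $(2d\cdot 3^{d-1})^{L_\aa}$ rather than by spatial packing; your packing sketch would need this same iterated size-comparability chain bound to be made rigorous, but the idea is the right one.
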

	\begin{proof} 
		The proof can be divided into 2 main steps using induction on $i$, and the first step requires a further induction on $\aa$. 
		
		\textit{Step~1: case $h=k-1$.} The case $h = k-1$ is specific, because the size comparability \eqref{e:neighbor_comparable_size} still holds. If the sum $\sum_{\cu' \in \pPN_{k-1}(\cu)} \Ind{x \in \aa\cu'}$ is zero, then it is trivial. Otherwise there exist some cube $\cu '' \in \pPN_{k-1}(\cu)$ such that $x \in \aa \cu''$. Then we relax a little the counting and observe the following inequality that 
		\begin{align}\label{eq.counting_relax}
			\sum_{\cu' \in \pPN_{k-1}(\cu)} \Ind{x \in \aa\cu'} \leq \sum_{\cu' \in \pPN_{k-1}(\cu)} \Ind{\aa\cu' \cap \aa \cu'' \neq \emptyset}.
		\end{align}
		This transforms the counting problem to the intersection problem between cubes in $\pPN_{k-1}(\cu)$. Roughly, as the neighbor partition cubes in $\pPN_{k-1}(\cu)$ are comparable, $\aa\cu' \cap \aa \cu'' \neq \emptyset$ means the distance between two partition cubes should not be very large. 
		
		We claim the following estimate, which will be justified later: for any  $\aa \in (1,2)$, there exists $L_\aa \in \N_+$ such that 
		\begin{align}\label{eq.counting_graph1}
			\forall \cu', \cu'' \in \pPN_{k-1}(\cu), \aa\cu' \cap \aa \cu'' \neq \emptyset \Longrightarrow \dist_{G_{k,k-1}(\cu)}(\cu', \cu'') \leq L_\aa.
		\end{align}
		Here we use the following graph structure $G_{k,h}(\cu)$:  $\pPN_{h}(\cu)$ serves as the vertex set, and two vertices are connected on $G_{k,h}(\cu)$ if and only if they are neighbor partitions cubes. Then we denote by $\dist_{G_{k,h}(\cu)}(\cdot, \cdot)$ for the distance on graph $G_{k,h}(\cu)$.

		Once we admit this estimate, then the {\lhs} of \eqref{eq.counting_relax} can be bounded by
		\begin{equation}\label{eq.counting_graph2}
			\#\{\cu' \in \pPN_{k-1}(\cu): x \in \aa \cu'\} \leq \# B(\cu'', L_\aa),
		\end{equation}
		where $B(\cu'', L_\aa)$ stands for a ball of radius $L_\aa$ and of center $\cu''$ in $G_{k,k-1}(\cu)$ 
		\begin{align}\label{eq.counting_ball}
			B(\cu'', L_\aa) := \{\cu' \in \pPN_{k-1}(\cu): \dist_{G_{k,k-1}(\cu)}(\cu', \cu'') \leq L_\aa\}.
		\end{align}
		Thanks to the size comparability \eqref{e:neighbor_comparable_size}, every partition cube in $\pPN_{k-1}(\cu)$ has at most $(2d \cdot 3^{d-1})$ neighbor partition cubes, thus every vertex in $G_{k,k-1}(\cu)$ has an upper bound for degree. Therefore, we obtain an upper bound for the volume of the ball $B(\cu'', L_\aa)$
		\begin{align}\label{eq.defCa}
			\# B(\cu'', L_\aa) \leq (2d \cdot 3^{d-1})^{L_\aa}.
		\end{align}
		We insert this to \eqref{eq.counting_graph2} and conclude the proof of case $i=k-1$. We also set the constant $C(\aa, d) := (2d \cdot 3^{d-1})^{L_\aa}$.
		
		It remains to verify the claim \eqref{eq.counting_graph1}.
		
		\smallskip
		\textit{Step~1.1: \eqref{eq.counting_graph1} for $\aa \in (1, \frac{5}{3}]$.}  If $\aa \leq \frac{5}{3}$, then $\aa\cu'$ is contained by the union of $\cu'$ and its neighbors following \eqref{e:neighbor_comparable_size}, which means
		\begin{equation}
			\aa\cu' \cap \aa\cu'' \neq \emptyset \Longrightarrow \dist_{G_{k,k-1}(\cu)}(\cu', \cu'') \leq 2.
		\end{equation}

		\smallskip
		\textit{Step 1.2:  \eqref{eq.counting_graph1} for $\aa \in (\frac{5}{3}, 2)$.} We generalize the argument in Step~1.1 and set
		\begin{equation}\label{eq.defan}
			\aa_1 := \frac{5}{3}, \qquad \aa_{n+1} := \frac{4}{3} + \frac{\aa_n}{3}.
		\end{equation}
		The main observation is that for any $\cu' \in \pPN_{k-1}(\cu)$,
		\begin{equation}\label{eq.neighborScale}
			\aa_{n+1} \cu' \subset \bigcup_{\widehat{\cu}' \in B(\cu',1)} \aa_{n} \widehat{\cu}'.
		\end{equation}
		Here $B(\cu',1)$ is as defined in \eqref{eq.counting_ball}, which counts $\cu'$ itself and its neighbor cubes. The observation \eqref{eq.neighborScale} is valid, because the {\rhs} covers a cube at least of the size 
		\begin{multline*}
			2\Ll(\frac{1}{2} \size(\cu') + \frac{1}{2}\cdot\frac{1}{3}\size(\cu') + \aa_n \cdot \frac{1}{2}\cdot\frac{1}{3}\size(\cu')\Rr) \\
			= \Ll(\frac{4}{3} + \frac{\aa_n}{3}\Rr) \size(\cu') = \aa_{n+1} \size(\cu').
		\end{multline*}
		\begin{figure}[b]
			\centering
			\includegraphics[height=100pt]{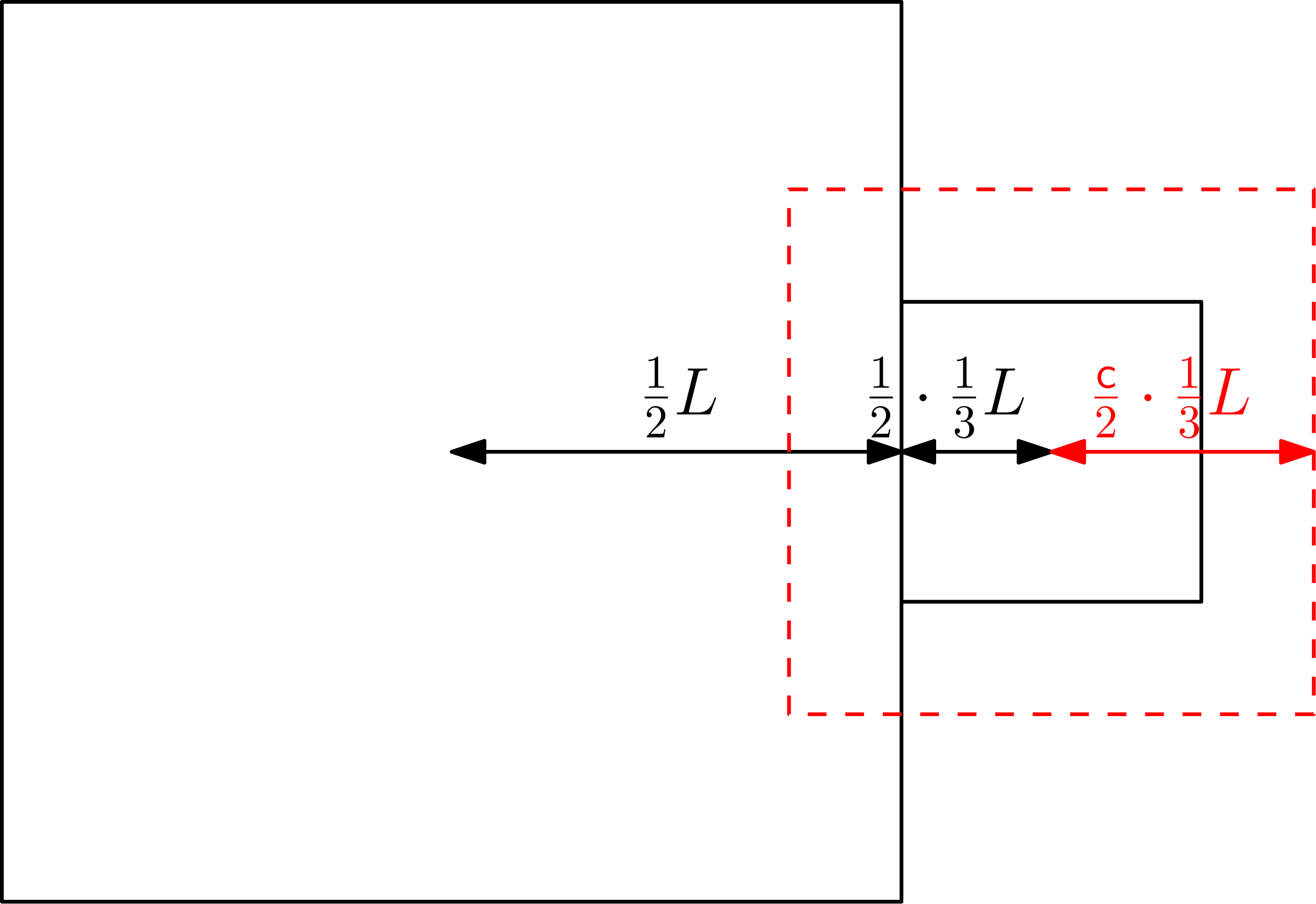}
			\caption{Illustration for the argument in \eqref{eq.neighborScale}.}\label{fig.neighborScale}
		\end{figure}
		See Figure~\ref{fig.neighborScale} for an illustration.
		
		We make an induction to prove \eqref{eq.counting_graph1} for $(\aa_n)_{n \in \N_+}$. The case $\aa_1$ is already proved in Step~1.1 and we  assume \eqref{eq.counting_graph1}  for $\aa_n$. Then for $\aa_{n+1}$ and every $\cu', \cu'' \in \pPN_{k-1}(\cu)$, we have
		\begin{align*}
			\aa_{n+1}\cu' \cap \aa_{n+1} \cu'' \neq \emptyset \Longrightarrow  \Ll(\bigcup_{\widehat{\cu}' \in B(\cu',1)} \aa_{n} \widehat{\cu}'\Rr) \bigcap \Ll(\bigcup_{\widehat{\cu}''  \in B(\cu'',1) } \aa_{n} \widehat{\cu}''\Rr) \neq \emptyset.
		\end{align*} 
		Here the relaxation of domain comes from  \eqref{eq.neighborScale}. This also implies 
		\begin{align*}
			\bigcup_{\widehat{\cu}' \in B(\cu',1)}\bigcup_{\widehat{\cu}''  \in B(\cu'',1) }  \aa_{n} \widehat{\cu}' \cap \aa_{n} \widehat{\cu}''  \neq \emptyset.
		\end{align*}
		For such neighbor partition cubes $\widehat{\cu}', \widehat{\cu}''$ with non-empty intersection after scaling, the assumption on \eqref{eq.counting_graph1} with $\aa_n$ applies, thus we have 
		\begin{align*}
			\dist_{G_{k,k-1}(\cu)}(\cu', \cu'') \leq 2 + \dist_{G_{k,k-1}(\cu)}(\widehat{\cu}', \widehat{\cu}'') \leq 2 + L_{\aa_n}.
		\end{align*}
		This induction implies that $L_{\aa_n} \leq 2n$, and the monotonicity justifies the existence of $L_\aa$ for all $\aa \in (1,2)$.

		\smallskip
		\textit{Step 2: case $0 \leq h \leq k-2$.} We have finished \eqref{eq.counting} for $h=k-1$, and for general $0 \leq h \leq k-2$, suppose that \eqref{eq.counting} is proved for $(h+1)$ with constant $C(\aa, d)$ defined in \eqref{eq.defCa}. Note that for every $\cu' \in \pPN_{h}(\cu)$ satisfying $x \in \aa \cu'$, his unique parent partition cube in $\pPN_{h+1}(\cu)$ must also contain $x$ after the scaling $\aa$. Therefore, we can count at first the scaled partition cube containing $x$ in $\pPN_{h+1}(\cu)$, then go further to count the children partition cubes in $\pPN_{h}(\cu)$
		\begin{equation*}
			\{\cu' \in \pPN_{h}(\cu): x\in \aa \cu'\} = \bigcup_{\widehat{\cu}'\in \pPN_{h+1}(\cu): x \in \aa\widehat{\cu}'} \{\cu'\in \sSl_{\G_h}(\widehat{\cu}'): x\in \aa \cu'\}.
		\end{equation*}
		Notice that 
		\begin{align*}
			\# \{\cu'\in \sSl_{\G_h}(\widehat{\cu}'): x\in \aa \cu'\} \leq C(\aa, d ),
		\end{align*}
		because it is just an application of the result in Step~1 to the partition $\pP^{(N, h+1)}_{h}(\widehat{\cu}')$. Thus, we use this bound to obtain 
		\begin{equation}\label{eq.countInduction}
			\begin{split}
				\#\{\cu' \in \pPN_{h}(\cu): x\in \aa \cu'\} &= \sum_{\widehat{\cu}'\in \pPN_{h+1}(\cu): x \in \aa\widehat{\cu}'} \#\{\cu'\in \sSl_{\G_h}(\widehat{\cu}'): x\in \aa \cu'\}\\
				&\leq C(\aa, d) \#\{\widehat{\cu}'\in \pPN_{h+1}(\cu): x \in \aa\widehat{\cu}'\},
			\end{split}
		\end{equation}
		and insert the assumption \eqref{eq.counting} with $(h+1)$ 
		\begin{align*}
			\#\{\widehat{\cu}'\in \pPN_{h+1}(\cu): x \in \aa\widehat{\cu}'\} \leq  C(\aa, d)^{k-(h+1)},
		\end{align*}
		then lemma is proved.
	\end{proof}

	\section{Weighted estimate for Poisson equation on clusters}\label{sec.Poisson} 
	This section focuses on the PDE analysis in the perturbed corrector equation \eqref{eq.recurrence_eq}. We extract its structure and reformulate it as 
	\begin{equation}\label{eq.PoissonCluster}
		\Ll\{
		\begin{array}{cc}
			-\na \cdot (\a^\# \na v_\#) = -\na\cdot \w_\#  &\text{ in }  {\itr (\cu_m)}, \\
			v_\# = 0 & \text{ on }  \partial \cu_m.
		\end{array}
		\Rr.
	\end{equation}
	Here we write $\a^\#$ following the convention \eqref{eq.sharp}. Differently, we denote by $(v_\#, \w_\#)$ the functions for the equation under $\a^\#$, but they are not necessarily equal to $(v(\a^\#), \w(\a^\#))$. Furthermore, $v_\# \in C_0(\cu_m)$ and $\w_\#$ is an anti-symmetric vector field introduced in Section~\ref{subsec.space}.
	
	Section~\ref{subsec.fieldEda} studies \eqref{eq.PoissonCluster} when $\w_\#$ is supported on $\overrightarrow{\Ed^{\a^\#}}(\cu_m)$, and then Section~\ref{subsec.fieldEd} handles more challenges when $\w_\#$ is supported on $\overrightarrow{\Ed}(\cu_m)$. They correspond respectively the case  $j=0$ and $j \geq 1$ in \eqref{eq.recurrence_eq}.


	\subsection{Source field on $\Eda$}\label{subsec.fieldEda}
	We state the weighted Dirichlet energy estimate on the intrinsic geometry of  percolation at first. In the statement, $\eps_{Me}$ is the exponent of Meyers' estimate in Proposition~\ref{prop.good_cube_property}.
	
	\begin{prop}
		\label{prop.weighted_L2}
		Given $\alpha \geq 0$ and $N \in \N$, then for all integers $h,k$ satisfying 
		\begin{align}\label{eq.condition_hk}
			h  \geq  \frac{(\alpha+1)(2+\eps_{Me})}{\eps_{Me}}, \qquad k \geq h+2,
		\end{align}
		there exists a finite positive constant $C = C(\p, d, N, h, k)$, such that every ${\cu_m \in \gGN_k}$ is $N$-$\stable$ for the statement: for $(v_\#, \w_\#)$ solving \eqref{eq.PoissonCluster} and $\w_\#$ an anti-symmetric vector field  supported on $\overrightarrow{\Ed^{\a^\#}}(\cu_m)$, then the following weighted estimate holds
		\begin{multline}\label{eq.weighted_L2}
			\sum_{\substack{\cu \in \pPN_{h}(\cu_m)}} |\cu|^{\alpha}\sum_{e\in \EE\Ll(\cCp^\#(\rr\cu \cap \cu_m)\Rr)} |\na v_\#(e)|^2\\
			\leq  C\sum_{\substack{\cu \in \pPN_{h+1}(\cu_m)}}|\cu|\sum_{e\in \EE\Ll(\cCp^\#(\rr\cu \cap \cu_m)\Rr)} |\w_\#(e)|^2.
		\end{multline}
	\end{prop}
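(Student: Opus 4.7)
The plan is to combine a global intrinsic $L^2$ energy estimate for $v_\#$ with Meyers' reverse H\"older inequality \eqref{e:Meyers} applied to each cube of the pyramid partition $\pPN_{h+1}(\cu_m)$, and then to distribute the resulting bound to the finer partition $\pPN_h(\cu_m)$ using the partition moment bounds from Proposition~\ref{prop.good_cube_property} and Definition~\ref{def.LambdaG}. The conditions $h\geq(\alpha+1)(2+\eps_{Me})/\eps_{Me}$ and $k\geq h+2$ are what calibrate the exponents in this distribution.

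First I would test \eqref{eq.PoissonCluster} against $v_\#$: since $\w_\#$ is supported on $\overrightarrow{\Ed^{\a^\#}}(\cu_m)$ (where $\a^\#\equiv 1$), Cauchy--Schwarz yields the global intrinsic bound $\|\na v_\#\,\Ind{\a^\#\neq 0}\|_{L^2(\cu_m)} \leq \|\w_\#\|_{L^2(\cu_m)}$. Next, for each $\cu'\in\pPN_{h+1}(\cu_m)$---which belongs to $\gGN_{h+1}\subset\gGN_0$ and hence is $N$-$\stable$ for Meyers---and for each child $\cu\in\pPN_h(\cu_m)$ with parent $\cu'$, the size control $\size(\cu)\leq\size(\cu')^{d/(2d+h+1)}$ ensures $\rr\cu\subset\tfrac{4}{3}\cu'$ (the remaining near-boundary configurations are absorbed in the constant). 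H\"older's inequality between the exponents $2$ and $2+\eps_{Me}$, chained with Meyers on $\cu'$ and the counting-measure monotonicity $\|\w_\#\|_{L^{2+\eps_{Me}}}\leq\|\w_\#\|_{L^2}$, then yields
\[
\|\na v_\#\|_{L^2(\cCp^\#(\rr\cu\cap\cu_m))}^2 \leq C|\cu|^{\eps_{Me}/(2+\eps_{Me})}\!\left(|\cu'|^{-\eps_{Me}/(2+\eps_{Me})}\|\na v_\#\|_{L^2(\cCp^\#(\rr\cu'\cap\cu_m))}^2 + \|\w_\#\|_{L^2(\cCp^\#(\rr\cu'\cap\cu_m))}^2\right).
\]

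Multiplying by $|\cu|^\alpha$ and summing over children of a fixed $\cu'$, the interpolation moment bound $\sum_{\cu:\mathrm{par.}=\cu'}|\cu|^\beta\leq C|\cu'|$ for $\beta\in[1,d+h+2]$---obtained from $\sum_\cu|\cu|=|\cu'|$ together with $\sum_\cu|\cu|^{d+h+2}\leq C|\cu'|$ coming from Definition~\ref{def.LambdaG}---produces the one-step estimate
\[
\sum_{\cu:\mathrm{par.}=\cu'}\!|\cu|^\alpha\|\na v_\#\|_{L^2(\cCp^\#(\rr\cu\cap\cu_m))}^2 \leq C|\cu'|^{2/(2+\eps_{Me})}\|\na v_\#\|_{L^2(\cCp^\#(\rr\cu'\cap\cu_m))}^2 + C|\cu'|\|\w_\#\|_{L^2(\cCp^\#(\rr\cu'\cap\cu_m))}^2.
\]
The second term summed over $\cu'\in\pPN_{h+1}(\cu_m)$ is exactly the RHS of the proposition. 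The main obstacle is the first term, which takes the structural form of the LHS but at level $h+1$ with weight $\gamma:=2/(2+\eps_{Me})<1$: one iterates the same Meyers--H\"older argument up the pyramid $\pPN_{h+1}\to\pPN_{h+2}\to\cdots\to\pPN_{k-1}$, preserving the weight because $\gamma$ is a \emph{fixed point} of the one-step scheme (since $\gamma+\eps_{Me}/(2+\eps_{Me}) = 1$), and terminating by applying Meyers directly to $\cu_m\in\gGN_0$ and invoking the global $L^2$ bound from the first step together with the counting Lemma~\ref{le:overlap_counting}. The hypothesis $h\geq(\alpha+1)(2+\eps_{Me})/\eps_{Me}$ precisely guarantees both that $\alpha+\eps_{Me}/(2+\eps_{Me})\in[1,d+h+2]$ at every stage and that the accumulated geometric constants, weighed against the maximum partition-cube bound $\max_{\cu'\in\pPN_{h+1}}|\cu'|\leq|\cu_m|^{d/(2d+h+2)}$, remain uniformly controlled in $|\cu_m|$.
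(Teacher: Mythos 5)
Your outline correctly identifies the three ingredients — Meyers' estimate applied cube-by-cube over $\pPN_{h+1}(\cu_m)$, the partition moment control from Definition~\ref{def.LambdaG}, and the global $L^2$ Dirichlet energy bound from testing with $v_\#$ — but the way you combine them does not close.

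The issue is where you apply H\"older. You do it \emph{per child}: $\|\na v_\#\|^2_{L^2(\rr\cu\cap\cu_m)}\leq |\cu|^{\eps_{Me}/(2+\eps_{Me})}\|\na v_\#\|^2_{L^{2+\eps_{Me}}(\rr\cu\cap\cu_m)}$, then replace each child's $L^{2+\eps_{Me}}$ norm by the parent's, apply Meyers' on the parent, and use the moment bound. This produces the one-step estimate with residual weight $\gamma=2/(2+\eps_{Me})>0$ on the $\nabla v_\#$ term. The paper instead applies H\"older on the \emph{weighted sum over children} with the normalized measure $\frac{1}{|\cu'|}\sum_{\cu''\in\sSl}$: it pairs $|\cu''|^{\alpha+1}$ against $\frac{1}{|\cu''|}\sum_{e\in\rr\cu''}|\na v_\#(e)|^2$, with conjugate exponents $(2+\eps_{Me})/\eps_{Me}$ and $(2+\eps_{Me})/2$; after Jensen's inequality (convexity of $t\mapsto t^{(2+\eps_{Me})/2}$ applied to the normalized per-child average) and the counting Lemma~\ref{le:overlap_counting}, the $|\cu'|$ in front exactly cancels the $\frac{1}{|\rr\cu'|}$ normalization in Meyers' $L^2$ term. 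This leaves the $\nabla v_\#$ contribution with weight \emph{zero} at level $h+1$. One sum over $\cu'\in\pPN_{h+1}(\cu_m)$ together with \eqref{eq.clt_decomposition}, the counting argument, and the global bound $\langle\na v_\#,\a^\#\na v_\#\rangle\leq\langle\w_\#,\a^\#\w_\#\rangle$ (using that $\w_\#$ is $\Eda$-supported so one may freely insert $\a^\#$) then closes the estimate immediately — no iteration over pyramid levels is needed.

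With the residual weight $\gamma>0$, your iteration $\pPN_{h+1}\to\pPN_{h+2}\to\cdots$ does not terminate in a controlled way, for two reasons. First, at each iterate you accumulate a source term $\sum_{\cu\in\pPN_{h+j}}|\cu|\,\|\w_\#\|^2_{L^2(\rr\cu\cap\cu_m)}$. Since the partition gets coarser as $j$ increases, each edge $e$ is assigned a weight $|\cu|$ that is \emph{larger} than the corresponding weight at level $h+1$, so these intermediate $\w_\#$ sums are not dominated by the right-hand side of the proposition. Second, the terminal $\nabla v_\#$ sum at level $k-1$ carries a factor of the form $\max_{\cu\in\pPN_{k-1}(\cu_m)}|\cu|^\gamma$ (or $|\cu_m|^\gamma$ if you apply Meyers' on $\cu_m$ directly). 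By \eqref{eq.sizeMax} this grows like a positive power of $3^m$, and the global $L^2$ bound on $\nabla v_\#$ cannot absorb it. The fixed-point observation $\gamma+\eps_{Me}/(2+\eps_{Me})=1$ keeps $\gamma$ constant along the iteration, which is exactly the problem: you need $\gamma$ to go to \emph{zero} after one step, not to be a fixed point. The remedy is to redo the H\"older step on the weighted sum (not per child) as above, at which point everything you wrote in the subsequent summation and termination becomes superfluous.
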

	\begin{remark}
		The sum $\sum_{\cu \in \pPN_{h+1}(\cu_m)} \sum_{e\in \EE\Ll(\cC_*(\rr\cu)\Rr)}$ in \eqref{e:key_estimate} will appear frequently in the paper. We need to shrink the cube as $\rr\cu$ in order cover the bonds $\EE\Ll(\cCb(\cu_m)\Rr)$; see Lemma~\ref{lem.detouring_path} and \eqref{eq.clt_decomposition}. Otherwise, if we do the sum only over $\cC_*(\cu)$ for each term, some bonds of $\cC_*(\cu_m)$ can be missing.
		
	\end{remark}
	
	\begin{proof}
		The proof only uses Proposition~\ref{prop.good_cube_property}, Definition~\ref{def.pyramid} and the properties in Section~\ref{subsec:counting}, so it is also $N$-$\stable$. We skip the superscript or subscript $\#$ for convenience. The proof can be divided into three steps.

		\textit{Step 1: up-grade to coarsened good cube.} By definition of the partitions $\pPN_{h+1}(\cu_m)$ in \eqref{e.pyrmid}, we do the sum on the {\lhs} of \eqref{eq.weighted_L2} at first over $\pPN_{h+1}(\cu_m)$, and then subdivide each cube into the partition of good cube $\gGN_{h}$
		\begin{multline}\label{eq.L2Subdivision}
			\sum_{\substack{\cu' \in \pPN_{h}(\cu_m)}} \sum_{e\in \EE\Ll(\cCp(\rr\cu' \cap \cu_m)\Rr)} \Big(\cdots\Big) \\
			= \sum_{ \substack{\cu \in \pPN_{h+1}(\cu_m) }}\sum_{\substack{\cu' \in \sSl_{\gGN_{h}}(\cu)}} \sum_{e\in \EE(\cCp(\rr\cu' \cap \cu_m))} \Big(\cdots\Big).
		\end{multline}

		Our first step aims to up-grade the weighted sum over $\pPN_{h}(\cu_m)$ on the {\lhs} of \eqref{eq.weighted_L2} to a normal sum over $\pPN_{h+1}(\cu_m)$. We focus on one term $\cu \in \pPN_{h+1}(\cu_m)$ and apply the H\"older inequality with $\eps = \eps_{Me}$ in Definition~\ref{def.Meyers}, then we obtain that
		\begin{equation}\label{eq.PoissonOneTerm}
			\begin{split}
				&\sum_{\substack{\cu' \in \sSl_{\gGN_{h}}(\cu)}}|\cu'|^{\alpha}	 \sum_{e\in \EE(\cCp(\rr\cu' \cap \cu_m))} |\na v(e)|^2\\
				&\leq \sum_{\substack{\cu' \in \sSl_{\gGN_{h}}(\cu)}}|\cu'|^{\alpha+1}	 \Ll(\frac{1}{\vert \cu'\vert}\sum_{e\in \EE(\cCp(\rr\cu' \cap \cu_m))} |\na v(e)|^2\Rr)\\
				& \leq \vert \cu \vert \cdot \mathbf{I} \cdot \mathbf{II},
			\end{split}
		\end{equation}
		with two terms defined as 
		\begin{align*}
			\mathbf{I} &= \Ll(\frac{1}{\vert \cu \vert}\sum_{\substack{\cu' \in \sSl_{\gGN_{h}}(\cu)}}|\cu'|^{\frac{(\alpha+1)(2+\eps)}{\eps}}\Rr)^{\frac{\eps}{2+\eps}},\\
			\mathbf{II} &= \Ll(\frac{1}{\vert \cu \vert}\sum_{\substack{\cu' \in \sSl_{\gGN_{h}}(\cu)}}
			\Ll(\frac{1}{\vert \cu'\vert}\sum_{e\in \EE(\cCp(\rr\cu' \cap \cu_m))} |\na v(e)|^2\Rr)^{\frac{2+\eps}{2}}\Rr)^{\frac{2}{2+\eps}}.
		\end{align*}
		
		One needs more analysis on these two terms. Thanks to the property (1) in Proposition~\ref{prop.good_cube_property}, the condition $\cu \in \gGN_{h+1}$ implies $\cu \in  \Lambda\Ll(\gGN_{h}, d + h + 1, C_{Sc,h+1}\Rr)$. Moreover, the condition $h \geq  \frac{(\alpha+1)(2+\eps)}{\eps}$ in \eqref{eq.condition_hk} ensures enough integrability in \eqref{eq.defgGpc}, which yields a bound  for the term $\mathbf{I}$
		\begin{align}\label{eq.cubeBound}
			\mathbf{I}  \leq \Ll(\frac{1}{\vert \cu \vert}\sum_{\cu' \in \sSl_{\gGN_{h}}(\cu)}|\cu'|^{\frac{(\alpha+1)(2+\eps)}{\eps}}\Rr)^{\frac{\eps}{2+\eps}} \leq (C_{Sc,h+1})^{\frac{\eps}{2+\eps}}.
		\end{align}

		For the term $\mathbf{II}$, we have the following estimate
		\begin{equation}\label{eq.cubeBound2}
			\begin{split}
				\mathbf{II} &\leq \Ll(\frac{1}{\vert \cu \vert}\sum_{\substack{\cu' \in \sSl_{\gGN_{h}}(\cu)}}
				\frac{1}{\vert \cu'\vert}\sum_{e\in \EE(\cCp(\rr\cu' \cap \cu_m))} |\na v(e)|^{2+\eps}\Rr)^{\frac{2}{2+\eps}}\\
				&\leq \Ll(\frac{1}{\vert \cu \vert}\sum_{\substack{\cu' \in \sSl_{\gGN_{h}}(\cu)}}
				\sum_{e\in \EE(\cCp(\rr\cu' \cap \cu_m))} |\na v(e)|^{2+\eps}\Rr)^{\frac{2}{2+\eps}}\\
				&\leq \Ll(\frac{2C(\rr,d)}{\vert \cu \vert}
				\sum_{e\in \EE(\cCp(\frac{4}{3}\cu \cap \cu_m))} |\na v(e)|^{2+\eps}\Rr)^{\frac{2}{2+\eps}}.
			\end{split}
		\end{equation}
		In the first line, we apply Jensen's inequality to the sum in the interior. In the second line, 
		we drop the factor $\frac{1}{\vert \cu'\vert}$  because $|\cu'|\geq 1$. Then we exchange the order of sum. The last line makes use of the observation (which will be justified in the next paragraph)
		\begin{align}\label{eq.43inclusion}
			\cCp(\rr\cu' \cap \cu_m) \subset \cCp(\frac{4}{3}\cu \cap \cu_m).
		\end{align}
		We also count how many times every edge is covered, which can be answered by Lemma \ref{le:overlap_counting}
		\begin{align*}
			\forall e \in \Ed, \qquad	\sum_{\substack{\cu' \in \sSl_{\gGN_{h}}(\cu)}} \Ind{e \in \EE(\cCp(\rr\cu' \cap \cu_m))} \leq 2 C(\rr, d).
		\end{align*}
		
		We now justify \eqref{eq.43inclusion}. Observing Lemma~\ref{lem.maximalCluster} and Lemma~\ref{lem.detouring_path}, we obtain
		\begin{equation}\label{eq.choice43}
			\cC_*(\rr\cu') \xleftrightarrow{\a} \cC_*(\cu') \xleftrightarrow{\a} \cC_*(\cu) \xleftrightarrow{\a} \cC_*\Ll(\frac{4}{3}\cu\Rr).
		\end{equation} 
		Moreover, the size comparison in the maximal cluster in \eqref{eq.sizeMax} implies every maximal cluster in the scaled cube $\rr \cu'$ in $\sSl_{\gGN_{h}}(\cu)$ is actually contained in that of $\frac{4}{3}\cu$. Similarly, $\rr \cu' \subset  \frac{4}{3}\cu$ also implies the inclusion of the clusters connecting to the boundary $\partial \cu_m$. Thus, the sum in the last line of \eqref{eq.cubeBound2} is over $\EE\Ll(\cCp(\frac{4}{3}\cu \cap \cu_m)\Rr)$. 
		
		In conclusion, finally in Step~1 we coarsen the partition cube, and make use of the control of the partition cube size to handle the extra weight in \eqref{eq.weighted_L2}. The price is a larger exponent $(2+\epsilon)$. But until the moment, we did not use the elliptic equation \eqref{eq.PoissonCluster} and the procedure above works for any general function. In the next step,  we will make use of Meyers' estimate to reduce this exponent. 

		\smallskip
		\textit{Step 2: Meyers' estimate.} 
		For each $\cu \in \pPN_{h+1}(\cu_m)$, as $\cu \in \gGN_{h+1} \subset \gGN_{0}$, and $\w$ is supported on $\EE(\cCp(\rr\cu \cap \cu_m))$,  following Proposition~\ref{prop.good_cube_property}, Meyers' estimate in Definition~\ref{def.Meyers} works for $v$
		\begin{equation}\label{eq.PoissonOneTerm_Meyers}
			\begin{split}
				&\Ll(\frac{1}{|\frac{4}{3}\cu|}\sum_{e\in \EE(\cCp(\frac{4}{3}\cu \cap \cu_m))}|\na v(e)|^{2+\eps}\Rr)^{\frac{2}{2+\eps}}\\
				&\leq C^2_{Me} \Ll[\frac{1}{|\rr\cu|}\sum_{e\in \EE\Ll(\cCp(\rr\cu \cap \cu_m)\Rr)}|\na v(e)|^2 + \Ll(\frac{1}{|\rr\cu|}\sum_{e\in \EE\Ll(\cCp(\rr\cu \cap \cu_m)\Rr)}|\w(e)|^{2+\eps}\Rr)^{\frac{2}{2+\eps}}\Rr]\\
				&\leq C^2_{Me} \Ll[\frac{1}{|\rr\cu|}\sum_{e\in \EE\Ll(\cCp(\rr\cu \cap \cu_m)\Rr)}|\na v(e)|^2 + \frac{1}{|\rr\cu|^{\frac{2}{2+\eps}}}\sum_{e\in \EE\Ll(\cCp(\rr\cu \cap \cu_m)\Rr)}|\w(e)|^{2}\Rr],
			\end{split}
		\end{equation}
		where in the third line we use the bound $(\sum_j a_j)^{t} \leq (\sum_j a_j^t)$ for positive $a_j$ and $t \in [0,1]$; in our case we we set $t=\frac{2}{2+\eps}$. 
		
		We put this estimate back to \eqref{eq.cubeBound2}, together with \eqref{eq.cubeBound}, we obtain the estimate of $\mathbf{I, II}$ in \eqref{eq.PoissonOneTerm}. Then the sum over one term $\cu \in \pPN_{h+1}(\cu_m)$ is 
		\begin{multline}\label{eq.PoissonOneTerm2}
			\sum_{\substack{\cu' \in \sSl_{\gGN_{h}}(\cu)}}|\cu'|^{\alpha}	 \sum_{e\in \EE(\cCp(\rr\cu' \cap \cu_m))} |\na v(e)|^2\\
			\leq C\Ll(\sum_{e\in \EE\Ll(\cCp(\rr\cu \cap \cu_m)\Rr)}|\na v(e)|^2 + \vert \cu \vert\sum_{e\in \EE\Ll(\cCp(\rr\cu \cap \cu_m)\Rr)}|\w(e)|^{2}\Rr).
		\end{multline}
		Notice that the factor $\vert \cu \vert$ in the last line of \eqref{eq.PoissonOneTerm} compensates that from the normalization in front of $\na v$ in Meyers' estimate, and we just relax the volume in front of $\w$.

		\smallskip
		\textit{Step 3: basic Dirichlet energy estimate.} Now we sum up \eqref{eq.PoissonOneTerm2}. The term $\w$ is of the correct expression as desired, and we need to apply the counting argument again for the term of $v$
		\begin{align*}
			&\sum_{\substack{\cu \in \pPN_{h+1}(\cu_m)}} \sum_{e\in \EE\Ll(\cCp(\rr\cu \cap \cu_m)\Rr)} |\na v(e)|^2  \\
			&= \sum_{e\in \EE(\cC_*(\cu_m))}|\na v(e)|^2  \Ll(\sum_{\substack{\cu \in \pPN_{h+1}(\cu_m)}} \Ind{e \in \EE\Ll(\cCp(\rr\cu \cap \cu_m)\Rr)}\Rr)\\
			&\leq 2C(\rr, d) \sum_{e\in \EE(\cCb(\cu_m))}|\na v(e)|^2 . 
		\end{align*}
		From the first line to the second line, we change the order of sum with \eqref{eq.clt_decomposition}. Then we apply Lemma~\ref{le:overlap_counting} from the second line to the third line.

		We finally need to give a Dirichlet energy estimate for the elliptic equation on the maximal cluster. We test \eqref{eq.PoissonCluster} with $v$  
		\begin{align*}
			\bracket{\na v, \a \na v}_{\cu_m}  = \bracket{\na v, \w}_{\cu_m} = \bracket{\na v, \a \w}_{\cu_m}.
		\end{align*}
		We are allowed to add $\a$ in the third equation above, because $\w$ is supported on $\overrightarrow{\Eda}$. Then  by Young's inequality we will get
		\begin{equation*}
			\bracket{\na v, \a \na v}_{\cu_m} \leq \bracket{\w, \a \w}_{\cu_m}.
		\end{equation*}
		This concludes the proof.
	\end{proof}
	\begin{remark}
		In the step \eqref{eq.PoissonOneTerm_Meyers} above, we apply a naive bound ${(\sum_j a_j)^{t} \leq (\sum_j a_j^t)}$ for $t \in [0,1]$ to $\w$, but we do need the Meyers' estimate for $v$. The key contribution of Meyers' estimate is to reduce the weight to $1$ in front of $v$  in \eqref{eq.PoissonOneTerm2}, otherwise we cannot close the estimate using the basic Dirichlet energy in Step~3.
	\end{remark}
	
	\subsection{Source field on $\Ed$}\label{subsec.fieldEd}
	Notice that the source term $\W_m(F,j)$ in our perturbed corrector equation \eqref{eq.recurrence_eq} is not necessarily supported on $\Ed^{\a^F}$ when $j \geq 1$, thus we develop a generalized version of Proposition~\ref{prop.weighted_L2}, in which	$\w_\#$ is a vector field  supported on $\overrightarrow{\Ed}(\cu_m)$. For this case, we also need a \emph{coarse-controlled function}. We call function $\g_\# :\overrightarrow{\Ed}(\cu_m) \to \R$ a coarse-controlled function for $v_\#$, if it satisfies the following statement when $\cu_m \in \gGN_{k}$ 
	\begin{multline}\label{eq.vg}
		\forall \cu \in \pPN_0(\cu_m), \quad {and} \quad \forall e \cap \cu \neq \emptyset, \\
		\vert \nabla v_\#(e) \vert \leq  \vert \g_\#(e) \vert + \sum_{e' \in \cCp^\#(\rr \cu \cap \cu_m)} \vert \nabla v_\#(e')\vert. 	
	\end{multline}
	
	
	In the following statement of the generalization of Proposition~\ref{prop.weighted_L2}, the two positive constants $\tilde{\eps}_{Me}$ and $\tilde{h}_{Me}$ will be further explained in Lemma~\ref{lem.Meyers_NEW}.
	\begin{prop}\label{prop.weighted_L2_NEW}
		Given $\alpha \geq 0$ and $N \in \N$, then for all integers $h,k$ satisfying 
		\begin{align}\label{eq.condition_hk_NEW}
			h \geq  \max \Ll\{\frac{(\alpha+3)(2+\tilde{\eps}_{Me})}{\tilde{\eps}_{Me}}, \tilde{h}_{Me} \Rr\}, \qquad k \geq h+2,
		\end{align}
		there exists a finite positive constant $C = C(d, N, h, k)$, such that every ${\cu_m \in \gGN_k}$ is $N$-$\stable$ for the statement: for every $(v_\#, \w_\#, \g_\#)$ satisfying \eqref{eq.PoissonCluster} and \eqref{eq.vg}, and $\w_\#$ an anti-symmetric vector field  supported on $\overrightarrow{\Ed}(\cu_m)$, then the following weighted estimate holds
		\begin{multline}\label{eq.weighted_L2_NEW}
			\sum_{\substack{\cu \in \pPN_{h}(\cu_m)}} |\cu|^{\alpha}\sum_{e\in \EE\Ll(\cCp^\#(\rr\cu \cap \cu_m)\Rr)} |\na v_\#(e)|^2\\
			\leq  C\sum_{\substack{\cu \in \pPN_{h+1}(\cu_m)}}|\cu|^3\sum_{e\in \Ed\Ll(\rr\cu \cap \cu_m\Rr)} \Ll(|\w_\#(e)|^2 + \vert \g_\#(e) \vert^2\Rr).
		\end{multline}
	\end{prop}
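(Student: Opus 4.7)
The plan is to mimic the three-step proof of Proposition~\ref{prop.weighted_L2}, replacing the two places where the ``intrinsic support'' of $\w_\#$ was used (Definition~\ref{def.Meyers} in Step~2, and the identity $\langle \nabla v_\#, \w_\#\rangle = \langle \nabla v_\#, \a^\# \w_\#\rangle$ in Step~3) by arguments that rely on the coarse-controlled bound \eqref{eq.vg}. The pyramid decomposition \eqref{e.pyrmid}, the size-moment control inherited from $\gGN_{h+1} \subset \Lambda(\gGN_h, d+h+1, C_{Sc,h+1})$, and the counting Lemma~\ref{le:overlap_counting} carry over verbatim, so the combinatorial skeleton is the same; what needs to change is how each partition cube is analyzed.

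For Step~1 I would repeat \eqref{eq.L2Subdivision}--\eqref{eq.cubeBound} with $\eps_{Me}$ replaced by $\tilde{\eps}_{Me}$, the first inequality in \eqref{eq.condition_hk_NEW} controlling the H\"older factor ``$\mathbf{I}$''. For Step~2, in place of Definition~\ref{def.Meyers} I would invoke the coarse-grained Meyers' estimate developed in Section~\ref{subsec.fieldEd} (which is why the threshold $h\geq \tilde{h}_{Me}$ is required), namely an inequality schematically of the form
\begin{align*}
&\Bigl(\tfrac{1}{|\tfrac{4}{3}\cu|}\sum_{e \in \EE(\cCp^\#(\tfrac{4}{3}\cu\cap\cu_m))} |\nabla v_\#(e)|^{2+\tilde{\eps}_{Me}}\Bigr)^{\!\frac{1}{2+\tilde{\eps}_{Me}}} \\
&\qquad \lesssim \Bigl(\tfrac{1}{|\rr\cu|}\sum_{e\in \EE(\cCp^\#(\rr\cu\cap\cu_m))} |\nabla v_\#(e)|^2\Bigr)^{\!\frac{1}{2}} + \mathcal{R}_{\cu},
\end{align*}
where $\mathcal{R}_{\cu}$ is a suitable $L^{2+\tilde{\eps}_{Me}}$-average of $(\w_\#, \g_\#)$ on the full edge set $\Ed(\rr\cu \cap \cu_m)$. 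Plugging back and relaxing with $(\sum a_j)^t \leq \sum a_j^t$ for $t = 2/(2+\tilde{\eps}_{Me})$ exactly as in \eqref{eq.PoissonOneTerm_Meyers}--\eqref{eq.PoissonOneTerm2}, each cube $\cu \in \pPN_{h+1}(\cu_m)$ contributes $\sum_e |\nabla v_\#(e)|^2$ on $\EE(\cCp^\#(\rr\cu\cap\cu_m))$ plus an $|\cu|^3$-weighted sum of $|\w_\#|^2 + |\g_\#|^2$ on $\Ed(\rr\cu \cap \cu_m)$. The two extra powers of $|\cu|$ beyond what appears in \eqref{eq.weighted_L2} come from de-normalizing the $L^{2+\tilde{\eps}_{Me}}$-norm of a non-intrinsic source and from transporting source mass through \eqref{eq.vg}, and they force the strengthened moment condition $\alpha+3$ in \eqref{eq.condition_hk_NEW}.

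For Step~3, the counting Lemma~\ref{le:overlap_counting} again reduces matters to bounding $\sum_{e \in \EE(\cCb^\#(\cu_m))} |\nabla v_\#(e)|^2$, which upon testing \eqref{eq.PoissonCluster} against $v_\#$ equals $\langle \nabla v_\#, \w_\#\rangle_{\cu_m}$. On open edges this is handled by Cauchy--Schwarz as in Proposition~\ref{prop.weighted_L2}; on each closed edge $e$ I would apply \eqref{eq.vg} to the unique $\cu \in \pPN_0(\cu_m)$ containing an endpoint of $e$ and replace $|\nabla v_\#(e)|$ by $|\g_\#(e)|$ plus an intrinsic $\ell^1$-average, then use Cauchy--Schwarz together with Lemma~\ref{le:overlap_counting} applied to $\pPN_0$ to absorb the intrinsic part back into $\langle \nabla v_\#, \a^\# \nabla v_\#\rangle_{\cu_m}$, leaving the advertised bound by $\|\w_\#\|_{L^2(\Ed(\cu_m))}^2 + \|\g_\#\|_{L^2(\Ed(\cu_m))}^2$. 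The main obstacle I anticipate is the coarse-grained Meyers estimate itself: the classical perturbative argument needs $\w$ to be visible in the equation, while here $\w_\#$ may load closed edges that $-\nabla \cdot (\a^\# \nabla v_\#)$ does not see, so one must trade such mass for an intrinsic source via \eqref{eq.vg} and absorb the resulting $|\cu|$-weights without destroying $N$-$\stable$ness; this is exactly what the thresholds $\tilde{h}_{Me}$ and $\alpha+3$ are tailored to accommodate.
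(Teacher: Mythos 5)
Your high-level outline matches the paper's proof (Steps~1 and~2 reuse the machinery of Proposition~\ref{prop.weighted_L2} with $\eps_{Me}$ replaced by $\tilde\eps_{Me}$, Step~3 tests the equation against $v_\#$), but there are two concrete gaps in how you carry out Steps~2 and~3.

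In Step~2, the schematic you write for the coarse-grained Meyers' estimate is too strong: you let $\mathcal{R}_{\cu}$ depend only on $(\w_\#,\g_\#)$, whereas the actual Lemma~\ref{lem.Meyers_NEW} necessarily carries a self-referencing term $\vert A^{-1}\nabla v_\# \Ind{\a^\#\neq 0}\vert$ inside the $L^{2+\tilde\eps_{Me}}$-error on the right-hand side. This term cannot be dropped; as the paper remarks, the inequality is trivial at $A\equiv 1$, and it is only by choosing $A$ so that $A^{-1}$ is small (to convey the regularity of $v_\#$) while paying the price $\vert\cu\vert\vert A\w_\#\vert$ on the source that one obtains a usable bound. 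The factor $|\cu|^3$ on $(\w_\#,\g_\#)$ in \eqref{eq.weighted_L2_NEW} arises precisely from this choice of $A$ together with the single factor from the Step~1 coarsening, not from ``de-normalizing a non-intrinsic $L^{2+\tilde\eps_{Me}}$-norm and transporting source mass'' as you describe. Omitting the $A^{-1}\nabla v_\#$ term makes your Step~2 look cleaner than it can actually be, and it also hides where the leftover $\nabla v_\#$ energy at scale $h+1$ comes from and why it needs a further absorption argument.

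In Step~3 there is a genuine gap. You split $\langle \w_\#,\nabla v_\#\rangle$ by open and closed edges, and on closed edges you apply \eqref{eq.vg} at the bottom scale $\pPN_0$; but Cauchy--Schwarz on the $\ell^1$-average in \eqref{eq.vg} then introduces a weight on the order of $\vert\rr\cu\vert^2$ for $\cu\in\pPN_0(\cu_m)$, and you propose to ``absorb the intrinsic part back into $\langle\nabla v_\#,\a^\#\nabla v_\#\rangle_{\cu_m}$''. That absorption does not go through: the weight $\vert\cu\vert^2$ at scale~$0$ is unbounded and has no counterpart on the left-hand side of the unweighted energy identity, nor any small factor to compensate it. The paper sidesteps this by applying Young's inequality first ($\langle\w_\#,\nabla v_\#\rangle\le B\Vert\w_\#\Vert^2+B^{-1}\Vert\nabla v_\#\Vert^2$), then coarse-graining the \emph{full} $\Vert\nabla v_\#\Vert_{L^2(\Ed(\cu_m))}^2$ at scale~$h$ (not $0$) via \eqref{eq.coarse_grain_control}, producing $B^{-1}\sum_{\cu'\in\pPN_h}\vert\rr\cu'\vert^2\sum_{e}|\nabla v_\#(e)|^2\Ind{\a^\#\neq 0}$, and finally absorbing this into the $\vert\cu'\vert^\alpha$-weighted left-hand side of \eqref{eq.weighted_L2_NEW} under the restriction $\alpha\geq 2$; the case $0\leq\alpha<2$ is then obtained by domination with $\alpha=2$. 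Your proposal mentions neither the choice of $B$, the restriction $\alpha\geq 2$, nor the fact that the absorption target is the weighted sum at scale $h$ rather than the unweighted intrinsic energy, so as written the argument does not close.
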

	
	Let us explain the importance of the coarse-controlled function $\g_\#$ here. We notice the different supports on the two sides of \eqref{eq.PoissonCluster}. Especially, because the {\lhs} of \eqref{eq.PoissonCluster} does not have the uniform ellipticity, $v_\#$ cannot be uniquely determined, and we need an auxiliary function $\g_\#$ and \eqref{eq.vg} to calibrate it.

	As we have seen, one important input in the proof of Proposition~\ref{prop.weighted_L2} is  Meyers' estimate on percolation cluster,  developed in \cite{armstrong2018elliptic} and \cite{dario2021corrector} and assumed for $\gGN_{0}$. Similarly, we also need a \emph{coarse-grained  Meyers' estimate}, stated in Lemma~\ref{lem.Meyers_NEW}, to prove Proposition~\ref{prop.weighted_L2_NEW}. As it involves the controlled function $\g$ and a weight function $A$, we give the details of the proof. In particular, the weight $A^{-1}$ is usually chosen to be very small, so this inequality is still non-trivial. Similar spirit roots from Widman's hole-filling technique, and an application can be found in \textit{the modified Caccioppoli inequality} of \cite[Proposition~3.6]{bulk}.
	\begin{lemma}[Coarse-grained Meyers' estimate]\label{lem.Meyers_NEW}
		There exist finite positive constants $\tilde{\eps}_{Me}, \tilde{C}_{Me}$ and $\tilde{h}_{Me} \in \N_+$, such that for every $\cu \subset \cu_m$ satisfying $\cu \in \gGN_{\tilde{h}_{Me}}$, every $(v_\#, \w_\#, \g_\#)$ satisfying \eqref{eq.PoissonCluster} and \eqref{eq.vg}, and every weight function ${A:\Ed(\cu_m) \to [1, \infty)}$, one has the estimate
		\begin{multline}\label{eq.Meyers_New}
			\norm{\nabla v_\# \Ind{\a^\#(\cdot) \neq 0} }_{\underline{L}^{2+\tilde{\eps}_{Me}}(\frac{4}{3} \cu \cap \cu_m)} 
			\leq \tilde{C}_{Me}\norm{\nabla v_\# \Ind{\a^\#(\cdot) \neq 0} }_{\underline{L}^{2}(\rr \cu \cap \cu_m)} \\ + \tilde{C}_{Me} \norm{\vert A^{-1}\nabla v_\# \Ind{\a^\#(\cdot) \neq 0}\vert + \vert \cu \vert\vert A\w_\# \vert +\vert \g_\# \vert}_{\underline{L}^{2+\tilde{\eps}_{Me}}(\rr \cu \cap \cu_m)}.
		\end{multline}
	\end{lemma}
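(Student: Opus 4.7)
The plan is to follow the classical Gehring--Meyers strategy: derive a modified Caccioppoli-type inequality that incorporates the weight $A$ and the calibrating field $\g_\#$, combine it with the Sobolev--Poincar\'e inequality on the supercritical cluster (which is available because $\cu$ is very-well-connected by Proposition~\ref{prop.good_cube_property}), so as to obtain a reverse-H\"older inequality with a slight gap, and then invoke Gehring's lemma to self-improve integrability from $2$ to $2+\tilde{\eps}_{Me}$ for some small $\tilde{\eps}_{Me} > 0$. The skeleton parallels the proofs in \cite[Proposition~3.8]{armstrong2018elliptic} and \cite[Proposition~11]{dario2021corrector}; the novelty lies in handling the source $\w_\#$ and the gradient $\nabla v_\#$ on closed edges, where the equation carries no ellipticity.

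For the Caccioppoli step, I would fix a sub-cube $\cu' \subset \rr\cu$ with dyadic shell $\cu'' \supset \cu'$, and test \eqref{eq.PoissonCluster} against $(v_\# - c)\eta^2$, where $\eta$ is a discrete cutoff supported on $\cu''$ equal to $1$ on $\cu'$ and $c$ is the average of $v_\#$ on $\cCp^\#(\rr\cu'' \cap \cu_m)$. The left-hand side produces $\sum_e \eta^2(e) |\nabla v_\#(e)|^2 \Ind{\a^\#(e)\neq 0}$, which is the quantity to bound. On the right-hand side I would split the source as $\w_\# = \w_\#^{\circ} + \w_\#^{\bullet}$ with $\w_\#^{\circ} := \w_\# \Ind{\a^\#\neq 0}$ and $\w_\#^{\bullet} := \w_\# \Ind{\a^\#=0}$; the cluster-part $\w_\#^{\circ}$ is absorbed by the standard Young inequality paired with the LHS. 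For $\w_\#^{\bullet}$, the terms $\sum_e \w_\#^{\bullet}(e) \eta^2(e) \nabla v_\#(e)$ and $\sum_e \w_\#^{\bullet}(e) (v_\# - c)(e) \eta(e) \nabla\eta(e)$ cannot be closed by the equation alone; instead I apply Young's inequality with the weight $A$, and wherever a $\nabla v_\#(e)$ on a closed edge appears I substitute the coarse-controlled bound \eqref{eq.vg}, converting it into a contribution of $|\g_\#|$ plus a sum of $|\nabla v_\#(e')|$ over edges $e'$ in $\cCp^\#(\rr\widehat{\cu} \cap \cu_m)$ for some partition cube $\widehat{\cu} \in \pPN_0(\cu_m)$. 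After squaring via Cauchy--Schwarz against the $|\widehat{\cu}| \leq |\cu|$ edges involved, this produces exactly the combinations $|A^{-1}\nabla v_\# \Ind{\a^\#\neq 0}|^2$, $|\cu|^2 |A\w_\#|^2$ and $|\g_\#|^2$ on the right-hand side of the Caccioppoli estimate, matching the RHS of \eqref{eq.Meyers_New} after the $L^{2+\tilde{\eps}}$ self-improvement.

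Once the modified Caccioppoli inequality is in hand, the remaining steps follow the classical template: combining with the Sobolev--Poincar\'e inequality on the cluster (valid on every sub-cube in $\rr\cu \cap \cu_m$ provided $\tilde{h}_{Me}$ is chosen large enough that the sub-partitions inside $\cu$ are still very-well-connected at every intermediate scale) produces a reverse H\"older inequality of the form
\begin{equation*}
\norm{\nabla v_\# \Ind{\a^\#\neq 0}}_{\aL^2(\cu')} \les \norm{\nabla v_\# \Ind{\a^\#\neq 0}}_{\aL^{2-\delta}(\cu'')} + \norm{A^{-1}\nabla v_\# \Ind{\a^\#\neq 0}}_{\aL^2(\cu'')} + \norm{|\cu| A\w_\#}_{\aL^2(\cu'')} + \norm{\g_\#}_{\aL^2(\cu'')}
\end{equation*}
on every sub-cube $\cu' \subset \cu'' \subset \rr\cu$. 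Gehring's lemma then boosts integrability from $2$ to $2+\tilde{\eps}_{Me}$ and yields \eqref{eq.Meyers_New}. The scaling constants $\tilde{\eps}_{Me}$ and $\tilde{C}_{Me}$ come out of the Gehring argument and do not depend on the particular choices of $\cu$, $A$, $\g_\#$, $\w_\#$, or $v_\#$.

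The main obstacle will be the precise tracking of the volume factor $|\cu|$ that multiplies $A\w_\#$. This factor arises from the substitution via \eqref{eq.vg}: each $\nabla v_\#$ on a closed edge is replaced by a sum over at most $|\cu|$ cluster edges, and applying Cauchy--Schwarz to pass back to $L^2$ introduces one factor of $|\cu|$. One has to check that this factor is absorbed into $A\w_\#$ (rather than into $\g_\#$ or $A^{-1}\nabla v_\#$), which in turn requires choosing the test cutoff $\eta$ and the weight $A$ compatibly with the partition scale $\pPN_0(\cu_m)$. A secondary obstacle is to choose $\tilde{h}_{Me}$ large enough that the Sobolev--Poincar\'e inequality on the cluster holds on every sub-cube of $\rr\cu$: this requires the sub-partitions of $\pPN_0$ to be very-well-connected at every intermediate scale between $\cu$ and the individual partition cubes, so that the Gehring step can be applied uniformly and the resulting constant $\tilde{C}_{Me}$ is independent of $\cu$ and of the outer cube $\cu_m$.
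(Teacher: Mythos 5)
Your skeleton -- modified Caccioppoli with weight, Sobolev--Poincar\'e on the cluster, reverse H\"older, then Gehring with the $A^{-1}\nabla v_\#$ term treated as inhomogeneous -- matches the paper's proof. The paper, however, does not split $\w_\#$ into cluster and off-cluster parts; that decomposition is superfluous. In the paper's Caccioppoli step one simply pairs the whole $\w_\#$ with $\nabla v_\#$ (on \emph{all} edges) via Young with a \emph{pre-weight} $\tilde{A}$, producing terms $\norm{\tilde{A}\w_\#}_{\aL^2}^2 + \norm{\tilde{A}^{-1}\eta\nabla v_\#}_{\aL^2}^2$, and then uses \eqref{eq.vg} once, on the single term $\tilde A^{-1}\nabla v_\#$, to trade the non-cluster gradient for $\g_\#$ plus a coarse-grained cluster gradient; this handles open and closed edges uniformly.

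The point you flag as ``the main obstacle'' -- where does the factor $|\cu|$ multiplying $A\w_\#$ come from, and why doesn't the analogous volume factor from Cauchy--Schwarz in the coarse-graining step instead pollute the $A^{-1}\nabla v_\#\Ind{\a^\#\neq 0}$ term -- is indeed the crux, and your proposal identifies it but does not actually close it. The paper's resolution is the specific choice of pre-weight $\tilde{A} = |\cu'|A$ with $\cu'$ the intermediate partition cube in $\gGN_{h'}$: this extra $|\cu'|$ in $\tilde{A}$ deliberately lands on $\w_\#$, giving $\norm{|\cu'|A\w_\#}$ (relaxed to $\norm{|\cu|A\w_\#}$), while the reciprocal factor $|\cu'|^{-1}$ in $\tilde A^{-1}$ cancels the Cauchy--Schwarz volume factor $|\rr\cu''|$ generated by \eqref{eq.vg} on the $\pPN_0$-scale cube $\cu''\subset\cu'$, and what survives is precisely $A^{-1}\nabla v_\#\Ind{\a^\#\neq 0}$ with \emph{no} divergent prefactor. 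Without this pre-weight trick (i.e.\ applying Young directly with the given $A$, as you propose), the $|\rr\cu''|$ factor would sit in front of $A^{-2}|\nabla v_\#\Ind{\a^\#\neq 0}|^2$ and the reverse H\"older inequality would carry a scale-dependent constant, which Gehring's lemma cannot remove. So the missing step is concrete: introduce the pre-weight $\tilde{A}=|\cu'|A$ \emph{inside} the Caccioppoli inequality, and choose the intermediate coarsening scale $h'$ via the moment bound in Proposition~\ref{prop.good_cube_property}(1) so that the relaxation $|\cu'|\leq|\cu|$ and the cancellation against $|\rr\cu''|$ are both uniform.
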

	\begin{proof}
		The proof of Meyers' estimate starts from the Caccioppoli inequality, and then the Sobolev--Poincar\'e inequality yields a reverse H\"older inequality. Afterward, Gehring’s lemma will improve the integrability.  In the proof, we just assume $\rr \cu \subset \cu_m$ for convenience, as the version near the boundary is similar. We also skip the superscript and subscript in the proof.
		
		\textit{Step~1: the Caccioppoli inequality.} 	We start from a cube $\cu' \in \gGN_{0}$ and also propose a cutoff function $\eta$, which vanishes outside $\cu'$ but may keep constant in $\frac{1}{2} \cu'$. We test \eqref{eq.PoissonCluster} with $\eta^2 ( v - c)$, then we have
		\begin{align*}
			\bracket{\nabla (\eta^2 ( v - c)), \a \nabla v}_{\cu'} = \bracket{\nabla (\eta^2 ( v - c)), \w}_{\cu'}.
		\end{align*}
		We denote by $R:= \size(\cu')$, then we obtain 
		\begin{align}\label{eq.Caccioppoli_NEW}
			\norm{ \eta \nabla v \Ind{\a(\cdot) \neq 0}}^2_{\aL^2(\cu')} 
			\leq \frac{1}{R^2} \norm{(v - c)}^2_{\aL^2(\cu')} +  \norm{\tilde{A} \w }^2_{\aL^2(\cu')} +  \norm{ \tilde{A}^{-1} \eta \nabla v}^2_{\aL^{2}(\cu')}.
		\end{align}
		Here $\tilde{A} \geq 1$ is a pre-weight function, which is related to $A$ and to be fixed later.
		\smallskip
		
		\textit{Step~2: the reverse H\"older inequality.} Choosing $c = (v)_{\cu'}$, then we apply the Sobolev--Poicar\'e inequality to the term $\norm{(v - c)}^2_{\aL^2(\cu')}$ with $2_*= \frac{2d}{d+2}$ and obtain 
		\begin{align*}
			\frac{1}{R^2} \norm{(v - c)}^2_{\aL^2( \cu')} \leq C_{2,d} \norm{\nabla v}^2_{\aL^{2_*}( \cu')}
		\end{align*}
		The controlled function $\g$ is applied to the term $\norm{\nabla v}^2_{\aL^{2}( \cu')}$ and $\norm{ \tilde{A}^{-1} \eta \nabla v}^2_{\aL^{2}(\cu')}$ with the coarse-graining argument in \cite[Lemma~3.6]{armstrong2018elliptic}: 
		\begin{align}\label{eq.coarse_grain_control}
			\begin{split}
				&\sum_{e \in \Ed(\cu')}\vert \nabla v (e)\vert^{2_*} \\
				&\leq  \sum_{e \in \Ed(\cu')}\Ll(\vert \g(e) \vert^{2_*} + \vert \rr \cu' \vert^{2_*}  \Ll(\frac{1}{\vert \rr \cu' \vert}\sum_{e' \in \cCp(\rr \cu')} \vert \nabla v(e')\vert\Rr)^{2_*}\Rr) \\
				&\leq \sum_{e \in \Ed(\cu')} \Ll(\vert \g(e) \vert^{2_*} + \vert \rr \cu' \vert^{2_*}  \Ll(\frac{1}{\vert \rr \cu' \vert}\sum_{e' \in \cCp(\rr \cu')} \vert \nabla v(e')\vert^{2_*} \Rr)\Rr)\\
				&= \sum_{e \in \Ed(\cu')} \vert \g(e) \vert^{2_*} + \vert \rr \cu' \vert^{2_*}  \sum_{e' \in \cCp(\rr \cu')} \vert \nabla v(e')\vert^{2_*}. 
			\end{split}
		\end{align}
		Here Jensen's inequality is applied from the first line to the second line. To absorb the weight $\vert \rr \cu' \vert^{2_*}$, we choose a large scale $h'$, and further assume that $\cu' \in \gGN_{h'} \subset \gGN_{0}$. Then we coarsen the domain and apply \eqref{eq.Caccioppoli_NEW} to $\cu'' \in \gGN_{h'+1}$, where the term $\norm{\nabla v}_{\aL^{2_*}( \cu'')}$ can be treated as
		\begin{align*}
			\norm{\nabla v}_{\aL^{2_*}( \cu'')} &= \Ll(\frac{1}{\vert \cu ''\vert}\sum_{\cu' \in \sSl_{\gGN_{h'}}(\cu'')} \sum_{e \in \Ed(\cu')}\vert \nabla v (e)\vert^{2_*}\Rr)^{\frac{1}{2_*}}\\
			&\leq  \norm{\g}_{\aL^{2_*}( \cu'')} + \Ll(\frac{1}{\vert \cu ''\vert}\sum_{\cu' \in \sSl_{\gGN_{h'}}(\cu'')} \vert \rr \cu' \vert^{2_*}  \sum_{e \in \Ed(\rr \cu')} \vert \nabla v(e)\vert^{2_*}\Ind{\a(e) \neq 0}\Rr)^{\frac{1}{2_*}}.
		\end{align*}
		We apply H\"older's inequality to the weighted sum, with $\frac{1}{s} + \frac{1}{s'} = 1, s>1$ to be determined later 
		\begin{equation}\label{eq.coarse_grain_weight_argument}
			\begin{split}
				&\Ll(\frac{1}{\vert \cu ''\vert}\sum_{\cu' \in \sSl_{\gGN_{h'}}(\cu'')} \vert \rr \cu' \vert^{2_*}  \sum_{e \in \Ed(\rr \cu')} \vert \nabla v(e)\vert^{2_*}\Ind{\a(e) \neq 0}\Rr)^{\frac{1}{2_*}}\\
				& \leq \Ll(\frac{1}{\vert \cu ''\vert}\sum_{\cu' \in \sSl_{\gGN_{h'}}(\cu'')} \vert \rr \cu' \vert^{(2_* + 1)s'}  \Rr)^{\frac{1}{2_* s'}} \\
				& \qquad \times \Ll(\frac{1}{\vert \cu''\vert}\sum_{\cu' \in \sSl_{\gGN_{h'}}(\cu'')} \Ll(\frac{1}{\vert \rr \cu'\vert} \sum_{e \in \Ed(\rr \cu')} \vert \nabla v(e)\vert^{2_*}\Ind{\a(e) \neq 0}\Rr)^s  \Rr)^{\frac{1}{2_* s}}\\
				&\leq C  \Ll(\frac{1}{\vert \cu''\vert}\sum_{\cu' \in \sSl_{\gGN_{h'}}(\cu'')} \frac{1}{\vert \rr \cu'\vert} \sum_{e \in \Ed(\rr \cu')} \vert \nabla v(e)\vert^{2_* s}\Ind{\a(e) \neq 0} \Rr)^{\frac{1}{2_* s}}\\
				&\leq C \norm{\nabla v \Ind{\a(\cdot) \neq 0}}_{\aL^{2_* s}( \frac{4}{3}\cu'')}.
			\end{split}
		\end{equation}
		From the third line to the forth line, Jensen's inequality is applied once again. We hope $2_* s < 2$, so this is still a reverse H\"older inequality. This implies the choice
		\begin{align*}
			s = \frac{2+2_*}{2 \cdot 2_*} \Longrightarrow 2_* s = \frac{2+2_*}{2} < 2.
		\end{align*}
		The moment condition (1) of Proposition~\ref{prop.good_cube_property} is also applied to the third line of \eqref{eq.coarse_grain_weight_argument}, so we make the following choice of $h'$ to ensure the integrability
		\begin{align*}
			h' := \lfloor (2_* + 1)s' \rfloor + 1.
		\end{align*} 
		In the last line, we also shrink the domain $\rr \cu' \subset \frac{4}{3}\cu''$ thanks to the fact \eqref{eq.sizeMax}. Especially, we should notice that, there is no hope to compensate the outcome of \eqref{eq.coarse_grain_weight_argument} with the {\lhs} of \eqref{eq.Caccioppoli_NEW}, because the coarse-graining argument uses the information outside $\cu'$ there. 
		
		A similar calculation in \eqref{eq.coarse_grain_control} is also applied to the term $\norm{ \tilde{A}^{-1} \eta \nabla v}^2_{\aL^{2}(\cu')}$ with $\cu' \in \gGN_{h'}$, which yields
		\begin{align*}
			\sum_{e \in \Ed(\cu')}\vert \tilde{A}^{-1} \eta \nabla v (e)\vert^{2} \leq \sum_{e \in \Ed(\cu')} \vert \tilde{A}^{-1} \g(e) \vert^{2} + \vert \rr \cu' \vert^{2}  \sum_{e' \in \cCp(\rr \cu')} \vert \tilde{A}^{-1} \nabla v(e')\vert^{2}. 
		\end{align*}
		Choosing $\tilde{A} = \vert \cu' \vert A$, then for every $\cu'' \in \gGN_{h''+1}, \frac{4}{3} \cu'' \subset \cu_m$, we have a reverse H\"older inequality
		\begin{multline}\label{eq.revere_Holder}
			\norm{ \nabla v \Ind{\a(\cdot) \neq 0}}_{\aL^2(\cu'')} \leq K \norm{\nabla v \Ind{\a(\cdot) \neq 0}}_{\aL^{\frac{2+2_*}{2}}(\frac{4}{3} \cu'')} \\
			+ C\Ll(\norm{A^{-1} \nabla v \Ind{\a(\cdot) \neq 0} }_{\aL^{2}(\frac{4}{3} \cu'')}
			+ \norm{\vert \cu''\vert A \w }_{\aL^2(\frac{4}{3} \cu'')} +  \norm{\g }_{\aL^2(\frac{4}{3} \cu'')} \Rr).
		\end{multline}
		\smallskip
		
		\textit{Step~3: Gehring’s lemma.} We aim to apply Gehring’s lemma to \eqref{eq.revere_Holder}. Under the setting $\w = 0$, then \eqref{eq.revere_Holder} becomes
		\begin{align*}
			\norm{ \nabla v \Ind{\a(\cdot) \neq 0}}_{\aL^2(\cu'')}
			\leq K \norm{\nabla v \Ind{\a(\cdot) \neq 0}}_{\aL^{\frac{2+2_*}{2}}(\frac{4}{3} \cu'')} + K \norm{\g }_{\aL^2(\frac{4}{3} \cu'')}.
		\end{align*}
		Then the argument of the coarsened function in \cite[Proposition~3.8]{armstrong2018elliptic} helps get rid of the minimal scale, and the classical Gehring’s lemma applies: there exists finite positive constants $C, \epsilon$ and integer $\tilde{h}_{Me} \in \N_+$, such that
		for $\cu \in \gGN_{\tilde{h}_{Me}}$, we have 
		\begin{align*}
			\norm{ \nabla v \Ind{\a(\cdot) \neq 0}}_{\aL^{2+\epsilon}(\frac{4}{3} \cu)}
			\leq C \norm{\nabla v \Ind{\a(\cdot) \neq 0}}_{\aL^{2}(\rr \cu)} + C \norm{\g }_{\aL^{2+\epsilon}(\rr \cu)}.
		\end{align*}

		For the general case $\w \neq 0$ is, we only keep the term $K \norm{\nabla v \Ind{\a(\cdot) \neq 0}}_{\aL^{\frac{2+2_*}{2}}(\frac{4}{3} \cu'')}$ as the term of improvement of integrability, and treat all the other terms
		\begin{align*}
			\vert A^{-1}\nabla v \Ind{\a(\cdot) \neq 0}\vert +  \vert \cu''\vert  \vert A \w  \vert +\vert \g \vert,
		\end{align*}
		as the inhomogeneous terms, i.e.
		\begin{multline*}
			\norm{ \nabla v \Ind{\a(\cdot) \neq 0}}_{\aL^2(\cu'')} \leq K \norm{\nabla v \Ind{\a(\cdot) \neq 0}}_{\aL^{\frac{2+2_*}{2}}(\frac{4}{3} \cu'')} \\
			+ C\norm{\vert A^{-1}\nabla v \Ind{\a(\cdot) \neq 0}\vert + \vert \cu''\vert  \vert A\w \vert +\vert \g \vert}_{\aL^2(\frac{4}{3} \cu'')}.
		\end{multline*}
		Gehring’s lemma yields an inhomogeneous version (see for example the proof in \cite[Lemma~C.4]{AKMbook}, \cite[Proposition~6.1]{Gehring95}, and  \cite[Section~3.2]{navarro2023variants})
		\begin{multline*}
			\norm{\nabla v \Ind{\a(\cdot) \neq 0} }_{\underline{L}^{2+\eps}(\frac{4}{3} \cu )} 
			\leq C\norm{\nabla v\Ind{\a(\cdot) \neq 0} }_{\underline{L}^{2}(\rr \cu )} \\ + C\norm{\vert A^{-1}\nabla v \Ind{\a(\cdot) \neq 0}\vert + \vert \cu\vert \vert A\w \vert +\vert \g \vert}_{\underline{L}^{2+\eps}(\rr \cu )}.
		\end{multline*}
		This bound will be trivial when $A \equiv 1$,  but in practice we let $A^{-1}$ be very small to convey the regularity of $v$. This is the desired result as \eqref{eq.Meyers_New}. 
	\end{proof}

	\begin{proof}[Proof of Proposition~\ref{prop.weighted_L2_NEW}]
		Under the condition \eqref{eq.condition_hk_NEW}, we replace \eqref{eq.Meyers_Poisson} by the coarse-grained Meyers' estimate in Lemma~\ref{lem.Meyers_NEW}, and follow the proof of Proposition~\ref{prop.weighted_L2} until the step \eqref{eq.PoissonOneTerm2}: for one term $\cu \in \pPN_{h+1}(\cu_m)$, we have
		\begin{multline*}
			\sum_{\substack{\cu' \in \sSl_{\gGN_{h}}(\cu)}}|\cu'|^{\alpha}	 \sum_{e\in \Ed(\rr\cu' \cap \cu_m)} |\na v(e)|^2 \Ind{\a(e) \neq 0}\\
			\leq  C\vert \cu \vert \Ll(\sum_{e\in \Ed(\rr\cu \cap \cu_m)}\Ll(\vert A^{-1}\nabla v(e) \vert \Ind{\a(e) \neq 0}+ \vert \cu\vert  \vert A \w (e)\vert  +\vert \g (e)\vert \Rr)^{2}\Rr) \\
			+ C\Ll(\sum_{e\in \Ed(\rr\cu \cap \cu_m)}|\na v(e)|^2 \Ind{\a(e) \neq 0}\Rr).
		\end{multline*}
		We choose the weight $A(e) = \vert \cu \vert^{-1}$ in this equation, which yields
		\begin{multline*}
			\sum_{\substack{\cu' \in \sSl_{\gGN_{h}}(\cu)}}|\cu'|^{\alpha}	 \sum_{e\in \Ed(\rr\cu' \cap \cu_m)} |\na v(e)|^2 \Ind{\a(e) \neq 0} \\
			\leq C\Ll(\sum_{e\in \Ed(\rr\cu \cap \cu_m)}|\na v(e)|^2 \Ind{\a(e) \neq 0}\Rr)
			+ C\vert \cu \vert^3 \Ll(\sum_{e\in \Ed(\rr\cu \cap \cu_m)}\Ll(\vert\w (e)\vert  +\vert \g (e)\vert \Rr)^{2}\Rr).
		\end{multline*}
		We sum over $\cu \in \pPN_{h+1}(\cu_m)$ and apply counting argument \eqref{eq.counting} to $|\na v(e)|^2$, then we get
		\begin{multline}\label{prop.weighted_L2_NEW_middle}
			\sum_{\cu' \in \pPN_{h}}|\cu'|^{\alpha}	 \sum_{e\in \Ed(\rr\cu' \cap \cu_m)} |\na v(e)|^2 \Ind{\a(e) \neq 0} \\
			\leq C \norm{\na v \Ind{\a(\cdot) \neq 0} }^2_{L^2(\cu_m)}
			+ C \sum_{\cu \in \pPN_{h+1}} \vert \cu \vert^3 \Ll(\sum_{e\in \Ed(\rr\cu \cap \cu_m)}\Ll(\vert\w (e)\vert  +\vert \g (e)\vert \Rr)^{2}\Rr).
		\end{multline}
		It remains to absorb the term $\norm{\na v \Ind{\a(\cdot) \neq 0} }^2_{L^2(\cu_m)}$. Since $v \equiv 0$ on $\partial \cu_m$, we test \eqref{eq.PoissonCluster} with $v$ to get the basic Dirichlet energy estimate
		\begin{equation*}
			\bracket{\na v, \a \na v}_{\cu_m} = \bracket{\w, \nabla v}_{\cu_m} 
			\leq B\norm{\w}^2_{L^2(\cu_m)} + B^{-1}\norm{\nabla v}^2_{L^2(\cu_m)}
		\end{equation*}
		A coarse-graining argument in \eqref{eq.coarse_grain_control} is then applied to $\pPN_{h}(\cu_m)$, which implies 
		\begin{multline*}
			\norm{\na v \Ind{\a(\cdot) \neq 0} }^2_{L^2(\cu_m)} \leq B\norm{\w}^2_{L^2(\cu_m)} +  B^{-1}\norm{\g}^2_{L^2(\cu_m)} \\
			+ B^{-1}\sum_{\cu' \in \pPN_{h}(\cu_m)} \sum_{e \in \Ed(\cu')} \vert \rr \cu' \vert^{2}   \vert \nabla v(e)\vert^{2} \Ind{\a(e) \neq 0}.
		\end{multline*}
		We insert this estimate back to \eqref{prop.weighted_L2_NEW_middle}, and choose $B$ such that $	\rr^{2d}	B^{-1}  C \leq \frac{1}{2}$, then the term $C \norm{\na v \Ind{\a(\cdot) \neq 0} }^2_{L^2(\cu_m)}$ on its {\rhs} compensates part of its {\lhs}, if $\alpha \geq 2$ is assumed here. Then we conclude desired result \eqref{eq.weighted_L2_NEW} under the condition $\alpha \geq 2$.
		
		The $0 \leq \alpha < 2$ is dominated by the case $\alpha = 2$, then we complete the proof.
	\end{proof}
	
	In the remaining of the paper, our task is to find a reasonable coarse-controlled function $\g$ for the perturbed corrector equation \eqref{eq.recurrence_eq}.

	\section{Cluster-growth decomposition}\label{sec.HarmonicExtension}	
	
	We keep the convention of notation \eqref{eq.shorthand} in the following sections. As discussed in \eqref{eq.harmonicBC}, the function $v_{m}$ follows a constant extension rule in order to be $\a$-harmonic in the whole domain $\cu_m$. However, because $V_m(F,j)$ involves more functions like $v^\#_{m}$, its behavior of extension is more complicated. This section is devoted to the structure  hidden in $D_F v_m$.

	We propose at first a canonical grain extension, which is more robust when opening edges in percolation.
	Recall that $\OO^F(x)$ stands for the hole containing $x$ if $x$ is not on the infinite cluster of $\a^F$, and $\cCb^F(\cu) \equiv \cCb^{\a^F}(\cu)$ is the boundary-connecting cluster under $\a^F$.
	
	\begin{definition}[Canonical constant extension]\label{def.ConstantExtension_canonical}
		Given a realization $\a$ and ${F \subset \Ed(\cu)}$, we define \emph{the canonical grain}
		\begin{align}\label{eq.def_center}
			[x]^F:= \Ll\{
			\begin{array}{ll}
				x, & x \in \cCb^F(\cu),\\
				\arg \min_{y \in \cCb^\a(\cu)} \dist(y, \OO^F(x)), & x \notin \cCb^F(\cu) .
			\end{array}\Rr.
		\end{align} 
		Here we fix a lexicographic order for $\arg \min$. We also keep the convention $[x] \equiv [x]^{\emptyset}$.	
		
		A function $u : \{0,1\}^{\Ed(\cu)} \times \cu \to \R$ follows the canonical grain extension rule if 	
		\begin{align}\label{eq.def_extension_plan}
			u(\a^F, x) = u(\a^F, [x]^F).
		\end{align}
	\end{definition}
	We skip the dependence of domain in the notation $[\cdot]$, but it will be clear in the context. Roughly speaking,  a function $u$ satisfying the canonical grain extension can be constructed in two steps:
	\begin{enumerate}
		\item Define its value	at first on $\cCb^F(\cu)$.
		\item Choose some vertices on $\cCb^\a(\cu)$ to extend the value on holes.
	\end{enumerate}
	We highlight this  boundary-connecting cluster in step 2 is under $\a$ rather than $\a^F$, so it is ``canonical" in some sense. This rule increases the stability, and works perfectly for $v_{m}$ defined in \eqref{eq.harmonicBC}.

	This canonical grain extension is robust, because the operator $[\cdot]$ remains unchanged if the there do not exist two holes connecting by the edges in $F$. We prove at first this observation as a warm-up.
	
	\begin{lemma}\label{lem.decom_caonical_easy}
		Given a realization $\a$, if $F \subset \EE(\cCb^F(\cu))$, then for every function $u$ satisfying Definition~\ref{def.ConstantExtension_canonical}, we have
		\begin{align}\label{eq.decom_caonical_easy_1}
			\forall G \subset F, \forall x \in \cu \setminus  \cCb^F(\cu), \qquad u^G(x) = u^G([x]).
		\end{align}
		As a consequence, we have 
		\begin{align}\label{eq.decom_caonical_easy_2}
			\forall x \in \cu \setminus  \cCb^F(\cu), \qquad (D_F u)(x) = (D_F u)([x]).
		\end{align}
	\end{lemma}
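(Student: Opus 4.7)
The plan is to reduce both statements to the single identity $[x]^G = [x]$ for every $G \subset F$ and every $x \in \cu \setminus \cCb^F(\cu)$. Granting this identity, the canonical extension rule \eqref{eq.def_extension_plan} applied to $\a^G$ gives $u^G(x) = u^G([x]^G) = u^G([x])$, which is exactly \eqref{eq.decom_caonical_easy_1}; summing over $G \subset F$ with the alternating signs of the inclusion--exclusion formula \eqref{eq.Difference} then yields \eqref{eq.decom_caonical_easy_2} at once.

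Because $x \notin \cCb^F(\cu) \supset \cCb^G(\cu) \supset \cCb^\a(\cu)$, both $[x]^G$ and $[x]$ fall into the second case of \eqref{eq.def_center} and are defined as the lexicographically smallest minimiser over $\cCb^\a(\cu)$ of the distance to $\OO^G(x)$ and $\OO^\a(x)$ respectively. Thus the whole lemma reduces to the purely geometric equality $\OO^G(x) = \OO^\a(x)$. The inclusion $\OO^\a(x) \subset \OO^G(x)$ is immediate from $\a \leq \a^G$ edgewise. For the reverse inclusion, the key preliminary step is to verify that $\OO^G(x) \cap \cCb^F(\cu) = \emptyset$: a shared vertex $w$ would be $\a^G$-connected to $x$ inside $\cu$ and $\a^F$-connected to $\partial \cu$ inside $\cu$, so concatenating these two paths and using $\a^G \leq \a^F$ would place $x$ in $\cCb^F(\cu)$, contradicting the hypothesis. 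Once this disjointness is established, the assumption $F \subset \EE(\cCb^F(\cu))$ forces every edge of $G \subset F$ to have both endpoints in $\cCb^F(\cu)$, hence outside $\OO^G(x)$. Consequently any $\a^G$-path realising the connectivity of $\OO^G(x)$ (a path whose vertices all lie in $\OO^G(x)$) cannot traverse any edge of $G$, and is therefore already an $\a$-path; this yields $\OO^G(x) \subset \OO^\a(x)$ and closes the argument.

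The main obstacle here is conceptual rather than technical: one must identify the right invariant encoded by the hypothesis $F \subset \EE(\cCb^F(\cu))$, namely that every edge opened by $F$ has both endpoints already in the final boundary-connecting cluster, so opening subsets $G \subset F$ can neither merge two disjoint holes nor absorb a hole into the boundary-connecting cluster. Once this is crystallised into the two-step argument above (disjointness from $\cCb^F(\cu)$, then path-avoidance of $G$), the rest is bookkeeping via \eqref{eq.def_extension_plan} and \eqref{eq.Difference}.
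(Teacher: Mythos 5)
Your proposal is correct and follows essentially the same route as the paper: reduce to the grain identity $[x]^G = [x]$, which in turn reduces to the hole identity $\OO^G(x) = \OO^\a(x)$ (a valid reduction, since \eqref{eq.def_center} always minimises over the \emph{fixed} set $\cCb^\a(\cu)$, so only the hole itself varies with $G$). The paper states the hole identity in one line ("no edges in $G$ are added to $\OO(x)$"), whereas you unpack it carefully via the disjointness $\OO^G(x) \cap \cCb^F(\cu) = \emptyset$ followed by path-avoidance of $G$; this is a harmless elaboration of the same observation, not a different argument.
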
 
	\begin{proof}
		
		For $x \in \cu \setminus  \cCb^F(\cu)$, we look at its associated hole. Because ${G \subset F \subset \EE(\cCb^F(\cu))}$, no edges in $G$ are added to $\OO(x)$. Thus we obtain 
		\begin{align*}
			\forall G \subset F, \qquad \OO^G(x) = \OO(x).
		\end{align*}
		Since \eqref{eq.def_center} only relies on vertices of the boundary-connecting cluster under $\a$,  we obtain
		\begin{align}\label{eq.dropG}
			[x]^G = [x].
		\end{align}
		We can  justify \eqref{eq.decom_caonical_easy_1} as 
		\begin{align*}
			u^G(x) \stackrel{\eqref{eq.f_convention}}{=} u(\a^G, x)  \stackrel{\eqref{eq.def_extension_plan}}{=}  u(\a^G, [x]^G) \stackrel{\eqref{eq.dropG}}{=} u(\a^G, [x])  \stackrel{\eqref{eq.f_convention}}{=} u^G([x]).
		\end{align*}
		Then, for every $x \in \cu \setminus  \cCb^F(\cu)$
		\begin{align*}
			(D_F u)(x) &= \sum_{G \subset F} (-1)^{\vert F \setminus G \vert} u^G([x]^G) \\
			&= \sum_{G \subset F} (-1)^{\vert F \setminus G \vert} u^G([x]) \\
			&=  (D_F u)([x]).
		\end{align*}
	\end{proof}

	To generalize Lemma~\ref{lem.decom_caonical_easy}, we need to understand which bonds contribute to the growth of the boundary-connecting cluster in the passage from $\a$ to $\a^F$. This relies on a key observation called \emph{cluster-growth decomposition}:
	\begin{equation}\label{eq.def_WIP_new}
		\begin{split}
			F & = F_* \sqcup F_\circ, \\
			F_* &:= F \cap \EE(\cCb^F(\cu)), \\
			F_\circ &:= F \setminus F_*.
		\end{split}
	\end{equation}
	We will see that, the two sets $F_*$ and $F_\circ$ are more or less independent:
	\begin{itemize}
		\item $F_*$ enlarges the boundary-connecting cluster;
		\item $F_\circ$ builds ``bridges" and shortcuts between the holes disconnecting to the boundary under $\a^{F_*}$. Here ``bridge" indicates the bond connecting two disjoint holes.
	\end{itemize}
	
	\begin{figure}[b]
		\centering
		\begin{subfigure}{0.4\textwidth}
			\includegraphics[width=\textwidth]{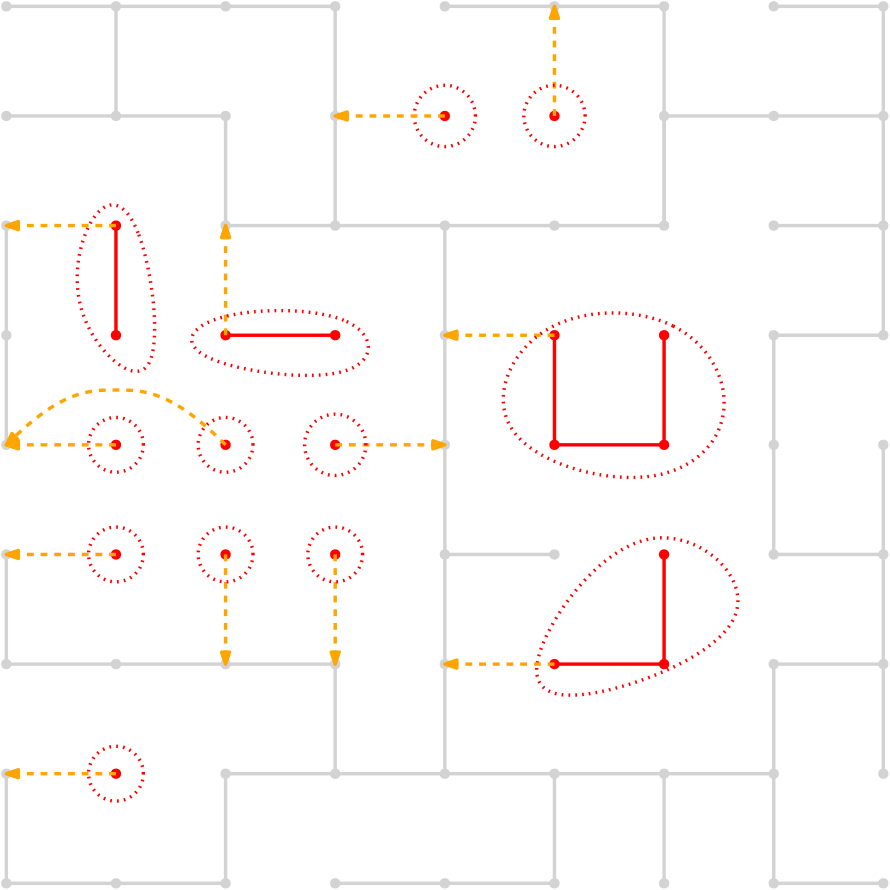}
			\caption{Percolation of $\a$}
			\label{fig.cluster_growth_0}
		\end{subfigure}
		\hfill
		\begin{subfigure}{0.4\textwidth}
			\includegraphics[width=\textwidth]{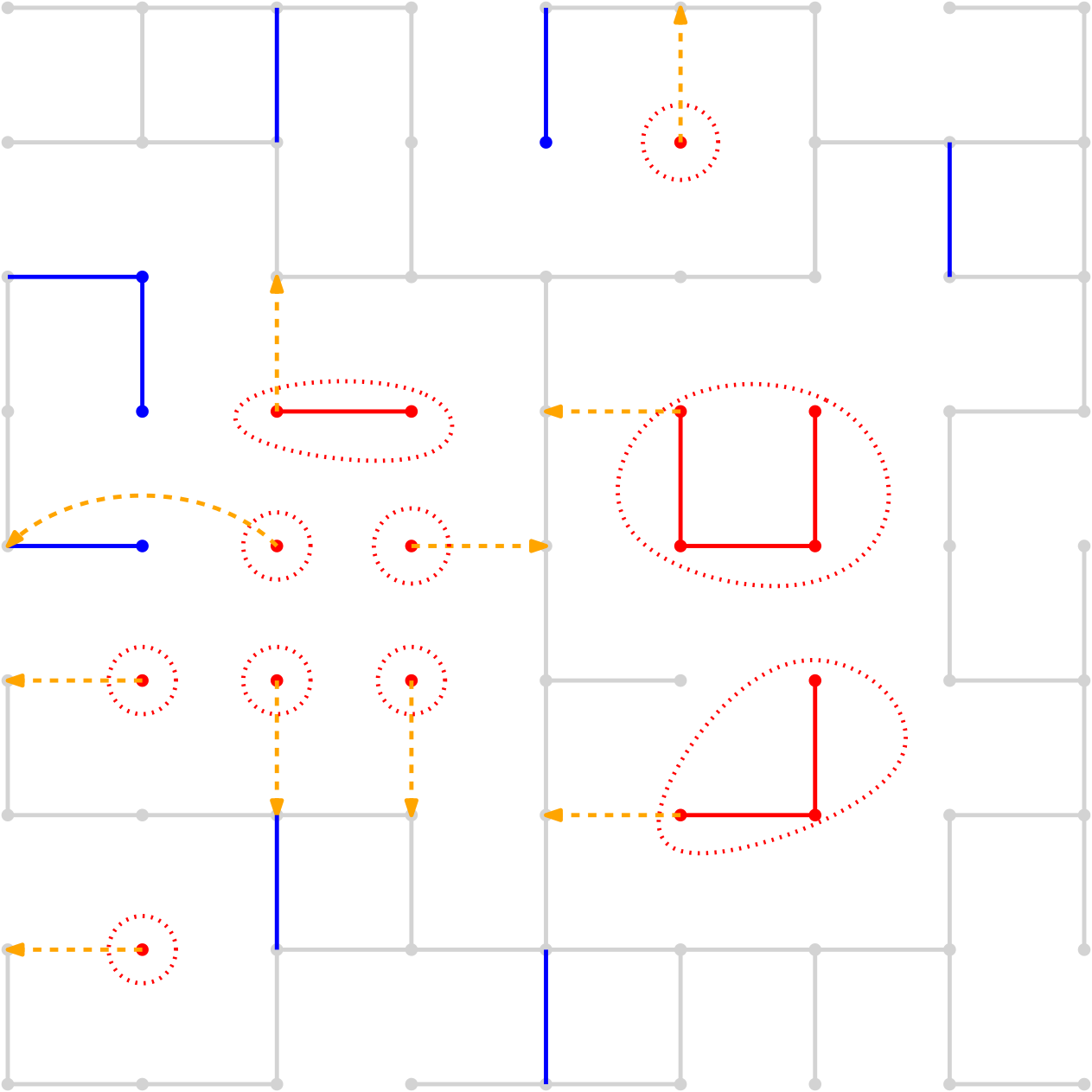}
			\caption{Percolation of $\a^{F_*}$}
			\label{fig.cluster_growth_1}
		\end{subfigure}
		\hfill
		\begin{subfigure}{0.4\textwidth}
			\includegraphics[width=\textwidth]{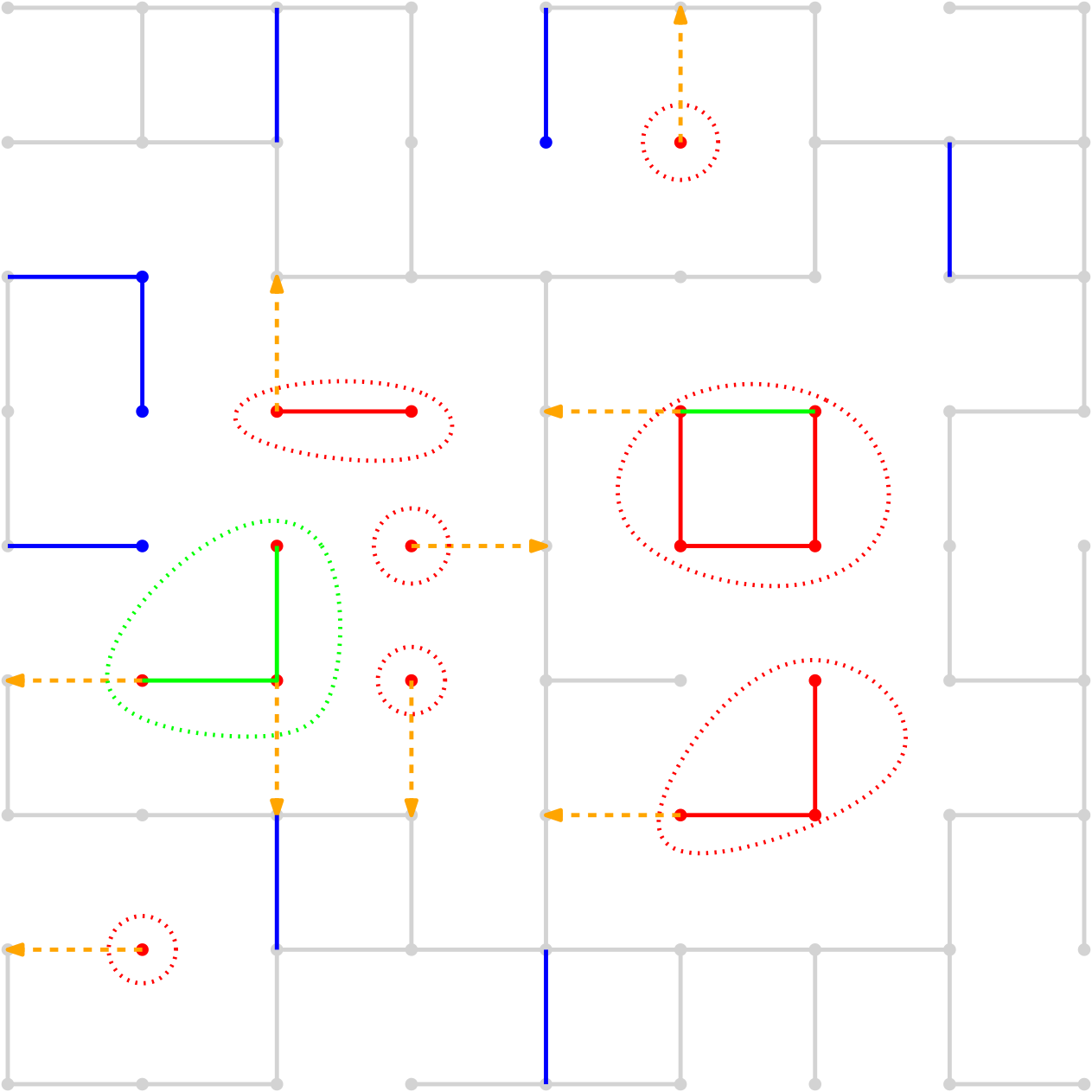}
			\caption{Percolation of $\a^{F_* \cup F_\circ}$}
			\label{fig.cluster_growth_2}
		\end{subfigure}
		
		\caption{The clusters in gray are boundary-connecting, while the others in red are holes. The edges in blue are in $F_*$, and the ones in green are of $F_\circ$. The arrows in orange mark the vertices for the canonical grain extension: among the nearest vertices on $\a$, they choose the one following the left-top order.}
		\label{fig.cluster_growth}
	\end{figure}

	See Figure~\ref{fig.cluster_growth} for an illustration. We also define another higher-order spatial differentiation operator
	\begin{align}\label{eq.def_diff_shift}
		(\nabla_{F} f)(x) := \sum_{G \subset F} (-1)^{\vert F \setminus G \vert} f([x]^G). 
	\end{align}
	
	The notations \eqref{eq.def_WIP_new} and \eqref{eq.def_diff_shift} then give a nice decomposition of $D_F v_m$, which can be summarized as: 
	\begin{center}
		\textit{``$F_*$ changes the harmonic function value, while $F_\circ$ changes the grain"}.
	\end{center}
	Afterwards, the following lemma establishes the link between the Glauber derivative and the spatial derivative.

	\begin{lemma}\label{lem.decom_DF}
		Let $v_{m}$ defined in \eqref{eq.harmonicBC} follow the canonical grain extension \eqref{eq.def_extension_plan} on $\cu_m$, then for every $F \subset \Ed(\cu_m)$ we have
		\begin{equation}\label{eq.decom_DF}
			\forall x \in \cu_m, \qquad (D_F v_m)(x) = (\nabla_{F_\circ} D_{F_*} v_m)(x).
		\end{equation}
		Here $F_*, F_\circ$ follows the decomposition \eqref{eq.def_WIP_new}.  
	\end{lemma}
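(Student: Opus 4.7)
The plan is to expand both sides of \eqref{eq.decom_DF} as alternating sums over subsets of $F$ and match them term-by-term. Using the inclusion-exclusion formula \eqref{eq.Difference} for $D_F$ together with the definition \eqref{eq.def_diff_shift} of $\nabla_{F_\circ}$, both sides decompose into sums indexed by $G \subset F$; exploiting the unique splitting $G = (G \cap F_*) \sqcup (G \cap F_\circ)$ and the multiplicativity of signs $(-1)^{|F\setminus G|} = (-1)^{|F_*\setminus K|}(-1)^{|F_\circ\setminus H|}$, the identity reduces to the term-by-term claim
\[
v_m^G(x) = v_m^{K}([x]^{H}), \qquad \text{for every } G = K \sqcup H \text{ with } K \subset F_*, H \subset F_\circ.
\]

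The verification of this term-by-term identity rests on two geometric observations about the cluster-growth decomposition \eqref{eq.def_WIP_new}. First, every $H$-edge, being in $F_\circ$, must have both endpoints outside $\cCb^{\a^F}(\cu_m)$ (for otherwise it would lie in $\EE(\cCb^{\a^F}(\cu_m))$, hence in $F_*$); since $\cCb^{\a^G} \subset \cCb^{\a^F}$, adding the $H$-edges to $\a^K$ cannot attach any new vertex to the boundary-connecting cluster, so $\cCb^{\a^G} = \cCb^{\a^K}$. Moreover the open-edge sets on this common cluster coincide (no edge of $H$ contributes), so the Dirichlet problems \eqref{eq.harmonicBC} for $v_m^G$ and $v_m^K$ agree, giving $v_m^G \equiv v_m^K$ on $\cCb^{\a^K}$. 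Second, I claim that the canonical grain only feels the $H$-perturbation: $[x]^G = [x]^H$. The heuristic is that $K$-edges are concentrated in $\cCb^{\a^F}$ and thus cannot modify the hole of $x$ sitting in the complement, while only $H$-edges can merge such holes. This extends the argument underlying Lemma~\ref{lem.decom_caonical_easy}, which already handles the special case $H = \emptyset$ for $x$ outside $\cCb^{\a^F}$; here I would iterate the same partition/connectivity reasoning inside $\cu_m \setminus \cCb^{\a^F}$.

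Granting the two claims, and applying the canonical grain extension rule $v_m^G(x) = v_m^G([x]^G)$ from Definition~\ref{def.ConstantExtension_canonical} together with the inclusion $[x]^H \in \cCb^\a \subset \cCb^{\a^K}$ where $v_m^G$ and $v_m^K$ agree, one concludes
\[
v_m^G(x) \;=\; v_m^G([x]^G) \;=\; v_m^G([x]^H) \;=\; v_m^K([x]^H),
\]
which finishes the term-by-term check and hence \eqref{eq.decom_DF}.

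The main obstacle is the second geometric claim: the stability of $[x]^G$ under $F_*$-perturbations. The delicate point is that Definition~\ref{def.ConstantExtension_canonical} bifurcates according to whether $x \in \cCb^{\a^G}$, and the branch can change with $G$—in particular, along the family of intermediate $G \subset F$, the vertex $x$ may transit between $\cu_m \setminus \cCb^{\a^G}$ and $\cCb^{\a^G}$. Tracing how the $\a^G$-hole $\OO^{\a^G}(x)$ is assembled from $\a$-holes by gluing along $H$-edges (while $K$-edges stay inside $\cCb^{\a^F}$ and are irrelevant to the hole), and reconciling this with the case split in the definition of $[x]^G$, is where I expect the bulk of the technical work to lie.
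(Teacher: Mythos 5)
Your proposed term-by-term identity $v_m^G(x) = v_m^K([x]^H)$ (for $G = K \sqcup H$, $K \subset F_*$, $H \subset F_\circ$) is exactly what the paper proves \emph{when $x \in \cu_m \setminus \cCb^F(\cu_m)$}; your geometric claims match the paper's Step~1 there, and that part of the argument is sound. However, the term-by-term identity genuinely \emph{fails} when $x \in \cCb^F(\cu_m) \setminus \cCb(\cu_m)$, which is precisely the ``transiting branch'' situation you flag at the end but do not resolve.

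Here is the concrete failure. Take $F = \{e_1, e_2\}$ with $e_1 \in F_*$ connecting a hole $\OO(x)$ to $\cCb(\cu_m)$ and $e_2 \in F_\circ$. Then $x \in \cCb^F(\cu_m)$ but $x \notin \cCb(\cu_m)$. For the term $(K,H) = (\{e_1\}, \emptyset)$ your identity asserts $v_m^{e_1}(x) = v_m^{e_1}([x])$. But $x \in \cCb^{e_1}(\cu_m)$ so $[x]^{e_1} = x$, while $[x] = [x]^{\emptyset}$ is a \emph{different} vertex (a nearest point of $\cCb(\cu_m)$ to $\OO(x)$); both lie on $\cCb^{e_1}(\cu_m)$, where $v_m^{e_1}$ is a nonconstant Dirichlet minimizer, so the two values need not agree. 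Your second geometric claim $[x]^G = [x]^H$ also breaks in this branch: here $[x]^G = [x]^{e_1} = x$ whereas $[x]^H = [x]^{\emptyset} \neq x$. So the reduction does \emph{not} hold term-by-term for $x \in \cCb^F(\cu_m)$.

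What rescues the lemma is a cancellation, not a matching: the paper's Step~2 shows that when $x \in \cCb^F(\cu_m)$ and $F_\circ \neq \emptyset$, \emph{both sides of \eqref{eq.decom_DF} vanish}. Fixing $e \in F_\circ$, one pairs $G$ with $G \cup \{e\}$ (for $G \subset F\setminus\{e\}$) and proves $(v_m^G - v_m^{G\cup\{e\}})(x) = 0$ directly, using two facts: (i) $\cCb^{G}(\cu_m) = \cCb^{G\cup\{e\}}(\cu_m)$ so $v_m^G \equiv v_m^{G\cup\{e\}}$ on $\cCb^G(\cu_m)$; and (ii) $[x]^G = [x]^{G\cup\{e\}}$, because $e \in F_\circ$ cannot touch $\OO^G(x)$ (if it did, both endpoints of $e$ would end up in $\cCb^F(\cu_m)$, contradicting $e \notin F_*$). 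In my example above, this pairing yields $v_m^{e_1,e_2}(x) - v_m^{e_1}(x) = 0$ and $v_m(x) - v_m^{e_2}(x) = 0$, so the full alternating sum is zero even though individual terms in your proposed bijection do not match. To complete your proof you need this separate Step~2 cancellation argument; the term-by-term strategy cannot be patched to cover $x \in \cCb^F(\cu_m)$.
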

	\begin{proof}
		
		The proof is divided into two steps.
		
		\textit{Step~1: case $x \in \cu_m  \setminus \cCb^F(\cu_m)$.} We make the calculation using $D_F u = D_{F_\circ} D_{F_*} u$ 
		\begin{equation}\label{eq.double_difference}
			\begin{split}
				(D_F v_m)(x) &= \sum_{G_2 \subset F_\circ} (-1)^{\vert F_\circ \setminus G_2 \vert} \sum_{G_1 \subset F_*} (-1)^{\vert F_* \setminus G_1 \vert} v_m^{G_1 \cup G_2}([x]^{G_1 \cup G_2}) \\
				&= \sum_{G_2 \subset F_\circ} (-1)^{\vert F_\circ \setminus G_2 \vert} \sum_{G_1 \subset F_*} (-1)^{\vert F_* \setminus G_1 \vert} v_m^{G_1}([x]^{G_1 \cup G_2})\\
				&= \sum_{G_2 \subset F_\circ} (-1)^{\vert F_\circ \setminus G_2 \vert} \sum_{G_1 \subset F_*} (-1)^{\vert F_* \setminus G_1 \vert} v_m^{G_1}([x]^{G_2})\\
				&= (\nabla_{F_\circ} D_{F_*} v_m)(x).
			\end{split}
		\end{equation}
		Let us explain why each equality holds: 
		\begin{itemize}[label=---]
			\item  The first line is just the definition \eqref{eq.def_extension_plan} and \eqref{eq.Difference}.
			\item  In the second line, because $F_\circ$ only creates bridges in $\cu_m \setminus  \EE(\cCb^F(\cu_m))$, opening bonds in $G_2$ will not change the geometry of the boundary-connecting cluster, i.e.
			\begin{align*}
				\Ll(\cCb^{G_1 \cup G_2}(\cu_m), \EE(\cCb^{G_1 \cup G_2}(\cu_m))\Rr) = \Ll(\cCb^{G_1}(\cu_m), \EE(\cCb^{G_1}(\cu_m))\Rr).
			\end{align*}  
			Therefore, by definition of $v_m$ in \eqref{eq.harmonicBC}, we have 
			\begin{align*}
				\forall y \in \cCb^{G_1}(\cu_m), \qquad v_m^{G_1 \cup G_2}(y) =  v_m^{G_1}(y).
			\end{align*}
			Because $[x]^{G_1 \cup G_2} \in \cCb(\cu_m) \subset \cCb^{G_1}(\cu_m)$, we can then replace $v_m^{G_1 \cup G_2}([x]^{G_1 \cup G_2})$ by  $ v_m^{G_1}([x]^{G_1 \cup G_2})$.
			\item  In the third line, because $G_1 \subset \EE(\cCb^F(\cu_m))$, it does not provide any bond connecting holes in $\cu_m \setminus  \EE(\cCb^F(\cu_m))$. Therefore, we have
			\begin{align*}
				\forall x \in \cu_m \setminus \cCb^{F}(\cu_m), \qquad	\OO^{G_2}(x) = \OO^{G_1 \cup G_2}(x).
			\end{align*}
			Using the definition of the canonical grain \eqref{eq.def_center}
			\begin{align*}
				\forall x \in \cu_m \setminus \cCb^{F}(\cu_m), \qquad   [x]^{G_2} = [x]^{G_1 \cup G_2},
			\end{align*}
			and this yields the equality in the third line.
			\item The forth line is just the expansion from \eqref{eq.def_diff_shift}.	
		\end{itemize}
		
		\medskip
		
		\textit{Step~2: case $x \in \cCb^F(\cu_m)$.} For this case, if $F_\circ = \emptyset$, then \eqref{eq.decom_DF} is obvious. Otherwise, we claim
		\begin{align}\label{eq.Null_Maximal_Cluster_pre}
			F_\circ \neq \emptyset \Longrightarrow \forall x \in \cCb^F(\cu_m), \quad (D_F v_m)(x) = (\nabla_{F_\circ} D_{F_*} v_m)(x) = 0.
		\end{align}
		Let $e \in F_\circ$, then we have 
		\begin{align*}
			D_F v_m &= \sum_{G \subset F \setminus \{e\}} (-1)^{\vert F \setminus (G \cup \{e\})\vert}v_m^{G \cup \{e\}} + (-1)^{\vert F \setminus G \vert}v_m^{G} \\
			&= \sum_{G \subset F \setminus \{e\}}  (-1)^{\vert F \setminus G \vert}\Ll(v_m^{G} - v_m^{G \cup \{e\}}\Rr).
		\end{align*}
		We then compare $v_m^{G}$ and $v_m^{G \cup \{e\}}$ for $x \in \cCb^F(\cu_m)$. Using the canonical grain extension \eqref{eq.def_extension_plan}, we have
		\begin{align}\label{eq.DF_cancel_0}
			\Ll(v_m^{G} - v_m^{G \cup \{e\}}\Rr)(x) = v_m^{G}([x]^{G}) - v_m^{G \cup \{e\}}([x]^{G \cup \{e\}}).
		\end{align}
		Because  $e \notin \EE(\cCb^F(\cu_m))$, its perturbation does not change the geometry of boundary-connecting cluster under $\a^G$, i.e.
		\begin{align}\label{eq.DF_cancel_observation}
			\Ll(\cCb^{G}(\cu_m), \EE(\cCb^{G}(\cu_m))\Rr) = \Ll(\cCb^{G \cup \{e\}}(\cu_m), \EE(\cCb^{G \cup \{e\}}(\cu_m))\Rr),
		\end{align}  
		which yields (by \eqref{eq.harmonicBC})
		\begin{align}\label{eq.DF_cancel_1}
			v_m^{G} \equiv v_m^{G \cup \{e\}} \text{ on } \cCb^{G}(\cu_m).
		\end{align}
		Moreover, we claim
		\begin{align}\label{eq.DF_cancel_2}
			[x]^{G} = [x]^{G \cup \{e\}}.
		\end{align}
		This can be justified in two cases:
		\begin{itemize}[label=---]
			\item If $x \in \cCb^{G}(\cu_m)$, then $[x]^{G} = [x]^{G \cup \{e\}} = x$ by \eqref{eq.def_center}.
			\item If $x \notin \cCb^{G}(\cu_m)$, then $e$ cannot be connected to the hole $\OO^G(x)$, otherwise it will be merged to $\cCb^{F}(\cu_m)$ when opening all bonds in $F$. Therefore, opening $e$ will not enlarge $\OO^G(x)$, i.e.
			\begin{align*}
				\OO^G(x) = \OO^{G \cup \{e\}}(x),
			\end{align*}
			and we have $[x]^{G} = [x]^{G \cup \{e\}}$ by \eqref{eq.def_center}.
		\end{itemize}
		Then we put \eqref{eq.DF_cancel_1} and \eqref{eq.DF_cancel_2} back to \eqref{eq.DF_cancel_0}, which concludes $(D_F v_m)(x) = 0$ for $x \in \cCb^F(\cu_m)$. The part $(\nabla_{F_\circ} D_{F_*} v_m)(x) = 0$ is similar and we skip the proof.
	\end{proof}

	Built on these notations, actually there are a lot of cancellation $D_F v_m$. They are quite natural as the perturbation on disjoint components will annihilate the function.
	\begin{corollary}\label{cor.DF_cancel}
		Under the setting of Lemma~\ref{lem.decom_DF}, the following observations hold.
		\begin{enumerate}
			\item If $F \nsubseteq \EE(\cCb^F(\cu_m))$, i.e. $F_\circ \neq \emptyset$, then 
			\begin{align}\label{eq.Null_Maximal_Cluster}
				\forall x \in \cCb^F(\cu_m), \qquad (D_F v_m)(x) = 0.
			\end{align}
			\item For all $x \in  \cu_m \setminus  \cCb^F(\cu_m)$, if $F_\circ \nsubseteq \EE(\OO^{F_\circ}(x))$, then $(D_F v_m)(x) = 0$.
		\end{enumerate}
	\end{corollary}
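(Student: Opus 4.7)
The first assertion is in fact already contained in the proof of Lemma~\ref{lem.decom_DF}: the calculation culminating in \eqref{eq.Null_Maximal_Cluster_pre} shows that $F_\circ\neq\emptyset$ forces $(D_F v_m)(x)=0$ for every $x\in\cCb^F(\cu_m)$. The plan for (1) is simply to invoke this as a corollary, with no additional argument required.

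For (2), the plan is to combine the decomposition \eqref{eq.decom_DF} with a cancellation-by-pairing argument. Fix $x\in\cu_m\setminus\cCb^F(\cu_m)$, and pick $e\in F_\circ\setminus \EE(\OO^{F_\circ}(x))$, which exists by hypothesis. By Lemma~\ref{lem.decom_DF} and the definition \eqref{eq.def_diff_shift} I would write
\begin{equation*}
(D_F v_m)(x)=(\nabla_{F_\circ}D_{F_*}v_m)(x)=\sum_{G\subset F_\circ}(-1)^{|F_\circ\setminus G|}(D_{F_*}v_m)([x]^G),
\end{equation*}
and then group the $2^{|F_\circ|-1}$ pairs $(G,\,G\cup\{e\})$ with $G\subset F_\circ\setminus\{e\}$. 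The two terms in each pair carry opposite signs, so the entire sum vanishes as soon as I verify the grain identity $[x]^G=[x]^{G\cup\{e\}}$.

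To establish this identity I would argue that both endpoints of $e$ lie outside $\OO^{F_\circ}(x)$. Indeed, $e\in F_\circ$ means $e$ is open under $\a^{F_\circ}$, so $e\in\EE(\OO^{F_\circ}(x))$ would be equivalent to both endpoints sitting inside this hole. The hypothesis $e\notin\EE(\OO^{F_\circ}(x))$ therefore forces both endpoints outside it. Since every open edge of $\a^G$ is open in $\a^{F_\circ}$, one has $\OO^G(x)\subset\OO^{F_\circ}(x)$, so neither endpoint of $e$ belongs to $\OO^G(x)$; consequently opening $e$ does not merge anything into this hole and $\OO^{G\cup\{e\}}(x)=\OO^G(x)$. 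The canonical grain $[x]^{\cdot}$ in \eqref{eq.def_center} depends only on the hole $\OO^{\cdot}(x)$ together with $\cCb^\a(\cu_m)$ (the latter being insensitive to $G$), so $[x]^{G\cup\{e\}}=[x]^G$, which closes the cancellation.

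The main—and essentially only—subtle step is the dichotomy ``either both endpoints of $e$ lie inside $\OO^{F_\circ}(x)$, or both lie outside.'' This is what makes the hypothesis $e\notin\EE(\OO^{F_\circ}(x))$ useful, and it hinges on $e$ being automatically open under $\a^{F_\circ}$. Once this observation is in place, the pairing cancellation is purely formal; no analytic input beyond Lemma~\ref{lem.decom_DF} is needed.
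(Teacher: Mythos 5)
Your proposal is correct and follows the same route as the paper: part (1) is read off from \eqref{eq.Null_Maximal_Cluster_pre}, and part (2) uses the pairing of $G$ with $G\cup\{e\}$ in the expansion of $\nabla_{F_\circ}$ together with the grain identity $[x]^G=[x]^{G\cup\{e\}}$, which in turn comes from $e$ not touching $\OO^G(x)\subset\OO^{F_\circ}(x)$. Your explicit unpacking of the dichotomy on the endpoints of $e$ (using that $e$ is open in $\a^{F_\circ}$) is a slightly more detailed rendition of the paper's one-line observation that if $e$ were connected to $\OO^G(x)$ it would lie in $\EE(\OO^{F_\circ}(x))$, but there is no substantive difference.
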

	\begin{proof}
		(1) is the result of \eqref{eq.Null_Maximal_Cluster_pre}.
		
		\smallskip
		
		For (2), then there exists an edge $e \in F_\circ$ but $e \notin \EE(\OO^{F_\circ}(x))$. We apply the formula in \eqref{eq.decom_DF} and the definition \eqref{eq.def_diff_shift}
		\begin{equation}\label{eq.DF_cancel_3}
			\begin{split}
				&(D_F v_m)(x) \\
				&= (\nabla_{F_\circ} D_{F_*} v_m)(x) \\
				&= \sum_{G \subset F_\circ \setminus \{e\}} (-1)^{\vert F_\circ \setminus (G \cup \{e\})\vert}  (D_{F_*} v_m)([x]^{G \cup \{e\}}) + (-1)^{\vert F_\circ \setminus G \vert} (D_{F_*} v_m)([x]^{G}) \\
				&= \sum_{G \subset F_\circ \setminus \{e\}}  (-1)^{\vert F_\circ \setminus G \vert}\Ll((D_{F_*} v_m)([x]^{G})- (D_{F_*} v_m)([x]^{G \cup \{e\}})\Rr).
			\end{split}
		\end{equation}
		The edge $e$ should not be connected to $\OO^G(x)$, otherwise it is in $\EE(\OO^{F_\circ}(x))$. This implies $\OO^{G \cup \{e\}}(x) = \OO^{G}(x)$, and we have $[x]^{G} = [x]^{G \cup \{e\}}$ by the definition \eqref{eq.def_center}. We put this identity back to \eqref{eq.DF_cancel_3} and finish the proof.
		
	\end{proof}

	\section{Weighted $\ell^1$-$L^2$ estimate $I_m(i,0)$}\label{sec.KeyEstimate1}
	This section and next section are the main part to prove a uniform bound \eqref{e:main_terms_to_bound} with respect to $m \in \N_+$. The key result is the following quenched weighted $\ell^1$-$L^2$ estimate, which is deterministic once we know the cube is good enough with the desired pyramid partition $\pPN_{h}$. We keep the conventions \eqref{eq.shorthand} for $V_{m,\xi}(F, j)$. The parameter $\rr$ in the statement roots from Definition~\ref{def.pyramid}, whose range is $(\frac{4}{3}, 2)$ by default. We also define the following parameter $h_*(\alpha,i,j)$ for the convenience, which is the minimal scale in the pyramid partition
	\begin{align}\label{eq.condition_h_star}
		h_*(\alpha,i,j) := \max \Ll\{\tilde{h}_{Me} ,\frac{(3+\alpha+12(i+j+2)^2)(2+\tilde{\eps}_{Me})}{\tilde{\eps}_{Me}}, \frac{(\alpha+5)(2+\eps_{Me})}{\eps_{Me}}\Rr\}.
	\end{align}
	Here the constants $\tilde{\eps}_{Me}, \tilde{h}_{Me}$ come from Lemma~\ref{lem.Meyers_NEW}, and $\eps_{Me}$ comes from Definition~\ref{def.Meyers}.
	
	\begin{prop}[Quenched improved weighted $\ell^1$-$L^2$ estimate]
		\label{pr:key_estimate}
		Given $i,j\in \N$ and $\alpha \geq 0$, then for every $\rr \in (\frac{50}{27}, 2)$, and every $N,k \in \N$ satisfying
		\begin{align}\label{eq.condition_Nk_main}
			N \geq i+j, \qquad k \geq \max\{h, h_*(\alpha,i,j)\} + 12(i+j) + 2,
		\end{align}
		there exists a constant $C(i,j,\alpha,h,N,k,d,\rr)$ independent of $m \in \N_+$ such that the following statement holds: for every $\cu_m \in \gGN_{k}$, we have the estimate when the canonical grain \eqref{def.ConstantExtension_canonical} is applied to $v_m$ in $\cu_m$ 
		\begin{multline}
			\label{e:key_estimate}
			\frac{1}{|\cu_m|}\sum_{\substack{|F|=i\\F\subset \Ed(\cu_m)}}\Ll(\sum_{\substack{\cu \in \pPN_{h}(\cu_m)}} |\cu|^{\alpha}\sum_{e\in \Ed(\cu)} |\na V_{m}(F, j)(e)|^2 \Rr) \\ \leq C(i,j,\alpha,h,N,k,d,\rr) .
		\end{multline}           
	\end{prop}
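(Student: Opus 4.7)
The strategy is an induction on the degree $\deg(V_m(F,j)) := |F|+2j$ introduced in Remark~\ref{rmk.OrderDecre}, using the perturbed corrector equation from Lemma~\ref{lem.recurrence_eq} to reduce to terms of strictly smaller degree. The base case $(i,j)=(0,0)$ reduces to $V_m(\emptyset,0)=v_m$: here the trivial Dirichlet energy bound $\bracket{\nabla v_m, \a\nabla v_m}_{\cu_m}\leq |\cu_m|d|\xi|^2$, together with Proposition~\ref{prop.weighted_L2} applied with $\w=0$, directly yields \eqref{e:key_estimate}. For the inductive step, fix $(i,j)\neq (0,0)$ and assume the estimate is established for all pairs of strictly smaller degree. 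Apply Lemma~\ref{lem.recurrence_eq} to obtain
\begin{equation*}
-\nabla\cdot(\aio{F}\nabla V_m(F,j)) = \nabla\cdot \W_m(F,j) \quad \text{in }\itr(\cu_m),
\end{equation*}
with $V_m(F,j)\in C_0(\cu_m)$, where each term on the right-hand side, read off from \eqref{e:induction_eqW}, is $(\aio{e}-\a)$ times the gradient of some $V_m(F',j')$ with $\deg(V_m(F',j'))<\deg(V_m(F,j))$.

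The two cases $j=0$ and $j\geq 1$ are then handled by Propositions~\ref{prop.weighted_L2} and~\ref{prop.weighted_L2_NEW} respectively, since the source field $\W_m(F,j)$ lives on different edge sets. For the case $j=0$, the structure $\W_m(F,0)=\sum_{e\in F}(\aio{e}-\a)(\nabla V_m(F\setminus\{e\},0)-\nabla V_m(F,-1))$ shows that the source is supported on the intrinsic bonds $\overrightarrow{\Ed^{\a^F}}$. Moreover, Lemma~\ref{lem.decom_DF} rewrites $D_F v_m = \nabla_{F_\circ}D_{F_*}v_m$, so that on any $\cu\subset\cu_m$ the function $V_m(F,0)=D_F v_m$ factors through the harmonic-on-cluster functions $D_{F_*}v_m$ composed with a finite set of canonical shifts. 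Invoking Proposition~\ref{prop.weighted_L2} and then bounding the resulting weighted sum of $|\W_m(F,0)|^2$ by lower-degree energies via the inductive hypothesis and the counting Lemma~\ref{le:overlap_counting} closes this case.

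For $j\geq 1$, the source $\W_m(F,j)$ generally lives on $\overrightarrow{\Ed}(\cu_m)$, not just on intrinsic bonds, so Proposition~\ref{prop.weighted_L2_NEW} is required together with a coarse-controlled majorant $\g$ in the sense of \eqref{eq.vg}. The plan is to exhibit such a $\g$ using the hole-separation decomposition $V_m(F,j)=\sum_{\ell=1}^j\sum_{G\subset\Ed(\cu),|G|=\ell}V_m(F\cup G,j-\ell\mid\cu)$ sketched in the introduction; on each partition cube $\cu\in\pPN_0(\cu_m)$, the tail piece $V_m(F\cup G,j-\ell\mid\cu)$ has all remaining Glauber perturbations outside $\Ed(\cu)$, so its local behaviour is again captured by the cluster-growth decomposition and can be controlled by a locally-averaged $L^2$ norm of lower-degree quantities plus a genuine boundary correction. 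This furnishes $\g$, and Proposition~\ref{prop.weighted_L2_NEW} then bounds the left-hand side of \eqref{e:key_estimate} by a weighted sum involving $|\W_m(F,j)|^2$ and $|\g|^2$; both are dominated by lower-degree improved energies, closing the induction after a final combinatorial sum over $|F|=i$ and application of Lemma~\ref{le:overlap_counting} to keep the constants independent of $m$.

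The principal obstacle is precisely the degeneracy of $\aio{F}$ on the left-hand side of \eqref{eq.recurrence_eq}: a naive energy estimate gives control of $\nabla V_m(F,j)$ only on $\EE(\cCp^{\aio{F}}(\cu_m))$, yet the statement \eqref{e:key_estimate} demands a uniform $L^2$ bound over all of $\Ed(\cu)$ within each partition cube, with an additional geometric weight $|\cu|^\alpha$. Bridging this gap is the whole reason we need the coarse-controlled framework of Proposition~\ref{prop.weighted_L2_NEW} combined with the cluster-growth decomposition and hole separation; without these structural inputs, the induction on degree cannot even be formulated, since $\W_m(F,j)$ and $V_m(F',j')$ are no longer comparable in any uniformly elliptic sense. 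A secondary difficulty is combinatorial: the sum $\sum_{|F|=i}$ contains $\binom{|\Ed(\cu_m)|}{i}$ terms, and the compensation necessary to keep the constant $C(i,j,\alpha,h,N,k,d,\rr)$ independent of $m$ is the entire reason for passing through the $\ell^1$-$L^2$ improved energy in the first place rather than each term individually, and requires the conditions $N\geq i+j$ and $k\geq \max\{h,h_*(\alpha,i,j)\}+12(i+j)+2$ in \eqref{eq.condition_Nk_main} to guarantee that Propositions~\ref{prop.weighted_L2} and~\ref{prop.weighted_L2_NEW} can be iterated up to the required depth on the pyramid partition without degrading the constants.
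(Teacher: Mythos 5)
Your proposal follows essentially the same route as the paper's own proof: induction on the degree $|F|+2j$, the perturbed corrector equation to reduce degree, Proposition~\ref{prop.weighted_L2} for $j=0$, and Proposition~\ref{prop.weighted_L2_NEW} with a hole-separation-based coarse-controlled function $\g$ for $j\geq 1$, together with the cluster-growth decomposition and the counting lemma to pass from cluster edges to all edges and to keep constants $m$-independent. One small slip: the base case $(0,0)$ is not ``Proposition~\ref{prop.weighted_L2} with $\w=0$'' (that would force $\nabla v_m\equiv 0$ on the cluster); rather one writes $v_m=\ell_\xi+\phi_m$, applies Proposition~\ref{prop.weighted_L2} to $\phi_m$ with source $\w=-\a\nabla\ell_\xi$, and then still needs the coarse-graining step \eqref{eq.coarse_grained_v} to pass from $I^*_m(0,0)$ to the full $I_m(0,0)$ over all of $\Ed(\cu)$ — but this is exactly the kind of passage you already describe and use correctly for $i\geq 1$, so it is a local oversight, not a gap in the method.
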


	The proof is divided into several subsections, so we can attack the technical difficulties separately. In the proof, we will make use of the nice properties of the perturbed corrector equations and the pyramid partitions respectively discussed in Lemma~\ref{lem.recurrence_eq} and Definition~\ref{def.pyramid}.
	


	\subsection{Coarse-graining argument for $V_m(F,0)$}

	We develop the coarse-graining argument for $V_m(F,0)$ using the structure discussed in Section~\ref{sec.HarmonicExtension}.

	The following lemma implies that partition cube is large enough to capture the information of the canonical grain introduced in \eqref{eq.def_center}.
	\begin{lemma}\label{lem.center_cube}
		Given $\rr \in (\frac{50}{27}, 2)$, then every $\cu_m \in \gGN_k$ is $N$-$\stable$ for the following property: 
		for every $\cu' \in \pPN_{h}(\cu_m)$ with $0 \leq h \leq k-1$, we have
		\begin{align}\label{eq.center_cube}
			\forall x \in \frac{16}{15} \cu' \cap \Ll(\cu_m \setminus \cCb^\#(\cu_m)\Rr),  \qquad  [x]^\# \in \cCp\Ll(\rr\cu' \cap \cu_m\Rr).
		\end{align}
	\end{lemma}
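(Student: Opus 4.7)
The plan is to combine the hole-confinement provided by Lemma~\ref{lem.detouring_path} with the quantitative density of the $\a$-maximal cluster guaranteed by the very-well-connectedness carried by $\cu'$ under the admissible environment $\a$.

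First, I would invoke the third bullet of Lemma~\ref{lem.detouring_path} applied to the partition cube $\cu' \in \pPN_{h}(\cu_m)$ under $\a^\#$: since $x \in \frac{16}{15}\cu' \cap (\cu_m \setminus \cCb^\#(\cu_m))$, the hole $\OO^\#(x)$ is entirely contained in $\frac{4}{3}\cu'$. Note also that $\cCb^\a(\cu_m) \subseteq \cCb^\#(\cu_m)$ (opening edges can only add connections), so $\OO^\#(x) \cap \cCb^\a(\cu_m) = \emptyset$ and the $\arg\min$ in \eqref{eq.def_center} is taken over a nonempty set bounded below by a positive distance.

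Next, I would exploit the $N$-stable very-well-connectedness of $\rr\cu'$ under $\a$ coming from Proposition~\ref{prop.good_cube_property}(ii). Applying Definition~\ref{def:well_connected}(1) to $\rr\cu'$, every sub-cube of size comparable to $\size(\cu')$ intersecting $\frac{3\rr}{4}\cu'$ (which contains $\frac{4}{3}\cu'$ when $\rr > 50/27 > 16/9$) is crossable under $\a$; Definition~\ref{def:well_connected}(2) then forces the resulting crossing clusters to merge with $\cCm^\a(\rr\cu')$ within $\rr\cu'$. Consequently, for any $y \in \OO^\#(x) \subset \frac{4}{3}\cu'$ there exists a vertex $z \in \cCm^\a(\rr\cu')$ with $\dist(z,y) \lesssim \size(\cu')$. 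Iterating Lemma~\ref{lem.maximalCluster} up the pyramid from $\cu'$ to $\cu_m$ gives $\cCm^\a(\rr\cu') \subset \cCm^\a(\cu_m) \subset \cCb^\a(\cu_m)$, so $z$ is an admissible candidate in the $\arg\min$ of \eqref{eq.def_center}. Hence $\dist([x]^\#, \OO^\#(x)) \le \dist(z, \OO^\#(x)) \lesssim \size(\cu')$, and $[x]^\#$ lies in a controlled enlargement of $\frac{4}{3}\cu'$, which the quantitative choice $\rr > 50/27$ is precisely tuned to fit inside $\rr\cu' \cap \cu_m$.

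Finally, to upgrade the inclusion $[x]^\# \in \rr\cu' \cap \cu_m$ to $[x]^\# \in \cCp^\a(\rr\cu' \cap \cu_m)$, I would look at the $\a$-cluster of $[x]^\#$. Because $[x]^\# \in \cCb^\a(\cu_m)$, this cluster reaches $\partial\cu_m$ within $\cu_m$; tracing the path inside $\rr\cu' \cap \cu_m$ from $[x]^\#$, either it reaches $\rr\cu' \cap \partial\cu_m$ directly (placing $[x]^\#$ in $\cCp^\a(\rr\cu'\cap\cu_m)$ by the second branch of \eqref{eq.def_clt_boundary}), or it exits $\rr\cu'$ through $\partial(\rr\cu')$. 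In the latter case the resulting open sub-path has diameter at least $\dist([x]^\#, \partial(\rr\cu'))$; the margin secured by $\rr > 50/27$ makes this diameter comparable to $\size(\rr\cu')$, which by Definition~\ref{def:well_connected}(2) (applied to $\rr\cu'$) forces the sub-path to connect to $\cCm^\a(\rr\cu')$ within $\rr\cu'$, and hence to $\cCm^\a(\cu')$ by well-connectedness of both cubes. Either scenario yields $[x]^\# \in \cCp^\a(\rr\cu' \cap \cu_m)$.

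The main obstacle is the tight geometric bookkeeping underlying the threshold $\rr > 50/27$: one must simultaneously (i) fit the hole inside $\frac{4}{3}\cu'$, (ii) accommodate the $O(\size(\cu'))$ displacement from $\OO^\#(x)$ to the nearest $\cCb^\a(\cu_m)$ vertex, and (iii) retain enough slack between $[x]^\#$ and $\partial(\rr\cu')$ to activate the diameter threshold $\size(\rr\cu')/10$ in Definition~\ref{def:well_connected}(2). Verifying that these three inequalities close together under the single constraint $\rr > 50/27$ is the delicate point; all the other steps are direct applications of results already established in Sections~\ref{sec.cube} and~\ref{sec.HarmonicExtension}.
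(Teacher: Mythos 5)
Your overall strategy matches the paper's: confine the hole inside $\frac{4}{3}\cu'$ via Lemma~\ref{lem.detouring_path}, use the density of $\a$-clusters in $\cu'$ to exhibit a nearby candidate for the $\arg\min$ in \eqref{eq.def_center}, and close with the diameter/proportion argument of Definition~\ref{def:well_connected}(2). However, your implementation of the density step is too coarse and, as written, the proportion argument at the end fails for $\rr$ near $\frac{50}{27}$.

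The issue is in how you produce the admissible candidate $z \in \cCb^\a(\cu_m)$ near $\OO^\#(x)$. You invoke Definition~\ref{def:well_connected}(1) applied to $\rr\cu'$, whose sub-cubes have size at least $\frac{1}{10}\size(\rr\cu') = \frac{\rr}{10}\size(\cu')$, so the best you can guarantee is $\dist(z,\OO^\#(x)) \le \frac{\rr}{10}\size(\cu')$. This forces $[x]^\#$ only into a cube of half-width $\left(\frac{2}{3} + \frac{\rr}{10}\right)\size(\cu')$; the remaining margin to $\partial(\rr\cu')$ is $\left(\frac{\rr}{2}-\frac{2}{3}-\frac{\rr}{10}\right)\size(\cu')$, and the associated proportion is
\begin{align*}
\frac{1}{\rr}\left(\frac{\rr}{2}-\frac{2}{3}-\frac{\rr}{10}\right)=\frac{2}{5}-\frac{2}{3\rr},
\end{align*}
which at $\rr=\frac{50}{27}$ equals $\frac{1}{25}<\frac{1}{10}$, so the diameter threshold in Definition~\ref{def:well_connected}(2) is not met. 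The paper instead exploits the stronger \emph{very}-well-connectedness of $\frac{4}{3}\cu'$: its third-order successors (each of size $\frac{1}{27}\cdot\frac{4}{3}\size(\cu')=\frac{4}{81}\size(\cu')$) are themselves well-connected, so each already contains a crossing cluster that chains up to $\cCm^\a(\cu_m)\subset\cCb^\a(\cu_m)$. This gives the much tighter bound $\dist([x]^\#,\OO^\#(x))\le \frac{4}{81}\size(\cu')$, hence $[x]^\#\in\frac{40}{27}\cu'$, and the resulting proportion $\frac{1}{2}\left(1-\frac{40/27}{\rr}\right)$ is exactly $\frac{1}{10}$ at $\rr=\frac{50}{27}$. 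So the threshold $\frac{50}{27}$ is tuned to the $\frac{4}{81}$ localization, not to the $\frac{\rr}{10}$ one; the ingredient you are missing is the third-order-successor device, without which the constants do not close. You flagged this bookkeeping as the delicate point, but it is not merely unverified — with your choice of tool it genuinely fails.
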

	\begin{proof}
		
		Thanks to the very-well connectivity in Definition~\ref{def:well_connected}, one of the $27^d$ third order successors of $\frac{4}{3}\cu'$ already provides a candidate of vertex on $\a$ with distance 
		\begin{align*}
			\frac{1}{27}  \cdot \frac{4}{3} \size(\cu') = \frac{4}{81} \size(\cu').
		\end{align*}
		We claim that the vertex outside $\frac{40}{27}\cu'$ cannot do better. Lemma~\ref{lem.detouring_path} implies 
		\begin{align*}
			\OO^\#(x) \subset \frac{4}{3}\cu'.
		\end{align*}
		Therefore, we have the estimate 
		\begin{multline*}
			\min_{y \notin \frac{40}{27} \cu'} \dist(y, \OO^\#(x)) \geq \min_{y \notin \frac{40}{27} \cu'} \dist\Ll(y, \frac{4}{3}\cu'\Rr) \geq \frac{\frac{40}{27} - \frac{4}{3}}{2} \size(\cu')\\
			= \frac{4}{54} \size(\cu') > \frac{4}{81} \size(\cu'),
		\end{multline*}
		and this implies that $[x]^\# \in \frac{40}{27}\cu'$. 
		
		Finally, a similar argument as \eqref{eq.large_open_path} justifies that $[x]^\# \in \cCp\Ll(\rr\cu' \cap \cu_m\Rr)$: if it is not the case, then there exists an open path $\gamma$ connecting $\partial \rr \cu'$ without touching $\cCm(\rr \cu')$. Then we calculate its proportion,
		\begin{align}\label{eq.large_open_path_2}
			\frac{\vert \gamma\vert}{\size(\rr \cu')} \geq \frac{\frac{\rr - \frac{40}{27}}{2}\size( \cu')}{\rr\size( \cu')}  \geq \frac{1}{2}\Ll(1 - \frac{\frac{40}{27}}{\rr}\Rr) \geq \frac{1}{2}\Ll(1 - \frac{40/27}{50/27}\Rr) \geq \frac{1}{10}.
		\end{align}
		In the last inequality, we use the condition $\rr \in (\frac{50}{27}, 2)$. Because $\rr \cu'$ is well-connected, the proportion above contradicts Definition~\ref{def:well_connected}. We thus finish the proof of \eqref{eq.center_cube}.
	\end{proof}

	We prove the coarse-graining argument for $V_m(F,0)$. Here the notation $F_*, F_\circ$ are defined as \eqref{eq.def_WIP_new}.
	\begin{lemma}[Coarse-graining argument for $V_m(F,0)$]\label{lem.coarse_grained_VF0}
		Given $\cu_m \in \gGN_k$, then the following estimates hold for every integer $0 \leq h \leq k-2$, and $1 \leq i = \vert F\vert \leq N$ with $F \subset \Ed(\cu_m)$, and every $\rr \in (\frac{50}{27}, 2)$: 
		\begin{itemize}
			\item For every $\cu \in \pPN_{h}(\cu_m)$ and edge $e \cap \cu \neq \emptyset$, we have
			\begin{align}\label{eq.coarse_grained_VF0_point}
				|\na V_m(F,0)(e)| \leq 2^{\vert F_\circ \vert} \sum_{e' \in \EE(\cCp^F(\rr \cu \cap \cu_m))} |\na V_m(F_*,0)(e')|\Ind{F_\circ \subset \Ed(\rr \cu)}.
			\end{align}
			\item There exists a finite positive constant $C(\rr,N,d,h,k)$, such that for every real number $\alpha > 0$, we have
			\begin{multline}\label{eq.coarse_grained_VF0}
				\sum_{\substack{|F|=i\\F\subset \Ed(\cu_m)}}\sum_{\substack{\cu \in \pPN_{h}(\cu_m)} }|\cu|^{\alpha} \sum_{e\in \Ed\Ll(\rr\cu \cap \cu_m \Rr)} |\na V_m(F,0)(e)|^2 \\
				\leq C \sum_{i_1 = 0}^i \sum_{\substack{|F|=i_1\\F\subset \Ed(\cu_m)}}\sum_{\substack{\cu \in \pPN_{h+1}(\cu_m)} }|\cu|^{\alpha+4+i-i_1} \sum_{e\in \EE\Ll(\cCp^{F}(\rr\cu \cap \cu_m)\Rr)} |\na V_m(F,0)(e)|^2. 
			\end{multline}
		\end{itemize}
	\end{lemma}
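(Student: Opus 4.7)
The plan for part~(1) exploits the cluster-growth decomposition of Lemma~\ref{lem.decom_DF}, which rewrites
\begin{equation*}
V_m(F,0)(x) = (\nabla_{F_\circ} V_m(F_*,0))(x) = \sum_{G \subset F_\circ} (-1)^{|F_\circ \setminus G|} V_m(F_*,0)([x]^G).
\end{equation*}
Fix $e = \{x,y\}$ with $e \cap \cu \neq \emptyset$. If $F_\circ \not\subset \Ed(\rr\cu)$, Lemma~\ref{lem.detouring_path} applied in $\a^{F_\circ}$ forces $\OO^{F_\circ}(z) \subset \frac{4}{3}\cu \subset \rr\cu$ at each endpoint $z$, so the stray edge of $F_\circ$ lies outside $\EE(\OO^{F_\circ}(z))$; Corollary~\ref{cor.DF_cancel} then gives $V_m(F,0)(z) = 0$, consistent with the vanishing indicator. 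If instead $F_\circ \subset \Ed(\rr\cu)$, then Lemma~\ref{lem.center_cube} applied in $\a^G$ for every $G \subset F_\circ$ locates both canonical grains $[x]^G$ and $[y]^G$ inside $\cCp^G(\rr\cu \cap \cu_m)$; monotonicity of clusters under edge opening yields $\cCp^G \subset \cCp^F$, so the two points can be joined by a path inside $\cCp^F(\rr\cu \cap \cu_m)$. Telescoping along this path bounds each difference $|V_m(F_*,0)([y]^G) - V_m(F_*,0)([x]^G)|$ by $\sum_{e' \in \EE(\cCp^F(\rr\cu \cap \cu_m))} |\nabla V_m(F_*,0)(e')|$, and summing over the $2^{|F_\circ|}$ subsets $G$ delivers \eqref{eq.coarse_grained_VF0_point}.

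For (2), the plan is a two-stage coarse-graining. First, apply (1), square via Cauchy--Schwarz with the trivial bound $|\EE(\cCp^F(\rr\cu \cap \cu_m))| \leq C|\cu|$, and sum over $e \in \Ed(\rr\cu \cap \cu_m)$ (another factor $C|\cu|$). Decomposing $F = F_* \sqcup F_\circ$ with $|F_*| = i_1$, counting $F_\circ$'s with $F_\circ \subset \Ed(\rr\cu)$ contributes at most $\binom{|\Ed(\rr\cu)|}{i-i_1} \leq C|\cu|^{i-i_1}$, while relaxing $\EE(\cCp^{F_* \cup F_\circ}(\rr\cu \cap \cu_m)) \subset \Ed(\rr\cu \cap \cu_m)$ removes the $F_\circ$-dependence of the cluster. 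This leaves a sum over all bonds of $\Ed(\rr\cu \cap \cu_m)$ weighted by $|\cu|^{\alpha+2+i-i_1}$. Second, re-introduce the intrinsic cluster at the parent scale: for each $\cu \in \pPN_h(\cu_m)$ with parent $\cu^p \in \pPN_{h+1}(\cu_m)$, re-apply (1) in the environment $\a^{F_*}$ with $F_\circ = \emptyset$ at the parent scale $\cu^p$ to bound $|\nabla V_m(F_*,0)(e')|$ by $\sum_{e'' \in \EE(\cCp^{F_*}(\rr\cu^p \cap \cu_m))} |\nabla V_m(F_*,0)(e'')|$ for edges $e'$ meeting $\cu^p$. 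Another Cauchy--Schwarz costs $C|\cu^p|$, and the moment estimate from $\cu^p \in \Lambda(\gGN_h, d+h+1, C_{Sc,h+1})$ gives $\sum_{\cu \subset \cu^p, \cu \in \pPN_h} |\cu|^{\alpha+3+i-i_1} \leq C|\cu^p|$ once $h$ is large enough, so the resulting weight $|\cu^p|^2 \leq |\cu^p|^{\alpha+4+i-i_1}$ matches \eqref{eq.coarse_grained_VF0}.

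The principal obstacle is the environment mismatch: step~(1) naturally produces the full-environment cluster $\cCp^F$, whereas the RHS of (2) requires the reduced-environment cluster $\cCp^{F_*}$. The two-stage detour -- first relaxing to all bonds $\Ed(\rr\cu \cap \cu_m)$ to eliminate $F_\circ$, then re-contracting via (1) at the parent scale to restore the intrinsic structure for $F_*$ alone -- bridges this gap, exploiting the room provided by the generous exponent $\alpha+4+i-i_1$ in the target. A secondary nuisance is that edges of $\Ed(\rr\cu \cap \cu_m)$ adjacent to $\partial\cu^p$ might fail $e' \cap \cu^p \neq \emptyset$; the size gap $\size(\cu) \leq \size(\cu^p)^{d/(d+h+1)}$ from \eqref{eq.sizeMax} makes such edges negligible, absorbable into the constants.
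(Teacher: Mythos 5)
Your proof of the pointwise estimate~\eqref{eq.coarse_grained_VF0_point} is essentially the same as the paper's (the paper applies the triangle inequality at the endpoints separately and then expands $\nabla_{F_\circ}$; you pair the two $G$-terms at $x$ and $y$ directly, which is a cosmetic variant). But your proof of the weighted estimate~\eqref{eq.coarse_grained_VF0} diverges from the paper, and the divergence is built on a misdiagnosis.

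You describe the ``principal obstacle'' as an environment mismatch, step~(1) producing $\cCp^F$ while the target requires $\cCp^{F_*}$. In fact there is no mismatch: every edge of $F_\circ$ has both endpoints outside $\cCb^F(\cu_m)$, so opening $F_\circ$ never enlarges the boundary-connecting structure; consequently $\cCb^F(\cu_m)=\cCb^{F_*}(\cu_m)$, $\cCm^{\a^F}(\cu)=\cCm^{\a^{F_*}}(\cu)$, and hence $\cCp^{F}(\rr\cu\cap\cu_m)=\cCp^{F_*}(\rr\cu\cap\cu_m)$. This is implicitly used by the paper, whose proof after the pointwise estimate goes straight to the intrinsic cluster for $F_*$ and then applies Lemma~\ref{lem.weight_estimate} to absorb the product of cube-weights via the counting argument and the pyramid structure. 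No re-introduction of the intrinsic cluster is ever needed.

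Because your two-stage detour is trying to patch a non-existent hole, it acquires gaps of its own. First, re-applying~\eqref{eq.coarse_grained_VF0_point} to $V_m(F_*,0)$ at the parent scale is only permitted when $|F_*|\geq 1$; the $i_1=0$ term on the right of~\eqref{eq.coarse_grained_VF0} involves $v_m=V_m(\emptyset,0)$, for which the correct analogue (Remark~\ref{rmk.plus1}) carries an extra ``$+1$'' coming from the boundary detour $\gamma'$, and your scheme does not recover the clean form stated in~\eqref{eq.coarse_grained_VF0}. Second, you remark that edges $e'\in\Ed(\rr\cu\cap\cu_m)$ that fail $e'\cap\cu^p\neq\emptyset$ are ``negligible,'' but the bound~\eqref{eq.coarse_grained_VF0_point} is not additive over edges in a way that lets one simply discard them; they must be assigned to a neighbouring parent cube, and doing so correctly is precisely what the weight function $A_{\alpha,\beta}$ and Step~3 of Lemma~\ref{lem.weight_estimate} (the re-interpretation of $2\rr\cu$ inside $\frac{103}{100}\widehat\cu$) are engineered to do. You should drop the two-stage re-contraction and instead follow the pointwise estimate with the weight estimate Lemma~\ref{lem.weight_estimate}, treating $\cCp^{F}=\cCp^{F_*}$ as an observation rather than a problem.
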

	\begin{proof}
		Concerning the {\lhs} of \eqref{eq.coarse_grained_VF0}, we use the decomposition \eqref{eq.decom_DF}, and write the sum as 
		\begin{align}\label{eq.sum_F1_F2}
			\sum_{\substack{|F|=i\\F\subset \Ed(\cu_m)}} \Big(\cdots\Big) =  \sum_{i_1 = 0}^i \sum_{\substack{|F_*|=i_1\\F_* \subset \EE\Ll(\cCb^{F_*}(\cu_m)\Rr)}} \sum_{\substack{|F_\circ|=i-i_1\\F_\circ \subset \Ed\Ll(\cu_m \setminus \cCb^{F_*}(\cu_m)\Rr)}} \Big(\cdots\Big).
		\end{align}
		Besides, we always have the following calculation for a general function $v$
		\begin{multline}\label{eq.decom_classic}
			\sum_{\substack{\cu \in \pPN_{h}(\cu_m)} }|\cu|^{\alpha}\sum_{e\in \Ed(\rr\cu)} |\na v(e)|^2 \\
			\leq \sum_{\substack{\cu' \in \pPN_{h}(\cu_m)} } \sum_{\substack{e \cap \cu' \neq \emptyset}} |\na v(e)|^2 \Ll( \sum_{\substack{\cu \in \pPN_{h}(\cu_m)} } |\cu|^{\alpha} \Ind{\rr \cu \supset e}\Rr).
		\end{multline}
		We combine \eqref{eq.sum_F1_F2} and \eqref{eq.decom_classic}, and  study 
		\begin{align*}
			\nabla v(e) = \nabla V_m(F_* \cup F_\circ,0)(e), \qquad e \cap \cu' \neq \emptyset.
		\end{align*}

		\textit{Step~1: point-wise estimate of gradient.} 	
		We denote by $e=\{x, y\}$ in the discussion, and recall that $\vert F_* \vert = i_1$, $\cu' \in \pPN_{h}(\cu_m)$.
		
		\textit{Case~1: $i_1 = i$, i.e. $F_\circ = \emptyset$.} For this case, $F=F_*$. We further discuss the status of $x,y$ as the two endpoints of $e$:
		\begin{itemize}[label=---]
			\item  If both $x$ and $y$ are in $\cCb^{F}(\cu_m)$, then they are also in $\cCp^F(\rr \cu' \cap \cu_m)$  by the observation in Lemma~\ref{lem.detouring_path}. 
			\item  If some endpoint in $e$, saying $x$, is not in $\cCb^{F}(\cu_m)$, then observation \eqref{eq.decom_caonical_easy_2} applies that 
			\begin{align*}
				V_m(F,0)(x) =  V_m(F,0)([x]).
			\end{align*}
			Moreover, \eqref{eq.center_cube} suggests that the grain $[x] \in \cCp(\rr \cu' \cap \cu_m)$.
		\end{itemize}
		In conclusion, we can carry the estimate of $\nabla V_m(F,0)(e)$ on $\cCp^F(\rr \cu' \cap \cu_m)$. As $\cCp^F(\rr \cu' \cap \cu_m)$ may contain more than one cluster, there are still two possibilities:
		\begin{itemize}[label=---]
			\item  If $x$ and $y$ or their associated grains live on the same cluster, then there must be an open path $\gamma_e \subset \EE(\cCp^F(\rr \cu' \cap \cu_m))$ connecting them, and we have
			\begin{align}\label{eq.shortcutPath}
				|\na V_m(F,0)(e)|
				&\leq \sum_{e'\in \gamma_e}|\na V_m(F,0)(e')|\\
				\nonumber &\leq \sum_{e'\in \EE \Ll(\cCp^F(\rr\cu' \cap \cu_m)\Rr)} |\na V_m(F,0)(e')|.
			\end{align}
			Here in the second line we relax the sum to $\EE \Ll(\cCp^F(\rr\cu' \cap \cu_m)\Rr)$.   
			\item Otherwise, $x$ and $y$ or their associated grains live on the different clusters. This case only happens near the boundary of $\partial \cu_m$ according to the definition of \eqref{eq.def_clt_boundary}, and then there are two disjoint open paths $\gamma_x, \gamma_y$ connecting $x, y$ to $\partial \cu_m$ within $\EE \Ll(\cCp^F(\rr\cu' \cap \cu_m)\Rr)$. We then find a third path $\gamma'$ on $\partial \cu_m$ to connect $\gamma_x$ and  $\gamma_y$ with the following estimate
			\begin{equation}\label{eq.shortcutPath2}
				\begin{split}
					& |\na V_m(F,0)(e)|\\
					&= \Ll\vert \sum_{e'\in \gamma_x \cup \gamma' \cup \gamma_y} \na V_m(F,0)(e') \Rr\vert \\
					&\leq \Ll\vert \sum_{e'\in \gamma_x} \na V_m(F,0)(e') \Rr\vert + \underbrace{\Ll\vert \sum_{e'\in  \gamma' } \na V_m(F,0)(e') \Rr\vert}_{=0} + \Ll\vert \sum_{e'\in \gamma_y} \na V_m(F,0)(e') \Rr\vert\\
					&\leq \sum_{e'\in \EE \Ll(\cCp^F(\rr\cu' \cap \cu_m)\Rr)} |\na V_m(F,0)(e')|.
				\end{split}
			\end{equation}
			Here because we assume $\vert F \vert = i \geq 1$, then $V_m(F,0) \equiv 0$ on $\partial \cu_m$, and the contribution along $\gamma'$ is null. 
		\end{itemize}
		In conclusion, for the case $F_\circ = \emptyset$, we obtain
		\begin{align}\label{eq.shortcutPath_Fo_empty}
			|\na V_m(F,0)(e)| \leq \sum_{e'\in \EE \Ll(\cCp^F(\rr\cu' \cap \cu_m)\Rr)} |\na V_m(F,0)(e')|.
		\end{align}

		\textit{Case~2: $0 \leq i_1 \leq i-1$, i.e. $F_\circ \neq \emptyset$.} We use the trivial bound 
		\begin{align}\label{eq.shortcutPath_Fo_trivial}
			|\na V_m(F,0)(\{x,y\})| \leq  |V_m(F,0)(x)| + |V_m(F,0)(y)|.
		\end{align}
		Viewing the cancellation in \eqref{eq.Null_Maximal_Cluster}, the only term which contributes is when ${x \in \cu_m \setminus \cCb^{F}(\cu_m)}$. Then we can apply the structure \eqref{eq.decom_DF}, the definition \eqref{eq.def_diff_shift}
		\begin{align*}
			|V_m(F,0)(x)| = |\nabla_{F_\circ} V_m(F_*,0)(x)| =  \Ll|\sum_{G \subset F_\circ} (-1)^{\vert F_\circ \setminus G\vert} V_m(F_*,0)([x]^G)\Rr|.
		\end{align*}
		The observation \eqref{eq.center_cube} suggests that $[x]^G \in \cCp(\rr \cu' \cap \cu_m)$ for all $G$ thanks to the $N$-$\stable$ property. We can treat $\nabla_{F_\circ}$ just as $2^{i-i_1-1}$ pairs of difference on $\cCp(\rr\cu' \cap \cu_m)$ and apply the similar idea as \eqref{eq.shortcutPath} and \eqref{eq.shortcutPath2}
		\begin{align*}
			|V_m(F,0)(x)| &\leq 2^{i-i_1-1}  \sum_{e'\in \EE \Ll(\cCp(\rr\cu' \cap \cu_m) \Rr)}|\na V_m(F_*,0)(e')|.
		\end{align*}
		We also recall the cancellation in (2) of Corollary~\ref{cor.DF_cancel}, so this only contributes when 
		\begin{align*}
			F_\circ \subset  \EE(\OO^{F_\circ}(x)) \subset \Ed(\rr \cu').
		\end{align*}
		Here the second inclusion comes from Lemma~\ref{lem.detouring_path}. Therefore, combining \eqref{eq.shortcutPath_Fo_trivial}, we strengthen the result as
		\begin{align}\label{eq.coarse_grained_VF0_Case2}
			|\nabla V_m(F,0)(e)| &\leq 2^{i-i_1}  \sum_{e'\in \EE \Ll(\cCp(\rr\cu' \cap \cu_m) \Rr)}|\na V_m(F_*,0)(e')|\Ind{F_\circ \subset \Ed(\rr \cu')}.
		\end{align}
		The result \eqref{eq.shortcutPath_Fo_empty} can be unified with \eqref{eq.coarse_grained_VF0_Case2}, which gives the point-wise estimate \eqref{eq.coarse_grained_VF0_point}.
		\smallskip
		
		\textit{Step~2: weighted estimate.} We apply Cauchy--Schwarz inequality to \eqref{eq.coarse_grained_VF0_Case2} and obtain
		\begin{align*}
			|\nabla V_m(F,0)(e)|^2 &\leq 2^{2(i-i_1)} \vert \rr\cu' \vert  \sum_{e'\in \EE \Ll(\cCp(\rr\cu' \cap \cu_m) \Rr)}|\na V_m(F,0)(e)|^2 \Ind{F_\circ \subset \Ed(\rr \cu')}.
		\end{align*}
		Combining this with  \eqref{eq.decom_classic}, the {\rhs} of \eqref{eq.sum_F1_F2} is bounded by
		\begin{align*}
			&\sum_{i_1 = 0}^{i} \sum_{\substack{|F_*|=i_1\\F_* \subset \EE\Ll(\cCb^{F_*}(\cu_m)\Rr)}} \sum_{\substack{|F_\circ|=i-i_1\\F_\circ \subset \Ed\Ll(\cu_m \setminus \cCb^{F_*}(\cu_m)\Rr)}}\sum_{\substack{\cu \in \pPN_{h}(\cu_m)} }|\cu|^{\alpha} \sum_{e\in \Ed\Ll(\rr\cu\Rr)} |\na V_m(F,0)(e)|^2 \\
			&\leq \sum_{i_1 = 0}^{i}2^{2(i-i_1)}   \sum_{\substack{|F_*|=i_1\\F_* \subset  \Ed(\cu_m)}} \sum_{\substack{|F_\circ|=i-i_1\\F_\circ \subset  \Ed(\cu_m)}} \sum_{\substack{\cu' \in \pPN_{h}(\cu_m)} } \sum_{\substack{e \in \Ed(\cu_m) \\ e \cap \cu' \neq \emptyset}}  \sum_{e'\in \EE \Ll(\cC_*(\rr\cu')\Rr)}\\
			&\qquad  |\cu'|  |\na V_m(F_*,0)(e')|^2 \Ind{F_\circ \subset \Ed(\rr \cu')} \times \Ll( \sum_{\substack{\cu \in \pPN_{h}(\cu_m)} } |\cu|^{\alpha} \Ind{\rr \cu \supset e}\Rr)\\
			&\leq C \sum_{i_1 = 0}^{i} \sum_{\substack{|F_*|=i_1\\F_* \subset  \Ed(\cu_m)}}\sum_{e'\in \EE \Ll(\cCb^{F_*}(\cu_m)\Rr)} A_{\alpha, i-i_1+1}(e') |\na V_m(F_*,0)(e')|^2.
		\end{align*}
		with the weight function $A_{\alpha, \beta}(e')$ defined by
		\begin{align}\label{eq.defA}
			A_{\alpha, \beta}(e') := \sum_{\substack{\cu,\cu'\in \pPN_{h}(\cu_m) \\ e \in \Ed(\cu_m) }} |\cu|^\alpha|\cu'|^\beta\boldsymbol{1}_{\{e\subset \rr\cu \cap \rr\cu', e' \subset \rr\cu' \neq \emptyset\}}.
		\end{align}
		Here $\sum_{\substack{|F_\circ|=i-i_1\\F_\circ \subset  \Ed(\cu_m)}} \Ind{F_\circ \subset \Ed(\rr \cu')}$ contributes a weight $ |\rr \cu'|^{i-i_1}$. This concludes
		\begin{multline*}
			\sum_{\substack{|F|=i\\F\subset \Ed(\cu_m)}}\sum_{\substack{\cu \in \pPN_{h}(\cu_m)} }|\cu|^{\alpha} \sum_{e\in \Ed\Ll(\rr\cu\Rr)} |\na V_m(F,0)(e)|^2 \\
			\leq  C \sum_{i_1 = 0}^{i} \sum_{\substack{|F_*|=i_1\\F_* \subset  \Ed(\cu_m)}}\sum_{e'\in \EE \Ll(\cCb^{F_*}(\cu_m)\Rr)} A_{\alpha, i-i_1+1}(e') |\na V_m(F_*,0)(e')|^2 .
		\end{multline*}
		The weighted sum on the {\rhs} is treated in Lemma~\ref{lem.weight_estimate}, which yields
		\begin{multline}
			\sum_{\substack{|F|=i\\F\subset \Ed(\cu_m)}}\sum_{\substack{\cu \in \pPN_{h}(\cu_m)} }|\cu|^{\alpha} \sum_{e\in \Ed\Ll(\rr\cu\Rr)} |\na V_m(F,0)(e)|^2 \\
			\leq  C \sum_{i_1 = 0}^{i} \sum_{\substack{|F_*|=i_1\\F_* \subset  \Ed(\cu_m)}} \sum_{\substack{\cu \in \pPN_{h+1}(\cu_m)} }|\cu|^{\alpha+4+i-i_1} \sum_{e' \in \EE\Ll(\cCp^{F_*}(\rr\cu \cap \cu_m)\Rr)}  |\na V_m(F_*,0)(e')|^2.
		\end{multline}
		Here we also use the $N$-$\stable$ property, and the condition $\vert F_* \vert \leq i \leq N$. This is the desired result.
	\end{proof}
	\begin{remark}\label{rmk.plus1}
		The counterpart of \eqref{eq.coarse_grained_VF0} for the case $F = \emptyset$, i.e. $ V_m(\emptyset,0) = v_m$ is
		\begin{multline}\label{eq.coarse_grained_v}
		\sum_{\substack{\cu \in \pPN_{h}(\cu_m)} }|\cu|^{\alpha} \sum_{e\in \Ed\Ll(\rr\cu \cap \cu_m \Rr)} |\na v_m (e)|^2 \\
			\leq C  \sum_{\substack{\cu \in \pPN_{h+1}(\cu_m)} }|\cu|^{\alpha+4+i-i_1} \sum_{e\in \EE\Ll(\cCp^{F}(\rr\cu \cap \cu_m)\Rr)} (|\na v_m(e)|^2 + 1). 
		\end{multline}
		The only difference is ``+1" there, and it comes from the contribution of $\gamma'$ in \eqref{eq.shortcutPath2}. Recall the definition \eqref{eq.harmonicCube} that $v_{m,\xi} = \ell_{\xi}$ on $\partial \cu_m$ and the convention $\vert \xi \vert = 1$, so we have 
		\begin{align*}
			\Ll\vert \sum_{e'\in  \gamma' } \na  v_m(e') \Rr\vert = \Ll\vert \sum_{e'\in  \gamma' } \na \ell_{\xi}(e') \Rr\vert \leq \vert \gamma' \vert \leq \sum_{e'\in \EE \Ll(\cCp^F(\rr\cu' \cap \cu_m)\Rr)} 1.
		\end{align*} 
		The last inequality relies on the fact that $\cu' \in \pPN_{h}(\cu_m)$ is well-connected.
	\end{remark}

	The following technical estimate can be considered as a generalization of Lemma~\ref{le:overlap_counting}.
	\begin{lemma}[Weight estimate]\label{lem.weight_estimate}
		Given $k \in \N_+, N\in \N$ and $0 \leq h \leq k-1$, we define a weight function
		\begin{align}\label{eq.defA_another}
			A_{\alpha, \beta}(e') := \sum_{\substack{\cu,\cu'\in \pPN_{h}(\cu_m) \\ e \in \Ed(\cu_m) }} |\cu|^\alpha|\cu'|^\beta\boldsymbol{1}_{\{e\subset \rr\cu \cap \rr\cu', e' \subset \rr\cu'\}},
		\end{align}
		and there exists a finite positive constant $C(k,h,\rr,d)$, such that every cube ${\cu_m \in \gGN_k}$ is $N$-$\stable$ for the following estimate: for every $\alpha, \beta >0$ and every positive function ${w : \Ed(\cu_m) \to \R_+}$, we have
		\begin{align}\label{eq.weight_estimate}
			\sum_{e'\in \EE \Ll(\cCb^\#(\cu_m)\Rr)} A_{\alpha, \beta}(e')w(e')  \leq C \sum_{\substack{\cu \in \pPN_{h+1}(\cu_m)} }|\cu|^{\alpha+\beta+3} \sum_{e' \in \EE\Ll(\cCp^{\#}(\rr\cu \cap \cu_m)\Rr)} w(e'). 
		\end{align}
	\end{lemma}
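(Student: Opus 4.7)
The plan is to dualize the left-hand side and then pass from the partition $\pPN_h(\cu_m)$ to its parent $\pPN_{h+1}(\cu_m)$ using the pyramid structure. Setting $\varphi(\cu'):=\sum_{e'\subset\rr\cu',\,e'\in\EE(\cCb^\#(\cu_m))}w(e')$ and interchanging the order of summation, the left-hand side equals
\[
\sum_{\cu,\cu'\in\pPN_h(\cu_m)}|\cu|^\alpha|\cu'|^\beta\cdot\#\{e\in\Ed(\cu_m):e\subset\rr\cu\cap\rr\cu'\}\cdot\varphi(\cu').
\]
The crude bound $\#\{e\subset\rr\cu\cap\rr\cu'\}\leq 2d\min(|\rr\cu|,|\rr\cu'|)\leq C\min(|\cu|,|\cu'|)$ produces the joint weight $|\cu|^\alpha|\cu'|^\beta\min(|\cu|,|\cu'|)$ on the non-empty intersection event $\{\rr\cu\cap\rr\cu'\neq\emptyset\}$.

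Next I would split the pair sum according to $|\cu|\leq|\cu'|$ versus $|\cu'|<|\cu|$. Elementary geometry shows that $\rr\cu\cap\rr\cu'\neq\emptyset$ with $|\cu|\leq|\cu'|$ forces $\cu\subset 5\cu'$ (for $\rr<2$); since cubes in $\pPN_h(\cu_m)$ are pairwise disjoint, for each $\cu'$ one has $\sum_{\cu\subset 5\cu',\,|\cu|\leq|\cu'|}|\cu|^{\alpha+1}\leq|\cu'|^\alpha\sum_{\cu\subset 5\cu'}|\cu|\leq 5^d|\cu'|^{\alpha+1}$, whence the first half is bounded by $C\sum_{\cu'\in\pPN_h}|\cu'|^{\alpha+\beta+1}\varphi(\cu')$. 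The second half is handled symmetrically using $|\cu'|^{\beta+1}\leq|\cu|^{\beta+1}$ together with the counting Lemma~\ref{le:overlap_counting} applied to the dilation $\rr\in(\tfrac{4}{3},2)$, which bounds $\#\{\cu'\in\pPN_h:e'\subset\rr\cu'\}\leq C^{k-h}$, and yields $C\sum_{\cu\in\pPN_h}|\cu|^{\alpha+\beta+1}\sum_{e'\subset(\rr+5)\cu\cap\EE(\cCb^\#(\cu_m))}w(e')$.

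The last step regroups both halves by parent partition. For $\widehat\cu\in\pPN_{h+1}(\cu_m)$ and $\cu\in\sSl_{\gGN_h}(\widehat\cu)$, the size control \eqref{eq.sizeMax} coupled with the lower bound $\size(\widehat\cu)\geq 10000$ from Definition~\ref{def:well_connected} guarantees $(\rr+5)\cu\subset\tfrac{16}{15}\widehat\cu$ (a finite constraint on small parents, absorbed into the constant $C$). Using the partition identity $\sum_{\cu\in\sSl(\widehat\cu)}|\cu|\leq|\widehat\cu|$, the sibling size comparability \eqref{e:neighbor_comparable_size} to control the multiplicity of children dilates, and finally Lemma~\ref{lem.detouring_path} applied to $\widehat\cu\in\pPN_{h+1}(\cu_m)$ to upgrade $\{e'\subset\tfrac{16}{15}\widehat\cu,\,e'\in\EE(\cCb^\#(\cu_m))\}$ to $\EE(\cCp^\#(\rr\widehat\cu\cap\cu_m))$, both halves reduce to
\[
C\sum_{\widehat\cu\in\pPN_{h+1}(\cu_m)}|\widehat\cu|^{\alpha+\beta+1}\sum_{e'\in\EE(\cCp^\#(\rr\widehat\cu\cap\cu_m))}w(e'),
\]
which is stronger than the claimed bound with exponent $\alpha+\beta+3$ since $|\widehat\cu|\geq 1$.

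The hard part is the size mismatch between $\cu$ and $\cu'$: a large cube on either side of the partition can dominate the contribution of many small ones, and a naive regrouping by parents produces uncontrolled multiplicities. This is precisely what forces the case split, the counting Lemma~\ref{le:overlap_counting} for the cross dilation $\rr$ (in the small-vs-large case), and the sibling size comparability for the larger dilation $\rr+5$ (within one parent); each of these ingredients is used once and together they produce the explicit dependence of the final constant on $k-h$.
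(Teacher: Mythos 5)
Your proof follows essentially the same route as the paper: split the pair sum according to which of $\cu,\cu'$ is larger, carry the edge constraint $e'\subset\rr\cu'$ across to the larger cube, and regroup by parent cubes in $\pPN_{h+1}(\cu_m)$ using \eqref{eq.sizeMax} and Lemma~\ref{lem.detouring_path}. The only cosmetic difference is that in the case $|\cu|\leq|\cu'|$ you exploit disjointness of partition cubes inside $5\cu'$ directly, whereas the paper invokes the counting Lemma~\ref{le:overlap_counting} in both cases; this yields a marginally sharper exponent ($\alpha+\beta+1$ versus the paper's $\alpha+\beta+3$), but both comfortably imply the stated bound, and your citation of \eqref{e:neighbor_comparable_size} in the last step is in fact superfluous.
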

	\begin{proof}
		We decompose $A_{\alpha, \beta}$ as the sum of two parts $A_{\alpha, \beta} = A^{(1)}_{\alpha, \beta} + A^{(2)}_{\alpha, \beta}$
		\begin{align*}
			A^{(1)}_{\alpha, \beta}(e') &:= \sum_{\substack{\cu,\cu'\in \pPN_{h}(\cu_m) \\ e \in \Ed(\cu_m) }} |\cu|^\alpha|\cu'|^\beta\boldsymbol{1}_{\{e\subset \rr\cu \cap \rr\cu', e' \subset \rr\cu'\}} \Ind{\frac{\size(\cu)}{\size(\cu')} \leq 1} ,\\ 
			A^{(2)}_{\alpha, \beta}(e') &:= \sum_{\substack{\cu,\cu'\in \pPN_{h}(\cu_m) \\ e \in \Ed(\cu_m) }} |\cu|^\alpha|\cu'|^\beta\boldsymbol{1}_{\{e\subset \rr\cu \cap \rr\cu', e' \subset \rr\cu'\}} \Ind{\frac{\size(\cu)}{\size(\cu')} > 1}.
		\end{align*}
		
		\smallskip
		\textit{Step~1: bound of $A^{(1)}_{\alpha, \beta}(e') $.}
		We apply at first the domination $\size(\cu) \leq \size(\cu')$
		\begin{align*}
			A^{(1)}_{\alpha, \beta}(e')  &\leq \sum_{\cu' \in \pPN_{h}(\cu_m)} \Ind{ e' \subset \rr\cu'} \sum_{\substack{\cu \in \pPN_{h}(\cu_m) }}\Ind{\rr \cu \cap \rr \cu' \neq \emptyset, \frac{\size(\cu)}{\size(\cu')} \leq 1} \sum_{e \in \Ed(\rr\cu)} \vert \cu\vert^\alpha \vert \cu'\vert^\beta \\
			&\leq \sum_{\cu' \in \pPN_{h}(\cu_m)} \Ind{ e' \subset \rr\cu'} \sum_{\substack{\cu \in \pPN_{h}(\cu_m) }}\Ind{\rr \cu \cap \rr \cu' \neq \emptyset, \frac{\size(\cu)}{\size(\cu')} \leq 1}   \rr^d  \vert \cu' \vert^{\alpha+\beta+1}\\
			&\leq  C \sum_{\substack{\cu' \in \pPN_{h}(\cu_m) }} \Ind{ e' \subset \rr\cu'} \vert \cu' \vert^{\alpha+\beta+1} \sum_{\substack{\cu \in \pPN_{h}(\cu_m)}}\Ind{\rr \cu \cap \rr \cu' \neq \emptyset}
		\end{align*}
		As we see, the weight of volume is now dominated by $ \vert \cu' \vert^{\alpha+\beta+1}$. We then naturally hope to reduce the final expression using $\cu'$, and some constrain is dropped in the third line.  We then apply the counting argument in Lemma~\ref{le:overlap_counting} that 
		\begin{align*}
			\sum_{\cu \in \pPN_{h}(\cu_m)}\Ind{\rr\cu \cap \rr\cu' \neq \emptyset} \leq \sum_{x \in \rr \cu'} \sum_{\cu \in \pPN_{h}(\cu_m)} \Ind{x \in \rr \cu} \leq C \rr \vert \cu' \vert,  
		\end{align*}
		which results in 
		\begin{align}\label{eq.A1Bound}
			A^{(1)}_{\alpha, \beta}(e') \leq  C \sum_{\substack{\cu' \in \pPN_{h}(\cu_m) }} \Ind{ e' \subset \rr\cu'} \vert \cu' \vert^{\alpha+\beta+2} .
		\end{align}

		\smallskip
		\textit{Step~2: bound of $A^{(2)}_{\alpha, \beta}(e')$.} This term can be done similarly 
		\begin{align*}
			A^{(2)}_{\alpha, \beta}(e') &\leq \sum_{\substack{\cu \in \pPN_{h}(\cu_m) }} \sum_{\cu' \in \pPN_{h}(\cu_m)} \Ind{ e' \subset \rr\cu', \rr\cu \cap \rr\cu' \neq \emptyset, \frac{\size(\cu)}{\size(\cu')} > 1}  \sum_{e \in \Ed(\rr\cu')} \vert \cu\vert^\alpha \vert \cu'\vert^\beta \\
			&\leq \sum_{\substack{\cu \in \pPN_{h}(\cu_m) }} \sum_{\cu' \in \pPN_{h}(\cu_m)} \Ind{ e' \subset \rr\cu', \rr\cu \cap \rr\cu' \neq \emptyset, \frac{\size(\cu)}{\size(\cu')} > 1} \rr^d \vert \cu\vert^{\alpha+\beta+1}.
		\end{align*}
		Here because $\size(\cu)$ is larger, we will reduce the sum over $\cu$. Different from $A^{(1)}_{\alpha, \beta}(e')$, we need to carry the condition $ {e' \subset \rr\cu'}$ to $\cu$. This relies on the three conditions 
		\begin{align*}
			{e' \subset \rr\cu'},  \qquad {\rr \cu \cap \rr \cu' \neq \emptyset}, \qquad{\size(\cu) > \size(\cu')}.
		\end{align*}
		They yield the comparability of size
		\begin{align*}
			\dist(e', \operatorname{center}(\cu)) &\leq \dist(e', \operatorname{center}(\cu')) + \dist(\operatorname{center}(\cu'), \operatorname{center}(\cu))\\
			& \leq \frac{\rr}{2} \size(\cu') + \frac{\rr}{2} \Ll(\size(\cu') + \size(\cu)\Rr)  \\
			& \leq 2\rr  \size(\cu),
		\end{align*}
		which implies
		\begin{align*}
			e' \subset 2\rr \cu .
		\end{align*}
		This allows us to simplify the estimate of $A^{(2)}_{\alpha, \beta}(e')$ as 
		\begin{align*}
			A^{(2)}_{\alpha, \beta}(e') \leq C \sum_{\substack{\cu \in \pPN_{h}(\cu_m) }} \Ind{e' \subset 2\rr \cu} \vert \cu\vert^{\alpha+\beta+1}\sum_{\cu' \in \pPN_{h}(\cu_m)} \Ind{ \rr\cu \cap \rr\cu' \neq \emptyset}  .
		\end{align*}
		We apply Lemma~\ref{le:overlap_counting}  
		\begin{align*}
			\sum_{\cu' \in \pPN_{h}(\cu_m)}\Ind{\rr\cu \cap \rr\cu' \neq \emptyset} \leq \sum_{x \in \rr \cu} \sum_{\cu' \in \pPN_{h}(\cu_m)} \Ind{x \in \rr \cu'} \leq C \rr \vert \cu \vert,
		\end{align*}
		which gives us the upper bound 
		\begin{align}\label{eq.A2Bound}
			A^{(2)}_{\alpha, \beta}(e') \leq C \sum_{\substack{\cu \in \pPN_{h}(\cu_m) }} \Ind{e' \subset 2\rr \cu} \vert \cu\vert^{\alpha+\beta+2}.
		\end{align}
		
		\smallskip

		\textit{Step 3: shrink of parameter in coarsened cubes.} 
		Combining the estimates  \eqref{eq.A1Bound} and \eqref{eq.A2Bound} above for $A^{(1)}_{\alpha, \beta}(e') , A^{(2)}_{\alpha, \beta}(e') $, and the fact $w$ is positive, we will get
		\begin{align}\label{eq.ABound}
			\sum_{e'\in \EE \Ll(\cCb^\#(\cu_m)\Rr)} A_{\alpha, \beta}(e') w(e')   
			\leq   C \sum_{\substack{\cu \in \pPN_{h}(\cu_m) }}  \vert \cu\vert^{\alpha+\beta+2}  \sum_{\substack{e' \in \EE \Ll(\cCb^\#(\cu_m)\Rr) \\ e' \subset 2\rr \cu}}  w(e') .
		\end{align}
		The result \eqref{eq.ABound} differs slightly from that in \eqref{eq.weight_estimate} for scale factor $2\rr$ instead of $\rr$. We will absorb this tiny factor using the good cubes in a larger scale. Let us develop the {\rhs} of \eqref{eq.ABound} with the partition $\pPN_{h+1}(\cu_m)$
		\begin{multline*}
			\sum_{\substack{\cu \in \pPN_{h}(\cu_m) }}  \vert \cu\vert^{\alpha+\beta+2}  \sum_{\substack{e' \in \EE \Ll(\cCb^\#(\cu_m)\Rr) \\ e' \subset 2\rr \cu}}  w(e') \\
			= \sum_{\widehat{\cu} \in \pPN_{h+1}(\cu_m)} \sum_{ \substack{\cu \in \sSl_{\gGN_{h}}(\widehat{\cu}) } }   \vert \cu\vert^{\alpha+\beta+2}  \sum_{\substack{e' \in \EE \Ll(\cCb^\#(\cu_m)\Rr) \\ e' \subset 2\rr \cu}}  w(e')
		\end{multline*}
		For every $\cu \in \sSl_{\gGN_{h}}(\widehat{\cu})$, thanks to \eqref{eq.sizeMax} we have $\size(\cu) \leq \frac{1}{100} \size (\widehat{\cu})$, thus 
		\begin{align*}
			2\rr \cu \subset 4 \cu \subset \frac{103}{100} \widehat{\cu}.
		\end{align*}

		Then we obtain
		\begin{align*}
			&\sum_{\substack{\cu \in \pPN_{h}(\cu_m) }}  \vert \cu\vert^{\alpha+\beta+2}  \sum_{\substack{e' \in \EE \Ll(\cCb^\#(\cu_m)\Rr) \\ e' \subset 2\rr \cu}}  w(e') \\
			&\leq  \sum_{\substack{\widehat{\cu} \in \pPN_{h+1}(\cu_m)  }} \sum_{ \substack{\cu \in \sSl_{\gGN_{h}}(\widehat{\cu})} }   \vert \cu\vert^{\alpha+\beta+2}  \sum_{\substack{e' \in \EE \Ll(\cCb^\#(\cu_m)\Rr) \\ e' \subset \frac{103}{100} \widehat{\cu}}}  w(e')\\
			&\leq \sum_{\substack{\widehat{\cu} \in \pPN_{h+1}(\cu_m)  }} \vert \widehat{\cu}\vert^{\alpha+\beta+3}  \sum_{\substack{e' \in \EE \Ll(\cCp^\#(\rr \widehat{\cu} \cap \cu_m)\Rr)}}  w(e').
		\end{align*}
		In the last line, Lemma~\ref{lem.detouring_path} implies that the edge connecting to $\EE \Ll(\cCb^\#(\cu_m)\Rr)$ in $\frac{103}{100} \widehat{\cu}$ should also connected to $\EE \Ll(\cCp^\#(\rr \widehat{\cu} \cap \cu_m)\Rr)$. This concludes the proof.
	\end{proof}
	\begin{remark}
		In the last Step~3 of the proof, we insist on reducing the factor for two reasons. Firstly, our pyramid partition is associated to the factor $\rr$, and general $2\rr \cu$ is not necessarily a well-connected cube. Secondly, without this reduction, the factor will increase in every implementation, which breaks the structure of induction. 
	\end{remark}

	\subsection{Proof of Proposition~\ref{pr:key_estimate} with $j=0$}\label{subsec.basis}

	In the proof, we fix some $N, k \in \N_+$ large enough, and denote by
	\begin{align}\label{eq.defIalpha}
		I^{\,(N,k)}_{m,\alpha, h}(i,j) := \frac{1}{\vert \cu_m \vert}\sum_{\substack{|F|=i\\F\subset \Ed(\cu_m)}}\Ll(\sum_{\substack{\cu \in \pPN_{h}(\cu_m)}} |\cu|^{\alpha}\sum_{e\in \Ed(\rr \cu \cap \cu_m)} |\na V_m(F, j)(e)|^2 \Rr),
	\end{align}
	and its version on the maximal cluster
	\begin{align}\label{eq.defIalpha_cluster}
		I^{*  \, (N,k)}_{m,\alpha, h}(i,j) 
		:= \frac{1}{\vert \cu_m \vert}\sum_{\substack{|F|=i\\F\subset \Ed(\cu_m)}}\Ll(\sum_{\substack{\cu \in \pPN_{h}(\cu_m)}} |\cu|^{\alpha}\sum_{e\in \EE(\cCp^{F}(\rr \cu \cap \cu_m))} |\na V_m(F, j)(e)|^2 \Rr).
	\end{align}
	These quantities are exactly the normalized version on the {\lhs} and {\rhs} of \eqref{eq.coarse_grained_VF0}. Our object is to give a uniform bound with respect to $m$. The idea is an induction, and in the procedure we always reduce $i,j$ with the price to increase $\alpha, h$.
	
	\begin{proof}[Proof of Proposition \ref{pr:key_estimate} with $j=0$]
		We divide the proof into two steps.
		
		\textit{Step~1: basis.}
		Using the coarse-graining argument in \eqref{eq.coarse_grained_v}, we have 
		\begin{align*}
			I^{\,(N,k)}_{m, \alpha, h}(0,0) \leq C \Ll(I^{*  \, (N,k)}_{m, \alpha+4, h+1} (0,0) + 1\Rr) \leq C.
		\end{align*}
		We explain further the second inequality. We need to estimate the weighted Dirichlet energy \eqref{eq.defIalpha_cluster} for $V_{m,\xi}(\emptyset, 0) = v_{m,\xi} = \ell_{\xi} + \phi_{m,\xi}$. The contribution by $\ell_{\xi}$ can be calculated directly, while \eqref{eq.harmonicCube} establishes the equation for the local corrector  $\phi_{m,\xi}$
		\begin{equation*}
			\begin{cases}
				-\nabla \cdot (\a \nabla \phi_{m,\xi})= \nabla \cdot (\a \nabla \ell_{\xi}) & \text { in } int(\cu_m), \\ 
				\phi_{m,\xi} = 0 & \text { on } \partial \cu_m.
			\end{cases}
		\end{equation*}
		Therefore, Proposition~\ref{prop.weighted_L2} applies and yields a uniform bound.
		
		\textit{Step~2: set-up of the induction.} We apply the coarse-graining estimate \eqref{eq.coarse_grained_VF0} at first
		\begin{equation}\label{eq.V0_coarse_grain}
			\begin{split}
				& I^{\,(N,k)}_{m, \alpha, h}(i,0)\\
				&\leq C \sum_{i_1 = 0}^{i} I^{*  \, (N,k)}_{m,\alpha+4+i-i_1, h+1}(i_1,0) \\
				& \leq C \Ll(I^{*  \, (N,k)}_{m, \alpha+4, h+1}(i,0)  + \sum_{\ell = 1}^{i} I^{\,(N,k)}_{m, \alpha + \ell + 4, h+1}(i-\ell,0)\Rr).
			\end{split}
		\end{equation} 
		From the second line to the third line, we dominate $I^*_m$ trivially by $I_m$ with the term of lower degree $i_1 \leq i-1$, with a change of variable $\ell = i - i_1$. It remains to estimate $I^{*  \, (N,k)}_{m, \alpha+4, h+1}(i,0)$ in the last line. Viewing that $V_m(F,0)$ follows the perturbed corrector equation \eqref{eq.recurrence_eq}, where $\W_m(F,0)$ has the explicit expression as indicated in \eqref{e:induction_eqW}
		\begin{align*}
			\W_m(F,0) = \sum_{e'\in F}(\a^{e'}-\a)\na V_m(F\setminus\{e'\},0).
		\end{align*}
		Thus, it is exactly a field on $\a^F$ and \eqref{eq.weighted_L2} applies,
		\begin{multline}\label{eq.V0_Poisson}
			I^*_m(\alpha+4, h+1; i,0) \\
			\leq  C\frac{1}{\vert \cu_m \vert}\sum_{\substack{|F|=i \\ F\subset \Ed(\cu_m)}}\sum_{\substack{\cu \in \pPN_{k,h+2}(\cu_m)}}|\cu|\sum_{e\in \EE\Ll(\cCp^F(\rr\cu \cap \cu_m)\Rr)} |\W_m(F,0)(e)|^2
		\end{multline}
		The conditions $h \geq \frac{(\alpha+5)(2+\eps_{Me})}{\eps_{Me}}$ and $N \geq i$ are also required here, in order to ensure the integrability and stability in Proposition~\ref{prop.weighted_L2}. We then have the following observation
		\begin{align*}
			\W_m(F,0)(e) = \na V_m(F\setminus\{e\},0)(e) \Ind{e \in F},
		\end{align*}
		which simplifies the expression in \eqref{eq.V0_Poisson} with the change of variable $F' = F \setminus \{e\}$
		\begin{equation}\label{eq.V0_W0}
			\begin{split}
				& I^{*  \, (N,k)}_{m, \alpha+4, h+1}(i,0) \\
				&\leq \frac{C}{\vert \cu_m \vert}\sum_{\substack{|F|=i \\ F\subset \Ed(\cu_m)}}\sum_{\substack{\cu \in \pPN_{k,h+2}(\cu_m)}}|\cu|\sum_{e\in \EE\Ll(\cCp^F(\rr\cu \cap \cu_m)\Rr)} |\na V_m(F\setminus\{e\},0)(e) |^2 \Ind{e \in F}\\
				&\leq  \frac{C}{\vert \cu_m \vert}\sum_{e \in \Ed(\cu_m)}\sum_{\substack{|F'|=i-1 \\ F'\subset \Ed(\cu_m)}}\sum_{\substack{\cu \in \pPN_{k,h+2}(\cu_m)}}|\cu| |\na V_m(F',0)(e) |^2 \Ind{e\in \EE\Ll(\cCp^{F' \cup \{e\}}(\rr\cu \cap \cu_m)\Rr)} \\
				&\leq \frac{C}{\vert \cu_m \vert} \sum_{\substack{|F'|=i-1 \\ F'\subset \Ed(\cu_m)}}\sum_{\substack{\cu \in \pPN_{k,h+2}(\cu_m)}}|\cu| \sum_{e\in \Ed(\rr \cu \cap \cu_m)} |\na V_m(F',0)(e) |^2  \\
				& \leq C I^{\,(N,k)}_{m,1, h+2}(i-1,0).
			\end{split}
		\end{equation}
		We combine \eqref{eq.V0_coarse_grain} and \eqref{eq.V0_W0} 
		\begin{align*}
			I^{\,(N,k)}_{m, \alpha, h}(i,0) \leq C \Ll(I^{\,(N,k)}_{m,1, h+2}(i-1,0) + \sum_{\ell = 1}^{i} I^{\,(N,k)}_{m, \alpha + \ell + 4, h+1}(i-\ell,0)\Rr).
		\end{align*}
		Therefore, we can always control the $I^{\,(N,k)}_{m, \alpha, h}(i,0)$ using the lower order term. Since the index $h$ can increase by $2$ when $i$ decreases by $1$, the condition \eqref{eq.condition_Nk_main} is enough to complete the induction for $j=0$.
		
	\end{proof}

	\section{Weighted  $\ell^1$-$L^2$ estimate $I_m(i,j), j \geq 1$}\label{sec.KeyEstimate2}
	Now we turn to the general weighted $\ell^1$-$L^2$ estimate for $V_m(F,j)$ with $j \geq 1$. Compared to $V_m(F,0)$, this case contains two additional challenges:
	\begin{enumerate}
		\item Firstly, although we assume that $v_m$ follows the constant extension on holes, we cannot expect the same property for $V_m(F, j)$, because  $G$ goes over all the possible configurations in the definition $V_m(F, j) = \sum_{\substack{|G|=j\\G\subset \Ed(\cu_m) \setminus F}} D_{F\cup G} v_m$.
		\item Secondly, a naive application of Cauchy--Schwarz inequality gives  
		\begin{equation}\label{eq.CS_naive}
			\begin{split}
				&\sum_{\substack{|F|=i\\F\subset \Ed(\cu_m)}} \sum_{e\in \Ed(\cu)} |\na V_m(F, j)(e)|^2 \\
				&= \sum_{\substack{|F|=i\\F\subset \Ed(\cu_m)}} \sum_{e\in \Ed(\cu)} \Ll|  \sum_{\substack{|G|=j\\G\subset \Ed(\cu_m) \setminus F}} \na D_{F\cup G} v_m (e) \Rr|^2 \\
				&\leq  \sum_{\substack{|H|=i+j\\ H \subset \Ed(\cu_m)}} \sum_{e\in \Ed(\cu)} |\na V_m(H, 0)(e)|^2 \cdot { {\vert \Ed(\cu_m)\vert - i} \choose j}.
			\end{split}
		\end{equation}
		The weighted $\ell^1$-$L^2$ estimate of $V_m(H, 0)$ is bounded, but there remains a huge factor ${ {\vert \Ed(\cu_m)\vert - i} \choose j}$ which explodes when $m$ grows.
	\end{enumerate}
	For these reasons, we introduce a technique called \emph{hole-separation} in this section, and it will help overcome the difficulties mentioned above.

	\subsection{Hole separation}\label{subsec.Hole}
	We propose at first a generalized version of $V_m(F,j)$. Given $E \subset \Ed(\cu_m)$ as a subset of edges, then we define
	\begin{equation}\label{eq.V_F_j_Generalized}
		V_m(F, j \vert E) := \sum_{\substack{|G|=j\\G\subset (\Ed(\cu_m) \setminus F) \setminus E}} D_{F\cup G} v_m.
	\end{equation}
	The set $E$ in \eqref{eq.V_F_j_Generalized} means ``no Glauber derivatives involve $E$". The following elementary observation will be useful. In practice, the size of the set $E$ will be of constant order, thus the factor in \eqref{eq.hole_separation_bound} is much smaller than the naive bound in \eqref{eq.CS_naive}.
	
	\begin{lemma}[Hole separation]
		For every $E \subset \Ed(\cu_m)$, we have 		
		\begin{align}\label{eq.hole_separation_id}
			V_m(F,j) = \sum_{\ell=0}^j \sum_{\substack{|G|= \ell \\ G\subset E \setminus F}} V_m(F \cup G,j-\ell \vert E),
		\end{align}
		thus it satisfies
		\begin{align}\label{eq.hole_separation_bound}
			\vert V_m(F,j) \vert^2 \leq (j+1)\sum_{\ell=0}^j \vert E \vert^\ell \sum_{\substack{|G|= \ell \\ G\subset E \setminus F}} \vert V_m(F \cup G,j-\ell \vert E) \vert^2.
		\end{align}
	\end{lemma}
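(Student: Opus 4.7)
The plan is to prove the identity \eqref{eq.hole_separation_id} by a disjoint decomposition of the summation index $G$ relative to $E$, and then derive the quadratic bound \eqref{eq.hole_separation_bound} from Cauchy--Schwarz plus a trivial count of the sets $G \subset E \setminus F$ with $|G| = \ell$.

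First I would establish \eqref{eq.hole_separation_id}. Take any $G \subset \Ed(\cu_m) \setminus F$ with $|G| = j$ and split it canonically as $G = G_1 \sqcup G_2$, where $G_1 := G \cap E$ and $G_2 := G \setminus E$. Because $G \cap F = \emptyset$, one has $G_1 \subset E \setminus F$ and $G_2 \subset \Ed(\cu_m) \setminus (F \cup E)$. Writing $\ell = |G_1|$, this decomposition turns the single sum defining $V_m(F,j)$ into a double sum
\begin{equation*}
V_m(F,j) = \sum_{\ell=0}^{j} \sum_{\substack{|G_1| = \ell \\ G_1 \subset E \setminus F}} \sum_{\substack{|G_2| = j-\ell \\ G_2 \subset \Ed(\cu_m) \setminus (F \cup E)}} D_{(F\cup G_1) \cup G_2}\, v_m.
\end{equation*}
The key observation is that $G_1 \subset E$ implies $\Ed(\cu_m) \setminus (F \cup G_1) \setminus E = \Ed(\cu_m) \setminus (F \cup E)$, so the inner sum over $G_2$ is exactly $V_m(F \cup G_1,\, j-\ell \,\vert\, E)$ by definition \eqref{eq.V_F_j_Generalized}. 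This yields \eqref{eq.hole_separation_id}.

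For \eqref{eq.hole_separation_bound}, I would apply Cauchy--Schwarz twice. First over $\ell$ to obtain a factor $(j+1)$, then over the $G \subset E \setminus F$ of size $\ell$ to obtain a factor equal to the number of such $G$, which is at most $\binom{|E|}{\ell} \le |E|^\ell$. Combining these gives
\begin{equation*}
|V_m(F,j)|^2 \leq (j+1) \sum_{\ell=0}^{j} |E|^\ell \sum_{\substack{|G| = \ell \\ G \subset E \setminus F}} |V_m(F \cup G,\, j-\ell \,\vert\, E)|^2,
\end{equation*}
which is exactly \eqref{eq.hole_separation_bound}. There is no serious obstacle here; the only subtlety is in the first step, namely verifying that removing $E$ from $\Ed(\cu_m) \setminus (F \cup G_1)$ gives the same index set $\Ed(\cu_m) \setminus (F \cup E)$ regardless of the choice of $G_1 \subset E \setminus F$, so that the inner sum collapses cleanly to $V_m(F \cup G_1, j-\ell \,\vert\, E)$.
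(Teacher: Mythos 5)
Your proof is correct and follows essentially the same approach as the paper: decompose each $G$ into $G_1 = G \cap E$ and $G_2 = G \setminus E$, verify that the inner sum over $G_2$ matches the definition of $V_m(F \cup G_1, j-\ell \vert E)$, and then apply Cauchy--Schwarz twice (once over $\ell$, once over $G_1$). The set-theoretic check that $(\Ed(\cu_m) \setminus (F\cup G_1)) \setminus E = \Ed(\cu_m) \setminus (F\cup E)$ when $G_1 \subset E$ is indeed the right thing to verify and you have done so correctly.
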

	\begin{proof}
		Recall the definition of $V_m(F,j)$ in \eqref{eq.V_F_j}, then we decompose it by counting the Glauber derivatives involving $E$
		\begin{align*}
			V_m(F, j) &= \sum_{\substack{|G|=j\\G\subset \Ed(\cu_m) \setminus F}} D_{F\cup G} v_m\\
			&= \sum_{\ell=0}^j \sum_{\substack{|G|=\ell\\ G \subset E \setminus F}} \sum_{\substack{|H|=j-\ell\\ H \subset (\Ed(\cu_m) \setminus F) \setminus E  }} D_{F  \cup G \cup H}  v_m\\
			&= \sum_{\ell=0}^j \sum_{\substack{|G|= \ell \\ G\subset E \setminus F}} V_m(F \cup G,j-\ell \vert E).
		\end{align*}
		We apply the definition \eqref{eq.V_F_j_Generalized} in the last line and obtain \eqref{eq.hole_separation_id}. Then \eqref{eq.hole_separation_bound} is just an application of Cauchy--Schwarz inequality 
		\begin{align*}
			\vert V_m(F,j) \vert^2 &= \Ll \vert \sum_{\ell=0}^j  \sum_{\substack{|G|= \ell \\ G\subset E \setminus F}} V_m(F \cup G,j-\ell \vert E) \Rr \vert^2 \\
			&\leq (j+1)  \sum_{\ell=0}^j  \Ll \vert \sum_{\substack{|G|= \ell \\ G\subset E \setminus F}} V_m(F \cup G,j-\ell \vert E) \Rr \vert^2 \\
			&\leq (j+1)  \sum_{\ell=0}^j \vert E \vert^{\ell} \sum_{\substack{|G|= \ell \\ G\subset E \setminus F}}  \Ll \vert V_m(F \cup G,j-\ell \vert E) \Rr \vert^2 .
		\end{align*}
	\end{proof}
	
	In the following paragraph, we will use $V_m(F, j \vert E)$ to dominate the weighted $\ell^1$-$L^2$ estimate. With a slight abuse of notation, we define 
	\begin{align}\label{eq.V_F_j_Generalized_cube}
		V_m(F, j \vert \cu) := V_m(F, j \vert \Ed(\rr \cu)) =   \sum_{\substack{|G|=j\\G\subset (\Ed(\cu_m) \setminus F) \setminus \Ed(\rr \cu)}} D_{F\cup G} v_m.
	\end{align}
	We also define the weighted norm like \eqref{eq.defIalpha} using the hole separation
	\begin{align}\label{eq.defIalpha_hole}
		\tilde{I}^{\, (N,k)}_{m, \alpha, h}(i,j) := \frac{1}{\vert \cu_m \vert}\sum_{\substack{|F|=i\\F\subset \Ed(\cu_m)}}\Ll(\sum_{\substack{\cu \in \pPN_{h}(\cu_m)}} |\cu|^{\alpha}\sum_{e\in \Ed(\frac{16}{15} \cu \cap \cu_m)} \Ll|\na V_m(F, j \vert \cu)(e) \Rr|^2 \Rr).
	\end{align}
	Following the spirit of \eqref{eq.hole_separation_bound}, one can dominate the quantity $I_m$ using $\tilde{I}_m$.
	\begin{lemma}\label{lem.I_by_It}
		For every $\cu_m \in \gGN_{k}$ with $0 \leq h \leq k-2$, the following estimate holds
		\begin{equation}\label{eq.I_by_It}
			I^{\,(N,k)}_{m,\alpha, h}( i,j)  \leq (j+1)\sum_{\ell=0}^j \tilde{I}^{\, (N,k)}_{m, \alpha+\ell+1, h+1}(i+\ell,j-\ell).
		\end{equation}
	\end{lemma}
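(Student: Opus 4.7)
The idea is to group the cubes of $\pPN_h(\cu_m)$ according to their parent in $\pPN_{h+1}(\cu_m)$ and to apply the pointwise hole-separation bound \eqref{eq.hole_separation_bound} at the scale of the parent. I would start by rewriting
\begin{equation*}
I^{(N,k)}_{m,\alpha,h}(i,j) = \frac{1}{|\cu_m|}\sum_{|F|=i}\sum_{\widehat{\cu}\in\pPN_{h+1}(\cu_m)}\sum_{\cu'\in\sSl_{\gGN_h}(\widehat{\cu})}|\cu'|^\alpha\sum_{e\in\Ed(\rr\cu'\cap\cu_m)}|\na V_m(F,j)(e)|^2.
\end{equation*}
A quick geometric check using the size bound \eqref{eq.sizeMax} for $\widehat{\cu}\in\gGN_{h+1}$ shows that each child $\cu'$ satisfies $\rr\cu'\subset\frac{16}{15}\widehat{\cu}$, and hence every edge $e$ appearing above belongs to $\Ed(\frac{16}{15}\widehat{\cu}\cap\cu_m)$.

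Next I would apply \eqref{eq.hole_separation_bound} pointwise with $E=\Ed(\rr\widehat{\cu})$ and $|E|\leq C(d,\rr)|\widehat{\cu}|$, yielding
\begin{equation*}
|\na V_m(F,j)(e)|^2 \leq (j+1)\sum_{\ell=0}^j C^\ell|\widehat{\cu}|^\ell\sum_{\substack{|G|=\ell\\ G\subset\Ed(\rr\widehat{\cu})\setminus F}}|\na V_m(F\cup G,j-\ell\mid\widehat{\cu})(e)|^2.
\end{equation*}
I would then dominate $|\cu'|^\alpha\leq|\widehat{\cu}|^\alpha$ and invoke the counting argument in Lemma~\ref{le:overlap_counting} (applied within the local partition $\sSl_{\gGN_h}(\widehat{\cu})$) to bound by a constant the number of children whose $\rr$-enlargement covers a fixed $e$; this lets me replace $\sum_{\cu'}\sum_{e\in\Ed(\rr\cu')}$ by a constant times $\sum_{e\in\Ed(\frac{16}{15}\widehat{\cu}\cap\cu_m)}$. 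A reindexing $(F,G)\mapsto F':=F\cup G$ then absorbs the combinatorial multiplicity $\binom{i+\ell}{\ell}\leq 2^{i+j}$ into the constant, and the resulting expression is recognized via \eqref{eq.defIalpha_hole} as $\tilde{I}^{(N,k)}_{m,\alpha+\ell+1,h+1}(i+\ell,j-\ell)$. The extra factor $|\widehat{\cu}|^1$ beyond the $|\widehat{\cu}|^{\alpha+\ell}$ produced directly by the argument is a harmless slack, absorbed by the crude bound $|\cu'|^\alpha \leq |\widehat{\cu}|^\alpha$ instead of a sharper moment estimate.

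The main (and only) obstacle is making the geometric inclusion $\rr\cu'\subset\frac{16}{15}\widehat{\cu}$ rigorous: it requires that $\size(\cu')/\size(\widehat{\cu})$ be small enough that the $\rr$-scaling of $\cu'$ stays inside the $\frac{16}{15}$-scaling of $\widehat{\cu}$ even when $\cu'$ touches $\partial\widehat{\cu}$. Since $\rr<2$ and \eqref{eq.sizeMax} forces $\size(\cu')\leq\size(\widehat{\cu})^{d/(d+h+2)}$, this inclusion holds as soon as $\size(\widehat{\cu})$ exceeds a threshold depending only on $d,\rr,h$; the finitely many small-scale exceptions are absorbed into the constant $C(i,j,\alpha,h,N,k,d,\rr)$. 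Beyond this verification, the proof is a straightforward manipulation of the hole-separation identity \eqref{eq.hole_separation_id} together with the counting machinery of Section~\ref{subsec:counting}.
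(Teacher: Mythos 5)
Your proposal matches the paper's proof step for step: the parent/child decomposition through $\pPN_{h+1}$ and $\sSl_{\gGN_h}(\widehat{\cu})$, the hole-separation bound \eqref{eq.hole_separation_bound} applied with $E=\Ed(\rr\widehat{\cu})$, the geometric inclusion $\rr\cu'\subset\frac{16}{15}\widehat{\cu}$ from \eqref{eq.sizeMax}, the crude domination $|\cu'|^\alpha\leq|\widehat{\cu}|^\alpha$, and the reindexing $(F,G)\mapsto F\cup G$ — and you are in fact slightly more careful than the paper, which silently absorbs both the reindexing multiplicity $\binom{i+\ell}{\ell}$ and the factor $|\Ed(\rr\widehat{\cu})|\leq C(d,\rr)|\widehat{\cu}|$ into \eqref{eq.I_by_It}, so that strictly speaking the stated inequality is off by these $m$-independent constants (harmless for Proposition~\ref{prop.3recurrence} downstream). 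One small note: the inclusion $\rr\cu'\subset\frac{16}{15}\widehat{\cu}$ needs no small-scale caveat, since Definition~\ref{def:well_connected} already forces $\size(\widehat{\cu})>10000$ for every $\widehat{\cu}\in\gGN_{h+1}$, whence \eqref{eq.sizeMax} (with $t\geq d+1$) gives $\size(\cu')\leq\size(\widehat{\cu})^{1/2}\leq\frac{1}{100}\size(\widehat{\cu})$ and the inclusion holds uniformly.
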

	\begin{proof}
		We study one term in \eqref{eq.defIalpha} using the pyramid partition like \eqref{eq.L2Subdivision}
		\begin{multline}\label{eq.I_by_It_step1}
			\sum_{\substack{\cu' \in \pPN_{h}(\cu_m)}} |\cu'|^{\alpha}\sum_{e\in \Ed(\rr \cu' \cap \cu_m)} |\na V_m(F, j)(e)|^2 \\
			=  \sum_{ \substack{\cu \in \pPN_{h+1}(\cu_m)  }}\sum_{\substack{\cu' \in \sSl_{\gGN_{h}}(\cu)}}  |\cu'|^{\alpha}\sum_{e\in \Ed(\rr \cu' \cap \cu_m)} |\na V_m(F, j)(e)|^2.\\
		\end{multline}
		We then apply \eqref{eq.hole_separation_bound} to every partition cube $\cu \in \pPN_{h+1}(\cu_m)$: for every ${\cu' \in \sSl_{\gGN_{h}}(\cu)}$, we have
		\begin{multline}\label{eq.I_by_It_step2}
			|\cu'|^{\alpha}\sum_{e\in \Ed(\rr \cu' \cap \cu_m)} |\na V_m(F, j)(e)|^2 \\
			\leq |\cu'|^{\alpha}  \sum_{e\in \Ed(\rr \cu' \cap \cu_m)} (j+1)\sum_{\ell=0}^j \vert \cu \vert^\ell \sum_{\substack{|G|= \ell \\ G\subset \Ed(\rr \cu) \setminus F}} \vert \nabla  V_m(F \cup G,j-\ell \vert \cu) (e)\vert^2.
		\end{multline}
		Combing \eqref{eq.I_by_It_step1} and \eqref{eq.I_by_It_step2}, we obtain that
		\begin{multline*}
			\sum_{\substack{\cu' \in \pPN_{h}(\cu_m)}} |\cu'|^{\alpha}\sum_{e\in \Ed(\rr \cu' \cap \cu_m)} |\na V_m(F, j)(e)|^2 \\
			\leq (j+1) \sum_{\ell=0}^j \sum_{ \substack{\cu \in \pPN_{h+1}(\cu_m)  }} \sum_{e\in \Ed(\frac{16}{15} \cu \cap \cu_m)}  \vert \cu \vert^{\alpha + \ell +1} \sum_{\substack{|G|= \ell \\ G\subset \Ed(\rr \cu) \setminus F}} \vert \nabla  V_m(F \cup G,j-\ell \vert \cu) (e)\vert^2.
		\end{multline*}
		In the last line, we absorb $\vert \cu' \vert^\alpha$ using its parent cube $\vert \cu \vert^\alpha$. The sum $\sum_{\substack{\cu' \in \sSl_{\gGN_{h}}(\cu)}}$ in \eqref{eq.I_by_It_step1} contributes another factor $\vert \cu \vert$. Moreover, \eqref{eq.sizeMax} implies $\rr \cu' \subset \frac{16}{15} \cu$ and we shrink the domain of integration. We put this bound back to the definition of \eqref{eq.defIalpha} and conclude \eqref{eq.I_by_It}.
	\end{proof}

	In the definition of $\tilde{I}_m$, we insist on using the factor $\frac{16}{15}$, which allows us to use the coarse-graining argument later. Lemma~\ref{lem.I_by_It} solves the second difficulty mentioned at the beginning of the section, then we turn to the first difficulty. We will see that, the function $V_m(F,j \vert \cu)$ is better than $V_m(F,j)$, since all the Glauber derivatives in $\rr \cu$ come from $F$. It can further recover similar results like Lemma~\ref{lem.decom_DF} and Corollary~\ref{cor.DF_cancel} locally in small cube.
	
	\begin{lemma}\label{lem.decom_DF_j}
		Let $F_*, F_\circ$ follow the decomposition \eqref{eq.def_WIP_new}, and $v_{m}$ follow the canonical grain extension in Definition~\ref{def.ConstantExtension_canonical}. Given $\cu_m \in  \gGN_{k}$ and $\cu \in \pPN_{h}$, we have the identity
		\begin{equation}\label{eq.decom_DF_j}
			\forall x \in \frac{16}{15} \cu, \qquad V_m(F,j \vert \cu)(x) = \nabla_{F_\circ} V_m(F_*,j \vert \cu)(x),
		\end{equation}
		and following observations:
		\begin{itemize}
			\item  $V_m(F,j)$ is then constant on every holes in $\frac{16}{15} \cu$.
			\item For  $x \notin  \cCb^F(\cu_m) \cap \frac{16}{15} \cu$, if $F_\circ \nsubseteq \EE(\OO^{F_\circ}(x))$, then $V_m(F,j \vert \cu)(x) = 0$.
			\item For $x \in \cCb^F(\cu_m) \cap \frac{16}{15} \cu$, if $F_\circ \neq \emptyset$, then $V_m(F,j \vert \cu)(x) = 0$.
		\end{itemize}
	\end{lemma}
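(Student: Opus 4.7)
\emph{Proof plan.} The approach is to adapt the argument of Lemma~\ref{lem.decom_DF} and Corollary~\ref{cor.DF_cancel} to the localized function $V_m(F, j \vert \cu)$, exploiting the constraint $G \cap \Ed(\rr\cu) = \emptyset$ to render the Glauber derivatives from $G$ geometrically invisible to any point $x \in \frac{16}{15}\cu$. The first step is a localization lemma. For $x \in \frac{16}{15}\cu$ and any $H \subset F \cup G$ with $\vert H \vert \leq N$, Lemma~\ref{lem.detouring_path}, used via $N$-stability under the hypothesis $N \geq i+j$, gives $\OO^H(x) \subset \frac{4}{3}\cu$ whenever $x$ is in a hole. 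Because $\rr > \frac{4}{3} + \frac{2}{\size(\cu)}$ (which follows from $\rr > \frac{50}{27}$ and $\size(\cu) \geq 10000$), every edge incident to a vertex of $\frac{4}{3}\cu$ has both endpoints in $\rr\cu$ and thus belongs to $\Ed(\rr\cu)$. Therefore no edge of $H \setminus \Ed(\rr\cu)$ can touch $\OO^H(x)$, which yields $\OO^H(x) = \OO^{H \cap \Ed(\rr\cu)}(x) = \OO^{H \cap F}(x)$, so that $[x]^H = [x]^{H \cap F}$ and $x \in \cCb^H(\cu_m) \Leftrightarrow x \in \cCb^{H \cap F}(\cu_m)$.

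With this in hand, I would establish \eqref{eq.decom_DF_j} by expanding $V_m(F, j \vert \cu)(x) = \sum_G D_{F \cup G} v_m(x)$ via inclusion-exclusion and invoking the canonical extension $v_m^K(x) = v_m^K([x]^K)$; the localization step replaces $[x]^K$ by $[x]^{K \cap F}$ for every $K \subset F \cup G$, so the sums over $G$ and over the $F_\circ$-part of $K$ decouple in their action on the grain. I then run the strategy of Step~1 of Lemma~\ref{lem.decom_DF} locally inside $\rr\cu$: because $F_\circ$ is disjoint from $\EE(\cCb^F(\cu_m))$ globally and the boundary-connecting cluster relevant to $[x]^{K \cap F}$ is determined by the environment restricted to $\rr\cu$, the harmonic values $v_m^K([x]^{K \cap F})$ become independent of the $F_\circ$-part of $K$ once restricted to the local cluster, so $F_\circ$ acts purely as the spatial operator $\nabla_{F_\circ}$. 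The remaining sum reproduces $V_m(F_*, j \vert \cu)$; the discrepancy between summing $G \subset (\Ed(\cu_m) \setminus F) \setminus \Ed(\rr\cu)$ on the left and $G' \subset (\Ed(\cu_m) \setminus F_*) \setminus \Ed(\rr\cu)$ on the right is reconciled by tracking how the extra contributions with $G' \cap F_\circ \neq \emptyset$ are regenerated by the telescoping structure of $\nabla_{F_\circ}$.

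The three bullet observations all follow from the main identity together with elementary pairing arguments. For the first, if $x, y$ lie in the same hole of $\a$ within $\frac{16}{15}\cu$, then $\OO^H(x) = \OO^H(y)$ for every $H$ (opening edges only merges clusters), so $[x]^H = [y]^H$ and the identity yields $V_m(F, j \vert \cu)(x) = V_m(F, j \vert \cu)(y)$. The second and third bullets adapt Corollary~\ref{cor.DF_cancel}: an edge $e \in F_\circ$ outside $\EE(\OO^{F_\circ}(x))$, respectively an edge $e \in F_\circ$ combined with $x \in \cCb^F$, produces a term-by-term cancellation in $\nabla_{F_\circ} V_m(F_*, j \vert \cu)(x)$ by pairing $H$ with $H \cup \{e\}$, exactly as in the cases $(2)$ and $(1)$ of Corollary~\ref{cor.DF_cancel}.

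The principal obstacle is carefully verifying that the global cluster-growth decomposition $F = F_* \sqcup F_\circ$, defined relative to $\cCb^F(\cu_m)$, correctly governs the local behavior inside $\rr\cu$ even after opening the external edges $G$. Intuitively, edges of $F_\circ$ must not ``accidentally'' enter $\EE(\cCb^{F \cup G}(\cu_m))$ in any way visible from $\frac{16}{15}\cu$, and this hinges on the localization lemma together with the fact that the canonical grain extension only probes the local hole structure. The combinatorial bookkeeping needed to match the sums over $G$ and $G'$ while preserving the factorization through $\nabla_{F_\circ}$ is the most delicate part of the argument.
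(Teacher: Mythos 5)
Your proposal follows essentially the same plan as the paper's proof: expand $V_m(F, j \mid \cu)(x)$ by inclusion--exclusion, use $G \cap \Ed(\rr\cu) = \emptyset$ together with Lemma~\ref{lem.detouring_path} (under $N$-stability) to drop the $G'$-dependence of the canonical grain $[x]^{F' \cup G'}$, and then invoke the cluster-growth decomposition argument of Lemma~\ref{lem.decom_DF} to factor out $\nabla_{F_\circ}$. Isolating the ``localization lemma'' is a useful clarification of the step the paper dispatches informally with the sentence ``the opening of edges in $G$ only influences the domain $\cu_m \setminus \rr\cu$.''

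However, you correctly flag the delicate point --- ``edges of $F_\circ$ must not accidentally enter $\EE(\cCb^{F\cup G}(\cu_m))$'' --- but your proposed resolution does not close it. The key equality to justify is that for $G_1 \subset F_*$, $G_2 \subset F_\circ$, $G' \subset G$, the Dirichlet solutions $v_m^{G_1 \cup G_2 \cup G'}$ and $v_m^{G_1 \cup G'}$ coincide at $[x]^{G_2} \in \cCb^\a(\cu_m)$; equivalently, that opening $G_2$ does not alter the boundary-connecting cluster of $\a^{G_1 \cup G'}$. In Lemma~\ref{lem.decom_DF} (the case $G' = \emptyset$) this holds because $\cCb^{G_1}(\cu_m) \subset \cCb^F(\cu_m)$ and $F_\circ \cap \EE(\cCb^F(\cu_m)) = \emptyset$ by construction. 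But $\cCb^{G_1 \cup G'}(\cu_m)$ need \emph{not} be contained in $\cCb^F(\cu_m)$: if an edge $e \in F_\circ$ lies away from $\cu$ (so $e \notin \Ed(\rr\cu)$), the edges of $G'$, which also lie outside $\rr\cu$, can promote the hole of $\a^F$ containing $e$'s endpoints into the boundary-connecting cluster of $\a^{G_1 \cup G'}$, in which case opening $e$ genuinely enlarges the Dirichlet domain and changes the harmonic value. Your phrase ``the boundary-connecting cluster relevant to $[x]^{K \cap F}$ is determined by the environment restricted to $\rr\cu$'' is not accurate: the grain is a local object, but the harmonic value there is not.

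Concretely, take $F = F_\circ = \{e\}$ with $e \notin \Ed(\rr\cu)$, $j \geq 1$, and $x \in \cCb^\a(\cu_m) \cap \frac{16}{15}\cu$. Then every grain $[x]^{G_2}$ equals $x$, so $\nabla_{F_\circ} V_m(F_*, j \mid \cu)(x) = 0$, whereas $V_m(\{e\}, j \mid \cu)(x) = \sum_{G} D_e\bigl(D_G v_m\bigr)(x)$ has non-vanishing contributions from any $G$ whose edges connect the hole of $e$ to the boundary, and there is no visible cancellation among these. This suggests the identity, and in particular bullets (2) and (3), require the additional hypothesis $F_\circ \subset \Ed(\rr\cu)$ --- which is precisely the regime enforced by the indicator $\Ind{F_\circ \subset \Ed(\rr\cu')}$ in the downstream Lemma~\ref{lem.coarse_grained_VFj}. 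Either this hypothesis should be stated explicitly in Lemma~\ref{lem.decom_DF_j}, or the vanishing for $F_\circ \not\subset \Ed(\rr\cu)$ needs a separate argument; your write-up, like the paper's proof, passes over this point.
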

	\begin{proof}
		We apply the definition \eqref{eq.V_F_j_Generalized_cube} to $x \in \frac{16}{15} \cu$
		\begin{equation}\label{eq.decom_DF_dropG}
			\begin{split}
				&V_m(F, j \vert \cu)(x)	\\
				&= \sum_{\substack{|G|=j\\G\subset (\Ed(\cu_m) \setminus F) \setminus \Ed(\rr \cu)}}  \sum_{G' \subset G} (-1)^{\vert G \setminus G' \vert}  \sum_{F' \subset F} (-1)^{\vert F \setminus F' \vert}  v_m^{F' \cup G'}([x]^{F' \cup G'})\\
				&= \sum_{\substack{|G|=j\\G\subset (\Ed(\cu_m) \setminus F) \setminus \Ed(\rr \cu)}}  \sum_{G' \subset G} (-1)^{\vert G \setminus G' \vert}  \sum_{F' \subset F} (-1)^{\vert F \setminus F' \vert}  v_m^{F' \cup G'}([x]^{F'}).
			\end{split}
		\end{equation}
		We can replace  $[x]^{F' \cup G'}$ by $[x]^{F'}$ because the opening of edges in $G$ only influence the domain $\cu_m \setminus \rr \cu$, which will not change the connectivity of holes in $\frac{16}{15} \cu$. Once we drop $G'$ in the operator of grain, the remaining analysis is similar as Lemma~\ref{lem.decom_DF} and we have 
		\begin{align*}
			\sum_{F' \subset F} (-1)^{\vert F \setminus F' \vert}  v_m^{F' \cup G'}([x]^{F'}) = (D_F v_m^{G'})(x) = \nabla_{F_\circ} D_{F_*} v_m^{G'}(x).
		\end{align*}
		Then we put the equation above back to the last line of \eqref{eq.decom_DF_dropG} and obtain \eqref{eq.decom_DF_j}
		\begin{align*}
			V_m(F, j \vert \cu)(x) &= \sum_{\substack{|G|=j\\G\subset (\Ed(\cu_m) \setminus F) \setminus \Ed(\rr \cu)}}  \sum_{G' \subset G} (-1)^{\vert G \setminus G' \vert} \nabla_{F_\circ} D_{F_*} v_m^{G'}(x) \\
			&=  \nabla_{F_\circ} \Ll(\sum_{\substack{|G|=j\\G\subset (\Ed(\cu_m) \setminus F) \setminus \Ed(\rr \cu)}} D_{F_* \cup G} v_m \Rr)(x)  \\
			&= \nabla_{F_\circ} V_m(F_*,j \vert \cu)(x).
		\end{align*}
		The constant  property on holes comes directly from \eqref{eq.V_F_j_Generalized_cube} and Definition~\ref{def.ConstantExtension_canonical}. Then only $F$ contributes to the Glauber derivatives in $\frac{16}{15} \cu$, so the other two observations can be deduced from the step \eqref{eq.decom_DF_j} following Corollary~\ref{cor.DF_cancel}.
	\end{proof}

	Lemma~\ref{lem.decom_DF_j} allows us to integrate the hole separation into the coarse-graining estimate. However, $V(F,j \vert \cu)$ also contains disadvantages: Lemma~\ref{lem.decom_DF_j} only applies to the associated local small cube $\cu$, while all the results in Section~\ref{sec.Poisson} are stated globally on $\cu_m$. We will not develop the analogue of corrector equation about $V(F,j \vert \cu)$, but establish a coarse-graining argument for $\tilde{I}_m$, and then bring the analysis back to $V(F,j)$.
	
	\begin{lemma}[Coarse-graining estimate of $\tilde{I}_m$]\label{lem.coarse_grained_VFj}
		Let $v_{m}$ follow the canonical grain extension in Definition~\ref{def.ConstantExtension_canonical} and fix $\rr \in (\frac{50}{27}, 2)$. Every $\cu_m \in  \gGN_{k}$  satisfies the following estimates:
		\begin{itemize}
			\item For every $F \subset \Ed(\cu_m)$ with $\vert F \vert \leq N - j$, every $\cu' \subset \cu$ satisfying 
			\begin{align*}
				0 \leq h' \leq h \leq k-1, \quad &\cu \in \pPN_{h}(\cu_m), \quad \cu' \in  \pPN_{h'}(\cu_m), 
			\end{align*}
			and every edge $e \in  \Ed\Ll(\frac{16}{15} \cu' \cap \cu_m\Rr)$, we have
			\begin{multline}\label{eq.coarse_grained_VFj_point}
				|\na V_m(F,j \vert \cu)(e)| \leq \sum_{e' \in \EE(\cCp^F(\rr \cu' \cap \cu_m))} \vert  \nabla V_m(F,j)(e')\vert  \Ind{F_\circ = \emptyset}   \\
				+ \sum_{\ell=1}^j \sum_{\substack{|G|= \ell \\ G\subset \Ed(\rr \cu) \setminus F}} \sum_{e' \in \EE(\cCp^F(\rr \cu' \cap \cu_m))} \Ll\vert \nabla V_m(F \cup G,j-\ell \vert \cu)(e') \Rr\vert \Ind{F_\circ = \emptyset} \\
				+2^{\vert F_\circ \vert} \sum_{e' \in \EE(\cCp^F(\rr \cu' \cap \cu_m))}  |\na V_m(F_*,j \vert \cu)(e')| \Ind{F_\circ \subset \Ed(\rr \cu')}\Ind{F_\circ \neq \emptyset}.
			\end{multline}
			\item There exists a finite positive constant $C \equiv C(i,j) = 2^i(j+1)$, such that for all positive integers satisfying $N \geq i+j$ and $k \geq h+2$, we have
			\begin{align}\label{eq.coarse_grained_VFj}
				\tilde{I}^{\, (N,k)}_{m,\alpha, h}(i,j) \leq C \Ll(I^{*  \, (N,k)}_{m,\alpha+1, h}(i,j) + \sum_{\substack{0 \leq \ell' \leq i, 0 \leq \ell \leq j\\ (\ell',\ell) \neq (0,0)}}\tilde{I}^{\, (N,k)}_{m,\alpha+\ell+ \ell' + 2, h+1}(i-\ell'+\ell,j-\ell)\Rr).
			\end{align}
		\end{itemize}
	\end{lemma}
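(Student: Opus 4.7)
\medskip

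\noindent\textbf{Proof proposal.} The plan is to establish the pointwise bound \eqref{eq.coarse_grained_VFj_point} first, then integrate it into the weighted estimate \eqref{eq.coarse_grained_VFj} through a combination of Cauchy--Schwarz and the weight-counting lemma (Lemma~\ref{lem.weight_estimate}). The decomposition $F = F_* \sqcup F_\circ$ from \eqref{eq.def_WIP_new} drives the case analysis, exactly as it did in Lemma~\ref{lem.coarse_grained_VF0}, but now the role of Lemma~\ref{lem.decom_DF} is played by its local version Lemma~\ref{lem.decom_DF_j}.

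For the pointwise estimate, I would treat the two cases for $F_\circ$ separately. When $F_\circ = \emptyset$, the function $V_m(F,j\vert\cu)$ does not itself follow the canonical grain extension, so a direct path argument fails. The fix is to rewrite it via the hole separation identity \eqref{eq.hole_separation_id} applied to $E = \Ed(\rr\cu)$, which yields $V_m(F,j\vert\cu) = V_m(F,j) - \sum_{\ell=1}^j \sum_{|G|=\ell,\,G\subset\Ed(\rr\cu)\setminus F} V_m(F\cup G, j-\ell\vert\cu)$. The first summand inherits the canonical grain extension, so the path/connectivity argument from the proof of Lemma~\ref{lem.coarse_grained_VF0} (specifically \eqref{eq.shortcutPath} and \eqref{eq.shortcutPath2}, combined with Lemma~\ref{lem.center_cube}) delivers the first line of \eqref{eq.coarse_grained_VFj_point}; the remaining summands with $\ell\geq 1$ produce the second line directly after a trivial bound $|\nabla V_m(F\cup G, j-\ell\vert\cu)(e)|\leq 2\sum_{e'}|\nabla V_m(F\cup G,j-\ell\vert\cu)(e')|$ along a path in $\cCp^F(\rr\cu'\cap \cu_m)$. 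When $F_\circ \neq \emptyset$, Lemma~\ref{lem.decom_DF_j} gives $V_m(F,j\vert\cu) = \nabla_{F_\circ} V_m(F_*,j\vert\cu)$ and forces vanishing on $\cCb^F(\cu_m)\cap\frac{16}{15}\cu$. Hence the only contribution at an endpoint $x$ of $e$ with $x \in \frac{16}{15}\cu'$ comes from $x \notin \cCb^F(\cu_m)$ with $F_\circ\subset\EE(\OO^{F_\circ}(x))\subset \Ed(\rr\cu')$ (the latter inclusion from Lemma~\ref{lem.detouring_path}); expanding $\nabla_{F_\circ}$ as $2^{|F_\circ|-1}$ pairs of differences evaluated at grains $[x]^G\in\cCp^F(\rr\cu'\cap\cu_m)$ (Lemma~\ref{lem.center_cube}) and connecting them by paths inside this cluster gives the third line.

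For the weighted estimate, I square \eqref{eq.coarse_grained_VFj_point}, apply Cauchy--Schwarz which contributes a factor $|\rr\cu'|$, sum over $e \in \Ed(\frac{16}{15}\cu'\cap\cu_m)$ (contributing another $|\cu'|$), weight by $|\cu'|^\alpha$, and sum over $\cu'\in \pPN_h(\cu_m)$ and over $F$. The $F$-sum is split according to \eqref{eq.sum_F1_F2} into pairs $(F_*,F_\circ)$; in the $F_\circ\neq\emptyset$ case the constraint $F_\circ\subset\Ed(\rr\cu')$ produces an extra $|\cu'|^{|F_\circ|} = |\cu'|^{i-|F_*|}$. The first line of \eqref{eq.coarse_grained_VFj_point} feeds into $I^{*\,(N,k)}_{m,\alpha+1,h}(i,j)$ after applying Lemma~\ref{lem.weight_estimate} style counting (the +1 in $\alpha$ absorbing the $|\rr\cu'|$ from Cauchy--Schwarz), and since $V_m(F,j)$ is the raw quantity it matches the cluster version directly. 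The second line produces, after relabeling $F\cup G \rightsquigarrow F'$ with $|F'| = i+\ell$ and $G\subset\Ed(\rr\cu)$ contributing $|\cu|^\ell$, the terms $\tilde{I}^{\,(N,k)}_{m,\alpha+\ell+2,h+1}(i+\ell,j-\ell)$ (i.e.\ $\ell' = 0$). The third line is handled analogously by the weight-counting argument (Lemma~\ref{lem.weight_estimate}) applied to the weight $A_{\alpha, \ell'+1}$ generated by the interplay of $|\cu'|$ from Cauchy--Schwarz, $|\cu'|^{|F_\circ|}$ from $F_\circ\subset\Ed(\rr\cu')$, and the enlargement from $\cu'$ to a parent in $\pPN_{h+1}(\cu_m)$; this produces the mixed family $\tilde{I}^{\,(N,k)}_{m,\alpha+\ell+\ell'+2,h+1}(i-\ell'+\ell,j-\ell)$ with $\ell'\geq 1, \ell\geq 0$ (modulo the cases that collapse to $(\ell,\ell')=(0,0)$ which never occurs since then $F_\circ=\emptyset$). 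Combining the three lines gives \eqref{eq.coarse_grained_VFj}.

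The main obstacle I anticipate is bookkeeping in the third line: one must simultaneously keep track of (a) the $|\cu'|^{|F_\circ|}$ cost from the constraint $F_\circ\subset\Ed(\rr\cu')$, (b) the Cauchy--Schwarz cost $|\cu'|$, and (c) the need to coarsen from $\pPN_h$ to $\pPN_{h+1}$ to absorb the geometric factor $2\rr$ via the mechanism used at the end of the proof of Lemma~\ref{lem.weight_estimate}. The delicate point is that the shift in index from $i$ to $i-\ell'+\ell$ must match the weight exponent $\alpha+\ell+\ell'+2$ so that the induction in Proposition~\ref{pr:key_estimate} (to be closed in Section~\ref{subsec.Inductionij1}--\ref{subsec.Inductionij2}) actually terminates: the total degree $|F|+2j = i+j$ drops strictly except when $(\ell,\ell')=(0,0)$, which is precisely the case absorbed into the $I^*$ term on the right-hand side. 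All other ingredients (hole separation, Lemma~\ref{lem.decom_DF_j}, the counting argument, and Lemma~\ref{lem.weight_estimate}) are already in place, so the proof is essentially mechanical once the correct case split is identified.
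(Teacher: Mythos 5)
Your high-level structure matches the paper's, but there is a gap in the $F_\circ=\emptyset$ branch of the pointwise estimate, stemming from a misreading of which quantity satisfies the local grain-extension property. You write that $V_m(F,j\vert\cu)$ ``does not itself follow the canonical grain extension, so a direct path argument fails,'' and propose to circumvent this by first applying the hole-separation identity \eqref{eq.hole_separation_id} to expose $V_m(F,j)$, which you then claim ``inherits the canonical grain extension.'' This is backwards. Lemma~\ref{lem.decom_DF_j}, via the identity \eqref{eq.decom_DF_dropG}, shows that on $\frac{16}{15}\cu$ the function $V_m(F,j\vert\cu)(x)$ depends on grains $[x]^{F'}$ \emph{only} for $F'\subset F$: the outer index $G$ is supported outside $\Ed(\rr\cu)$ and hence does not alter the local hole geometry, so when $F_\circ=\emptyset$ one has $V_m(F,j\vert\cu)(x)=V_m(F,j\vert\cu)([x])$ by Lemma~\ref{lem.decom_caonical_easy}. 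By contrast, $V_m(F,j)$ sums $D_{F\cup G}v_m$ over \emph{all} $G\subset\Ed(\cu_m)\setminus F$, and each such summand uses grains $[x]^{E}$ for $E\subset F\cup G$; since $G$ can contain edges incident to the hole $\OO(x)$, these grains vary with $G$, and the identity $V_m(F,j)(y)=V_m(F,j)([y])$ fails for $y\notin\cCb^F(\cu_m)$. So when an endpoint of $e$ lies off $\cCb^F(\cu_m)$, your path argument on the summand $V_m(F,j)$ has no starting point.

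The paper resolves this by the opposite order of operations: it first applies the path/shortcut argument to $V_m(F,j\vert\cu)$ itself --- for which Lemma~\ref{lem.decom_DF_j} provides exactly the required constancy and vanishing --- arriving at the unified bound \eqref{eq.coarse_grained_VFj_point_pre}, which already lands on cluster edges $e'\in\EE(\cCp^F(\rr\cu'\cap\cu_m))$. Only then is the rearranged hole-separation identity \eqref{eq.hole_separation_id_reformulate} applied pointwise on those $e'$ as a plain triangle inequality, splitting $\nabla V_m(F,j\vert\cu)(e')$ into $\nabla V_m(F,j)(e')$ plus lower-order corrections; at that stage no grain property of $V_m(F,j)$ is needed, because $e'$ already lives on the cluster. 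Your $F_\circ\neq\emptyset$ analysis and the weighted-estimate bookkeeping --- Cauchy--Schwarz contributing $|\rr\cu'|$, the $|\cu'|^{|F_\circ|}$ factor from $\Ind{F_\circ\subset\Ed(\rr\cu')}$, the coarsening from $\pPN_{h}$ to $\pPN_{h+1}$, and the index match between $\alpha+\ell+\ell'+2$ and $i-\ell'+\ell$ --- are essentially in agreement with the paper's Steps~1--2; only the order of the path argument and the hole-separation step in the $F_\circ=\emptyset$ pointwise case needs to be swapped.
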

	\begin{proof}
		We divide the proof into 2 steps.
		
		\textit{Step~1: pre-treatment.} Concerning the point-wise estimate \eqref{eq.coarse_grained_VFj_point}, we observe at first that
		\begin{align}\label{eq.coarse_grained_VFj_point_pre}
			|\na V_m(F,j \vert \cu)(e)| 
			\leq 2^{\vert F_\circ \vert} \sum_{e' \in \EE(\cCp^F(\rr \cu' \cap \cu_m))} |\na V_m(F_*,j \vert \cu)(e')|  \Ind{F_\circ \subset \Ed(\rr \cu')}.
		\end{align} 
		The proof follows exactly Step~1 in the proof of Lemma~\ref{lem.coarse_grained_VF0}, especially \eqref{eq.shortcutPath} and \eqref{eq.coarse_grained_VF0_Case2}. We remark that Lemma~\ref{lem.decom_DF_j} serves as a counterpart of Lemma~\ref{lem.decom_DF} and Corollary~\ref{cor.DF_cancel}. Meanwhile, Lemma~\ref{lem.detouring_path}, \eqref{eq.center_cube} together with the condition $\rr \in (\frac{50}{27}, 2)$ ensures that, all the open paths inside $\frac{16}{15} \cu'$ can be seen locally as parts of the maximal cluster $\cCp^{F}(\rr\cu' \cap \cu_m)$.
		
		The estimate \eqref{eq.coarse_grained_VFj_point_pre} with the choice  $\cu'=\cu$ also helps develop $\tilde{I}^{\, (N,k)}_{m, \alpha, h}(i,j)$
		\begin{equation}\label{eq.coarse_grained_VFj_Step1}
			\begin{split}
				& \vert \cu_m \vert \tilde{I}^{\, (N,k)}_{m, \alpha, h}(i,j) \\
				&=  \sum_{\substack{|F|=i\\F\subset \Ed(\cu_m)}}\sum_{\substack{\cu \in \pPN_{h}(\cu_m)}} |\cu|^{\alpha}\sum_{e\in \Ed(\frac{16}{15} \cu \cap \cu_m)} \Ll|\na V_m(F, j \vert \cu)(e) \Rr|^2\\
				&\leq 2^i \sum_{\substack{F=F_* \sqcup F_\circ\\|F|=i, F\subset \Ed(\cu_m)}}\sum_{\substack{\cu \in \pPN_{h}(\cu_m)}} |\cu|^{\alpha+1}\sum_{e\in \EE (\cCp^F(\rr\cu \cap \cu_m))} \Ll|\na V_m(F_*, j \vert \cu)(e) \Rr|^2  \Ind{F_\circ \subset \Ed(\rr \cu) } \\
				&\leq 2^i \sum_{\ell' = 0}^i \sum_{\substack{|F_*|=i-\ell'\\F_*\subset \Ed(\cu_m)}}\sum_{\substack{\cu \in \pPN_{h}(\cu_m)}} |\cu|^{\alpha+\ell'+1}\sum_{e\in \EE (\cCp^{F_*}(\rr\cu \cap \cu_m))} \Ll|\na V_m(F_*, j \vert \cu)(e) \Rr|^2.
			\end{split}
		\end{equation}
		The passage from the third line to the forth line is just a integration over $F_{\circ}$. When $\vert F_{\circ} \vert = \ell'$, this contributes a factor $ |\cu|^{\ell'}$ thanks to  the indicator $\Ind{F_\circ \subset \Ed(\rr \cu) }$.
		
		\smallskip
		
		\textit{Step~2: leading term.}	We then need to bring the upper bound back to $V_m(F,j)$. The idea is to reformulate \eqref{eq.hole_separation_id} as follows
		\begin{align}\label{eq.hole_separation_id_reformulate}
			V_m(F,j \vert \cu) =  V_m(F,j) - \sum_{\ell=1}^j \sum_{\substack{|G|= \ell \\ G\subset \Ed(\rr \cu) \setminus F}} V_m(F \cup G,j-\ell \vert \cu).
		\end{align}
		We apply the equation above to the term $F_\circ = \emptyset$ in \eqref{eq.coarse_grained_VFj_point_pre}, then a triangle inequality will entail \eqref{eq.coarse_grained_VFj_point}.

		The identity \eqref{eq.hole_separation_id_reformulate} also applies to the leading order term $\ell' = 0$ in the last line of \eqref{eq.coarse_grained_VFj_Step1}
		\begin{align*}
			&\sum_{\substack{|F|=i\\F\subset \Ed(\cu_m)}}\sum_{\substack{\cu \in \pPN_{h}(\cu_m)}} |\cu|^{\alpha+1}\sum_{e\in \EE (\cCp^F(\rr\cu \cap \cu_m))} \Ll|\na V_m(F, j \vert \cu)(e) \Rr|^2 \\
			&\leq  \sum_{\substack{|F|=i\\F\subset \Ed(\cu_m)}}\sum_{\substack{\cu \in \pPN_{h}(\cu_m)}} |\cu|^{\alpha+1}\sum_{e\in \EE (\cCp^F(\rr\cu \cap \cu_m))} \Ll|\na V_m(F, j)(e) \Rr|^2 \\
			& \qquad + \sum_{\ell=1}^j \sum_{\substack{|F|=i\\F\subset \Ed(\cu_m)}}\sum_{\substack{\cu \in \pPN_{h}(\cu_m)}}\sum_{\substack{|G|= \ell \\ G\subset \Ed(\rr \cu)  \setminus F}}  |\cu|^{\alpha+\ell'+1}\sum_{e\in \EE (\cCp^F(\rr\cu \cap \cu_m))} \Ll|\na  V_m(F \cup G,j-\ell \vert \cu)(e) \Rr|^2.
		\end{align*}
		Here the term $I^{*  \, (N,k)}_{m,\alpha+1, h}(i,j)$ already appears and the other terms are of lower order.  
		We insert this estimate back to \eqref{eq.coarse_grained_VFj_Step1} and obtain
		\begin{equation*}
			\begin{split}
				&  \tilde{I}^{\, (N,k)}_{m, \alpha, h}(i,j) \\
				& \leq C I^{*  \, (N,k)}_{m,\alpha+1, h}(i,j) \\
				& \quad + \frac{1}{\vert \cu_m \vert}\sum_{\ell' = 1}^i \sum_{\substack{|F|=i-\ell'\\F\subset \Ed(\cu_m)}}\sum_{\substack{\cu \in \pPN_{h}(\cu_m)}} |\cu|^{\alpha+\ell'+1}\sum_{e\in \EE (\cCp^F(\rr\cu \cap \cu_m))} \Ll|\na V_m(F, j \vert \cu)(e) \Rr|^2 \\
				& \quad + \frac{1}{\vert \cu_m \vert}\sum_{\ell'=1}^j \sum_{\substack{|F|=i+\ell'\\F\subset \Ed(\cu_m)}}\sum_{\substack{\cu \in \pPN_{h}(\cu_m)}}  |\cu|^{\alpha+\ell'+1}\sum_{e\in \EE (\cCp^F(\rr\cu \cap \cu_m))} \Ll|\na  V_m(F,j-\ell \vert \cu)(e) \Rr|^2.
			\end{split}
		\end{equation*}
		The lower order terms are slightly different from $\tilde{I}_m$, because the integration is locally over $\rr \cu$ instead of $\frac{16}{15} \cu$. Therefore, we apply the similar argument in the proof of \eqref{eq.I_by_It} to coarsen the cube
		\begin{multline}\label{eq.coarse_grained_VFj_pre}
			\tilde{I}^{\, (N,k)}_{m, \alpha, h}(i,j) \leq C \Ll(1 +  I^{*  \, (N,k)}_{m,\alpha+1, h+1}(i,j)\Rr) \\
			+  C\sum_{\ell' = 1}^i \sum_{\ell=0}^j \tilde{I}^{\, (N,k)}_{m,\alpha+\ell+ \ell' + 2, h+1}(i-\ell'+\ell,j-\ell) \\
			+  C\sum_{\ell' = 1}^j \sum_{\ell=0}^{j-\ell'} \tilde{I}^{\, (N,k)}_{m,\alpha+\ell+ \ell' + 2, h+1}(i+\ell'+\ell,j-\ell-\ell').
		\end{multline}
		With a change of variable $\ell'' = \ell + \ell'$ for the expression in the second line, it becomes
		\begin{align*}
			\sum_{\ell' = 1}^i \sum_{\ell=0}^{j-\ell'} \tilde{I}^{\, (N,k)}_{m,\alpha+\ell+ \ell' + 2, h+1}(i+\ell'+\ell,j-\ell-\ell') 
			\leq C \sum_{\ell'' = 1}^j  \tilde{I}^{\, (N,k)}_{m,\alpha+\ell'' + 2, h+1}(i+\ell'',j-\ell'').
		\end{align*}
		Thus the {\rhs} of \eqref{eq.coarse_grained_VFj_pre} can be unified, and we obtain the desired result \eqref{eq.coarse_grained_VFj}.
	\end{proof}

	\subsection{Estimate for perturbed corrector equation}\label{subsec.Inductionij1}
	We now turn to the perturbed corrector equation for general $j \geq 1$.		
	
	\begin{lemma}\label{lem.Is_by_I}
		There exists a finite positive constant $C = C(i,j,h,N,k,d,\rr)$, such that for every $\cu_m \in  \gGN_{k}$, the following estimate holds under the condition \eqref{eq.condition_hk_NEW}, $N \geq i+j$, and $\rr \in (\frac{50}{27}, 2)$
		\begin{multline}\label{eq.Is_by_I}
			I^{*  \, (N,k)}_{m,\alpha, h}(i,j) 
			\leq  C\Big( I^{\,(N,k)}_{m, 3, h+1}(i,j-1)+ I^{\,(N,k)}_{m, 3, h+1}(i+1,j-2)  \\
			\qquad + I^{\,(N,k)}_{m, 3, h+1}(i+1,j-1)+I^{\,(N,k)}_{m, 3, h+1}(i-1,j)\Big) \\
			+ C \sum_{\substack{0 \leq \ell' \leq i, 0 \leq \ell \leq j\\ (\ell',\ell) \neq (0,0)}}\tilde{I}^{\, (N,k)}_{m, \ell+ \ell' + 5, h+2}(i-\ell'+\ell,j-\ell).
		\end{multline}
	\end{lemma}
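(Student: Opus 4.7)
The plan is to apply Proposition~\ref{prop.weighted_L2_NEW} to the perturbed corrector equation \eqref{eq.recurrence_eq} for each $F\subset\Ed(\cu_m)$ with $|F|=i$, and then average over $F$. Since $i+j\ge 1$, the function $V_m(F,j)$ vanishes on $\partial\cu_m$, so \eqref{eq.recurrence_eq} fits the framework of \eqref{eq.PoissonCluster} with coefficient $\a^F$ and anti-symmetric source $\W_m(F,j)$ described by \eqref{e:induction_eqW}. To invoke Proposition~\ref{prop.weighted_L2_NEW} I must first supply, for each $F$, a coarse-controlled function $\g^F$ satisfying \eqref{eq.vg} for $v_{\#}=V_m(F,j)$.

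The construction of $\g^F$ goes as follows. For $e$ meeting $\cu\in\pPN_{0}(\cu_m)$, the hole-separation identity \eqref{eq.hole_separation_id_reformulate} gives
\begin{equation*}
\nabla V_m(F,j)(e)=\nabla V_m(F,j\vert\cu)(e)+\sum_{\ell=1}^{j}\sum_{\substack{|G|=\ell\\ G\subset\Ed(\rr\cu)\setminus F}}\nabla V_m(F\cup G,j-\ell\vert\cu)(e),
\end{equation*}
and the point-wise estimate \eqref{eq.coarse_grained_VFj_point} of Lemma~\ref{lem.coarse_grained_VFj}, applied with $\cu'=\cu$, bounds each summand. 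The $\Ind{F_\circ=\emptyset}$ leading piece of \eqref{eq.coarse_grained_VFj_point} produces exactly the cluster-sum $\sum_{e'\in\cCp^{F}(\rr\cu\cap\cu_m)}|\nabla V_m(F,j)(e')|$ required on the right-hand side of \eqref{eq.vg}; all remaining contributions---the higher-order $|\nabla V_m(F\cup G,j-\ell\vert\cu)|$ terms and the $\Ind{F_\circ\ne\emptyset}$ piece involving $|\nabla V_m(F_*,j\vert\cu)|$---are absorbed into $|\g^F(e)|$, defined with respect to the unique cube $\cu_{\pPN_{0}}$ attached to an endpoint of $e$, so that $\g^F$ depends on $e$ alone. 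Proposition~\ref{prop.weighted_L2_NEW} with this $\g^F$, whose hypotheses are granted by \eqref{eq.condition_hk_NEW} and $N\ge i+j$, then yields
\begin{equation*}
\sum_{\cu\in\pPN_{h}}|\cu|^{\alpha}\sum_{e\in\EE(\cCp^{F}(\rr\cu\cap\cu_m))}|\nabla V_m(F,j)(e)|^{2}\le C\sum_{\cu\in\pPN_{h+1}}|\cu|^{3}\sum_{e\in\Ed(\rr\cu\cap\cu_m)}\bigl(|\W_m(F,j)(e)|^{2}+|\g^F(e)|^{2}\bigr).
\end{equation*}

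Averaging over $|F|=i$ and normalising by $|\cu_m|$ then converts the $|\W_m(F,j)|^{2}$ contribution into the four $I^{\,(N,k)}_{m,3,h+1}$ terms of \eqref{eq.Is_by_I}: the identity $(\a^{e}-\a)(e')=(1-\a(e'))\Ind{e=e'}$ collapses each inner sum in \eqref{e:induction_eqW} to a single-edge expression, and the resulting four $V_m$-expressions translate (after relabelling $F\mapsto F$ for the $V_m(F,j-1)$ term, $F\cup\{e\}\mapsto F'$ of size $i+1$ for the $V_m(F\cup\{e\},j-2)$ and $V_m(F\cup\{e\},j-1)$ terms, and $F\setminus\{e\}\mapsto F''$ of size $i-1$ for the $V_m(F\setminus\{e\},j)$ term) precisely into the four claimed $I_m$ indices. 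For the $|\g^F|^{2}$ contribution, a Cauchy--Schwarz on the internal sum produces a factor $|\cu|^{\ell}$ from the hole-separation index $|G|=\ell$ and a factor $|\cu|^{\ell'}$ from summing over $F_\circ\subset\Ed(\rr\cu)$ with $|F_\circ|=\ell'$, so the $|\cu|^{3}$ weight from Proposition~\ref{prop.weighted_L2_NEW} promotes to $|\cu|^{3+\ell+\ell'}$ on $\pPN_{h+1}$. A final coarsening from $\pPN_{h+1}$ to $\pPN_{h+2}$, using \eqref{eq.sizeMax} to enforce $\rr\cu\subset\frac{16}{15}\cu'$, absorbs an extra $|\cu'|^{2}$ factor and delivers the $\tilde{I}^{\,(N,k)}_{m,\ell+\ell'+5,h+2}(i-\ell'+\ell,j-\ell)$ terms with $(\ell',\ell)\ne(0,0)$ that appear in \eqref{eq.Is_by_I}; the exclusion of $(0,0)$ is automatic because that contribution was extracted into the cluster-sum of \eqref{eq.vg} rather than into $\g^F$.

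The principal technical hurdle I anticipate is the consistency of $\g^F$ across different $\cu\in\pPN_{0}(\cu_m)$ that an edge $e$ might meet---that is, choosing a single scalar $\g^F(e)$ which makes \eqref{eq.vg} hold for all admissible cubes simultaneously---together with the combinatorial bookkeeping that routes each higher-order contribution in the $|\g^F|^{2}$ step to the correct $(\ell',\ell)$ slot of the $\tilde{I}_m$ family. The remaining arithmetic on the weight exponents $\alpha\mapsto\ell+\ell'+5$ and the partition-level shift $h\mapsto h+2$ is forced by one coarsening inside Proposition~\ref{prop.weighted_L2_NEW} and one further $\pPN_{h+1}\to\pPN_{h+2}$ step, which is precisely why the assumption $k\ge h+2$ is imposed in \eqref{eq.condition_hk_NEW}.
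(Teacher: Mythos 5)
Your overall plan coincides with the paper's: apply Proposition~\ref{prop.weighted_L2_NEW} to the perturbed corrector equation \eqref{eq.recurrence_eq}, supply a coarse-controlled function via hole separation and the cluster-growth estimate \eqref{eq.coarse_grained_VFj_point}, collapse the $\W_m$ term to the four $I_m$ indices, and route the $\g^F$ contribution to the $\tilde{I}_m$ terms. The $\W_m$ step as you describe it (collapsing $(\a^e-\a)(e')$ to the diagonal and relabelling $F\cup\{e\}$ and $F\setminus\{e\}$) is correct and matches the paper's Step~2.

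There is, however, a genuine gap in the construction of $\g^F$. You perform the hole separation at level $\pPN_0$: you take $\cu\in\pPN_{0}(\cu_m)$ in the identity $\nabla V_m(F,j)(e)=\nabla V_m(F,j\vert\cu)(e)+\sum_{\ell\ge1}\sum_{G}\nabla V_m(F\cup G,j-\ell\vert\cu)(e)$ and then apply \eqref{eq.coarse_grained_VFj_point} with $\cu'=\cu$. Consequently every term lodged into $\g^F(e)$ carries a hole-separation cube $\cu$ at level $0$: $V_m(F\cup G,j-\ell\vert\cu_0)$, $V_m(F_*,j\vert\cu_0)$ with $\cu_0\in\pPN_0$. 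But the target quantities $\tilde{I}^{\,(N,k)}_{m,\ell+\ell'+5,h+2}(i-\ell'+\ell,j-\ell)$ are defined via \eqref{eq.defIalpha_hole} with $V_m(F',j'\vert\cu)$ for $\cu\in\pPN_{h+2}$, which is a genuinely different object: the excluded edge set $\Ed(\rr\cu)$ at level $h+2$ is far larger than $\Ed(\rr\cu_0)$, so $V_m(F',j'\vert\cu_{h+2})$ sums over strictly fewer Glauber derivatives. Your ``final coarsening from $\pPN_{h+1}$ to $\pPN_{h+2}$ using \eqref{eq.sizeMax}'' only renormalizes the weight exponent and the summation domain; it does not change the hole-separation cube sitting inside $V_m(\cdot\,\vert\,\cdot)$. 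Bridging from $V_m(\cdot\vert\cu_0)$ to $V_m(\cdot\vert\cu_{h+2})$ would require another application of \eqref{eq.hole_separation_id} with the edge set $\Ed(\rr\cu_{h+2})\setminus\Ed(\rr\cu_0)$, introducing a further index that you neither account for in the combinatorics nor in the $(\ell,\ell')$ bookkeeping.

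The paper avoids this by exploiting the two-scale freedom built into \eqref{eq.coarse_grained_VFj_point}, which is stated for $\cu'\subset\cu$ with $\cu'\in\pPN_{h'}$ and $\cu\in\pPN_{h}$ at \emph{distinct} levels. In the definition \eqref{eq.defgm} of $\g_m(F,j\vert h)$, the cluster sums use $\cu'=\cu_{m,0}(e)\in\pPN_0$ (so the leading $\ell=0$, $F_\circ=\emptyset$ piece is exactly the cluster sum required by \eqref{eq.vg}), while the hole-separation cube $\cu$ for the $V_m(\cdot\vert\cu)$ terms is taken at the level matching the outer sum in \eqref{eq.weighted_L2_NEW}; the proof then invokes $\g_m(F,j\vert h+1)$, i.e.\ hole separation at level $h+1$, after which a single coarsening step (cf.\ Lemma~\ref{lem.I_by_It} and Lemma~\ref{lem.coarse_grained_VFj}) produces the $\tilde{I}_{m,\cdot,h+2}$ terms. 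To repair your argument you must either adopt the paper's split choice $\cu'\in\pPN_0$, $\cu\in\pPN_{h+1}$ when applying \eqref{eq.coarse_grained_VFj_point}, or add an explicit second hole-separation from level $0$ to level $h+2$ and absorb the new index into the existing $(\ell,\ell')$ scheme.
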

	
	\begin{proof}
		We aim to apply Proposition~\ref{prop.weighted_L2_NEW}, and a key step is to find the coarse-controlled function.
		
		\textit{Step~1: coarse-controlled function.}
		Given $e \in \Ed(\cu_m)$, we  fix a lexicographic order and denote by $\cu_{m,0}(e)$ the unique element in $\pPN_{0}(\cu_m)$ that contains $e$. We then define $\g_{m}(F,j \vert h)$ as a coarse-controlled function satisfying \eqref{eq.vg} for $v = V_m(F,j)$, using the hole separation with respect to $\pPN_{h}(\cu_m)$. We just treat one term $e$ near $\cu \in \pPN_{h}(\cu_m)$. Following the hole-separation identity \eqref{eq.hole_separation_id}
		\begin{align*}
			V_m(F,j) = \sum_{\ell=0}^j \sum_{\substack{|G|= \ell \\ G\subset \Ed(\rr \cu) \setminus F}} V_m(F \cup G,j-\ell \vert \cu).
		\end{align*}
		Then we apply the coarse-graining estimate \eqref{eq.coarse_grained_VFj_point} to the leading term $\ell = 0$ and obtain 
		\begin{align*}
			\vert \nabla V_m(F,j)(e)\vert \leq \vert \g_m(F,j \vert h)(e)\vert + \sum_{e' \in \EE(\cCp^F(\rr \cu_{m,0}(e) \cap \cu_m))} \vert  \nabla V_m(F,j)(e')\vert \Ind{F_\circ = \emptyset},  
		\end{align*}
		with the coarse-controlled function $\g_m(F,j \vert h)$ defined as
		\begin{align}\label{eq.defgm}
			\begin{split}
				&\g_m(F,j \vert h)(e) \\
				&:=  \sum_{\ell=1}^j \sum_{\substack{|G|= \ell \\ G\subset \Ed(\rr \cu) \setminus F}}  \Ll\vert \nabla V_m(F \cup G,j-\ell \vert \cu)(e) \Rr\vert \\
				&\quad + \sum_{\ell=1}^j \sum_{\substack{|G|= \ell \\ G\subset \Ed(\rr \cu) \setminus F}} \sum_{e' \in \EE(\cCp^F(\rr \cu_{m,0}(e) \cap \cu_m))} \Ll\vert \nabla V_m(F \cup G,j-\ell \vert \cu)(e') \Rr\vert \Ind{F_\circ = \emptyset} \\
				&\quad +2^{\vert F_\circ \vert} \sum_{e' \in \EE(\cCp^F(\rr \cu_{m,0}(e) \cap \cu_m))} |\na V_m(F_*,j \vert \cu)(e')|  \Ind{F_\circ \subset \Ed(\rr  \cu_{m,0}(e))}\Ind{F_\circ \neq \emptyset}.
			\end{split}
		\end{align}
		Notice that, for different $0 \leq h \leq k-2$, the definition above yields different  coarse-controlled functions $\g_m(F,j \vert h)$, but they all satisfy the definition \eqref{eq.vg} for $v = V_m(F,j)$.
		
		We apply Proposition~\ref{prop.weighted_L2_NEW} and $\g_m(F,j \vert h+1)$ to the perturbed corrector equation \eqref{eq.recurrence_eq}
		\begin{multline}\label{eq.VWBound}
			\sum_{\substack{\cu \in \pPN_{h}(\cu_m)}} |\cu|^{\alpha}\sum_{e\in \EE\Ll(\cC_*^\#(\rr\cu)\Rr)} |\na V_m(F, j)(e)|^2\\
			\leq C\sum_{\substack{\cu \in \pPN_{h+1}(\cu_m)}}|\cu|^3 \sum_{e\in \Ed(\rr\cu)} | \W_m(F, j)(e)|^2 + \vert\g_m(F,j \vert h+1)(e)\vert^2.
		\end{multline} 	
		The part concerning the controlled function $\g_m(F,j \vert h+1)$ can be bounded by \eqref{eq.coarse_grained_VFj}, because the coarse-graining occurs in cubes of $\pPN_{0}(\cu_m)$ (see its definition \eqref{eq.defgm}), which is smaller than that in $\pPN_{h+1}(\cu_m)$. Thus,  we obtain that 
		\begin{multline}\label{eq.gBound}
			\sum_{\substack{|F|=i\\F\subset \Ed(\cu_m)}}\sum_{\substack{\cu \in \pPN_{h+1}(\cu_m)}}|\cu|^3 \sum_{e\in \Ed(\rr\cu)} \vert\g_m(F,j \vert h+1)(e)\vert^2 \\
			\leq C\sum_{\substack{0 \leq \ell' \leq i, 0 \leq \ell \leq j\\ (\ell',\ell) \neq (0,0)}}\tilde{I}^{\, (N,k)}_{m, \ell+ \ell' + 5, h+2}(i-\ell'+\ell,j-\ell).
		\end{multline}
		Following the notation \eqref{eq.defIalpha}, we obtain that 
		\begin{multline}\label{eq.Ibound}
			I^{\,(N,k)}_{m, \alpha, h}(i,j) \leq  \frac{C}{\vert \cu_m \vert}\sum_{\substack{|F|=i\\F\subset \Ed(\cu_m)}} \sum_{\substack{\cu \in \pPN_{h+1}(\cu_m)}}|\cu|^3 \sum_{e\in \Ed(\rr\cu)} |\W_m(F, j)(e)|^2 \\
			+ C\sum_{\substack{0 \leq \ell' \leq i, 0 \leq \ell \leq j\\ (\ell',\ell) \neq (0,0)}}\tilde{I}^{\, (N,k)}_{m, \ell+ \ell' + 5, h+2}(i-\ell'+\ell,j-\ell).
		\end{multline}
		
		\textit{Step~2: clarify the recurrence.} It remains to make $\W_m(F,j)$ more explicit. Recall the perturbed corrector equation \eqref{e:induction_eqW}, for any $e\in \Ed(\cu_m)$ we have
		\begin{align*}
			\W_m(F,j) &= \sum_{e\in \Ed(\cu_m) \setminus F}(\aio{e}-\a)\Big(\na V_m(F,j-1) - \na V_m(F\cup \{e\}, j-2) \\
			\nonumber & \qquad \qquad \qquad \qquad + \na V_m(F\cup \{e\}, j-1)\Big) \\
			\nonumber &\qquad +\sum_{e\in F}(\aio{e}-\a)\Big(\na V_m(F\setminus\{e\},j)-\na V_m(F,j-1)\Big).
		\end{align*}
		which implies that 
		\begin{equation}\label{eq.Wbound}
			\begin{split}
				\vert \W_m(F,j)(e) \vert &\leq \Ind{e \in \Ed(\cu_m) \setminus F}\Big(\vert\na V_m(F,j-1)\vert + \vert\na V_m(F\cup \{e\}, j-2)\vert  \\
				& \qquad \qquad \qquad \qquad + \vert \na V_m(F\cup \{e\}, j-1) \vert \Big)(e) \\
				& \qquad + \Ind{e \in F}\Big(\vert \na V_m(F\setminus\{e\},j) \vert + \vert\na V_m(F,j-1)\vert\Big)(e).
			\end{split}
		\end{equation}
		This is because in the percolation $\aio{e'}(e) = \a(e)$ whenever $e' \neq e$, and only one term in the sum really contributes. Then the last sum in \eqref{eq.Ibound} becomes
		\begin{equation}\label{eq.Wbound2}
			\begin{split}
				&\sum_{\substack{|F|=i\\F\subset \Ed(\cu_m)}} |\W_m(F,j)(e)|^2 \leq 5\sum_{\substack{|F|=i\\F\subset \Ed(\cu_m)\setminus\{e\}}} \Big[|\na V_m(F,j-1)(e)|^2 + |\na V_m(F\cup \{e\},j-2)(e)|^2 \\
				&\qquad \qquad \qquad \qquad \qquad + |\na V_m(F\cup \{e\},j-1)(e)|^2\Big] \\
				& \qquad \qquad \qquad \qquad \qquad + 5\sum_{\substack{|F|=i\\e\in F\subset \Ed(\cu_m)}}\Big[|\na V_m(F\setminus \{e\},j)(e)|^2 + |\na V_m(F,j-1)(e)|^2\Big].
			\end{split}
		\end{equation}
		We analyze these $5$ terms one by one.

		For the first term on the {\rhs} of \eqref{eq.Wbound2}, we relax the condition $F\subset \Ed(\cu_m)\setminus\{e\}$ and change the order of sum to give
		\begin{equation}\label{eq.Term1}
			\begin{split}
				& \vert \cu_m \vert^{-1} \sum_{\substack{\cu \in \pPN_{h+1}(\cu_m)}} |\cu|^3\sum_{e\in \Ed(\rr \cu)} \sum_{\substack{|F|=i\\F\subset \Ed(\cu_m)\setminus\{e\}}} |\na V_m(F,j-1)(e)|^2\\
				&\leq \vert \cu_m \vert^{-1}\sum_{\substack{\cu \in \pPN_{h+1}(\cu_m)}} |\cu|^3\sum_{e\in \Ed(\rr \cu)} \sum_{\substack{|F|=i\\F\subset \Ed(\cu_m)}} |\na V_m(F,j-1)(e)|^2\\
				& = I^{\,(N,k)}_{m, 3, h+1}(i,j-1).
			\end{split}
		\end{equation}
		
		The same reasoning works for the second term on the {\rhs} of \eqref{eq.Wbound2}, with a change of variable ${F' = F \cup \{e\}}$
		\begin{equation}\label{eq.Term2}
			\begin{split}
				&\vert \cu_m \vert^{-1} \sum_{\substack{\cu \in \pPN_{h+1}(\cu_m)}} |\cu|^3\sum_{e\in \Ed(\rr \cu)} \sum_{\substack{|F|=i\\F\subset \Ed(\cu_m)\setminus\{e\}}} |\na V_m(F\cup \{e\},j-2)(e)|^2\\
				&= \vert \cu_m \vert^{-1} \sum_{\substack{\cu \in \pPN_{h+1}(\cu_m)}} |\cu|^3\sum_{e\in \Ed(\rr \cu)} \sum_{\substack{|F'|=i+1\\e \in F'\subset \Ed(\cu_m)}} |\na V_m(F',j-2)(e)|^2\\
				&\leq \vert \cu_m \vert^{-1} \sum_{\substack{\cu \in \pPN_{h+1}(\cu_m)}} |\cu|^3\sum_{e\in \Ed(\rr \cu)} \sum_{\substack{|F'|=i+1\\  F'\subset \Ed(\cu_m)}} |\na V_m(F',j-2)(e)|^2 \\
				&= I^{\,(N,k)}_{m, 3, h+1}(i+1,j-2).
			\end{split}
		\end{equation}

		The third term on the {\rhs} of \eqref{eq.Wbound2} works like the second term with the only change from the index $(j-2)$ to $(j-1)$
		\begin{equation}\label{eq.Term3}
			\begin{split}
				&\vert \cu_m \vert^{-1} \sum_{\substack{\cu \in \pPN_{h+1}(\cu_m)}} |\cu|^3\sum_{e\in \Ed(\rr \cu)} \sum_{\substack{|F|=i\\F\subset \Ed(\cu_m)\setminus\{e\}}} |\na V_m(F\cup \{e\},j-1)(e)|^2\\
				&= \vert \cu_m \vert^{-1} \sum_{\substack{\cu \in \pPN_{h+1}(\cu_m)}} |\cu|^3\sum_{e\in \Ed(\rr \cu)} \sum_{\substack{|F'|=i+1\\e \in F'\subset \Ed(\cu_m)}} |\na V_m(F',j-1)(e)|^2\\
				&\leq I^{\,(N,k)}_{m, 3, h+1}(i+1,j-1).
			\end{split}
		\end{equation}
		
		For the fourth term on the {\rhs} of \eqref{eq.Wbound2}, we make the change of variable $F' = F \setminus \{e\}$
		\begin{equation}\label{eq.Term4}
			\begin{split}
				&\vert \cu_m \vert^{-1} \sum_{\substack{\cu \in \pPN_{h+1}(\cu_m)}} |\cu|^3\sum_{e\in \Ed(\rr \cu)} \sum_{\substack{|F|=i\\e \in F\subset \Ed(\cu_m)}} |\na V_m(F\setminus \{e\},j)(e)|^2\\
				&= \vert \cu_m \vert^{-1} \sum_{\substack{\cu \in \pPN_{h+1}(\cu_m)}} |\cu|^3\sum_{e\in \Ed(\rr \cu)} \sum_{\substack{|F'|=i-1\\ F'\subset \Ed(\cu_m) \setminus \{e\}}} |\na V_m(F',j)(e)|^2\\
				&\leq \vert \cu_m \vert^{-1} \sum_{\substack{\cu \in \pPN_{h+1}(\cu_m)}} |\cu|^3\sum_{e\in \Ed(\rr \cu)} \sum_{\substack{|F'|=i-1\\  F'\subset \Ed(\cu_m) }} |\na V_m(F',j)(e)|^2 \\
				&= I^{\,(N,k)}_{m, 3, h+1}(i-1,j).
			\end{split}
		\end{equation}
		
		For the fifth term on the {\rhs} of \eqref{eq.Wbound2}, we relax directly the condition $e \in F$
		\begin{equation}\label{eq.Term5}
			\begin{split}
				&\vert \cu_m \vert^{-1} \sum_{\substack{\cu \in \pPN_{h+1}(\cu_m)}} |\cu|^3\sum_{e\in \Ed(\rr \cu)} \sum_{\substack{|F|=i\\ e \in F\subset \Ed(\cu_m)}} |\na V_m(F,j-1)(e)|^2\\
				&\leq \vert \cu_m \vert^{-1} \sum_{\substack{\cu \in \pPN_{h+1}(\cu_m)}} |\cu|^3\sum_{e\in \Ed(\rr \cu)} \sum_{\substack{|F|=i\\F\subset \Ed(\cu_m)}} |\na V_m(F,j-1)(e)|^2\\
				& = I^{\,(N,k)}_{m, 3, h+1}(i,j-1).
			\end{split}
		\end{equation}
		
		Combing \eqref{eq.Term1}, \eqref{eq.Term2}, \eqref{eq.Term3}, \eqref{eq.Term4}, \eqref{eq.Term5} and \eqref{eq.Ibound}, we obtain the desired estimate \eqref{eq.Is_by_I}.
	\end{proof}

	\subsection{General recurrence of Proposition~\ref{pr:key_estimate}}\label{subsec.Inductionij2}
	Using the hole-separation trick, we will give a proof of alternative recurrence for quantities $I^{(N,k)}_{m,\alpha, h}(i,j), I^{* \,(N,k)}_{m,\alpha, h}(i,j), \tilde{I}^{ \,(N,k)}_{m,\alpha, h}(i,j)$ introduced in \eqref{eq.defIalpha}, \eqref{eq.defIalpha_cluster}, \eqref{eq.defIalpha_hole}. They are the weighted $\ell^1$-$L^2$ energy, its version on cluster,  and its version of hole separation.
	\begin{prop}\label{prop.3recurrence}
		Under the same condition as Proposition~\ref{pr:key_estimate}, there exist constants $C, C^*, \tilde{C} < +\infty$ depending on $i,j,\alpha,h,N,k,d,\rr$ satisfying
		\begin{align*}
			I^{* \;(N,k)}_{\; m,\alpha, h}(i,j) &\leq C^*(i,j,\alpha, h,N,k,d,\rr), \\
			\tilde{I}^{ \;(N,k)}_{\; m,\alpha, h}(i,j) &\leq \tilde{C}(i,j,\alpha,h,N,k,d,\rr), \\
			I^{\; (N,k)}_{\; m,\alpha, h}(i,j) &\leq C(i,j,\alpha,h,N,k,d,\rr). \\
		\end{align*}
	\end{prop}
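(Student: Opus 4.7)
The plan is to run a triple induction on the partial order $\prec$ induced by the degree $\deg(i,j) := i+2j$, simultaneously bounding the three quantities $I^{*\,(N,k)}_{\,m,\alpha,h}(i,j)$, $\tilde I^{\,(N,k)}_{\,m,\alpha,h}(i,j)$, $I^{\,(N,k)}_{\,m,\alpha,h}(i,j)$. The base case $\deg = 0$, i.e.\ $(i,j)=(0,0)$, is essentially the content of Step~1 in the proof of Proposition~\ref{pr:key_estimate} (Subsection~\ref{subsec.basis}): the function $V_m(\emptyset,0) = v_m$ solves the corrector equation, and Proposition~\ref{prop.weighted_L2} gives a direct uniform bound on $I^{*}_{m,\alpha,h}(0,0)$, whence on $\tilde I_m(0,0)$ and $I_m(0,0)$ by Lemmas \ref{lem.coarse_grained_VFj} and \ref{lem.I_by_It}.

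For the induction step at degree $d \geq 1$, assume all three quantities are bounded (by constants depending only on the listed parameters) for every pair $(i',j')$ with $\deg(i',j') < d$, uniformly in $m$. The key observation is that the three estimates \eqref{eq.I_by_It}, \eqref{eq.coarse_grained_VFj}, \eqref{eq.Is_by_I} feed into each other in a strictly triangular manner once $\deg$ is fixed:
\begin{itemize}
\item Lemma~\ref{lem.Is_by_I} bounds $I^{*}_{m,\alpha,h}(i,j)$ by $I_m$ and $\tilde I_m$ evaluated exclusively at pairs of \emph{strictly lower} degree (one checks $\deg(i,j-1), \deg(i\pm 1, j-1), \deg(i+1,j-2), \deg(i-\ell'+\ell,j-\ell)$ are all $< d$ since either $\ell+\ell'\geq 1$ or an index is reduced).
\item Lemma~\ref{lem.coarse_grained_VFj} bounds $\tilde I_{m,\alpha,h}(i,j)$ by $I^{*}_{m,\alpha+1,h}(i,j)$ (same degree) plus $\tilde I_m$ at strictly lower degree, since the sum excludes $(\ell',\ell)=(0,0)$.
\item Lemma~\ref{lem.I_by_It} bounds $I_{m,\alpha,h}(i,j)$ by $\tilde I_m(i+\ell,j-\ell)$ for $0\leq\ell\leq j$, where $\ell=0$ gives the same degree and $\ell\geq 1$ gives strictly lower degree.
\end{itemize}
Thus the three are treated at degree $d$ in the order $I^{*} \to \tilde I \to I$: the bound on $I^{*}$ uses only the inductive hypothesis, then $\tilde I$ uses the newly obtained bound on $I^{*}$ plus induction, and finally $I$ uses the bound on $\tilde I$ plus induction.

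The only subtlety is parameter bookkeeping. Each application of Lemmas~\ref{lem.I_by_It}, \ref{lem.coarse_grained_VFj}, \ref{lem.Is_by_I} increases the moment $\alpha$ by a bounded additive constant and the scale index $h$ by at most $2$. Since the induction has depth $O(d) = O(i+2j)$, the cumulative increase in $h$ is of order $O(i+j)$, which is exactly accommodated by the standing hypothesis \eqref{eq.condition_Nk_main} on $k$, while the accumulated moment growth is controlled by the slack in \eqref{eq.condition_h_star}. The $N$-$\stable$ parameter $N \geq i+j$ is preserved throughout since every cube inherited from $\pPN_{h+1}$ or $\pPN_{h+2}$ lies in $\gGN_{h+1}\subset\gGN_0$ and remains $N$-$\stable$ by Proposition~\ref{prop.good_cube_property}. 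Assembling these three linear inequalities at each induction step produces the asserted finite constants $C^{*}, \tilde C, C$.

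The main technical obstacle, and the reason the induction closes, is the strict monotonicity of $\deg$ under each of the three recurrences: this is precisely the ``degree decrease'' highlighted in Remark~\ref{rmk.OrderDecre}, combined with the fact that Lemma~\ref{lem.coarse_grained_VFj} manages to produce $I^{*}_m(i,j)$ (not $I_m(i,j)$) at the same degree as $\tilde I_m(i,j)$, breaking what would otherwise be a circular dependence. The bulk of the work in the proof is therefore simply a careful verification that none of the terms produced on the right-hand sides of \eqref{eq.I_by_It}, \eqref{eq.coarse_grained_VFj}, \eqref{eq.Is_by_I} has degree equal to $d$ in the wrong quantity, together with tracking $(\alpha,h)$ through finitely many steps to confirm that the hypotheses $N\geq i+j$ and $k\geq \max\{h,h_{*}(\alpha,i,j)\}+12(i+j)+2$ of Proposition~\ref{pr:key_estimate} are preserved.
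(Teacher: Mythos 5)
Your proposal is correct and follows essentially the same route as the paper: the triangular closing order $I^{*} \to \tilde I \to I$ at each level, with the degree $i+2j$ strictly decreasing across all right-hand-side terms, is exactly the mechanism used in the paper's Steps 1--3 (the paper phrases this via the partial order $\prec$ of \eqref{eq.partial_order} rather than degree directly, but $\prec$ strictly decreases degree so the two formulations are interchangeable). The parameter accounting you sketch — $h$ grows by at most $2$ per application, so by $O(i+j)$ over the induction, and $\alpha$ grows by $O((i+j)^2)$ — matches the paper's Step~4 and is what the bounds $k \geq \max\{h,h_*(\alpha,i,j)\}+12(i+j)+2$ and $h_*$ in \eqref{eq.condition_h_star} are engineered to absorb.
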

	\begin{proof}
		The basis of proof for $(i,j) = (0,0)$ was already contained in Section~\ref{subsec.basis}, and we implement the following alternative recurrence to close the proof. We define the following partial order 
		\begin{align}\label{eq.partial_order}
			(i,j) \prec (i_0,j_0) \Longleftrightarrow i+j \leq i_0+j_0 \text{ and }  \Ll\{\begin{array}{ll}
				j < j_0, \\
				\text{ or } j=j', i < i'.
			\end{array}\Rr.  
		\end{align}
		We then use $I_m(i,j), I^*_m(i,j), \tilde{I}_m(i,j)$ respectively to indicate the three statements with admissible parameters. In Step~1-3 below, we give the argument of induction, and then discuss the technical conditions needed in Step~4.
		
		\textit{Step~1: $\{I_m(i,j), \tilde{I}_m(i,j)\}_{(i,j) \prec (i_0, j_0)}   \Longrightarrow 	I^{*}_{m}(i_0,j_0)$.} Suppose that we have established all these statements for $(i,j) \prec (i_0, j_0)$, then to obtain that for estimate on clusters $I^{*}_{m}(i_0,j_0)$,  we just apply the result from Proposition~\ref{prop.weighted_L2_NEW} (see also Lemma~\ref{lem.Is_by_I})
		\begin{multline}\label{eq.key_estimate_Step1}
			I^{*  \, (N,k)}_{m,\alpha, h}(i_0,j_0) 
			\leq  C\Big(I^{\,(N,k)}_{m, 3, h+1}(i_0,j_0-1)+ I^{\,(N,k)}_{m, 3, h+1}(i_0+1,j_0-2)  \\
			\qquad + I^{\,(N,k)}_{m, 3, h+1}(i_0+1,j_0-1)+I^{\,(N,k)}_{m, 3, h+1}(i_0-1,j_0)\Big) \\
			+ C \sum_{\substack{0 \leq \ell' \leq i, 0 \leq \ell \leq j\\ (\ell',\ell) \neq (0,0)}}\tilde{I}^{\, (N,k)}_{m, \ell+ \ell' + 5, h+2}(i_0-\ell'+\ell,j_0-\ell).
		\end{multline}
		where the {\rhs} only involves the terms of lower order.
		
		\smallskip
		
		\textit{Step~2: $\{\tilde{I}_{m}(i,j)\}_{(i,j) \prec (i_0, j_0)} + I^{*}_{m}(i_0,j_0)    \Longrightarrow \tilde{I}_{m}(i_0,j_0)$.} 
		Once we have $I^{*}_{m}(i_0,j_0)$, we recall the coarse-graining estimate \eqref{eq.coarse_grained_VFj} in Lemma~\ref{lem.coarse_grained_VFj}, and yield the estimate for the version of hole-separation
		\begin{multline}\label{eq.key_estimate_Step2}
			\tilde{I}^{\, (N,k)}_{m,\alpha, h}(i_0,j_0) \leq C I^{*  \, (N,k)}_{m,\alpha+1, h}(i_0,j_0)\\
			+ C\sum_{\substack{0 \leq \ell' \leq i_0, 0 \leq \ell \leq j_0\\ (\ell',\ell) \neq (0,0)}}\tilde{I}^{\, (N,k)}_{m,\alpha+\ell+ \ell' + 2, h+1}(i_0-\ell'+\ell,j_0-\ell).
		\end{multline}
		
		\smallskip
		
		\textit{Step~3: $\{\tilde{I}_{m}(i,j)\}_{(i,j) \prec (i_0, j_0)} + \tilde{I}_{m}(i_0,j_0)  \Longrightarrow I_{m}(i_0,j_0)$.} The statement of $\tilde{I}_{m}(i_0,j_0)$ and its version of lower orders yield that of $I_{m}(i_0,j_0)$ thanks to the hole-separation trick in  Lemma~\ref{lem.I_by_It}
		\begin{align}\label{eq.key_estimate_Step3}
			I^{\,(N,k)}_{m,\alpha, h}( i_0,j_0)  \leq (j+1)\sum_{\ell=0}^j \tilde{I}^{\, (N,k)}_{m, \alpha+\ell+1, h+1}(i_0+\ell,j_0-\ell).
		\end{align}
		
		\smallskip
		
		\textit{Step~4: conditions for the parameter.} We discuss a sufficient condition about the parameters to complete the induction. We fix $(\alpha, i, j)$ in the following paragraph, then the choice for $N$ and $\rr$ is quite natural.
		\begin{itemize}[label=---]
			\item $\rr \in (\frac{50}{27}, 2)$: the geometric condition  is required when using Proposition~\ref{prop.good_cube_property} and Lemma~\ref{lem.center_cube}.
			\item $N \geq i+j$: We need to keep the $N$-$\stable$ properties of the partition cubes. Notice that at most $(i+j)$-th Glauber derivative is applied in the induction.
		\end{itemize}
		
		The conditions for $h,k$ are more involved. We define the degree like \eqref{eq.defdegV}
		\begin{align*}
			\deg(I^{*}_{m}(i,j)) = \deg(\tilde{I}_{m}(i,j)) = \deg(I_m(i,j)) := i+2j.
		\end{align*}
		When we run Step~1-3 for one round, every term will decrease at least by $1$, so we need at most $(i+2j)$ rounds. Meanwhile, in  $(i+2j)$ rounds, the scale $h$ and the moment $\alpha$ will increase.
		\begin{itemize}[label=---]
			\item The partition cube coarsens at most from $h$ to $(h+2)$ in \eqref{eq.key_estimate_Step1}-\eqref{eq.key_estimate_Step3}, so the largest  scale can be
			\begin{align}\label{eq.key_estimate_coarseness}
				h + 3 \cdot	(i+2j) \cdot  2 \leq h + 12 (i+j).
			\end{align}
			\item The moment has an increment at most from $\alpha$ to $\alpha + \ell+ \ell' + 5$ in \eqref{eq.key_estimate_Step1}-\eqref{eq.key_estimate_Step3}, with the constrain $\ell \leq j$ and $\ell' \leq i+j$ there. Therefore, the largest moment can be 
			\begin{align}\label{eq.key_estimate_moment}
				\alpha + 3 \cdot (i+2j)  \cdot(i+2j+5) \leq \alpha + 12 (i+j+2)^2.
			\end{align}
		\end{itemize}
		Because we need to apply Proposition~\ref{prop.weighted_L2_NEW}, then \eqref{eq.key_estimate_moment} suggests $h \geq h_*(\alpha,i,j)$ as a minimal scale to ensure \eqref{eq.condition_hk_NEW} throughout the induction. Afterwards, by \eqref{eq.key_estimate_coarseness}, we need $k \geq h + 12(i+j) + 2$ to finish all the renormalization needed.  
		
		Finally, we notice that the weighted sum in Proposition~\ref{prop.3recurrence}  like $I^{\; (N,k)}_{\; m,\alpha, h}(i,j)$ with ${h < h_*(\alpha,i,j)}$ can be naturally dominated  by the case $h = h_*(\alpha,i,j)$. This concludes the choice of parameters in \eqref{eq.condition_Nk_main}.

	\end{proof}

	\begin{proof}[Proof of Proposition~\ref{pr:key_estimate}  with $j \geq 1$]
		This is a direct result of Proposition~\ref{prop.3recurrence}.
	\end{proof}

	\section{Conclusion}\label{sec.Pf}
	
	The last section is devoted to the proof of the main theorem.
	\subsection{Annealed improved $\ell^1$-$L^2$ estimate}
	The first result in this section is an annealed version of Proposition~\ref{pr:key_estimate}, thus we finally get the desired estimate of the improved energy  $\bar{I}_m(i,j)$ defined in \eqref{eq.improvedEnergy}.
	\begin{prop}[Annealed improved $\ell^1$-$L^2$ estimate]\label{pr:key_estimate_avg}
		Given $i,j \in \N$, there exist constants $\{C_{i,j}(\p,\rr, d)\}_{i,j \in \N}$ independent of $m \in \N_+$ such that the following statement holds for $\bar{I}_m(i,j)$ that
		\begin{equation}
			\label{e:key_estimate2}
			\E_\p\Ll[\frac{1}{\vert \cu_m \vert} \sum_{F \subset \Ed(\cu_m)} \sum_{e \in \Ed(\cu_m)} \vert \nabla V_m(F,j)(e)\vert^2  \Rr] \leq C_{i,j}(\p,\rr, d).
		\end{equation}           
		The constant $C_{i,j}(\p,\rr, d)$ is locally uniformly with respect to $\p$.
	\end{prop}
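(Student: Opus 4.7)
The plan is to split the expectation according to whether $\cu_m$ is good enough for Proposition~\ref{pr:key_estimate} to apply, combine its quenched output on the good event with a crude deterministic bound on the bad event, and close the estimate by invoking the super-polynomial tail of $\P_\p[\cu_m \notin \gGN_k]$ given by Proposition~\ref{prop.good_cube_property}.

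First, fix $\rr \in (\frac{50}{27}, 2)$, $N = i+j$, $h = h_*(0,i,j)$ as in \eqref{eq.condition_h_star}, and $k = h + 12(i+j) + 2$, so that condition \eqref{eq.condition_Nk_main} is met with $\alpha = 0$. Because every edge $e \in \Ed(\cu_m)$ lies in $\Ed(\rr \cu \cap \cu_m)$ for at least one $\cu \in \pPN_h(\cu_m)$, the integrand defining $\bar I_m(i,j)$ is pointwise dominated by the quantity inside $I^{\,(N,k)}_{m,0,h}(i,j)$. Hence on the event $\{\cu_m \in \gGN_k\}$ the quenched bound from Proposition~\ref{pr:key_estimate} yields a constant $C(i,j,d,\rr)$ independent of $m$.

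Next, establish a crude deterministic bound on the complementary event. Unpacking the definitions \eqref{eq.V_F_j} and \eqref{eq.Difference}, one has
\begin{equation*}
V_m(F,j) \;=\; \sum_{|G|=j,\, G \subset \Ed(\cu_m) \setminus F}\; \sum_{E \subset F \cup G} (-1)^{|F \cup G \setminus E|} v_m^E.
\end{equation*}
The maximum principle for \eqref{eq.harmonicBC} gives $|v_m^E(x)| \leq C(d)\, 3^m$ uniformly in $E$ and $x$, since the canonical grain extension of Definition~\ref{def.ConstantExtension_canonical} is a constant extension from values already in $[-C(d)\,3^m, C(d)\,3^m]$. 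Counting the at most $\binom{|\Ed(\cu_m)|}{j} 2^{i+j}$ terms in the sum, then summing over $|F|=i$ and $e \in \Ed(\cu_m)$ and dividing by $|\cu_m|$, the integrand is at most $C(i,j,d)\, 3^{m P(i,j,d)}$ for some explicit polynomial exponent $P$. Combining with the exponential tail $\P_\p[\cu_m \notin \gGN_k] \leq K_k \exp(-K_k^{-1} 3^{m s_k})$ from property (2) of Proposition~\ref{prop.good_cube_property}, one obtains
\begin{equation*}
\bar I_m(i,j) \;\leq\; C(i,j,d,\rr) \;+\; C(i,j,d)\, 3^{m P}\, K_k \exp\!\bigl(-K_k^{-1} 3^{m s_k}\bigr),
\end{equation*}
and the second term is bounded uniformly in $m \in \N_+$, vanishing super-polynomially as $m \to \infty$.

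Local uniformity in $\p \in (\p_c,1]$ would follow from the local uniformity of the Antal--Pisztora estimate \eqref{eq.tailWellConnected} and of the Meyers constants $\eps_{Me}, \tilde\eps_{Me}, \tilde h_{Me}$, which propagate through the inductive construction of $\gGN_k$ in Proposition~\ref{prop.good_cube_property}. The hard part is not in either bound individually but in the bookkeeping: one must verify that the admissible parameters $(N,h,k)$ dictated by \eqref{eq.condition_Nk_main} and \eqref{eq.condition_h_star} can be chosen for every $(i,j)$, and that the deterministic fallback is genuinely polynomial in $3^m$. The latter crucially uses that the canonical grain extension is defined on \emph{all} of $\cu_m$, not merely on $\cCb^E(\cu_m)$, so that the maximum principle yields a pointwise $L^\infty$ bound for each configuration $v_m^E$ regardless of the percolation geometry.
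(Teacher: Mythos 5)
Your proof follows the paper's own argument essentially verbatim: fix $\rr$, $N=i+j$, $h=h_*(0,i,j)$, $k=h+12(i+j)+2$, split on the event $\{\cu_m \in \gGN_k\}$, apply the quenched estimate of Proposition~\ref{pr:key_estimate} on the good event (the observation that the $\sum_{e\in\Ed(\cu_m)}$ is dominated by the partition sum at $\alpha=0$ is exactly the paper's \eqref{eq.contriQuenched}), and on the bad event multiply the crude maximum-principle bound $|V_m(F,j)| \lesssim |\cu_m|^j\,3^m$ (the paper's \eqref{eq.TrivialBoundV}--\eqref{eq.rareBound}) by the exponential tail from Proposition~\ref{prop.good_cube_property}(2). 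This is the same decomposition, the same trivial deterministic bound, and the same tail-vs-polynomial tradeoff, so there is nothing to add.
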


	\begin{proof}
		The main part of the proof is still the quenched version. We set $\alpha = 0$ in Proposition~\ref{pr:key_estimate} and then pick integers 
		\begin{align*}
			N = i+j, \qquad	h = h_*(0,i,j), \qquad k= h_*(0,i,j) + 12(i+j) + 2,
		\end{align*}
		such that the quenched version \eqref{e:key_estimate} holds. We decompose the estimate of \eqref{e:key_estimate2} into two parts, the good case $\{ \cu_m \in \gGN_k\}$ and the bad case $\{\cu_m \notin \gGN_k\}$, then treat them respectively. 
		
		\smallskip
		\textit{Step~1: bad case $\{\cu_m \notin \gGN_k\}$. } We observe the following trivial bound, which comes directly from \eqref{eq.V_F_j}, \eqref{eq.Difference} and maximum principle
		\begin{align}\label{eq.TrivialBoundV}
			\forall e \in \Ed(\cu_m), \qquad \vert V_m(F,j)\vert \leq 2^j \vert \cu_m \vert^j  3^{m}.
		\end{align}
		We apply directly this trivial bound of $V_m(F,j)$ together with the tail estimate of probability
		\begin{equation}\label{eq.rareBound}
			\begin{split}
				&\E_\p\Ll[\frac{1}{|\cu_m|}\sum_{\substack{|F|=i\\F\subset \Ed(\cu_m)}}  \sum_{e \in \Ed(\cu_m)}  |\na V_m(F, j)(e)|^2 \Ind{\cu_m \notin  \gGN_k}\Rr]\\
				&\leq 4^j \vert \cu_m \vert^{i+2j} 3^{2m} \P_\p[\cu \notin \gGN_{k}] \\
				&\leq 4^j 3^{m(2+d(i+2j))} K_k \exp(-K_k 3^{m s_k}) \\
				&\leq C'(i,j).
			\end{split}
		\end{equation}
		Here we recall the good cube $\gGN_{k}$ satisfies the tail probability bound \eqref{e:rarely_bad}, as indicated by (2) Proposition~\ref{prop.good_cube_property}. This gives a uniform bound with respect to $m \in \N_+$, because the tail probability overcomes the contribution of the sum. 
		
		\smallskip
		\textit{Step~2: good case $\{\cu_m \in \gGN_k\}$. } 
		For this part, the quenched version \eqref{e:key_estimate} applies 
		\begin{align}\label{eq.contriQuenched}
			\frac{1}{|\cu_m|}\sum_{\substack{|F|=i\\F\subset \Ed(\cu_m)}} \sum_{e\in \Ed(\cu_m)} |\na V_m(F, j)(e)|^2 \Ind{ \cu_m \in \gGN_k} \leq I^{\; (N,k)}_{\; m, 0, h}(i,j) \leq C.
		\end{align}
		so its expectation is also uniformly bounded with respect to $m \in \N_+$. 
		
		\smallskip
		We combine \eqref{eq.rareBound} and \eqref{eq.contriQuenched}, and conclude \eqref{e:key_estimate2}. The upper bound is locally uniform because the connectivity is monotone in function of $\p$.

	\end{proof}
	
	\subsection{Proof of main theorems}
	We recall the following classical result on the uniform convergence and derivatives.

	\begin{lemma}\label{lem.conceptual}
		Let $\nu$ be a quantity defined on an open interval $I$. If $(\nu_L)_{L \in \N_+}$ satisfy conditions (1)(2)(3), then $\nu$ is also smooth on $I$ and the derivatives also converge
		\begin{align}\label{eq.conceptual}
			\forall \rho \in I, \qquad \nu^{(k)}(\rho) = \lim_{L \to \infty} \nu_L^{(k)}(\rho).
		\end{align}
		\begin{enumerate}
			\item \textit{Approximation}: $\nu_L$ converges pointwisely to $\nu$ on $I$.
			\item \textit{Smoothess}: $\nu_L$ is smooth on $I$ for every $L \in \N_+$.
			\item \textit{Locally bounded derivatives}: for any $k \in \N$, the mapping $\rho \mapsto \sup_{L \in \N_+} \vert \nu_L^{(k)}(\rho) \vert$
			is locally uniform.
		\end{enumerate}

		Moreover, if we replace the condition (1) with a quantitative version:  
		\begin{enumerate}
			\item[(1+)] there exist two finite positive functions $C(\rho), \alpha(\rho)$ locally uniform on $\rho$ such that
			\begin{align*}
				\forall L \in \N_+, \qquad \vert \nu_L(\rho)  - \nu(\rho)\vert \leq C L^{-\alpha};
			\end{align*}
		\end{enumerate}
		then we can also improve \eqref{eq.conceptual}: for any $\alpha' \in (0,\alpha)$ on $I$, there exists a finite positive constant $C(\rho, \alpha')$ such that  
		\begin{align}\label{eq.conceptualPlus}
			\forall L \in \N_+, \qquad \vert \nu_L^{(k)}(\rho)  - \nu^{(k)}(\rho)\vert \leq C'(\rho, \alpha') L^{- \alpha'}.
		\end{align}
	\end{lemma}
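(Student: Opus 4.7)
The plan is to deduce the qualitative statement \eqref{eq.conceptual} via a standard compactness argument, and then to obtain the quantitative refinement \eqref{eq.conceptualPlus} through a Landau--Kolmogorov-type interpolation. I will proceed by induction on the order of differentiation.

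For the qualitative part under conditions (1)--(3), fix a compact subinterval $J\subset I$. The hypothesis (3) applied with derivative order $k+1$ gives a uniform bound $M = M(J,k)$ on $\sup_{L,\rho\in J}|\nu_L^{(k+1)}(\rho)|$, which yields the equicontinuity of $\{\nu_L^{(k)}\}_{L\in\N_+}$ on $J$. Combined with pointwise boundedness from (3) at order $k$, the Arzel\`a--Ascoli theorem extracts a subsequence $\nu_{L_n}^{(k)}$ converging uniformly on $J$ to some limit $g_k$. Assuming inductively that $\nu^{(k-1)}$ exists and $\nu_L^{(k-1)}\to \nu^{(k-1)}$ locally uniformly (the case $k=0$ being (1)), I pass to the limit in the identity
\[
\nu_{L_n}^{(k-1)}(\rho) = \nu_{L_n}^{(k-1)}(\rho_0) + \int_{\rho_0}^{\rho} \nu_{L_n}^{(k)}(t)\,\d t,
\]
which identifies $g_k$ as $\nu^{(k)}$. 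Since every subsequence admits a further subsequence converging to the same limit $\nu^{(k)}$, the full sequence converges, closing the induction.

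For the quantitative refinement under (1+), the qualitative part already guarantees that both $\nu_L$ and $\nu$ are $C^\infty$ and that $|\nu^{(n)}|$ is locally bounded on $I$ for every $n$. Fix $\rho\in I$, a compact subinterval $J\subset I$ containing a neighborhood of $\rho$, and set $g_L := \nu_L - \nu$. From (1+) one has $\|g_L\|_{L^\infty(J)}\leq C L^{-\alpha}$, and from (3) together with the qualitative conclusion one has $\|g_L^{(n)}\|_{L^\infty(J)}\leq C_n$ uniformly in $L$, for every $n$. The Landau--Kolmogorov inequality on a compact interval (applied to $g_L$ restricted to $J$) then yields, for any $n>k$,
\[
|g_L^{(k)}(\rho)| \leq C_n\,\|g_L\|_{L^\infty(J)}^{1-k/n}\,\|g_L^{(n)}\|_{L^\infty(J)}^{k/n} \leq C'_n\,L^{-\alpha(1-k/n)}.
\]
Given any $\alpha'\in (0,\alpha)$, choosing $n$ large enough to enforce $\alpha(1-k/n)\geq \alpha'$ yields the bound $|\nu_L^{(k)}(\rho)-\nu^{(k)}(\rho)|\leq C'(\rho,\alpha')\,L^{-\alpha'}$ as required.

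I do not expect a serious obstacle here: both ingredients are classical. The only technical point worth care is the Landau--Kolmogorov inequality on a bounded interval (rather than on $\R$), which requires a short extension or localization argument but introduces only constants depending on $J$ and $n$. The local uniformity of the constants in both \eqref{eq.conceptual} and \eqref{eq.conceptualPlus} with respect to $\rho$ is then inherited from the local uniformity in hypotheses (1+) and (3).
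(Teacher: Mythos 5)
Your proposal is correct. For the quantitative refinement you use the Landau--Kolmogorov interpolation inequality on a compact subinterval exactly as the paper does, so that part matches. For the qualitative statement the paper instead Taylor-expands $\nu_L^{(k)}$ to second order around a fixed $\rho_0$, uses condition~(3) to control the quadratic remainder uniformly in $L$, and extracts a convergent subsequence of $\nu_L^{(k+1)}(\rho_0)$; the limit is then identified as $\nu^{(k+1)}(\rho_0)$ and uniqueness of subsequential limits closes the induction. Your route via Arzel\`a--Ascoli (equicontinuity of $\nu_L^{(k)}$ from (3) at order $k+1$, locally uniform subsequential limit $g_k$, identification of $g_k=\nu^{(k)}$ by passing to the limit in $\nu_{L_n}^{(k-1)}(\rho)=\nu_{L_n}^{(k-1)}(\rho_0)+\int_{\rho_0}^\rho \nu_{L_n}^{(k)}$) reaches the same conclusion through a compactness-plus-FTC argument closer in spirit to the textbook theorem the paper cites (Rudin 7.17); it is equally valid and of comparable length. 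One minor remark: you state the inductive hypothesis as locally uniform convergence of $\nu_L^{(k-1)}$, whereas for the base case $k=1$ condition~(1) only gives pointwise convergence --- but in fact the integration step only needs $\nu_{L_n}^{(k-1)}$ to converge pointwise at $\rho$ and $\rho_0$, so the argument goes through regardless (and the locally uniform version can be recovered at each stage from equicontinuity anyway).
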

	\begin{proof}
		We prove \eqref{eq.conceptual} by induction. The basis is ensured by condition (1). Suppose now the result is established until  $k$-th derivative, and aim to prove the $(k+1)$-th derivative.  Viewing (2), we have Taylor's expansion for $\nu_L$ that for all $\rho_0, \rho \in I$, thus there exists $\theta \in (0,1)$  such that
		\begin{align*}
			\nu^{(k)}_L(\rho) = \nu^{(k)}_L(\rho_0) + \nu^{(k+1)}_L(\rho_0)(\rho - \rho_0) + \frac{1}{2} \nu^{(k+2)}_L(\rho_0 + \theta(\rho-\rho_0))(\rho - \rho_0)^2.
		\end{align*}
		Now applying the condition (3) to $\nu^{(k+2)}_L$. There exists $C_{k+2}(\rho_0) < \infty$, such that the following estimate holds for $\rho$ in a small neighborhood of $\rho_0$
		\begin{align*}
			\forall L \in \N_+, \qquad \Ll\vert \nu^{(k)}_L(\rho) - \nu^{(k)}_L(\rho_0) - \nu^{(k+1)}_L(\rho_0)(\rho - \rho_0) \Rr\vert \leq C_{k+2}(\rho_0)(\rho - \rho_0)^{2}.
		\end{align*}
		We pass $L$ to infinity for the equation above, then by induction
		\begin{align*}
			\lim_{L \to \infty} \nu_L^{(k)}(\rho) = \nu^{(k)}(\rho), \qquad \lim_{L \to \infty} \nu_L^{(k)}(\rho_0) = \nu^{(k)}(\rho_0),
		\end{align*}
		and $\nu_L^{(k+1)}(\rho_0)$ also converges to a limit $\nu^{(k+1), *}(\rho_0)$ along a subsequence thanks to the uniform bound (3). This gives us
		\begin{align}\label{eq.kdiff}
			\Ll\vert \nu^{(k)}(\rho) - \nu^{(k)}(\rho_0) - \nu^{(k+1),*}(\rho_0)(\rho - \rho_0) \Rr\vert \leq C_{k+2}(\rho_0)(\rho - \rho_0)^{2},
		\end{align}
		which implies that $\nu$ is $(k+1)$-th differentiable at $\rho_0$. 
		\begin{align}\label{eq.induction_derivative}
			\lim_{\rho \to \rho_0}\Ll\vert \frac{\nu^{(k)}(\rho) - \nu^{(k)}(\rho_0)}{\rho - \rho_0} - \nu^{(k+1),*}(\rho_0) \Rr\vert \leq \lim_{\rho \to \rho_0} C_{k+2}(\rho_0)(\rho - \rho_0) = 0.
		\end{align}
		This also gives a characterization of $\nu^{(k+1), *}(\rho_0) = \nu^{(k+1)}(\rho_0)$, so it is also the limit of the whole sequence $\lim_{L \to \infty} \nu_L^{(k+1)}(\rho)$:  for any subsequence of $\nu_L$, when extracting a further convergent sequence of $(k+1)$-th derivative, \eqref{eq.induction_derivative} is always satisfied. Thus, the limit has no choice but $\nu^{(k+1)}(\rho_0)$, and this justifies \eqref{eq.conceptual}.    
		
		Concerning the convergence rate, a quick approach is Landau--Kolmogorov inequality \cite{landau1914, kolmogoroff1949}: for any $\rho \in I$, we pick a neighborhood $J$ such that $\rho \in J \subset I$, then 
		\begin{align*}
			\forall 1 \leq k <n, \qquad \norm{(\nu_L - \nu)^{(k)}}_{L^{\infty}(J)} \leq C(k,n,J) \norm{\nu_L - \nu}^{1-\frac{k}{n}}_{L^{\infty}(J)} \norm{(\nu_L - \nu)^{(n)}}^{\frac{k}{n}}_{L^{\infty}(J)}.
		\end{align*} 
		Then we can obtain the convergence rate using the condition (1+), with a very large $n$ to be chosen.
		
	\end{proof}

	\begin{proof}[Proof of Theorem~\ref{thm.main} and Theorem~\ref{thm.ab}]
		It suffices to verify the 3 conditions in Lemma~\ref{lem.conceptual} for percolation model. As mentioned in \eqref{eq.Einstein} (see also \cite[eq.(181)]{dario2021quantitative}), we only need to prove the result for $\ab$, and it has a finite-volume approximation $\ab_m$ defined in \eqref{eq.ab}.
		\begin{itemize}
			\item The condition (1) and (1+) is ensured by \cite[Proposition~5.2]{armstrong2018elliptic}.
			\item The condition (2) is verified in Proposition~\ref{pr:expansion_init}.
			\item The condition (3) is the heart of this paper, which relies on a chaos expansion \eqref{e:derivative_formula}, an upper bound by improved $\ell^1$-$L^2$ energy in Lemma~\ref{lem.abk_VFj}, and the perturbed corrector equations \eqref{eq.recurrence_eq}. Then the techniques including the renormalizaiton, the coarse-graining, the cluster-growth decomposition, and the hole separation are developed in Section~4-7. Finally, the condition (3) is proved in Proposition~\ref{pr:key_estimate_avg}.
		\end{itemize}
	\end{proof}

	\appendix
	\section{Index of notation}\label{sec.notation_list}
	We list the notations commonly used in the paper, with their meaning briefly recalled and the position they are defined. 
	\begin{multicols}{2}
		\begin{itemize}[label=--]
			\setlength\itemsep{0.9em}
			\item $\a$: the state of percolation; \eqref{eq.def_Percolation}.
			
			\item $\theta(\p)$: the connected probability; \eqref{def.theta}.
			
			\item $\ab(\p)$: the conductivity; \eqref{eq.ab}.
			
			\item $\ab_m^{(k)}(\p)$: the $k$-th derivative of approximated conductivity in $\cu_m$; \eqref{e:derivative_formula}.
			
			\item $\a^G$: the percolation when opening bonds in $G$; \eqref{eq.defaG}.
			
			\item $f^G$: the function taking $\a^G$ as variable; \eqref{eq.f_convention}.
			
			\item $D^G$: the Glauber derivative; \eqref{e.def.DE}.
			
			\item $\EE$: the intrinsic geometry; \eqref{eq.def_intrinsic}.
			
			\item $\a^\#, f^\#, \cC^\#, \OO^\#$: percolation of the environment $\#$, and the random variables (function, cluster, hole) taking $\a^\#$ as the sample; \eqref{eq.sharp}.
			
			\item $\cu_m$: the cube of size $3^m$; \eqref{eq.defCube}.
			
			\item $\cCf$: the infinite cluster; Section~\ref{subsec.perco}.
			
			\item $\OO(x)$: the hole containing $x$; Section~\ref{subsec.perco}.
			
			\item $\cC_*(\cu)$: the maximal cluster in $\cu$; Definition~\ref{def:well_connected}.
			
			\item $\cCb(\cu)$: the boundary-connecting clusters of $\cu$; \eqref{eq.def_Cbc}.
			
			\item $\cCp(c \cu \cap \cu_m)$: the clusters in ${c \cu \cap \cu_m}$ connecting to the boundary $\cu_m$; \eqref{eq.def_clt_boundary}.

			\item $\mathscr{E}_{m,\xi}$: the minimum Dirichlet energy in $\cu_m$ with boundary condition $\ell_{\xi}$; \eqref{eq.defmu}.
			
			\item $v_{m,\xi}$: the minimiser to attain $\mathscr{E}_{m,\xi}$ defined in \eqref{eq.harmonicBC}. It is assumed to follow the constant extension rule \eqref{eq.harmonicCube} in the paper, and then to follow \eqref{eq.decom_caonical_easy_1} starting from Section~\ref{sec.HarmonicExtension}.
			
			\item $V_{m,\xi}(F,j)$: $=\sum_{\substack{|G|=j\\G\subset \Ed(\cu_m) \setminus F}} D_{F\cup G} v_{m,\xi}$.
			
			\item $v_{m}, V_{m}(F,j)$: shorthand notation respectively for  $v_{m,\xi}, V_{m,\xi}(F,j)$.
			
			\item $\sSl_{\G}(\cu)$: local partition of good cubes; Proposition~\ref{prop.partition}.
			
			\item $\Lambda(\G,t,C)$: the good cubes with $t$-moment for its partition; Definition~\ref{def.LambdaG}.
			
			\item $\gGN_{k}$: the good cube of scale $k$ and $N$-stability; Proposition~\ref{prop.good_cube_property}.
			
			\item $\pPN_{h}$: the pyramid partition; \eqref{e.pyrmid}
			
			\item $N$-$\stable$: the stability under perturbation of $N$ bonds; Definition~\ref{def.N_stable}.
			
			\item $[x]^\#$: the canonical grain; \eqref{eq.def_center}.
			
			\item $F_*, F_\circ$: the disjoint parts of $F$ in cluster-growth decomposition; \eqref{eq.def_WIP_new}.
			
			\item $\nabla_G$: spatial finite difference; \eqref{eq.def_diff_shift}.

			\item $I^{\; (N,k)}_{\; m,\alpha, h}(i,j)$: weighted $\ell^1$-$L^2$ energy; \eqref{eq.defIalpha}.
			
			\item $I^{* \;(N,k)}_{\; m,\alpha, h}(i,j)$: weighted $\ell^1$-$L^2$ energy on clusters; \eqref{eq.defIalpha_cluster}.
			
			\item $\tilde{I}^{ \;(N,k)}_{\; m,\alpha, h}(i,j)$: weighted $\ell^1$-$L^2$ energy via hole separation; \eqref{eq.defIalpha_hole}.
			
			\item $V_m(F,j \vert \cu)$: hole separation; \eqref{eq.V_F_j_Generalized_cube}.
			
			\item $\rr$: a constant to slightly enlarge the cube, of value $(\frac{4}{3},2)$ by default; Definition~\ref{def.Meyers}. In Section~\ref{sec.KeyEstimate1} and \ref{sec.KeyEstimate2}, it is usually assumed to be of value $(\frac{50}{27},2)$.
		\end{itemize}
	\end{multicols}

	\begin{center}
		ACKNOWLEDGEMENTS 
	\end{center}
	\medskip
	Gu was supported by National Key R\&D Program of China (No.2023YFA1010400) and NSFC  (No.12301166). The two authors also thank TSIMF, IASM-BIRS, and NYU Shanghai for hospitality, where parts of this project were developed during the workshops there.

	\bibliographystyle{abbrv}
	\bibliography{Ref}
	
\end{document}